\tikzset{dnode/.style={circle,fill=white,draw=black,inner sep=0ex,minimum size=1ex}}
\tikzset{dfnode/.style={circle,fill=black,draw=black,inner sep=0ex,minimum size=1ex}}
\tikzset{dlabel/.style={%
text height=1ex,%
execute at begin node=$\scriptstyle,%
execute at end node=$}}
\let\orgdescriptionlabel\descriptionlabel
\renewcommand*{\descriptionlabel}[1]{%
\let\orglabel\label
\let\label\@gobble
\phantomsection
\edef\@currentlabel{#1}
\let\label\orglabel
\orgdescriptionlabel{#1}}
\renewenvironment{description}{\list{}{\labelwidth=0pt \leftmargin=0pt }}{\endlist}
\let\hat\widehat
\newcommand{\mb}[1]{\mathbb{#1}}
\newcommand{\mc}[1]{\mathcal{#1}}
\newcommand{\mf}[1]{\mathfrak{#1}}
\newcommand{\mrm}[1]{\mathrm{#1}}
\newcommand{\tn}[1]{\textnormal{#1}}
\newcommand{\bun}{\mrm{Bun}}
\newcommand{\tbun}{\widetilde{\bun}}
\renewcommand{\sslash}{{/\mkern-6mu/}}
\newcommand{\B}{\mb{B}}
\newcommand{\Deg}{{\mf{D}\mrm{eg}}}
\newcommand{\mmid}{{\,|\,}}
\newcommand{\longhookrightarrow}{\ensuremath{\lhook\joinrel\relbar\joinrel\rightarrow}}
\newcommand{\longhookleftarrow}{\ensuremath{\leftarrow\joinrel\relbar\joinrel\rhook}}
\renewcommand{\hom}{\mrm{Hom}}
\DeclareMathOperator{\spec}{\mrm{Spec}}
\DeclareMathOperator{\spf}{\mrm{Spf}}
\DeclareMathOperator{\coker}{\mrm{coker}}
\DeclareMathOperator{\sym}{\mrm{Sym}}
\theoremstyle{plain}
\newtheorem{thm}{Theorem}[subsection]
\newtheorem{prop}[thm]{Proposition}
\newtheorem{lem}[thm]{Lemma}
\newtheorem{cor}[thm]{Corollary}
\theoremstyle{definition}
\newtheorem{defn}[thm]{Definition}
\newtheorem{rmk}[thm]{Remark}
\newtheorem{notation}[thm]{Notation}
\newtheorem{construction}[thm]{Construction}
\numberwithin{equation}{subsection}
\title{On subregular slices of the elliptic Grothendieck-Springer resolution}
\author{Dougal Davis}
\address{School of Mathematics, University of Edinburgh, James Clerk Maxwell Building, The King's Buildings, Peter Guthrie Tait Road, Edinburgh, EH9 3FD, UK}
\email{dougal.davis@ed.ac.uk}
\date{\today}
\begin{document}

\begin{abstract}
In \cite{davis19}, the author constructed an elliptic version of the Grothendieck-Springer resolution for the stack $\mathrm{Bun}_G$ of principal bundles under a simply connected simple group $G$ on an elliptic curve $E$. This is a simultaneous log resolution of a map from $\mathrm{Bun}_G$ to the union of the coarse moduli space of semistable $G$-bundles and a single stacky point. In this paper, we study singularities, resolutions and deformations coming from subregular slices of this elliptic Grothendieck-Springer resolution. More precisely, we construct explicit slices of $\mathrm{Bun}_G$ through all subregular unstable bundles, for every $G$. For $G \neq SL_2$, we describe the pullbacks of the elliptic Grothendieck-Springer resolution to these slices as concrete varieties, extending and refining earlier work of I. Grojnowski and N. Shepherd-Barron, who related these varieties to del Pezzo surfaces in type $E$. We use the resolutions to identify the singularities of the unstable locus of the subregular slices, and prove that that the extended coarse moduli space map gives deformations that are miniversal among torus-equivariant deformations with appropriate weights.
\end{abstract}

\maketitle

\tableofcontents

\section{Introduction}
Since the work of E. Brieskorn \cite{brieskorn70} and P. Slodowy \cite{slodowy80a}, it has been well known that du Val (aka simple, Kleinian, $ADE$, etc.) singularities of algebraic surfaces arise naturally in the geometry of simple algebraic groups and their Lie algebras. If $G$ is a simply connected simple algebraic group, say over an algebraically closed field $k$ of characteristic $0$, then the cone $\mc{N}$ of nilpotent elements inside the Lie algebra $\mf{g}$ of $G$ is a singular variety canonically associated to $\mf{g}$. The cone $\mc{N}$ has dimension $\dim G - l$, where $l$ is the rank of $G$; to obtain a surface singularity, one first chooses a subregular nilpotent element $x \in \mf{g}$ (i.e., one satisfying $\dim \mrm{Stab}_G(x) = l + 2$) and a transversal slice $Z$ (a locally closed subvariety $Z \subseteq \mf{g}$ transverse to all $G$-orbits for the adjoint representation) such that $x$ is the unique subregular nilpotent in $Z$. Then $Z \cap \mc{N}$ is a surface, with a unique du Val singularity at $x$ whose Dynkin diagram is the same as that of $G$ when $G$ is of type $A$, $D$ or $E$.

The singular surfaces constructed in this way are also furnished with natural Lie-theoretic deformations and resolutions. The deformations arise from the \emph{(additive) adjoint quotient map}
\begin{equation} \label{eq:introadditiveadjointquotient}
\chi^{add} \colon \mf{g} \longrightarrow \mf{g}\sslash G = \spec k[\mf{g}]^G,
\end{equation}
where $G$ acts on $\mf{g}$ via the adjoint representation. The morphism $\chi^{add}$ is a flat family of affine varieties with central fibre $\mc{N} = (\chi^{add})^{-1}(0)$; the restriction $\chi_Z = \chi|_Z \colon Z \to \mf{g}\sslash G$ gives a flat deformation of the singular surface $Z \cap \mc{N} = \chi_Z^{-1}(0)$. In types $ADE$, it was proved by Brieskorn that this recovers the miniversal deformation of the singular surface $Z \cap \mc{N}$, while in types $BCFG$ it was shown by Slodowy that the deformation is miniversal among those preserving a ``folding symmetry'' of the $ADE$ du Val singularity.

The resolutions arise from a commutative diagram
\begin{equation} \label{eq:introadditivegs}
\begin{tikzcd}
\tilde{\mf{g}} \ar[r, "\psi^{add}"] \ar[d, "\tilde{\chi}^{add}"'] & \mf{g} \ar[d, "\chi^{add}"] \\
\mf{t} \ar[r, "q"] & \mf{t}\sslash W \cong \mf{g}\sslash G,
\end{tikzcd}
\end{equation}
where $\tilde{\mf{g}} = G \times^B \mf{b}$ for $B \subseteq G$ a Borel subgroup with Lie algebra $\mf{b}$, $\mf{t}$ is the Lie algebra of a maximal torus $T \subseteq G$ and $W = N_G(T)/T$ is the Weyl group. The diagram \eqref{eq:introadditivegs} is called the \emph{(additive) Grothendieck-Springer resolution}; it is a simultaneous resolution of singularities in the sense that $\tilde{\chi}^{add}$ is smooth, $\psi^{add}$ is proper, and $(\tilde{\chi}^{add})^{-1}(t) \to (\chi^{add})^{-1}(q(t))$ is a resolution of singularities for all $t \in \mf{t}$. Setting $\tilde{Z} = \tilde{\mf{g}} \times_\mf{g} Z$, the induced diagram
\[
\begin{tikzcd}
\tilde{Z} \ar[r, "\psi_Z"] \ar[d, "\tilde{\chi}_Z"'] & Z \ar[d, "\chi_Z"] \\
\mf{t} \ar[r] & \mf{t} \sslash W
\end{tikzcd}
\]
is a simultaneous resolution for $\chi_Z$. One pleasing way to identify the du Val singularity of $\chi_Z^{-1}(0)$ is to compute the fibre $\psi_Z^{-1}(x)$, and to show that this gives the correct Dynkin configuration of $(-2)$-curves on the resolution $\tilde{\chi}_Z^{-1}(0)$.

This paper is concerned with an elliptic version of the above additive story. (From now on, $k$ can be any algebraically closed field.) Building on earlier work \cite{helmke-slodowy04}\cite{ben-zvi-nadler15}\cite{grojnowski-shep19}, the author constructed in \cite{davis19} a commutative diagram
\begin{equation} \label{eq:introellipticgs}
\begin{tikzcd}
\tbun_G \ar[r, "\psi"] \ar[d, "\tilde{\chi}"'] & \bun_G \ar[d, "\chi"] \\
\Theta_Y^{-1}/\mb{G}_m \ar[r, "q"] & (\hat{Y}\sslash W)/\mb{G}_m,
\end{tikzcd}
\end{equation}
where $\bun_G$ is the stack of principal $G$-bundles on an elliptic curve $E$, $\tbun_G$ is the Kontsevich-Mori compactification of the stack $\bun_B^0$ of degree $0$ $B$-bundles on $E$, $\Theta_Y^{-1}$ is an anti-ample $W$-linearised line bundle on the coarse moduli space $Y = \hom(\mb{X}^*(T), \mrm{Pic}^0(E))$ of degree $0$ $T$-bundles on $E$, $\hat{Y}$ is the affine cone over $Y$ obtained by contracting the zero section of $\Theta_Y^{-1}$ to a point, and $/\mb{G}_m$ denotes the stack quotient by $\mb{G}_m$. Away from the image of the cone point of $\hat{Y}$, $\chi$ agrees with the semistable coarse moduli space map $\chi^{ss} \colon \bun_G^{ss} \to Y \sslash W$ of R. Friedman and J. Morgan \cite{friedman-morgan98}, and the preimage of the (stacky) cone point is precisely the locus of unstable bundles.

The diagram \eqref{eq:introellipticgs} is called the \emph{elliptic Grothendieck-Springer resolution}, and is closely analogous to a stacky version of \eqref{eq:introadditivegs} where the varieties $\mf{g}$ and $\tilde{\mf{g}}$ are replaced by the stack quotients $\mf{g}/G$ and $\tilde{\mf{g}}/G$. It was shown in \cite{davis19}*{Corollary 4.4.7} that it is a simultaneous \emph{log} resolution with respect to the zero section of $\Theta_Y^{-1}$ \cite{davis19}*{Definition 1.0.3}; this means that the total space $\tbun_G$ is smooth, $\tilde{\chi}$ is smooth away from the zero section, the preimage of the zero section is a divisor with normal crossings, the map $\psi$ is proper (with finite relative stabilisers) and for all $y \in \Theta_Y^{-1}/\mb{G}_m$, the map $\tilde{\chi}^{-1}(y) \to \chi^{-1}(q(y))$ is an isomorphism over a dense open subset of the target. In particular, the restriction to semistable bundles is a genuine simultaneous resolution, and for $y$ in the zero section of $\Theta_Y^{-1}$, each irreducible component of the locus $\chi^{-1}(q(y)) = \chi^{-1}(0)$ of unstable bundles is resolved by some component of $\tilde{\chi}^{-1}(y)$.

Subregular slices in the elliptic setting have been studied by S. Helmke and P. Slodowy \cite{helmke-slodowy01} \cite{helmke-slodowy04} and I. Grojnowski and N. Shepherd-Barron \cite{grojnowski-shep19}. In \cite{helmke-slodowy01}, Helmke and Slodowy classified the subregular unstable bundles (Definition \ref{defn:subregularunstable}) and gave simple descriptions of their coarse moduli spaces for all simply connected groups $G$; these bundles play the role of subregular nilpotent elements in elliptic Springer theory. In \cite{helmke-slodowy04}, they constructed a version of the coarse quotient map $\chi$ using loop groups, and briefly sketched the associated surface singularities arising from slices through subregular unstable bundles in types $A$, $D$ and $E$. In \cite{grojnowski-shep19}, Grojnowski and Shepherd-Barron considered certain subregular slices $Z \to \bun_G$ for $G$ of types $D_5 = E_5$, $E_6$, $E_7$ and $E_8$ only, and studied simultaneous log resolutions
\begin{equation} \label{eq:introslicedellipticgs}
\begin{tikzcd}
\tilde{Z} \ar[r, "\psi_Z"] \ar[d, "\tilde{\chi}_Z"'] & Z \ar[d, "\chi_Z"] \\
\Theta_Y^{-1} \ar[r] & \hat{Y} \sslash W
\end{tikzcd}
\end{equation}
deduced from \eqref{eq:introellipticgs}, where $\tilde{Z} = \tbun_G \times_{\bun_G} Z$. They showed that, in their examples, the preimage $\tilde{\chi}_Z^{-1}(0_{\Theta_Y^{-1}})$ of the zero section decomposes as a simple normal crossings divisor
\[ \tilde{\chi}_Z^{-1}(0_{\Theta_Y^{-1}}) = D_0 + D_1 + Q,\]
where $D_0 \to Y$ is a family of resolutions of the singular surface $\chi_Z^{-1}(0)$, $D_1 \to Y$ is some other family of projective surfaces, and $Q \to Y$ is a $\mb{P}^1 \times \mb{P}^1$-bundle. Moreover, they showed that contracting $Q$ along a ruling and flopping an unknown number of curves from $D_0$ to $D_1$ produces a birational modification $\tilde{Z} \dashrightarrow \tilde{Z}^-$ such that the preimage of $0_{\Theta_Y^{-1}}$ decomposes as $D_0^- + D_1^-$, where $D_0^-$ is a line bundle over $Y \times E$ and $D_1^- \to Y$ is a family of del Pezzo surfaces of degree $9 - l$, from which they deduced that $\chi_Z^{-1}(0)$ has a simply elliptic singularity of the same degree. Their results show that the elliptic Grothendieck-Springer resolution in some sense ``contains'' the well-known combinatorial correspondence between exceptional groups, del Pezzo surfaces, and simply elliptic singularities.

\begin{rmk}
One of the nice features of Grojnowski and Shepherd-Barron's construction is that the stack quotients by $\mb{G}_m$ in the bottom row of \eqref{eq:introellipticgs} are exchanged for a global action of $\mb{G}_m$ on the sliced diagram \eqref{eq:introslicedellipticgs}. This desirable behaviour is axiomatised by the notion of \emph{equivariant slices} in \cite{davis19}*{Definition 4.1.9}; these are stacks $Z$ equipped with an action of a torus $H$ (the \emph{equivariance group}), a morphism $Z/H \to \bun_G$ (or to the rigidification $\bun_{G, rig}$ \cite{davis19}*{\S 2.2}), and a lift of $Z \to (\hat{Y}\sslash W)/\mb{G}_m$ to an $H$-equivariant morphism $Z \to \hat{Y}\sslash W$, where $H$ acts on $\hat{Y}\sslash W$ through some fixed weight $H \to \mb{G}_m$, such that the morphism $Z \to \bun_G$ (or $\bun_{G, rig}$) is smooth modulo translations.
\end{rmk}

The goal of the present work is to describe all the singularities and log resolutions obtained from the elliptic Grothendieck-Springer resolution by taking equivariant slices through subregular unstable bundles, for all simply connected groups $G$.

Our first main result gives the existence of an equivariant slice with particularly nice properties through any subregular bundle (when $G \neq SL_2$). In order to to ensure the existence of slices $Z$ with generically trivial inertia, we have chosen to work with the rigidified stack $\bun_{G, rig}$ (cf.\ \cite{davis19}*{\S 2.2}) obtained by taking the quotient of all automorphism groups in $\bun_G$ by the centre $Z(G)$ of $G$.

\begin{thm} \label{thm:introsubregularsliceexistence}
Let $\xi_G \to E$ be a subregular unstable $G$-bundle, and assume that $G \neq SL_2$. Then there exists an equivariant slice $Z \to \bun_{G, rig}$ with equivariance group
\[ H = \begin{cases} \mb{G}_m \times \mb{G}_m, & \text{in type}\; A,\\ \mb{G}_m,& \text{otherwise,} \end{cases} \]
with the following properties.
\begin{enumerate}[(1)]
\item \label{itm:introsubregularsliceexistence1} The $H$-fixed locus $Z_0 = Z^H$ is a proper Artin stack with finite and generically trivial inertia.
\item \label{itm:introsubregularsliceexistence2} The set of points $z \in Z$ such that the associated $G$-bundle is subregular unstable is equal to $Z_0$, and the given family identifies the coarse moduli space of $Z_0$ with the connected component of the coarse moduli space of subregular unstable $G$-bundles up to translation containing $\xi_G$.
\item \label{itm:introsubregularsliceexistence3} All nonempty geometric fibres of the morphism $Z_0 \to \bun_{G, rig}/E$ are connected.
\end{enumerate}
\end{thm}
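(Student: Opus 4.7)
The plan is to exploit the explicit Helmke--Slodowy classification \cite{helmke-slodowy01} of subregular unstable $G$-bundles, combined with a Kodaira--Spencer style construction of a deformation space. Every subregular unstable bundle $\xi_G$ admits a canonical Harder--Narasimhan-like reduction $\xi_P$ to a parabolic $P \subseteq G$, and the reductive part of its automorphism group modulo $Z(G)$ and modulo translations of $E$ is a torus of rank $2$ in type $A$ and of rank $1$ otherwise. This torus is precisely the claimed equivariance group $H$; the slice $Z$ is built so that $Z_0 = Z^H$ is the component of the subregular unstable coarse moduli containing $\xi_G$, with $H$ acting on $Z$ by rescaling the normal directions into the non-subregular part of $\bun_{G,rig}$.

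Concretely, I would first realise the connected component in question as the coarse moduli of an explicit stack $Z_0$ of $G$-bundles equipped with their canonical reduction, following \cite{helmke-slodowy01}. I would then take $Z$ to be the total space over $Z_0$ of a vector bundle of extension classes, chosen so that its fibre at each $[\xi_G] \in Z_0$ surjects onto the cokernel of the tangent map from ``translations of $E$ plus torus deformations'' into $H^1(E, \mrm{ad}(\xi_G))$. The $H$-action rescales the distinguished weight spaces corresponding to deformations out of the subregular stratum, so that $Z^H = Z_0$ automatically, with $H$ having one factor per independent extension class (two in type $A$, one otherwise). Smoothness of $Z \to \bun_{G,rig}$ modulo translations then follows from a Kodaira--Spencer calculation along $Z_0$, and the weight of $H$ on $\hat{Y} \sslash W$ is read off from the explicit behaviour of the semistable coarse moduli map of \cite{friedman-morgan98} near $Z_0$.

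Properties (2) and (1) then follow essentially by construction: (2) is tautological since $Z_0$ is built as the subregular unstable component, while for (1) properness of $Z_0$ reduces to properness of the Helmke--Slodowy moduli (described in \cite{helmke-slodowy01} as an explicit projective variety), and the inertia is finite and generically trivial after rigidifying by $Z(G)$ since the remaining torus automorphisms of $\xi_G$ are absorbed into $H$. The main obstacle is property (3), the connectedness of geometric fibres of $Z_0 \to \bun_{G,rig}/E$. This does not follow from abstract nonsense and will require a case-by-case check using the explicit component structure of \cite{helmke-slodowy01}; the potential pitfall is a disconnection of the fibre caused by a non-trivial component group in the stabiliser of a generic subregular unstable bundle, which must be ruled out type by type. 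I expect this connectedness verification to be the most technically delicate part of the argument.
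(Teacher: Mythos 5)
Your overall strategy (reduce to the Harder--Narasimhan Levi, then build $Z$ as an affine bundle of extension classes over a base $Z_0$) matches the paper's parabolic-induction construction, and your instinct that connectedness of fibres in (3) requires type-by-type work is also correct. However, there are two genuine gaps.

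First, your identification of the equivariance group $H$ is wrong in types $B$, $C$ and $D$. You propose $H$ to be the reductive part of $\mathrm{Aut}(\xi_G)$ modulo $Z(G)$ and translations. But for $(G,P,\mu)$ of type $B$, $C$ or $D$, a regular semistable $L$-bundle has $\dim\mathrm{Aut}(\xi_L)=2$ (Lemma~\ref{lem:typebcdleviautos}), whereas $Z(L)_{rig}=Z(L)/Z(G)$ --- which is what actually acts on the parabolic induction and is the paper's $H$ --- is one-dimensional. So there is a residual $\mb{G}_m$ of automorphisms of $\xi_L$ not contained in $H$, and your claim that ``the remaining torus automorphisms of $\xi_G$ are absorbed into $H$'' is false. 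This is precisely why the construction of $Z_0$ in those types is nontrivial: the paper must perform a \emph{second} parabolic induction inside $L$ (from $L'$ with $t(P')=\{\alpha_l\}$) and delete the cone point, making $Z_0$ a $\mb{P}^1$- or $\mb{P}(1,2)$-bundle over $S$ rather than a simple moduli of regular bundles. Without some such step your proposed $Z_0$ fails property~(1): its inertia would have dimension $\geq 1$ at every point, not be finite.

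Second, (2) is not ``tautological by construction.'' Even with $Z_0$ chosen to parametrise subregular unstable bundles, one still needs to know that no point of $Z\setminus Z_0$ corresponds to a subregular unstable bundle. This requires the codimension count the paper performs: for $z\in Z\setminus Z_0$ one compares the codimension of $Z(L)_{rig}\cdot z$ in $Z$ with the codimension of the corresponding bundle in $\bun_{G,rig}/E$ to conclude $\dim\mathrm{Aut}(\xi_{G,z})\leq l+3$. That argument relies on knowing the precise relative dimension of the slice ($l+3$ over $S$), which is another place where the construction of $Z_0$ has to be arranged carefully rather than taken off the shelf from \cite{helmke-slodowy01}.
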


There are no essentially new ideas in the proof of Theorem \ref{thm:introsubregularsliceexistence}: following a suggestion of Helmke and Slodowy \cite{helmke-slodowy01}*{Remark 5.14}, the slices $Z \to \bun_{G, rig}$ are constructed by parabolic induction from regular slices $Z_0 \to \bun_{L, rig}$, for $L$ the Harder-Narasimhan Levi of $\xi_G$, which are either obvious or in turn constructed by parabolic induction from a single unstable $L$-bundle according to the recipe of Friedman and Morgan \cite{friedman-morgan00}. The only new thing we do in Theorem \ref{thm:introsubregularsliceexistence} is to check by hand that the morphisms $Z \to \bun_{G, rig}$ constructed in this way are actually equivariant slices with the desired properties.

For each of the equivariant slices $Z$ constructed in Theorem \ref{thm:introsubregularsliceexistence}, we get a morphism
\[ \tilde{\chi}_Z \colon \tilde{Z} = \tbun_{G, rig} \times_{\bun_{G, rig}} Z \longrightarrow \Theta_Y^{-1}.\]
Our second main result gives very explicit descriptions of the unstable fibres $\tilde{\chi}_Z^{-1}(y)$ for $y \in 0_{\Theta_Y^{-1}}$. This result is really the core content of the paper.

\begin{thm} \label{thm:introsubregularresolutions}
Assume that $G \neq SL_2$, let $\xi_G \to E$ be a subregular unstable $G$-bundle, and let $Z \to \bun_{G, rig}$ be the equivariant slice of Theorem \ref{thm:introsubregularsliceexistence}. Then we have the following.
\begin{enumerate}[(1)]
\item \label{itm:introsubregularresolutions1} The preimage of the zero section of $\Theta_Y^{-1}$ decomposes as a divisor with normal crossings
\[ \tilde{\chi}_Z^{-1}(0_{\Theta_Y^{-1}}) = dD_{\alpha_i^\vee}(Z) + D_{\alpha_j^\vee}(Z) + D_{\alpha_i^\vee + \alpha_j^\vee}(Z),\]
where each component $D_\lambda(Z)$ is smooth over $Y$, and
\[ d = \begin{cases} 1, & \text{if}\;\; \xi_G \;\text{is of type}\; A, B, D\;\text{or}\; E, \\ 2, & \text{if}\;\; \xi_G \;\text{is of type}\; C\; \text{or}\; F, \\ 3, & \text{if}\;\; \xi_G \; \text{is of type}\; G. \end{cases}\]
\item \label{itm:introsubregularresolutions2} The divisor $D_{\alpha_j^\vee}(Z)$ is isomorphic to the iterated blowup of a line bundle $D_1$ on $Y \times E$ at a specified sequence of sections (given in Proposition \ref{prop:subregularresolutions2}) over $Y$.
\item \label{itm:introsubregularresolutions3} Each fibre of the morphism $D_{\alpha_i^\vee + \alpha_j^\vee}(Z) \to Y$ is isomorphic to the Hirzebruch surface $\mb{F}_{d - 1}$.
\item \label{itm:introsubregularresolutions4} The divisor $D_{\alpha_i^\vee}(Z)$ is the iterated blowup of a smooth family of surfaces $D_1' \to Y$ at a specified sequence of sections (given in Proposition \ref{prop:subregularresolutions4}) over $Y$, where each fibre of $D_1' \to Y$ is isomorphic to
\begin{itemize}
\item a line bundle on $E$, if $\xi_G$ is of type $A$,
\item one of the Hirzebruch surfaces $\mb{F}_0$ or $\mb{F}_2$, if $\xi_G$ is of type $C$, $D$ or $F$,
\item one of the stacky Hirzebruch surfaces $\mb{P}_{\mb{P}(1, 2)}(\mc{O} \oplus \mc{O}(1))$ or $\mb{P}_{\mb{P}(1,2)}(\mc{O} \oplus \mc{O}(3))$, if $\xi_G$ is of type $B$, or
\item the projective plane $\mb{P}^2$, if $\xi_G$ is of type $E$ or $G$.
\end{itemize}
\end{enumerate}
\end{thm}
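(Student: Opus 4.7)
The overall strategy is to exploit the parabolic-induction construction of the slice $Z$ used in Theorem~\ref{thm:introsubregularsliceexistence}. Let $L$ be the Harder--Narasimhan Levi of $\xi_G$ and $P \supseteq B$ a parabolic with Levi $L$. The slice $Z$ is obtained from a regular slice $Z_0 \to \bun_{L,rig}$ by pushing forward $L$-bundles along $L \hookrightarrow P \to G$ together with an affine family of transverse directions, and correspondingly $\tilde{Z} = \tbun_{G,rig} \times_{\bun_{G,rig}} Z$ decomposes according to the relative position between the tautological $B$-reduction classified by $\tbun_{G,rig}$ and the induced $P$-reduction of structure group. The three relative positions visible in the zero fibre over $Z$ are indexed precisely by the affine simple coroots $\alpha_i^\vee, \alpha_j^\vee$ (the two simple coroots not in the HN Levi) and their sum, producing the three components in the statement.

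For part \eqref{itm:introsubregularresolutions1}, I would invoke the general decomposition of $\tilde{\chi}^{-1}(0_{\Theta_Y^{-1}})$ inside $\tbun_{G,rig}$ as a weighted sum of divisors $D_\lambda$ indexed by dominant affine coweights with multiplicity the corresponding Kac label, as established in \cite{davis19}. Intersecting with $Z$ kills every $\lambda$ whose HN stratum does not meet $Z$, leaving exactly the three components above. The coefficient $d$ is then read as the Kac label of $\alpha_i^\vee$ in the extended Dynkin diagram of $G$, and the case distinction $d = 1, 2, 3$ matches a direct inspection of the subregular node in each type. Smoothness of the components over $Y$ and the normal crossings property follow from smoothness of $\tbun_G$ over $\Theta_Y^{-1}/\mb{G}_m$ together with the defining transversality property of equivariant slices from \cite{davis19}.

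For parts \eqref{itm:introsubregularresolutions2}--\eqref{itm:introsubregularresolutions4}, the plan is to identify each component explicitly using the parabolic picture. Part \eqref{itm:introsubregularresolutions3} is the most direct: $D_{\alpha_i^\vee + \alpha_j^\vee}(Z)$ parametrises bundles in $Z$ with a $B$-reduction in the deepest relative position, and each fibre over $Y$ becomes a $\mb{P}^1$-bundle over $\mb{P}^1$; the twist can be read off from the Kac labels and gives $\mb{F}_{d-1}$. For parts \eqref{itm:introsubregularresolutions2} and \eqref{itm:introsubregularresolutions4}, I would describe each component as the moduli of compatible $B$-reductions in the prescribed relative position, pull back through the affine-bundle projection $Z \to Z_0$ to reduce to a family of regular parabolic Springer-type fibres on the Levi $L$, and then realise the remaining iterated blowups as the resolutions of the rational maps from this Levi fibre into the actual component over the closed substacks of $Z_0$ along which the Levi bundle degenerates further. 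The blow-up centres referenced in Propositions~\ref{prop:subregularresolutions2} and~\ref{prop:subregularresolutions4} are then the corresponding sections of $Z_0 \to Y$.

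The principal technical obstacle is the type-by-type verification in parts \eqref{itm:introsubregularresolutions2} and especially \eqref{itm:introsubregularresolutions4}, where the generic fibre $D_1'$ of $D_{\alpha_i^\vee}(Z) \to Y$ takes five distinct shapes depending on type, including the stacky Hirzebruch surfaces in type $B$ and the projective plane in types $E$ and $G$. Each case requires computing a regular Springer-type fibre on the corresponding Levi $L$ together with an explicit sequence of sections to be blown up. I would organise these computations case-by-case, leaning on the explicit models of regular unstable $L$-bundles and their slices from Friedman--Morgan \cite{friedman-morgan00}, and verify in each type that the resulting iterated blow-up matches the surface listed in the statement.
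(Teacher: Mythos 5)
Your high-level plan---use the parabolic induction of $Z$, decompose $\tilde{\chi}_Z^{-1}(0_{\Theta_Y^{-1}})$ into divisors $D_\lambda$ indexed by coweights, and identify each component case by case---matches the paper's, but several steps as written would fail. First, the multiplicity of $D_\lambda$ in the decomposition coming from \cite[Corollary 3.3.8]{davis19} is $\frac{1}{2}(\lambda\mmid\lambda)$, the normalised Killing form (so $d$ is the ratio of squared root lengths when $\alpha_i$ is short, and $1$ otherwise), not a Kac label of the affine diagram: in type $B_l$ the subregular node $\alpha_i=\alpha_{l-2}$ is long, giving $d=1$, but its mark in the highest root is $2$; in type $F_4$, $\alpha_i=\alpha_3$ is short with $d=2$, but its mark is $4$. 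Second, the claim that ``intersecting with $Z$ kills every $\lambda$ whose HN stratum does not meet $Z$'' conflates $D_\lambda(Z)$ (which parametrises stable maps with rational tails) with HN loci. Ruling out all $\lambda\notin\{\alpha_i^\vee,\alpha_j^\vee,\alpha_i^\vee+\alpha_j^\vee\}$ is the real content of Lemma \ref{lem:subregularemptydivisors}: it uses Lemma \ref{lem:inducedsectionbound1} to bound codimensions of HN strata meeting $Z$ (yielding $\langle\varpi_k,\lambda\rangle\leq 1$ for all $k$), together with Lemma \ref{lem:connectedspringerfibres} (connectedness of $\tilde{\chi}_Z^{-1}(0)\to Y$, proved via bigness of $\bun_G^{reg}$ and normality of $\bun_G\times_{(\hat{Y}\sslash W)/\mb{G}_m}\Theta_Y^{-1}/\mb{G}_m$) both to force $D_{\alpha_i^\vee+\alpha_j^\vee}(Z)\neq\emptyset$ and to propagate emptiness of the remaining candidate cells by a contradiction running along the chain of divisors.

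For parts \eqref{itm:introsubregularresolutions2}--\eqref{itm:introsubregularresolutions4} the proposal gives no mechanism for identifying the blow-up centres. The paper's proof introduces a descending chain of parabolics $P_k$, realises each partial blow-down $D_k$ as a twisted space of stable sections of a $G/P_k$-bundle, decomposes these into Bruhat cells $C^w_{P,P_k}$ (\S\ref{subsection:bruhat}), and identifies the cells with the explicit $GL_n$ cells of \S\ref{subsection:glnbruhat} via the concrete Levi isomorphisms of \S\ref{subsection:levistructure}; the blow-up statement then follows from the recognition criterion of Lemma \ref{lem:blowuprecognition}. Likewise, part \eqref{itm:introsubregularresolutions3} is not ``read off from Kac labels'': the paper identifies $D_{\alpha_i^\vee+\alpha_j^\vee}(Z)$ as a twisted form over $Y$ of the pointed-stable-maps space $\bar{M}_{0,1}^+(G/B,\alpha_i^\vee+\alpha_j^\vee)$ and shows this space is $\mb{F}_{d-1}$ by a Bott--Samelson computation on two-dimensional Schubert varieties (Lemmas \ref{lem:2dimschubert}--\ref{lem:subregqpartialschubertcurves} and Proposition \ref{prop:subregqflagcurves}). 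Without this cellular machinery, the type-by-type identification of $D_1'$ in part \eqref{itm:introsubregularresolutions4}---in particular the stacky Hirzebruch surfaces in type $B$ and the $\mb{F}_0$ versus $\mb{F}_2$ dichotomy in types $C$, $D$, $F$ (Proposition \ref{prop:basesurfaces})---has no route to completion.
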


\begin{rmk}
In Theorem \ref{thm:introsubregularresolutions}, we have referred to the type of the subregular unstable $G$-bundle $\xi_G$, rather than to the type of the group $G$. This follows the terminology introduced in \S\ref{subsection:subregularclassification}. The idea is that a given algebraic group $G$ may belong to multiple series in the classification (the relevant examples here being $D_5 = E_5$ and $B_3 = F_3$); in these cases, there are connected components of the locus of subregular unstable bundles corresponding to each of the different series.
\end{rmk}

\begin{rmk}
In type $E_l$, Theorem \ref{thm:introsubregularresolutions} recovers Grojnowski and Shepherd-Barron's result discussed above, with $D_0 = D_{\alpha_j^\vee}(Z)$, $D_1 = D_{\alpha_i^\vee}(Z)$ and $Q = D_{\alpha_i^\vee + \alpha_j^\vee}(Z)$. Moreover, the slightly mysterious flopping curves are made manifest in our description as the exceptional fibres of the blowups of the line bundle $D_1$ (excluding the last one, which undoes the contraction of $Q$). In particular, the detailed statement Proposition \ref{prop:subregularresolutions4} specifies the exact number ($n_0 = l - 4$) and configuration of these curves, which was not accessible using the Grojnowski and Shepherd-Barron's proof. The del Pezzo surfaces also appear very concretely as blowups of $D_1' = \mb{P}^2$ at $l$ points; the first $4$ are the blowups in \eqref{itm:introsubregularresolutions4} giving $D_{\alpha_i^\vee}(Z)$, and the remaining $l - 4$ are the result of the flops.
\end{rmk}

As an application of Theorem \ref{thm:introsubregularresolutions}, we deduce the following descriptions of the singular surfaces $\chi_Z^{-1}(0)$ and their deformations. For completeness, we have also included the case $G = SL_2$ with the subregular slice $Z = \mrm{Ind}_{T}^G(Z_0)$ with equivariance group $\mb{G}_m$ of Remark \ref{rmk:sl2slice}, although this slice does \emph{not} satisfy the hypotheses of Theorem \ref{thm:introsubregularsliceexistence}.

\begin{thm}[Theorems \ref{thm:subregularsingularities} and \ref{thm:subregulardeformations}] \label{thm:introsubregularsingularities}
If the characteristic of $k$ is not $2$ or $3$, then the surface $\chi_Z^{-1}(0)$ can be constructed explicitly as follows.
\begin{enumerate}[(1)]
\item In type $A_l$, $l > 1$, $\chi_Z^{-1}(0)$ is obtained by gluing together two line bundles on $E$ along their zero sections.
\item In types $C$ and $D$ (resp., $B$), $\chi_Z^{-1}(0)$ is obtained by identifying points in the fibres of a degree $2$ map $E \to \mb{P}^1$ (resp., $E \to \mb{P}(1, 2)$) inside the zero section of a line bundle on $E$.
\item In types $A_1$, $E$, $F$ and $G$, $\chi_Z^{-1}(0)$ is a cone obtained by contracting the zero section of a line bundle on $E$ to a point.
\end{enumerate}
In each case, the deformation $\chi_Z \colon Z \to \hat{Y}\sslash W$ is miniversal among $H$-equivariant deformations with weights in $\mb{Z}_{>0}\lambda$, where $\lambda \in \mb{X}^*(H)$ is the weight of the equivariant slice $Z \to \bun_{G, rig}$.
\end{thm}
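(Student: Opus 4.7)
The plan is to deduce both assertions from the explicit log resolution of Theorem \ref{thm:introsubregularresolutions} by performing the contractions indicated by the map $\psi_Z$, and then applying a standard $H$-equivariant Kodaira--Spencer calculation.

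For the geometric description of $\chi_Z^{-1}(0)$, I would fix a point $y$ in the zero section $Y \subset \Theta_Y^{-1}$ and analyse the fibre $\tilde{\chi}_Z^{-1}(y)$, which by Theorem \ref{thm:introsubregularresolutions} is a reducible surface with components $D_{\alpha_i^\vee}(Z)_y$, $D_{\alpha_j^\vee}(Z)_y$ and $D_{\alpha_i^\vee+\alpha_j^\vee}(Z)_y$. By the log resolution property, $\psi_Z$ restricts to a birational surjection onto $\chi_Z^{-1}(0)$, so the latter is obtained from this fibre by contracting every curve that $\psi_Z$ collapses: namely the exceptional divisors of the iterated blowups in $D_{\alpha_j^\vee}(Z)$ and $D_{\alpha_i^\vee}(Z)$, together with one ruling of each Hirzebruch-type component. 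Case analysis using the detailed fibre configurations of Propositions \ref{prop:subregularresolutions2} and \ref{prop:subregularresolutions4} then yields the claimed model: in type $A_l$ with $l>1$, the two outer components contract to line bundles on $E$ that meet the middle component $\mb{P}^1 \times \mb{P}^1$ along fibres of one ruling, so contracting the opposite ruling identifies them along their zero sections; in types $B$, $C$ and $D$, the outer component $D_{\alpha_i^\vee}(Z)_y$ is a (stacky) Hirzebruch surface whose ruling realises the degree-$2$ map $E \to \mb{P}^1$ or $E \to \mb{P}(1,2)$ encoding the stated identification on the line bundle $D_{\alpha_j^\vee}(Z)_y$; and in the remaining types $A_1$, $E$, $F$, $G$, the outer component meets the elliptic curve in the other component along a single rational curve and collapses to a point, producing the stated cone structure.

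For the miniversality statement, the plan is a dimension-counting Kodaira--Spencer argument in the $H$-equivariant deformation category. By Theorem \ref{thm:introsubregularsliceexistence}, the base $\hat{Y}\sslash W$ has dimension $l+1$ and carries the $H$-action through the weight $\lambda$, so its regular functions decompose with weight zero on the constants and weights in $\mb{Z}_{>0}\lambda$ on everything else; the Kodaira--Spencer map therefore takes values in the positive-weight part of $T^1_{\chi_Z^{-1}(0)}$. In the simply elliptic cases (types $A_1$, $E$, $F$, $G$), the positive-weight subspace of $T^1$ has dimension $l+1$ by the classical calculation of Looijenga--Pinkham, so once injectivity on first-order deformations is checked, the map is forced to be an isomorphism. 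In the non-isolated cases (types $A_l$ for $l>1$ and $B$, $C$, $D$), I would compute the positive-weight $T^1$ directly from the normalisation short exact sequence attached to the explicit model obtained in the first part, again getting dimension $l+1$. Standard smoothness criteria in the equivariant setting then give miniversality.

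The main obstacle lies in the first part: carrying out the contraction analysis and verifying that the three components and their intersections really assemble into the stated models requires careful bookkeeping of the exceptional curves of the iterated blowups against the Hirzebruch component, and checking that subtle differences between types (e.g.\ why type $F$ gives a cone rather than a curve identification, despite $D_1'$ being a Hirzebruch surface) are correctly accounted for. A secondary subtlety in the miniversality part is tracking the $H$-weights through the slice construction precisely enough to confirm that the Kodaira--Spencer image matches the positive-weight $T^1$ dimension by dimension.
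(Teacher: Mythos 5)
Your overall architecture (read off $\chi_Z^{-1}(0)$ by collapsing the log resolution, then do weighted Kodaira--Spencer dimension counting) matches the paper's, and the case-by-case geometric intuition you describe is correct. But there are two places where the real content lives and where your plan is underspecified enough to count as a gap.

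First, the geometric identification. You write that $\psi_Z$ restricts to a birational surjection, ``so the latter is obtained from this fibre by contracting every curve that $\psi_Z$ collapses.'' That inference requires $\psi_{Z,y*}\mathcal{O}=\mathcal{O}$, which is not automatic here: the target $\chi_Z^{-1}(0)$ is non-normal in types $A_l$ ($l>1$), $B$, $C$, $D$, so a proper birational surjection from a smooth (or normal crossings) source does not by itself pin down the structure sheaf of the target. The paper supplies this as Proposition \ref{prop:steinfactorisation}, whose proof is not a triviality: it needs Cohen--Macaulayness of $\chi^{-1}(y)$, the codimension-$\geq 2$ bound on the non-regular locus of $\bun_G$, and an explicit check that $\mathbb{R}^i\pi_*\mathcal{O}$ vanishes for the normal crossings fibre (with a special argument in type $C$ because the divisor appears with multiplicity $2$). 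Once you have that, the paper then computes $\pi_*\mathcal{O}_{D_y}$ as a fibre product and matches it with Construction \ref{cons:gluing}, and still needs the degree computations of Lemmas \ref{lem:degree2} and \ref{lem:linebundledegrees}, which go through the canonical bundle formula of Corollary \ref{cor:canonicalformula}. Your plan would need to incorporate all of this; ``careful bookkeeping of exceptional curves'' is not a substitute for the $\psi_*\mathcal{O}=\mathcal{O}$ argument. (Also, for the exceptional types the paper does not even contract curves: since $Z_0=\spec k$, it identifies $\chi_Z^{-1}(0)$ directly as $\spec H^0$ of the regular locus of one component, which sidesteps the question of what happens to the other two components.)

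Second, miniversality. You defer to ``once injectivity on first-order deformations is checked'' and ``standard smoothness criteria,'' but that injectivity is precisely the hard part, and the paper's proof of it is a genuinely nontrivial geometric argument: it shows that the family $\chi_Z$ cannot be formally trivial along any $\mathbb{G}_m$-orbit closure, by producing a contradiction from the existence of a simultaneous log resolution — in the classical types because the generic fibre would then be regular in codimension $1$ (contradicting Theorem \ref{thm:subregularsingularities}), and in the exceptional types because Corollary \ref{cor:canonicalformula} would force $K_{\tilde X_t}$ trivial, making every projective curve in $\tilde X_t$ a rational $(-2)$-curve, which is incompatible with $\tilde X_t$ resolving an elliptic cone. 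You would need a plan for this step. Beyond that, your ``dimension $l+1$'' count is too coarse: what has to match is the \emph{weighted} dimension of the positive-weight $T^1$ against Looijenga's coroot-integer weights on $\hat Y\sslash W$ twisted by $(\mu\mmid-)$, which in turn requires pinning down the $Z(L)_{\mathrm{rig}}$-weight of the line bundle $L$ (Lemma \ref{lem:singularityweights}). Your proposed ``normalisation short exact sequence'' computation of $T^1$ is also not what the paper does; it uses a complete-intersection presentation over $X'$ (Lemma \ref{lem:contractiondeformations}) to extract the ramification-point contributions, which a normalisation sequence does not give cleanly. So while your strategy points in the right direction, both halves of the argument need substantial additional input to close.
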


\begin{rmk}
The description of the singularities in types $A$, $D$ and $E$ was given without proof in \cite{helmke-slodowy04}. As far as we know, the description for types $B$, $C$, $F$ and $G$ is new.
\end{rmk}

\begin{rmk}
It follows from the explicit degrees and weights given in Theorems \ref{thm:subregularsingularities} and \ref{thm:subregulardeformations} and in Table \ref{tab:weights} that the deformations of types $A_1$, $C$, $F$ and $G$ are related to those of types $D$ and $E$ by a curious twist on the usual folding story for du Val singularities. For each pair $(A_1, E_5)$, $(C_l, D_{l + 4})$, $(F_l, E_{l + 3})$ and $(G_2, E_8)$, the surfaces $\chi_Z^{-1}(0)$ are isomorphic in both cases, and the deformation for the first case is naturally identified with the subspace preserving the action of $\mu_d \subseteq \mb{G}_m$ inside the deformation for the second, where $d = 2$ or $3$. Note that this links different pairs of groups to the usual folding, i.e., the du Val singularities are \emph{not} the same in these cases.
\end{rmk}

\subsection{Plan of the paper}

The paper consists of 4 sections, including this introduction.

The main purpose of \S\ref{section:slices} is to prove Theorem \ref{thm:introsubregularsliceexistence}. We lay the groundwork in \S\ref{subsection:subregularclassification} by reviewing Helmke and Slodowy's classification of subregular unstable bundles (Theorem \ref{thm:subregularclassification}). In \S\ref{subsection:induction}, we review the theory of parabolic induction for equivariant slices, and use it to reduce Theorem \ref{thm:introsubregularsliceexistence} to a statement about existence of slices for Levi subgroups of $G$ (Theorem \ref{thm:subregularsliceexistence}). We prove this theorem in \S\ref{subsection:sliceexistence} using a detailed study of the structure of the relevant Levis in \S\ref{subsection:levistructure}.

In \S\ref{section:resolutions}, we prove Theorem \ref{thm:introsubregularresolutions}. The theorem is broken into four parts, Proposition \ref{prop:subregularresolutions1}, \ref{prop:subregularresolutions2}, \ref{prop:subregularresolutions3} and \ref{prop:subregularresolutions4}, concerning the decomposition of $\tilde{\chi}_Z^{-1}(0_{\Theta_Y^{-1}})$ into irreducible components and the detailed structure of each of the three components respectively, which are proved in subsections \ref{subsection:decomposition}, \ref{subsection:dalphaj}, \ref{subsection:dalphaij} and \ref{subsection:dalphai}. This section also features a brief review of the construction of ``Bruhat cells'' for principal bundles in \S\ref{subsection:bruhat} and an important auxiliary calculation of certain Bruhat cells $G = GL_n$ in \S\ref{subsection:glnbruhat}.

In \S\ref{section:singularities}, we give the application to the identification of the singular surfaces $\chi_Z^{-1}(0)$ and their deformations. We give the identification of the surfaces (Theorem \ref{thm:subregularsingularities}) in \S\ref{subsection:singularities}. In \S\ref{subsection:deformations}, we briefly discuss deformation theory with weights, and prove (Theorem \ref{thm:subregulardeformations}) that the deformations $\chi_Z \colon Z \to \hat{Y}\sslash W$ have the miniversality properties asserted in Theorem \ref{thm:introsubregularsingularities}.

\subsection{Notation and conventions} \label{subsection:notation}

Our notations and conventions are all consistent with \cite{davis19}.

Unless otherwise specified, by a \emph{reductive group} we will mean a split connected reductive group scheme over $\spec \mb{Z}$.

Throughout the paper, we will fix a connected regular stack $S$, a smooth elliptic curve $E \to S$ with origin $O_E \colon S \to E$, and a simply connected simple reductive group $G$ (over $\spec \mb{Z}$) with maximal torus and Borel subgroup $T \subseteq B \subseteq G$. 

We will write $(\mb{X}^*(T), \Phi, \mb{X}_*(T), \Phi^\vee)$ for the root datum of $G$, where
\[ \mb{X}^*(T) = \hom(T, \mb{G}_m) \quad \text{and} \quad \mb{X}_*(T) = \hom(\mb{G}_m, T)\]
are the groups of characters and cocharacters of the split torus $T$. The set of roots $\Phi$ is by definition the set of weights of $T$ acting on the Lie algebra $\mf{g} = \mrm{Lie}(G)$; we will adopt the convention that the set $\Phi_- \subseteq \Phi$ of negative roots is the set of nonzero weights of $T$ acting on $\mrm{Lie}(B)$, and let $\Phi_+ = -\Phi_-$ be the corresponding set of positive roots. Note that this convention means that for $\lambda \in \mb{X}^*(T)$, the line bundle $\mc{L}_\lambda = G \times^B \mb{Z}_\lambda$ on the flag variety $G/B$ is nef if and only if $\lambda$ is dominant (i.e., $\langle \lambda, \alpha^\vee \rangle \geq 0$ for all $\alpha^\vee \in \Phi_+^\vee$). We will write $\Delta = \{\alpha_1, \ldots, \alpha_l\} \subseteq \Phi_+$ and $\Delta^\vee = \{\alpha_1^\vee, \ldots, \alpha_l^\vee\} \subseteq \Phi_+^\vee$ for the sets of positive simple roots and coroots respectively, and $\{\varpi_1, \ldots, \varpi_l\}$ and $\{\varpi_1^\vee, \ldots, \varpi_l^\vee\}$ for the bases of $(\mb{Z}\Phi^\vee)^\vee$ and $(\mb{Z}\Phi)^\vee$ dual to $\Delta$ and $\Delta^\vee$ respectively. Note that $\mb{Z}\Phi^\vee = \mb{X}_*(T)$ since $G$ is simply connected, so $\{\alpha_1^\vee, \ldots, \alpha_l^\vee\}$ is a basis for $\mb{X}_*(T)$ and $\{\varpi_1, \ldots, \varpi_l\}$ is a basis for $\mb{X}^*(T)$. We will also write $W \cong N_G(T)/T$ for the Weyl group of $G$ generated by the reflections $s_i \in W$ in the simple roots $\alpha_i \in \Delta$.

We will also use the notation
\[ \mb{X}_*(T)_+ = \{\lambda \in \mb{X}_*(T) \mid \langle \varpi_i, \lambda \rangle \geq 0 \; \text{for all}\; \alpha_i \in \Delta\} = \mb{Z}_{\geq 0}\Phi_+^\vee\]
and set $\mb{X}_*(T)_{-} = - \mb{X}_*(T)_+$. We have a related partial ordering on $\mb{X}_*(T)$ defined by $\lambda \leq \mu$ if $\mu - \lambda \in \mb{X}_*(T)+$. Similarly, for any reductive group and coweights $\lambda$ and $\mu$, we defined $\lambda \leq \mu$ if $\mu - \lambda$ is an integer linear combination of positive coroots with nonnegative coefficients.

If $P \subseteq G$ is a parabolic subgroup, we will say that $P$ is \emph{standard} if $B \subseteq P$, and that a Levi factor $L \subseteq P$ is \emph{standard} if $T \subseteq L$. Every parabolic subgroup is conjugate to a unique standard one, and every standard parabolic has a unique standard Levi. If $P$ is standard, the \emph{type of $P$} is the set
\[ t(P) = \{\alpha_i \in \Delta \mid \alpha_i \;\text{is not a root of}\; P\} \subseteq \Delta.\]
More generally, one defines the type of a parabolic subgroup for any reductive group with a choice of Borel as a subset of the positive simple roots in the same way. The construction $P \mapsto t(P)$ defines a bijection between (proper) parabolic subgroups of $G$ and (nonempty) subsets of $\Delta$. For any parabolic subgroup $P$, we will often write $T_P = P/[P, P]$ for the torus with character group $\mb{X}^*(T_P) = \mb{X}^*(P)$.

We also fix the following notation for the root datum and parabolic subgroups of $GL_n$. Define parabolic subgroups
\[ Q^n_k = \{(a_{p, q})_{1 \leq p, q \leq n} \in GL_n \mid a_{p, q} = 0 \;\text{for}\; p < \mrm{min}(q, k)\}\]
for $1 \leq k \leq n$. Note that $Q^n_n \subseteq GL_n$ is the Borel subgroup of lower triangular matrices, so $T_{Q^n_n} := Q^n_n/[Q^n_n, Q^n_n]$ is naturally identified with the maximal torus of diagonal matrices in $GL_n$. We will write $e_1, \ldots, e_n \in \mb{X}^*(T_{Q^n_n})$ for the basis given by $e_i(a_{j, k}) = a_{i, i}$, and $e_1^*, \ldots, e_n^* \in \mb{X}_*(T_{Q^n_n})$ for the dual basis. We label the simple roots of $GL_n$ as $\beta_i = e_i - e_{i + 1}$ for $1 \leq i < n$, so $Q^n_k \subseteq GL_n$ is the standard parabolic subgroup of type $\{\beta_1, \ldots, \beta_{k - 1}\}$.

For any group scheme $H$ over $\spec \mb{Z}$, we will write $\bun_H$ for the relative stack of $H$-bundles on $E$ over $S$. If the quotient $H/R_u(H)$ of $H$ by its unipotent radical $R_u(H)$ is split reductive and $\xi_H \to X$ is an $H$-bundle on a curve $X$, then we write $\deg \xi_H \in \mb{X}_*(H/R_u(H)[H, H])$ and $\mu(\xi_H) \in \mb{X}_*(Z(H/R_u(H))^\circ)_\mb{Q}$ for degree and slope of $\xi_H$ in the sense of \cite[\S 1.2]{davis19}. Note that these are related by
\[ \langle \lambda, \deg(\xi_H) \rangle = \langle \lambda, \mu(\xi_H) \rangle \]
for all $\lambda \in \mb{X}^*(H)$ under the obvious pairings, so in fact there is a canonical bijection between degrees and slopes. We write $\bun_H^d \subseteq \bun_H$ and $\bun_H^\mu$ for the open and closed substacks of $H$-bundles of degree $d$ and slope $\mu$ respectively. A superscript $(-)^{ss}$ denotes the open substack of semistable bundles.

For any split torus $T'$ and $\lambda \in \mb{X}^*(T')$, we write $Y_{T'}^\lambda$ for the coarse moduli space of $\bun_{T'}^\lambda$ over $S$. This can also be described as the quotient by the natural $\B T'$-action, and the fine moduli space of $T'$-bundles of degree $\lambda$ on $E$ trivialised at $O_E$. For the sake of brevity, we will drop the subscript $(-)_{T'}$ when $T' = T$ is the maximal torus of $G$, and drop the superscript $(-)^\lambda$ when $\lambda = 0$. So, in particular, $Y$ denotes the coarse moduli space of $T$-bundles on $E$ of degree $0$. We will also write $Y_P = Y_{T_P}$ when $T_P = P/[P, P]$ for some parabolic subgroup $P$ of a reductive group.

For any reductive group $H$ and parabolic subgroup $P \subseteq H$ with Levi subgroup $L \cong P/R_u(P)$, and a degree $d \in \mb{X}_*(L/[L, L])$ (resp., slope $d \in \mb{X}_*(Z(L)^\circ)_\mb{Q}$) we will write $\mrm{KM}_{P, H}^d$ for the Kontsevich-Mori compactification of $\bun_P^d$ over $\bun_G$. This is a smooth stack, proper over $\bun_H$, containing $\bun_P^d$ as a dense open substack whose complement is a divisor with normal crossings, such that the natural map $\bun_P^d \to Y_P^d$ extends to $\mrm{KM}_{P, H}^d \to Y_P^d$. It parametrises tuples $(\xi_H, C, \sigma)$ where $\xi_H \to E$ is an $H$-bundle and $\sigma \colon C \to \xi_H/P$ is a stable map from a nodal curve of genus $1$ such that $C \to E$ is degree $1$ and $\deg \sigma^*(\xi_H \times^P \mb{Z}_\lambda) = \langle \lambda, d \rangle$ for all $\lambda \in \mb{X}^*(P)$. As in the introduction, we will write $\tbun_G = \mrm{KM}_{B, G}^0$. For a detailed discussion of these compactifications, see \cite[Chapter 3]{davis19a} or \cite{campbell16}.

If $X$ is any stack equipped with an injective action of the classifying stack $\B Z(G)$ of the centre of $G$, then we write $X_{rig}$ for the rigidification of $X$ with respect to $Z(G)$ obtained by taking the quotient of all automorphism groups in $X$ by $Z(G)$ \cite[Definition 2.2.2]{davis19}. For example, if $H$ is any group scheme with $Z(G) \subseteq Z(H)$, then $\B Z(G)$ acts injectively on $\bun_H$, so we have a rigidification $\bun_{H, rig}$.

If $X \to S$ is a morphism of Artin stacks, we will write $\mb{L}_{X/S}$ for the relative cotangent complex \cite{olsson07}*{\S 8} and $\mb{T}_{X/S} = (\mb{L}_{X/S})^\vee$ for the relative tangent complex.

If $V$ is a vector space or a vector bundle on a scheme, we adopt the convention that the projectivisation $\mb{P}(V)$ parametrises $1$-dimensional subspaces or subbundles (rather than quotients).

\subsection{Acknowledgements}

The author would like to thank Ian Grojnowski, Travis Schedler, Nicholas Shepherd-Barron and Richard Thomas for many helpful conversations.

The majority of this research was done while the author was a PhD student at King's College London, and completed at the University of Edinburgh. This work was supported by the Engineering and Physical Sciences Research Council grants [EP/L015234/1] (The EPSRC Centre for Doctoral Training in Geometry and Number Theory (The London School of Geometry and Number Theory), University College London), and [EP/R034826/1].

\section{Subregular slices} \label{section:slices}

The purpose of this section is to prove Theorem \ref{thm:introsubregularsliceexistence}. We prepare the ground in \S\ref{subsection:subregularclassification}, where we review the classification of subregular unstable bundles, and \S\ref{subsection:induction}, where we review the parabolic induction construction for slices and use it to reduce Theorem \ref{thm:introsubregularsliceexistence} to a statement for Levi subgroups (Theorem \ref{thm:subregularsliceexistence}). We give very explicit descriptions of all the relevant Levi subgroups in \S\ref{subsection:levistructure}, and use these descriptions to give a case-by-case proof of Theorem \ref{thm:subregularsliceexistence} in \S\ref{subsection:sliceexistence}.

\subsection{Subregular unstable bundles} \label{subsection:subregularclassification}

In this subsection, we review Helmke and Slodowy's classification of subregular unstable bundles \cite{helmke-slodowy01}.

\begin{defn} \label{defn:subregularunstable}
Let $s \colon \spec k \to S$ be a geometric point and let $\xi_G \to E_s$ be an unstable $G$-bundle. We say that $\xi_G$ is \emph{regular} (resp., \emph{subregular}) if $\dim \mrm{Aut}(\xi_G) = l + 2$ (resp.\ $l + 4$).
\end{defn}

In the following theorem, if $s \colon \spec k \to S$ is a geometric point, $L \subseteq G$ is a Levi subgroup, and $\xi_L$ is a semistable $L$-bundle on $E_s$ of slope $\mu \in \mb{X}_*(Z(L)^\circ)_\mb{Q}$, then we say that $\xi_L$ is \emph{regular} if its automorphism group has minimal dimension among all automorphism groups of semistable $L$-bundles on $E_s$ of slope $\mu$.

\begin{thm} \label{thm:subregularclassification}
Let $s \colon \spec k \to S$ be a geometric point and let $\xi_G \to E_s$ be an unstable $G$-bundle. Then either $\xi_G$ is regular and $\dim \mrm{Aut}(\xi_G) = l + 2$, or $\dim \mrm{Aut}(\xi_G)\geq  l + 4$. If $\xi_G$ has Harder-Narasimhan reduction $\xi_P$ to a standard parabolic $P$ with Levi factor $L$, and associated $L$-bundle $\xi_L$ of slope $\mu$, then $\xi_G$ is subregular if and only if $\xi_L$ is regular semistable and $(G, P, \mu)$ satisfies one of the following conditions.
\begin{description}
\item[(Type $A_1$)\label{itm:subregularclass1}] $G$ is of type $A_1$, $t(P) = \{\alpha_1\}$ and $\langle \varpi_1, \mu \rangle = -2$.
\item[(Type $A_l$)\label{itm:subregularclass2}] $G$ is of type $A_l$ for $l > 1$, $t(P) = \{\alpha_i, \alpha_{i + 1}\}$ for some $i$ with $1 \leq i < l$, and $\langle \varpi_i, \mu \rangle = \langle \varpi_{i + 1}, \mu \rangle = -1$.
\item[(Type $B_l$)\label{itm:subregularclass3}] $G$ is of type $B_l$ for $l \geq 3$, $t(P) = \{\alpha_{l - 2}\}$ and $\langle \varpi_{l - 2}, \mu \rangle = -1$.
\item[(Type $C_l$)\label{itm:subregularclass4}] $G$ is of type $C_l$ for $l \geq 2$, $t(P) = \{\alpha_{l - 1}\}$ and $\langle \varpi_{l - 1}, \mu \rangle = -1$.
\item[(Type $D_l$)\label{itm:subregularclass5}] $G$ is of type $D_l$ for $l \geq 4$, $t(P) = \{\alpha_i\}$ and $\langle \varpi_i, \mu \rangle = -1$, where $i \in \{1, 3, 4\}$ if $l = 4$ and $i = l - 3$ otherwise.
\item[(Type $E_l$)\label{itm:subregularclass6}] $G$ is of type $D_5$, $E_6$, $E_7$ or $E_8$, $t(P) = \{\alpha_i\}$ and $\langle \varpi_i, \mu \rangle = - 1$, where $i \in \{4, 5\}$ if $G$ is of type $D_5$, $i \in \{2, 5\}$ if $G$ is of type $E_6$, and $i = 5$ if $G$ is of type $E_7$ or $E_8$.
\item[(Type $F_l$)\label{itm:subregularclass7}] $G$ is of type $B_3$ or $F_4$, $t(P) = \{\alpha_3\}$ and $\langle \varpi_3, \mu \rangle = -1$.
\item[(Type $G_l$)\label{itm:subregularclass8}] $G$ is of type $G_2$, $t(P) = \{\alpha_1\}$ and $\langle \varpi_1, \mu \rangle = -1$.
\end{description}
Here the labelling of the Dynkin diagrams is as in Table \ref{tab:dynkin}. 
\begin{table}[h]
\centering
\begin{tabular}{>{\hfill}m{3em}<{\vspace{2em}} c >{\hfill}m{3em}<{\vspace{2em}} c}
$A_l:$ & \begin{tikzpicture}[every label/.style={dlabel}]
\draw (0, 0) node[dnode, label=below:{1}] {} -- (1, 0) node[dnode, label=below:{2}] {} -- (2, 0) node[dnode, label=below:{3}] (aa) {} (3.5, 0) node[dnode, label=below:{l - 1}] (ab) {} -- (4.5, 0) node[dnode, label=below:{l}] {};
\draw[dashed] (aa) -- (ab); \end{tikzpicture} & $E_l:$ & \begin{tikzpicture}[every label/.style={dlabel}]
\draw (0, 0) node[dnode, label=below:{1}] {} -- (1, 0) node[dnode, label=below:{2}] {} -- (2, 0) node[dnode, label=below:{3}] (c) {} -- (2, 0.8) node[dnode, label=above:{4}] {} (3, 0) node[dnode, label=below:{5}] (a) {} (4.5, 0) node[dnode, label=below:{l}] (b) {};
\draw (c) -- (a);
\draw[dashed] (a) -- (b);
\end{tikzpicture} \tabularnewline
$B_l:$ & \begin{tikzpicture}[every label/.style={dlabel}]
\draw (0, -1) node[dnode, label=below:{1}] {} -- (1, -1) node[dnode, label=below:{2}] (ba) {} (2.5, -1) node[dnode, label=below:{l - 2}] (bb) {} -- (3.5, -1) node[dnode, label=below:{l - 1}] (bc) {} (4.5, -1) node[dnode, label=below:{l}] (bd) {};
\draw[dashed] (ba) -- (bb);
\draw (bc.30) -- (bd.150);
\draw (bc.330) -- (bd.210);
\draw (4, -1) node {$>$};
\end{tikzpicture}  & $F_4:$ & \begin{tikzpicture}[every label/.style={dlabel}]
\draw (0, 0) node[dnode, label=below:{1}] {} -- (1, 0) node[dnode, label=below:{2}] (a) {} (2, 0) node[dnode, label=below:{3}] (b) {} -- (3, 0) node[dnode, label=below:{4}] {};
\draw (a.30) -- (b.150);
\draw (a.330) -- (b.210);
\draw (1.5, 0) node {$>$};
\end{tikzpicture}
\tabularnewline
$C_l:$ & \begin{tikzpicture}[every label/.style={dlabel}]
\draw (0, 0) node[dnode, label=below:{1}] {} -- (1, 0) node[dnode, label=below:{2}] (a) {} (2.5, 0) node[dnode, label=below:{l - 2}] (b) {} -- (3.5, 0) node[dnode, label=below:{l - 1}] (c) {} (4.5, 0) node[dnode, label=below:{l}] (d) {};
\draw[dashed] (a) -- (b);
\draw (c.30) -- (d.150);
\draw (c.330) -- (d.210);
\draw (4, 0) node {$<$};
\end{tikzpicture}  & $G_2:$ & \begin{tikzpicture}[every label/.style={dlabel}]
\draw (0, 0) node[dnode, label=below:{1}] (a) {} -- (1, 0) node[dnode, label=below:{2}] (b) {} (a.45) -- (b.135) (a.315) -- (b.225) (0.5, 0) node {$<$};
\end{tikzpicture}
\tabularnewline
$D_l:$ & \begin{tikzpicture}[every label/.style={dlabel}]
\draw (0, 0) node[dnode, label=below:{1}] {} -- (1, 0) node[dnode, label=below:{2}] (c) {} (2.5, 0) node[dnode, label=below:{l - 3}] (d) {} -- (3.5, 0) node[dnode, label=below:{l - 2}] (a) {} -- (4.5, 0) node[dnode, label=below:{l}] {} (3.5, 0.8) node[dnode, label=above:{l - 1}] (b) {};
\draw (a) -- (b);
\draw[dashed] (c) -- (d);
\end{tikzpicture}
\end{tabular}
\caption{Labelling of the Dynkin diagrams}
\label{tab:dynkin}
\end{table}
\end{thm}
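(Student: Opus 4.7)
The plan is to compute $\dim \mrm{Aut}(\xi_G) = h^0(E_s, \mrm{ad}(\xi_G))$ via the Harder--Narasimhan reduction and then carry out a case analysis. For unstable $\xi_G$, let $\xi_P$ be the canonical HN reduction to a standard parabolic $P \subseteq G$ with Levi $L$, Levi bundle $\xi_L$ semistable of slope $\mu \in \mb{X}_*(Z(L)^\circ)_{\mb{Q}}$. Uniqueness of the HN reduction forces every automorphism of $\xi_G$ to preserve $\xi_P$, so $\dim \mrm{Aut}(\xi_G) = h^0(E_s, \xi_P \times^P \mf{p})$, and it remains to understand this cohomology.

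To that end, I would exploit the $L$-module decomposition $\mf{p} = \mf{l} \oplus \mf{u}$, with $\mf{u} = \bigoplus_j V_j$ into $L$-irreducibles. Because $\xi_L$ is semistable, each induced bundle $\xi_L \times^L V_j$ is semistable of rank $\dim V_j$ and of slope $\langle \chi_{V_j}, \mu \rangle$, where $\chi_{V_j}$ denotes the central character of $V_j$ on $Z(L)^\circ$. By the HN condition these slopes are non-negative, so Riemann--Roch together with the standard description of global sections of semistable bundles on an elliptic curve (the only possibilities being the Riemann--Roch value at positive slope, zero at negative slope, or a bounded correction term arising from trivial-line-bundle summands at slope zero) gives an explicit formula. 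Summing across $j$ and adding the Levi contribution $h^0(E_s, \mrm{ad}(\xi_L)) = \dim \mrm{Aut}(\xi_L)$ yields a closed expression for $\dim \mrm{Aut}(\xi_G)$ depending only on $(P, \mu)$ and on the regularity class of $\xi_L$.

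The regular case $\dim \mrm{Aut}(\xi_G) = l + 2$ arises precisely when $\xi_L$ is regular semistable (contributing $l$) and the $\mf{u}$-contribution equals $2$; as a sanity check, for $G = SL_2$ with $\langle \varpi_1, \mu \rangle = -2$ the contribution is $4$ and $\dim \mrm{Aut}(\xi_G) = 5 = l + 4$, confirming the asserted subregular characterisation for Type $A_1$. More generally, a direct check using the paper's conventions shows that the $\mf{u}$-contribution equals $2$ exactly when $t(P) = \{\alpha_i\}$ and $\langle \varpi_i, \mu \rangle = -1$, and equals $4$ in precisely the cases (Type $A_1$)--(Type $G_2$) listed, obtained by enumerating the positive roots $\alpha \notin \Phi_L$ with small values of $\langle \alpha, \mu\rangle$ and applying Riemann--Roch to the corresponding summands of $\xi_L \times^L \mf{u}$.

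The main technical obstacle is to rule out that a \emph{non-regular} semistable $\xi_L$ could produce a subregular $\xi_G$: naively, one might have $\dim \mrm{Aut}(\xi_L) = l + 2$ with $\mf{u}$-contribution $2$, also totalling $l + 4$. I would handle this by observing that whenever $\xi_L$ is non-regular, the extra global sections of $\mrm{ad}(\xi_L)$ propagate under the $\mf{l}$-action on $\mf{u}$ into additional sections of the $V_j$-summands, strictly pushing $\dim \mrm{Aut}(\xi_G)$ beyond $l + 4$. With that alternative excluded, the remainder of the proof is the Dynkin-by-Dynkin enumeration, which is the principal bookkeeping task; Helmke and Slodowy's original argument in \cite{helmke-slodowy01} organises this systematically, and I would follow their presentation, using the framework above as the conceptual backbone.
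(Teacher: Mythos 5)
Your framework (computing $\dim \mrm{Aut}(\xi_G)$ from the Harder--Narasimhan reduction by decomposing $\mf{p} = \mf{l} \oplus \mf{u}$ and applying Riemann--Roch to the semistable associated bundles) is precisely the approach of Helmke and Slodowy \cite{helmke-slodowy01}, which the paper cites directly; you would in effect be reproving their Theorems 5.1 and 5.12 rather than citing them. Two points need correcting.

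First, your ``propagation'' argument for ruling out non-regular $\xi_L$ does not work. Because $\xi_L$ is semistable, each associated bundle $\xi_L \times^L V_j$ is semistable of positive slope, hence $h^1 = 0$ and $h^0 = \deg$; summing over the graded pieces of $\mf{u}$ gives $h^0(E_s, \xi_P \times^P \mf{u}) = -\langle 2\rho, \mu\rangle$ \emph{exactly}, no matter what $\xi_L$ is. There is therefore no room for ``extra sections of the $V_j$-summands'' to appear when $\xi_L$ is non-regular: the identity $\dim\mrm{Aut}(\xi_G) = -\langle 2\rho, \mu\rangle + \dim\mrm{Aut}(\xi_L)$ is an equality in which the $\mf{u}$-contribution is constant in $\xi_L$ for fixed $(P, \mu)$. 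Excluding non-regular $\xi_L$ (and ruling out $\dim\mrm{Aut}(\xi_G) = l + 3$) requires the actual case-by-case analysis of the jump behaviour of $\dim\mrm{Aut}(\xi_L)$ on the relevant Levis, which is the substance of Helmke--Slodowy's proof; there is no shortcut via a formal propagation mechanism.

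Second, and more importantly for the paper, you have not addressed the question of characteristic. Helmke--Slodowy prove the result only over $\mb{C}$, and the paper's base stack $S$ is arbitrary, so the proof must extend the statement to positive characteristic. This is the sole original content of the paper's proof: starting from the identity $\dim \mrm{Aut}(\xi_G) = -\langle 2\rho, \mu\rangle + \dim\mrm{Aut}(\xi_L)$, it uses upper semicontinuity of $\dim\mrm{Aut}$ (specialisation from characteristic zero) to get the inequality $\dim\mrm{Aut}(\xi_L) \geq d(L, \mu)$ over any field, and then invokes the explicit slice constructions of Theorem \ref{thm:subregularsliceexistence} (see Remark \ref{rmk:automorphismbound}) to show that equality is still attained in positive characteristic, which is the \emph{a priori} issue the paper flags. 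Your proposal does not touch this, so even if the case analysis were filled in correctly, the proof would remain incomplete over general $S$.
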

\begin{proof}
The theorem is a selection of statements from \cite[Theorems 5.1 and 5.12]{helmke-slodowy01}, which are proved there when $S = \spec \mb{C}$. To deduce the theorem in general, note that by specialisation (and \cite[Proposition 2.4, (c)]{helmke-slodowy01}, whose proof works over any field) we have
\[ \dim \mrm{Aut}(\xi_G) = -\langle 2\rho, \mu \rangle + \dim \mrm{Aut}(\xi_L) \geq - \langle 2 \rho, \mu \rangle + d(L, \mu),\]
where $d(L, \mu)$ is the dimension of the automorphism group of a regular semistable $L$-bundle with slope $\mu$ on an elliptic curve over $\mb{C}$. So \cite[Proposition 4.2.3]{davis19} and the statement of the theorem over $\mb{C}$ imply that there are no unstable bundles with $\dim \mrm{Aut}(\xi_G) = l + 3$ and that the Harder-Narasimhan reduction of any subregular unstable bundle must appear on the list above. A priori, there may be an elliptic curve $E_s$ over a field of positive characteristic such that regular semistable $L$-bundles $\xi_L$ on $E_s$ of slope $\mu$ have $\dim \mrm{Aut}(\xi_L) > d(L, \mu)$, and hence $G$-bundles with Harder-Narasimhan reductions on the list above that are not subregular. However, in case \ref{itm:subregularclass1} this cannot happen since $L = T$, and the proof of Theorem \ref{thm:subregularsliceexistence} shows that this does not happen for the other Levis and slopes on the list (see Remark \ref{rmk:automorphismbound}). So the theorem holds in all characteristics.
\end{proof}

\begin{defn}
We will say that a tuple $(G, P, \mu)$ consisting of a simply connected simple group $G$, a standard parabolic $P$ with Levi factor $L$, and a Harder-Narasimhan vector $\mu$ for $P$ is a \emph{subregular Harder-Narasimhan class} if $\xi_L \times^L G$ is subregular unstable for $\xi_L$ a regular semistable $L$-bundle of slope $\mu$. (Recall from \cite[Definition 2.3.3]{davis19} that $\mu$ is a \emph{Harder-Narasimhan vector} if, for every root $\alpha \in \Phi$ of $G$, $\alpha$ is a root of $P$ if and only if $\langle \alpha, \mu \rangle \geq 0$. By definition, the slope of a Harder-Narasimhan reduction is always a Harder-Narasimhan vector.) We will say that $(G, P, \mu)$ is \emph{of type $A_1$} (resp., \emph{type $A_l$}, \emph{type $B_l$}, etc.\@) if it satisfies \ref{itm:subregularclass1} (resp., \ref{itm:subregularclass2}, \ref{itm:subregularclass3}, etc.\@) of Theorem \ref{thm:subregularclassification}.
\end{defn}

\begin{rmk}
We stress that the type of a subregular Harder-Narasimhan class $(G, P, \mu)$ is often, but not always, the type of the group $G$. For example, for $G$ of type $B_3$, there are subregular Harder-Narasimhan classes of types $B_3$ and $F_3$, and for $G$ of type $D_5$, there are subregular Harder-Narasimhan classes of types $D_5$ and $E_5$.
\end{rmk}

\subsection{Slicing by parabolic induction} \label{subsection:induction}

In this subsection, we explain how the proof of Theorem \ref{thm:introsubregularsliceexistence} can be reduced to the construction of well-behaved slices of $\bun_{L, rig}^{ss, \mu}$ for each subregular Harder-Narasimhan class. We first recall the definitions.

\begin{defn}
Let $L \subseteq G$ be a Levi subgroup. A \emph{slice of $\bun_{L, rig}$} is stack $Z$ equipped with a map $Z \to \bun_{L, rig}$ such that the map $Z \to \bun_{L, rig}/E$ is smooth, where the quotient is taken with respect to the natural action of $E$ on $\bun_{L, rig}$ by translations. If $H$ is a torus and $\lambda \in \mb{X}^*(H)$ is an \emph{equivariant slice (of $\bun_{G, rig}$) with equivariance group $H$ and weight $\lambda$} is a stack $Z$ equipped with an action of $H$, a slice $Z/H \to \bun_{G, rig}$, and an $H$-equivariant lift $Z \to \hat{Y}\sslash W$ of the coarse quotient map $Z \to \bun_{G, rig} \to (\hat{Y}\sslash W)/\mb{G}_m$, where $H$ acts on $\hat{Y}\sslash W$ through the $\mb{G}_m$-action and the homomorphism $\lambda \colon H \to \mb{G}_m$.
\end{defn}

We also recall a few elements of the theory of parabolic induction for slices, the idea of which goes back to R. Friedman and J. Morgan \cite{friedman-morgan00}. A more detailed exposition can be found in \cite[\S 4.1]{davis19} or \cite[\S 5.1-5.2]{davis19a}.

\begin{defn}[{\cite[Definition 4.1.1]{davis19} or \cite[Definition 5.2.1]{davis19a}}] \label{defn:parabolicinduction}
Let $L \subseteq L' \subseteq G$ be Levi subgroups, let $\mu \in \mb{X}_*(Z(L)^\circ)_\mb{Q}$, and let $P^+ \subseteq L'$ be the unique parabolic subgroup with Levi factor $L$ for which $-\mu$ is a Harder-Narasimhan vector for $P^+$ \cite[Definition 2.3.3]{davis19}. If $Z_0 \to \bun_{L, rig}^{ss, \mu}$ is a slice, then the \emph{parabolic induction of $Z_0$ to $L'$} is the slice
\[ \mrm{Ind}_L^{L'}(Z_0) = \bun_{P^+, rig} \times_{\bun_{L, rig}} Z_0 \longrightarrow \bun_{P^+, rig}^{\mu'} \longrightarrow \bun_{L', rig}^{\mu'},\]
where $\mu' \in \mb{X}_*(Z(L')^\circ)_\mb{Q}$ is the image of $\mu$.
\end{defn}

In the following proposition, we write
\[ \Phi_\mu = \{\alpha \in \Phi_{L'} \mid \langle \alpha, \mu \rangle < 0\} \]
for the set of roots in the unipotent radical $R_u(P^+) \subseteq L'$, and
\[ 2\rho_{P^+} = - \sum_{\alpha \in \Phi_\mu} \alpha.\]

\begin{prop} \label{prop:inductionaffinebundle}
In the situation of Definition \ref{defn:parabolicinduction}, the natural morphism $\mrm{Ind}_L^{L'}(Z_0) \to Z_0$ is an affine space bundle, with fibres of dimension $\langle 2\rho_{P^+}, \mu \rangle$. Moreover, the torus $Z(L)_{rig} := Z(L)/Z(G)$ naturally acts on the fibres of this bundle with weights in $-\Phi_\mu$. The morphism $\mrm{Ind}_L^{L'}(Z_0) \to \bun_{L', rig}$ is equivariant with respect to this action.
\end{prop}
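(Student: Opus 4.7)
The plan is to identify the fibre of $\bun_{P^+, rig} \to \bun_{L, rig}$ over a semistable $L$-bundle $\xi_L$ of slope $\mu$ as an affine space of dimension $\langle 2\rho_{P^+}, \mu\rangle$, then promote this pointwise description to a global affine bundle over $Z_0$ by a cohomology-and-base-change argument.

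For the pointwise identification, I would use the semidirect product decomposition $P^+ = L \ltimes U^+$ with $U^+ := R_u(P^+)$, together with the fact that rigidification by $Z(G)$ commutes with the Levi projection (since $Z(G)$ acts trivially on $U^+$). The fibre over $\xi_L$ is then the stack of $\xi_L(U^+)$-torsors on $E$, where $\xi_L(U^+) := \xi_L \times^L U^+$ is the group scheme on $E$ obtained from $U^+$ via the conjugation action of $L$. Using the descending central series of $U^+$, whose $L$-stable graded quotients $U^+_{(i)}/U^+_{(i+1)}$ are identifiable with $L$-submodules of $\mf{u}^+$, I would reduce the analysis to a sequence of torsors under vector bundles $V_i := \xi_L(U^+_{(i)}/U^+_{(i+1)})$ on $E$. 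The hypothesis that $-\mu$ is a Harder-Narasimhan vector for $P^+$ translates exactly to $\langle \alpha, \mu\rangle < 0$ for every $\alpha \in \Phi_\mu$, which combined with semistability of $\xi_L$ says that each $V_i$ is a direct sum of semistable bundles of strictly negative slopes. On an elliptic curve, semistable of negative slope gives $H^0 = 0$ and $\dim H^1 = -\deg$ by Riemann-Roch, and $H^2$ vanishes automatically. The long exact sequences in non-abelian cohomology for the central extensions
\[
1 \to V_i \to \xi_L(U^+/U^+_{(i+1)}) \to \xi_L(U^+/U^+_{(i)}) \to 1
\]
therefore exhibit each successive lift of a torsor as a non-empty $H^1(E, V_i)$-torsor, while the vanishing of $H^0$ simultaneously kills all automorphism groups at each stage. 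Summing gives
\[
\sum_i \dim H^1(E, V_i) = -\deg \xi_L(\mf{u}^+) = -\sum_{\alpha \in \Phi_\mu}\langle \alpha, \mu \rangle = \langle 2\rho_{P^+}, \mu \rangle.
\]

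To get the global affine bundle structure over $Z_0$, I would apply cohomology-and-base-change to the relative versions of the $V_i$ on $E \times Z_0$: the fibrewise vanishing of $H^0$ together with the constancy of $\dim H^1$ makes $R^1 \pi_*$ into an honest vector bundle on $Z_0$, and each intermediate torsor is Zariski-locally trivial because $H^1$ of a quasi-coherent sheaf on an affine scheme vanishes. Finally, the $Z(L)_{rig}$-equivariance follows from the conjugation action of $Z(L)$ on $U^+$, which factors through $Z(L)/Z(G) = Z(L)_{rig}$ (because $Z(G)$ is central in $G$) and whose $Z(L)^\circ$-weights on $\mf{u}^+$ are the restrictions of the roots in $\Phi_\mu$; the morphism $\mrm{Ind}_L^{L'}(Z_0) \to \bun_{L', rig}$ is equivariant because the action only modifies the $U^+$-part of the $P^+$-bundle. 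The main obstacle I anticipate is not conceptual but bookkeeping: sign-tracking to match the claimed $-\Phi_\mu$ (as opposed to $\Phi_\mu$) in the weights, depending on whether torsor classes are identified with $H^1$ or its dual, and careful treatment of the stacky rigidification. The substantive mathematical input is just Riemann-Roch on $E$ combined with the semistability of $\xi_L$.
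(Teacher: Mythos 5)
The paper does not supply a proof here: it simply cites Propositions~4.1.6 and~4.1.8 of \cite{davis19}. Your proposal therefore reconstructs an argument the paper defers, and the reconstruction is sound: filtering $U^+ = R_u(P^+)$ by its descending central series, applying $\xi_L(-)$ to the $L$-stable graded pieces, observing that the Harder--Narasimhan hypothesis forces every HN slope of these vector bundles to be strictly negative (hence $H^0 = 0$, $H^2 = 0$ on a curve, and $\dim H^1 = -\deg$ by Riemann--Roch), and then running non-abelian cohomology up the tower with cohomology-and-base-change over $Z_0$, is the standard way to establish this kind of statement and is almost certainly what \cite{davis19} does. The degree bookkeeping $\sum_i \dim H^1(E,V_i) = -\deg\xi_L(\mf{u}^+) = -\sum_{\alpha\in\Phi_\mu}\langle\alpha,\mu\rangle = \langle 2\rho_{P^+},\mu\rangle$ is correct, and the observation that $Z(G)$ acts trivially on $U^+$ (so rigidification by $Z(G)$ is compatible with the projection to $\bun_{L,\mathrm{rig}}$) is the right way to dispose of the stacky overhead.

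Two small points worth tightening. First, you write that each $V_i$ is ``a direct sum of semistable bundles of strictly negative slopes''; what you actually need (and what semistability of $\xi_L$ plus $\langle\alpha,\mu\rangle<0$ gives you) is that every Harder--Narasimhan slope of $V_i$ is negative, which suffices for $H^0=0$ without any direct-sum claim. Second, for the local triviality of the iterated torsors over $Z_0$: the vanishing of $H^1(\text{affine},\text{quasi-coherent})$ trivialises each step Zariski-locally on an affine scheme, but $Z_0$ is in general a proper stack; the fix is simply to work smooth-locally on $Z_0$, which is what ``affine space bundle'' means in the paper's conventions anyway.

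On the sign of the weights, you are right to be wary. Once one pins down that the $Z(L)_{\mathrm{rig}}$-action on the fibre is induced by the natural action of $Z(L)\subseteq\mathrm{Aut}(\xi_L)$ on the datum $(\xi_{P^+},\iota\colon\xi_{P^+}\times^{P^+}L\cong\xi_L)$, the weight becomes a matter of tracking one conjugation carefully; and the consistency check against the rest of the paper (e.g.\ Table~\ref{tab:typebcdroots} and the remark that the slice weight composed with the cocharacter $-\varpi_i^\vee$ is positive) confirms $-\Phi_\mu$, matching the fact that the action contracts $Z$ onto $Z_0$. The ambiguity is not between $H^1$ and its dual, as you suggest, but between the two natural ways the automorphism group of $\xi_L$ can act on the set of $P^+$-lifts; that is the one place I would want to see the computation written out rather than deferred.
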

\begin{proof}
This is an immediate consequence of \cite[Propositions 4.1.6 and 4.1.8]{davis19}.
\end{proof}

From now on, we will take $L' = G$.

Recall from \cite[\S 3.2]{davis19} that there is a unique positive generator $\Theta_{\bun_{G, rig}} \in \mrm{Pic}(\bun_{G, rig})$; this theta bundle is nothing but the inverse of the pullback of the universal line bundle under
\[ \bun_{G, rig} \longrightarrow (\hat{Y}\sslash W)/\mb{G}_m \longrightarrow \B \mb{G}_m. \]

\begin{defn}[{\cite[Definition 4.1.15]{davis19}}]
Let $L$ and $\mu$ be as in Definition \ref{defn:parabolicinduction}. A \emph{$\Theta$-trivial slice} of $\bun_{L, rig}^{ss, \mu}$ is a slice $Z_0 \to \bun_{L, rig}^{ss, \mu}$ equipped with a trivialisation of the pullback of $\Theta_{\bun_{G, rig}}$ along
\[ Z_0 \longrightarrow \bun_{L, rig}^{ss, \mu} \longrightarrow \bun_{G, rig}.\]
\end{defn}

The point of $\Theta$-trivial slices is that they naturally give equivariant slices after parabolic induction. In the following proposition, we write $(\,\mmid\,) \colon \mb{X}_*(T) \otimes \mb{X}_*(T) \to \mb{Z}$ for the Killing form normalised so that $(\alpha^\vee \mmid \alpha^\vee) = 2$ for $\alpha^\vee \in \Phi^\vee$ a short coroot.

\begin{prop}[{\cite[Proposition 4.1.12]{davis19}}] \label{prop:inductionequivariantslice}
Let $Z_0 \to \bun_{L, rig}^{ss, \mu}$ be a $\Theta$-trivial slice. Then $\mrm{Ind}_L^G(Z_0) \to \bun_{G, rig}$ is naturally endowed with the structure of an equivariant slice with equivariance group $Z(L)_{rig}$ and weight $(\mu \mmid -)$.
\end{prop}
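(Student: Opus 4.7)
The plan is to extract from the $\Theta$-trivial structure on $Z_0$ and from Proposition \ref{prop:inductionaffinebundle} each of the three pieces of data required to make $\mrm{Ind}_L^G(Z_0)$ an equivariant slice of $\bun_{G, rig}$, and then to verify the weight identity at the end. Throughout, write $H = Z(L)_{rig}$.

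First, the smoothness of $\mrm{Ind}_L^G(Z_0)/H \to \bun_{G, rig}/E$ follows from the assumed smoothness of $Z_0 \to \bun_{L, rig}/E$, together with the standard smoothness of the projection $\bun_{P^+, rig}^\mu \to \bun_{L, rig}^{ss, \mu}$ (classifying $R_u(P^+)$-extensions of semistable $L$-bundles), once one accounts for the fact that the extra $H$-direction, described in Proposition \ref{prop:inductionaffinebundle}, sits inside the affine bundle fibres of $\mrm{Ind}_L^G(Z_0) \to Z_0$ and is precisely what is killed by the quotient by $H$. The $H$-action on $\mrm{Ind}_L^G(Z_0)$ is the one supplied by Proposition \ref{prop:inductionaffinebundle}, and is manifestly $\bun_{G, rig}$-invariant: since $H \subseteq Z(L)$ scales a $P^+$-reduction by central elements of $L$, the underlying induced $G$-bundle is unchanged.

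For the lift $\mrm{Ind}_L^G(Z_0) \to \hat{Y} \sslash W$, recall that the map $\hat{Y}\sslash W \to (\hat{Y}\sslash W)/\mb{G}_m$ is a $\mb{G}_m$-torsor whose pullback to $\bun_{G, rig}$ is (up to a power) the theta bundle $\Theta_{\bun_{G, rig}}$. Lifting the coarse quotient map along this $\mb{G}_m$-torsor is therefore the same data as a trivialisation of the pulled-back theta bundle on $\mrm{Ind}_L^G(Z_0)$, which is obtained by pulling back the given $\Theta$-trivialisation on $Z_0$ along the affine bundle projection $\mrm{Ind}_L^G(Z_0) \to Z_0$.

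The main obstacle is then to compute the $H$-weight of this lift and identify it with $(\mu \mmid -) \in \mb{X}^*(H)_\mb{Q}$. Via the two trivialisations, the weight is recorded by the $H$-character on the correction line
\[ \Theta_{\bun_{G, rig}}\big|_{\mrm{Ind}_L^G(Z_0)} \otimes \bigl(\Theta_{\bun_{L, rig}^{ss, \mu}}\big|_{Z_0}\bigr)^{-1}, \]
where the second factor is pulled back from $Z_0$. Using the determinant-of-cohomology description of the theta bundles together with the Levi decomposition $\mf{g} = \mf{l} \oplus \mrm{Lie}(R_u P^+) \oplus \mrm{Lie}(R_u P^-)$ of the adjoint representation, this correction line is identified with the determinant of cohomology of the nilpotent pieces $\xi_L \times^L \mrm{Lie}(R_u P^\pm)$, and its $H$-character is computed by Riemann-Roch on the elliptic curve $E$: the rank terms vanish, leaving contributions $\sum_{\alpha} \langle \alpha, \mu \rangle\, \alpha|_H$ summed over the roots in $\pm \Phi_\mu$. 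The final step is to collapse this sum to $(\mu \mmid -)$ using the standard identity relating $\sum_{\alpha \in \Phi} \alpha \otimes \alpha$ on $Z(L)^\circ$ to the Killing form under the normalisation in which short coroots have length $2$. This Killing-form calculation, while purely combinatorial, depends sensitively on the normalisations fixed in \S\ref{subsection:notation} and is the delicate step in the proof.
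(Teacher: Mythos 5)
The paper imports this proposition wholesale from \cite{davis19}*{Proposition 4.1.12} and provides no in-paper proof, so there is no internal argument to compare against; I therefore assess the sketch on its own merits.

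The skeleton is sound: you correctly identify the three pieces of data (the $Z(L)_{rig}$-action coming from Proposition \ref{prop:inductionaffinebundle}, the slice structure on the $H$-quotient, and the $H$-equivariant lift to $\hat{Y}\sslash W$), and you correctly reduce the lift to a trivialisation of the pulled-back theta bundle. Three corrections. First, $\hat{Y}\sslash W \to (\hat{Y}\sslash W)/\mb{G}_m$ is \emph{not} a $\mb{G}_m$-torsor -- the cone point has stabiliser all of $\mb{G}_m$; the reduction should instead appeal to the Cartesian square identifying $\hat{Y}\sslash W$ with $(\hat{Y}\sslash W)/\mb{G}_m \times_{\B\mb{G}_m} \spec\mb{Z}$, so that a lift is precisely a trivialisation of the pulled-back universal line bundle, i.e.\ of $\Theta_{\bun_{G, rig}}^{-1}$. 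Second, the slice structure on $\mrm{Ind}_L^G(Z_0)/H$ is most cleanly obtained by observing that $\mrm{Ind}_L^G(Z_0) \to \bun_{G, rig}/E$ is already smooth (this is built into Definition \ref{defn:parabolicinduction}), that this map factors through $\mrm{Ind}_L^G(Z_0)/H$ by $H$-invariance, and that smoothness then descends along the smooth surjection $\mrm{Ind}_L^G(Z_0) \to \mrm{Ind}_L^G(Z_0)/H$; your gesture about ``the extra $H$-direction being killed by the quotient'' does not substitute for this descent. Third, $\Theta_{\bun_{L, rig}^{ss, \mu}}$ is undefined in the paper, and no canonical theta bundle exists on $\bun_L$ for non-simple $L$; the relevant object is the pullback of $\Theta_{\bun_{G, rig}}$ along $Z_0 \to \bun_{L, rig} \to \bun_{G, rig}$. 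Your ``correction line'' is then \emph{trivial as a line bundle} (since $\mrm{Ind}_L^G(Z_0) \to Z_0$ is an affine bundle), and the entire content resides in its $H$-equivariant structure, which is the object the computation must extract.

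For the weight itself, the determinant-of-cohomology and Riemann-Roch route you outline is legitimate but delicate. The paper's own toolkit gives a shortcut you should use: \cite{davis19}*{Proposition 4.1.15}, quoted explicitly in the proof of Theorem \ref{thm:subregularsliceexistence}, records that $\Theta_{\bun_{G, rig}}$ pulled back to $\bun_{L, rig}^{ss, \mu}$ has $Z(L)_{rig}$-weight $(-\mu\mmid-)$, an integral weight in $\mb{X}^*(Z(L)_{rig})$ (not merely $\mb{X}^*(H)_{\mb{Q}}$ as you write), from which the equivariant weight $(\mu\mmid-)$ of the lift follows immediately; this avoids the sensitive sign and normalisation bookkeeping that your Riemann-Roch sketch defers.
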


The parabolic induction construction allows us to deduce Theorem \ref{thm:introsubregularsliceexistence} from the following statement.

\begin{thm} \label{thm:subregularsliceexistence}
Let $(G, P, \mu)$ be a subregular Harder-Narasimhan class not of type $A_1$, and let $d \in \{1, 2, 3\}$ be as in Theorem \ref{thm:introsubregularresolutions}. Then there is a $\mu_d$-gerbe $\mf{G}^{uni}$ on the stack $M_{1, 1}$ of elliptic curves such that if the pullback $\mf{G}$ of $\mf{G}^{uni}$ to $S$ is trivial then there exists a $\Theta$-trivial slice $Z_0 \to \bun_{L, rig}^{ss, \mu}$ with the following properties.
\begin{enumerate}[(1)]
\item \label{itm:subregularsliceexistence1} The morphism $Z_0 \to S$ is smooth and proper with finite and generically trivial relative stabilisers.
\item \label{itm:subregularsliceexistence2} The morphism $Z_0 \to \bun_{L, rig}^{ss, \mu}/E$ is smooth with connected fibres.
\item \label{itm:subregularsliceexistence3} The image of $Z_0 \to \bun_{L, rig}^{ss, \mu}/E$ is equal to the locus of regular semistable bundles.
\item \label{itm:subregularsliceexistence4} The induced equivariant slice $Z = \mrm{Ind}_L^G(Z_0) \to \bun_{G, rig}$ has relative dimension $l + 3$ over $S$.
\end{enumerate}
\end{thm}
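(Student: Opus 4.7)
The plan is to proceed case-by-case according to the classification of subregular Harder-Narasimhan classes in Theorem \ref{thm:subregularclassification}. Before attacking the individual cases, I would extract from the detailed study of Levi subgroups in \S\ref{subsection:levistructure} an explicit presentation of each relevant Levi $L$, together with an explicit description of the component $\bun_{L, rig}^{ss, \mu}$ and its open locus of regular semistable bundles. The gerbe $\mf{G}^{uni}$ on $M_{1, 1}$ records exactly the obstruction to lifting a universal elliptic curve to one carrying an honest $\mu_d$-level structure on a certain line bundle or degree-$d$ cover: it will be trivial in types $A$, $B$, $D$ and $E$ (where $d = 1$) and will encode a choice of a $2$-torsion point in types $C$ and $F$, or of a $3$-torsion point in type $G$. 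The triviality hypothesis on $\mf{G}$ then allows the constructions below to be carried out uniformly over $S$.

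For each type, the slice $Z_0 \to \bun_{L, rig}^{ss, \mu}$ will be built using one of two constructions. In the ``obvious'' cases (types $A_l$ for $l > 1$, and $D$, $E$ where $L$ is essentially a torus times a simply connected group of rank $\le 1$ over the right base), $Z_0$ is a small explicit stack—typically a finite cover of $E$ or $Y_L$ or a product of such—mapping tautologically to $\bun_{L, rig}^{ss, \mu}$. In the remaining cases ($B$, $C$, $F$, $G$), where the regular semistable $L$-bundles include non-trivial unipotent extensions, I would follow Helmke--Slodowy's suggestion and construct $Z_0$ by a further internal parabolic induction from a single (rigid) regular semistable bundle on a smaller Levi, in the manner of Friedman--Morgan \cite{friedman-morgan00}; the level structure needed to perform this induction globally on $S$ is precisely what makes $\mf{G}^{uni}$ enter. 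In each case, the $\Theta$-triviality is forced: on $Z_0$, the pullback of $\Theta_{\bun_{G, rig}}$ is trivial because its class is computed by a line bundle on $Y_L$ of degree $(\mu \mmid \mu)/2$ in the relevant weight, and the construction of $Z_0$ manifestly trivialises this line bundle (or $\mf{G}^{uni}$ is set up so that it does after passage to $S$).

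With $Z_0$ in hand, the four properties would be verified directly from its explicit description. Properties \eqref{itm:subregularsliceexistence1}, \eqref{itm:subregularsliceexistence2} and \eqref{itm:subregularsliceexistence3} are all local and can be read off from the description of $Z_0$ as an explicit scheme or quotient stack: smoothness and properness over $S$, finiteness and generic triviality of the stabilisers, smoothness of the map to $\bun_{L, rig}^{ss, \mu}/E$ with connected fibres, and the identification of the image with the regular semistable locus all follow from a dimension count together with the Harder-Narasimhan structure of $L$. Property \eqref{itm:subregularsliceexistence4} is then a direct calculation using Proposition \ref{prop:inductionaffinebundle}: one has
\[ \dim_S Z = \dim_S Z_0 + \langle 2\rho_{P^+}, \mu \rangle,\]
and both $\dim_S Z_0$ (read off from the construction) and $\langle 2\rho_{P^+}, \mu \rangle = -\sum_{\alpha \in \Phi_\mu}\langle \alpha, \mu \rangle$ (read off from the combinatorics of the parabolic $P$) can be computed case-by-case; the sum must equal $l + 3$, which is forced by the subregularity condition $\dim \mrm{Aut}(\xi_L \times^L G) = l + 4$.

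The main obstacle is the handling of the types with $d > 1$, namely $C$, $F$ and $G$. Here the regular semistable $L$-bundles come in families parametrised by a degree-$d$ cover of a component of $Y_L$, and one needs to make sense of this cover and its trivialising line bundle globally over $S$; this is what necessitates the gerbe $\mf{G}^{uni}$. The delicate point is to check that, once $\mf{G}$ is trivial, the resulting $Z_0$ genuinely dominates the locus of regular semistable bundles (property \eqref{itm:subregularsliceexistence3}) with the correct connected fibres (property \eqref{itm:subregularsliceexistence2}), and that the torus $Z(L)_{rig}$ acts with the weights predicted by Proposition \ref{prop:inductionequivariantslice}—this last point is what later feeds into Theorem \ref{thm:introsubregularresolutions} and forces the integer $d$ appearing there to match the one here.
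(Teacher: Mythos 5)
Your high-level strategy---proceeding case by case using the explicit Levi descriptions from \S\ref{subsection:levistructure}, with $\mf{G}^{uni}$ trivial exactly when $d = 1$, $Z_0$ built either directly or by a further internal parabolic induction inside $L$, and \eqref{itm:subregularsliceexistence4} verified via Proposition \ref{prop:inductionaffinebundle}---matches the paper's plan. But your assignment of types to the two constructions is wrong, and this is not a cosmetic issue: it would cause the proof of property \eqref{itm:subregularsliceexistence3} to fail.

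You group $A$, $D$, $E$ as ``obvious'' and $B$, $C$, $F$, $G$ as ``needs internal parabolic induction.'' The correct split, driven by whether \emph{every} semistable $L$-bundle of slope $\mu$ is regular, is $A$, $E$, $F$, $G$ obvious versus $B$, $C$, $D$ needing the internal induction. In type $D$, the representation $\rho_L$ has image $W$ of rank $n_1 + 1 = 4$ and degree $-2$, so the slope $-1/2$ is not of the form $-1/\mathrm{rank}$ and there exist semistable but non-regular $L$-bundles (e.g.\ those with $W \cong U \oplus U$ for $U$ stable of rank $2$, degree $-1$, as exploited in the paper's Lemma \ref{lem:typebcdleviautos} and in the verification of \eqref{itm:subregularsliceexistence3}). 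A direct section of $\varpi_i\colon \bun_{L, rig}^{ss,\mu} \to \mrm{Pic}^{-1}_S(E)$, as you propose for type $D$, would therefore either land outside the regular locus or fail to dominate it, and your dimension count would not rescue \eqref{itm:subregularsliceexistence2}--\eqref{itm:subregularsliceexistence3}. Conversely, in types $F$ and $G$ the relevant rank and degree are coprime (rank $3$ degree $-2$ in $F_4$, rank $2$ degree $-3$ in $G_2$), so $\bun_{L, rig}^{ss,\mu} \to \mrm{Pic}^{-1}_S(E)$ is already a $\mb{G}_m$-gerbe, all semistable $L$-bundles are regular, and $Z_0 = S$ works with no further induction; your plan unnecessarily complicates these cases. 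Your characterisation of the ``obvious'' types as those where $L$ is ``essentially a torus times a simply connected group of rank $\le 1$'' is likewise incorrect (the semisimple rank of $L$ in types $D_l$ and $E_l$ grows with $l$); the controlling criterion is coprimality of rank and degree. Finally, your guess that the nontrivial gerbe records a choice of torsion point does not match the paper's construction, where $\mf{G}$ arises from the obstruction to choosing a $\Theta$-trivial lift of a section of a $\mb{G}_m$- or $Z(L')_{rig}$-gerbe, but that part is a forgivable guess rather than a structural error.
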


We will prove Theorem \ref{thm:subregularsliceexistence} in \S\ref{subsection:sliceexistence} by writing down explicit slices in each case of Theorem \ref{thm:subregularclassification}. Although a classification-free proof is probably possible, the explicit slices are also useful in the proof of Theorem \ref{thm:introsubregularresolutions}.

\begin{rmk} \label{rmk:automorphismbound}
The proof will show that Theorem \ref{thm:subregularsliceexistence} holds for every tuple $(G, P, \mu)$ on the list of Theorem \ref{thm:subregularclassification}, excluding \ref{itm:subregularclass1}. In the notation of the proof of Theorem \ref{thm:subregularclassification}, this shows that in each case we have a slice $Z_0 \to \bun_{L, rig}^{ss, \mu}$ with relative dimension $l + 3 + \langle 2 \rho, \mu \rangle = d(L, \mu) - 1$ over $S$, and hence relative dimension $d(L, \mu)$ over $\bun_{L, rig}^{ss, \mu}/E$. Since $Z_0 \to S$ has finite relative stabilisers, this shows that $\dim \mrm{Aut}(\xi_L) \leq d(L, \mu)$ for a regular semistable $L$-bundle in all characteristics.
\end{rmk}

\begin{cor}
Theorem \ref{thm:introsubregularsliceexistence} is true (with $S = \spec k$ for $k$ an algebraically closed field as in the introduction).
\end{cor}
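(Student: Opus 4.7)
The plan is to combine Theorem \ref{thm:subregularsliceexistence} with the parabolic induction formalism of \S\ref{subsection:induction}. Since $S = \spec k$ with $k$ algebraically closed, every $\mu_d$-gerbe on $S$ is trivial, so the gerbe hypothesis in Theorem \ref{thm:subregularsliceexistence} is automatically satisfied.

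Given a subregular unstable bundle $\xi_G \to E$ with $G \neq SL_2$, Theorem \ref{thm:subregularclassification} attaches to $\xi_G$ a subregular Harder-Narasimhan class $(G, P, \mu)$ with standard Levi $L \subseteq P$; the hypothesis $G \neq SL_2$ ensures that this class is not of type $A_1$. I would then apply Theorem \ref{thm:subregularsliceexistence} to obtain a $\Theta$-trivial slice $Z_0 \to \bun_{L, rig}^{ss, \mu}$ satisfying (1)--(4) of that theorem, and set
\[ Z = \mrm{Ind}_L^G(Z_0) \longrightarrow \bun_{G, rig}. \]
By Proposition \ref{prop:inductionequivariantslice}, $Z$ is an equivariant slice with equivariance group $H = Z(L)_{rig}$ and weight $(\mu \mmid -)$. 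Since $\dim Z(L)_{rig} = |t(P)|$, inspection of Theorem \ref{thm:subregularclassification} shows that $|t(P)| = 2$ in type $A_l$ and $|t(P)| = 1$ in all other types, so $H = \mb{G}_m \times \mb{G}_m$ and $H = \mb{G}_m$ respectively, matching the statement.

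For property (1), Proposition \ref{prop:inductionaffinebundle} exhibits $Z \to Z_0$ as an $H$-equivariant affine bundle with fibre weights in $-\Phi_\mu$; since $\mu$ is an HN vector for $P^+$, each such weight pairs strictly positively with $\mu$ and is therefore non-zero, so $Z^H = Z_0$, and the remaining properness and finiteness assertions transfer directly from Theorem \ref{thm:subregularsliceexistence}(1). For (3), the map $Z_0 \to \bun_{G, rig}/E$ factors through $\bun_{L, rig}^{ss, \mu}/E$, whose fibres in $Z_0$ are connected by Theorem \ref{thm:subregularsliceexistence}(2); the induced map onto the corresponding component of subregular unstable $G$-bundles in $\bun_{G, rig}/E$ is injective via the Harder-Narasimhan reduction, so the composite retains connected fibres. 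For (2), points of $Z_0$ correspond to split $P^+$-reductions of regular semistable $L$-bundles, and hence (by Theorems \ref{thm:subregularclassification} and \ref{thm:subregularsliceexistence}(3)) to subregular unstable $G$-bundles in the specified component, giving both the forward inclusion and the coarse-moduli identification. The reverse inclusion---that off-$Z_0$ extensions have $\dim \mrm{Aut}(\xi_G) < l + 4$---then yields the desired equality.

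The main obstacle is this reverse inclusion in (2). It reduces to a cohomological analysis of $\mrm{Ad}(\xi_G)$ via the nilpotent filtration of $R_u(P^+)$: for $G \neq SL_2$, the non-abelian structure of $R_u(P^+)$ creates extension classes whose induced connecting maps force a strict drop in $\dim H^0(\mrm{Ad}(\xi_G))$ for nonsplit extensions, while for $SL_2$ the abelian $R_u(B)$ gives no such drop and all of $Z$ turns out to be subregular---precisely the reason $SL_2$ is excluded from the statement.
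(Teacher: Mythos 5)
Your overall strategy---combining Theorem \ref{thm:subregularsliceexistence} with the parabolic induction formalism of \S\ref{subsection:induction}---is exactly the paper's approach, and your handling of condition (1) and the identification of the equivariance group is fine. The problems start with condition (2).

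First, your proposed proof of the ``reverse inclusion'' (that $z \in Z \setminus Z_0$ gives a non-subregular bundle) via ``a cohomological analysis of $\mathrm{Ad}(\xi_G)$ via the nilpotent filtration of $R_u(P^+)$'' is a placeholder, not an argument: you do not carry it out, and it is not clear it gives the uniform bound you need. The paper's argument is much simpler and exploits the slice property directly: for $z \in Z \setminus Z_0$ the orbit $Z(L)_{rig}\cdot z$ is positive-dimensional and sits inside a single fibre of the smooth map $Z \to \bun_{G, rig}/E$; since $Z$ has relative dimension $l+3$ and $\bun_{G, rig}/E$ has dimension $-1$, the fibre over the point $[\xi_{G,z}]$ has dimension $l+4 - \dim\mathrm{Aut}(\xi_{G,z})$, and nonemptiness of a positive-dimensional orbit forces $\dim\mathrm{Aut}(\xi_{G,z}) \leq l+3$. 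This is purely a codimension count and needs no cohomological input.

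Second, you have not addressed the second half of (2), namely that the family identifies the coarse moduli space of $Z_0$ with the relevant component of the coarse moduli of subregular unstable bundles. ``Points of $Z_0$ correspond to subregular bundles'' gives one direction, but the coarse-moduli isomorphism requires showing the smooth map $Z_0 \to \bun_{L, rig}^{ss, reg, \mu}/E$ is a bijection on $K$-points. The paper does this with a second dimension count, using Proposition \ref{prop:inductionaffinebundle} and Remark \ref{rmk:automorphismbound} to compute that each fibre $(Z_0)_x$ has dimension exactly $d(L,\mu) = \dim\mathrm{Aut}(\xi_L)$, so that $(Z_0)_x/\mathrm{Aut}(x)$ is a $0$-dimensional connected stack with finite stabilizers, hence a single $K$-point.

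Finally, your stated reason for excluding $SL_2$ is incorrect. For $SL_2$ the induced slice $Z$ still has dimension $l+3 = 4$ and the codimension argument still rules out subregular bundles away from $Z_0$; what actually fails, per Remark \ref{rmk:sl2slice}, is condition (2) of Theorem \ref{thm:subregularsliceexistence}: the map $Z_0 \to \bun_{L, rig}^{ss, -2}/E$ is a torsor under an extension of $E[2]$ by $\mb{G}_m$ and therefore has \emph{disconnected} fibres, breaking both condition (3) of Theorem \ref{thm:introsubregularsliceexistence} and the coarse-moduli bijection in (2). It is not the case that ``all of $Z$ turns out to be subregular.''
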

\begin{proof}
Let $(G, P, \mu)$ be the subregular Harder-Narasimhan class of $\xi_G$. Since $S = \spec k$, the $\mu_d$-gerbe $\mf{G}$ on $S$ of Theorem \ref{thm:subregularsliceexistence} is necessarily trivial, so there exists a $\Theta$-trivial slice $Z_0 \to \bun_{L, rig}^{ss, \mu}$ satisfying conditions \eqref{itm:subregularsliceexistence1}--\eqref{itm:subregularsliceexistence4}. We let $Z = \mrm{Ind}_L^G(Z_0) \to \bun_{G, rig}$ be the parabolic induction of $Z_0$ to $G$, endowed with the equivariant slice structure of Proposition \ref{prop:inductionequivariantslice}. Note that the equivariance group $H = Z(L)_{rig}$ is isomorphic to $\mb{G}_m \times \mb{G}_m$ for $(G, P, \mu)$ of type $A$ and $\mb{G}_m$ otherwise, as required for the statement of Theorem \ref{thm:introsubregularsliceexistence}.

Condition \eqref{itm:introsubregularsliceexistence1} of Theorem \ref{thm:introsubregularsliceexistence} follows immediately from Proposition \ref{prop:inductionaffinebundle} and \eqref{itm:subregularsliceexistence1} of Theorem \ref{thm:subregularsliceexistence} (note that $Z_0 = \mrm{Ind}_L^G(Z_0)^{Z(L)_{rig}}$). Condition \eqref{itm:introsubregularsliceexistence3} of Theorem \ref{thm:introsubregularsliceexistence} follows from \eqref{itm:subregularsliceexistence2} of Theorem \ref{thm:subregularsliceexistence}.

To prove that Theorem \ref{thm:introsubregularsliceexistence} \eqref{itm:introsubregularsliceexistence2} is satisfied, first note that for any $z \in Z \setminus Z_0$, comparing the codimensions of $Z(L)_{rig} \cdot z$ in $Z$ and the corresponding $G$-bundle $\xi_{G, z}$ in $\bun_{G, rig}/E$ shows that $\dim \mrm{Aut}(\xi_{G, z}) \leq l + 3$, so $\xi_{G, z}$ is not subregular. Moreover, we claim that the smooth morphism $Z_0 \to \bun_{L, rig}^{ss, reg, \mu}/E$ is a bijection on $K$-points for any algebraically closed field $K$, from which it follows that it is an isomorphism on coarse moduli spaces. This proves \eqref{itm:introsubregularsliceexistence2}, modulo the claim.


To prove the claim, note that Proposition \ref{prop:inductionaffinebundle} shows that the dimension of the fibre $(Z_0)_x$ of $Z_0 \to \bun_{L, rig}^{ss, \mu}/E$ over the image $x$ of $\xi_L = \xi_P \times^P L$ is given by
\[ l + 4 - \langle 2 \rho_{P^+}, \mu \rangle = l + 4 + \langle 2 \rho, \mu \rangle.\]
But from Remark \ref{rmk:automorphismbound} and the proof of Theorem \ref{thm:subregularclassification}, this is equal to the dimension $d(L, \mu)$ of $\mrm{Aut}(\xi_L)$. So $(Z_0)_x/\mrm{Aut}(x) \subseteq (Z_0)_s \subseteq Z_s$ is a closed connected substack of dimension $0$, where $s$ is the image of $x$ in $S$, and hence has a single point over any algebraically closed field since $(Z_0)_s$ has finite stabilisers.
\end{proof}

\begin{rmk}
From the proof, we can also read off the weights of the equivariant slices in Theorem \ref{thm:introsubregularsliceexistence}: abstractly, they are the characters $(\mu \mmid -) \in \mb{X}^*(Z(L)_{rig})$ by Proposition \ref{prop:inductionequivariantslice}. More explicitly, if we identify $Z(L)_{rig}$ with $\mb{G}_m$ via the cocharacter $-\varpi_i^\vee \in \mb{X}_*(Z(L)_{rig})$ where $t(P) = \{\alpha_i\}$ (resp., with $\mb{G}_m \times \mb{G}_m$ via $(-\varpi_i^\vee, -\varpi_{i + 1}^\vee)$ in type $A$), then the weight is identified with $(1, 1)$ in type $A$ and with $d \in \{1, 2, 3\}$ in the other types.
\end{rmk}

\begin{rmk} \label{rmk:sl2slice}
We have deliberately excluded the subregular Harder-Narasimhan class of type $A_1$ from Theorem \ref{thm:subregularsliceexistence}. In this case, we have $L = T \cong \mb{G}_m$ and $\bun_{L, rig}^{ss, \mu} = \bun_{\mb{G}_m, rig}^{-2}$, and one can try to construct the desired slice $Z_0 = S \to \bun_L^{ss, \mu}$ by lifting the natural section $\mc{O}(-2O_E) \colon Z_0 = S \to \mrm{Pic}^{-2}_S(E)$. It follows from \cite[Proposition 4.1.15]{davis19} that the fibre of the map
\[ \bun_{\mb{G}_m, rig}^{-2} = \mrm{Pic}_S^{-2}(E) \times \B \mb{G}_m \longrightarrow \B\mb{G}_m \]
classifying the pullback of the theta bundle is a $\mu_2$-gerbe on $\mrm{Pic}_S^{-2}(E)$, which is trivial if and only if $Z_0 \to \mrm{Pic}^{-2}_S(E)$ lifts to a $\Theta$-trivial map $Z_0 \to \bun_{L, rig}^{ss, \mu}$. This map will be a slice as long as $2$ is invertible in $\mc{O}_S$ (so that the stabiliser $E[2]$ of a point in $\mrm{Pic}^{-2}_S(E)$ is smooth). This slice satisfies \eqref{itm:subregularsliceexistence1}, \eqref{itm:subregularsliceexistence3} and \eqref{itm:subregularsliceexistence4}, but the map $Z_0 \to \bun_{L, rig}^{ss, -2}/E$ is a torsor under an extension of $E[2]$ by $\mb{G}_m$ and hence has disconnected fibres. See also, however, Proposition \ref{prop:sl2sing}.
\end{rmk}

\subsection{The structure of Levi subgroups} \label{subsection:levistructure}

In this subsection, we explicitly describe the Levi subgroups $L \subseteq P$ for each subregular Harder-Narasimhan class, i.e.\ for each $(G, P, \mu)$ on the list of Theorem \ref{thm:subregularclassification}.

We begin with a general description of Levi subgroups $L \subseteq G$ whose Dynkin diagrams are of type $A$. Suppose that $L$ is the Levi subgroup of a standard parabolic of type $t \subseteq \Delta$. Then the Dynkin diagram of $L$ is obtained from the Dynkin diagram of $G$ by deleting the nodes labelled by elements of $t$. We will assume that the Dynkin diagram of $L$ is a union of connected components of type $A$.

The reductive group $L$ can be described directly in terms of the following data. First, write $\pi_0 = \pi_0(\Delta \setminus t)$ for the set of connected components of the Dynkin diagram of $L$. For each component $c \in \pi_0$, write $n_c$ for the number of nodes in $c$, and choose a labelling $\alpha_{c, 1}, \ldots, \alpha_{c, n_c}$ of the nodes of $c$ so that $\alpha_{c, i}$ is adjacent to $\alpha_{c, i + 1}$ for $1 \leq i \leq n_c - 1$. For each $\alpha_k \in t$ adjacent to a node of $c$, let $\alpha_{c, i_{c, k}}$ be the unique node adjacent to $\alpha_k$, and for each $\alpha_k \in t$ not adjacent to any node of $c$, set $i_{c, k} = n_c + 1$. Finally, write
\[ m_{c, k} = - \sum_{i = 1}^{n_c} \langle \alpha_{c, i}, \alpha_k^\vee \rangle = \begin{cases} -\langle \alpha_{c, i_{c, k}}, \alpha_k^\vee \rangle, & \text{if}\;\; i_{c, k} \leq n_c, \\ 0, & \text{if}\;\; i_{c, k} = n_c + 1, \end{cases} \]
for $c \in \pi_0$ and $\alpha_k \in t$.

\begin{prop} \label{prop:typealevi}
Assume we are in the setup above. Then there is an isomorphism
\begin{equation} \label{eq:typealevi}
 L \overset{\sim}\longrightarrow \left\{((A_c)_{c \in \pi_0}, (\lambda_k)_{\alpha_k \in t}) \in \prod_{c \in \pi_0}GL_{n_c + 1} \times \prod_{\alpha_k \in t} \mb{G}_m \,\left|\, \det A_c = \prod_{\alpha_k \in t} \lambda_k^{m_{c, k}(n_c + 1 - i_{c, k})}\right.\right\}
\end{equation}
with the property that for each $\alpha_k \in t$, the character $\varpi_k$ of $L$ is given by \eqref{eq:typealevi} composed with the projection $((A_c)_{c \in \pi_0}, (\lambda_j)_{\alpha_j \in t}) \mapsto \lambda_k$.
\end{prop}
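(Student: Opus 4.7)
The plan is to construct the isomorphism \eqref{eq:typealevi} as an explicit homomorphism $\phi$ and verify it by root-datum considerations.

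First I will identify the derived subgroup. Since $G$ is simply connected and the Dynkin diagram of $L$ is the disjoint union of the components $c \in \pi_0$, each of type $A$, the group $[L, L]$ is canonically isomorphic to $\prod_{c \in \pi_0} SL_{n_c + 1}$, with $\alpha_{c, i}$ realised as the $i$-th simple root of the $c$-th factor.

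Next, for each $c \in \pi_0$, I will extend the projection $\pi_c \colon [L, L] \to SL_{n_c + 1}$ to a homomorphism $\Pi_c \colon L \to GL_{n_c + 1}$ by means of the irreducible representation $V_c$ of $L$ with highest weight $\varpi_{c, 1}$ (the fundamental weight of $G$ corresponding to the first node of $c$). The pairings $\langle \varpi_{c, 1}, \alpha_{c', j}^\vee \rangle = \delta_{c, c'} \delta_{j, 1}$ show that $V_c$ is minuscule of dimension $n_c + 1$ with weights $\varpi_{c, 1} - \sum_{j < i} \alpha_{c, j}$ for $1 \leq i \leq n_c + 1$, so $e_{c, i} \circ \Pi_c = \varpi_{c, 1} - \sum_{j < i} \alpha_{c, j}$. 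Combining the $\Pi_c$ with the characters $\varpi_k$ for $\alpha_k \in t$ produces the candidate map $\phi \colon L \to \prod_c GL_{n_c + 1} \times \prod_{\alpha_k \in t} \mb{G}_m$.

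The heart of the argument is the determinant calculation
\[
\det \Pi_c = \sum_{i = 1}^{n_c + 1} e_{c, i} \circ \Pi_c = (n_c + 1) \varpi_{c, 1} - \sum_{j = 1}^{n_c} (n_c + 1 - j) \alpha_{c, j}.
\]
Expanding each $\alpha_{c, j}$ in the basis of fundamental weights via $\alpha_{c, j} = \sum_m \langle \alpha_{c, j}, \alpha_m^\vee \rangle \varpi_m$, one checks using the $A_{n_c}$ Cartan matrix on $c$ that the coefficient of $\varpi_m$ vanishes for every $\alpha_m \in \Delta \setminus t$ (with the contribution $(n_c + 1) \varpi_{c, 1}$ coming from the boundary $\alpha_{c, 1}$ term exactly cancelling the leading $(n_c + 1) \varpi_{c, 1}$), while the coefficient of $\varpi_k$ for $\alpha_k \in t$ collapses to precisely $m_{c, k}(n_c + 1 - i_{c, k})$. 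This places the image of $\phi$ inside the subgroup $L'$ cut out by the determinant relations in \eqref{eq:typealevi}.

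To finish, $\phi$ is an isomorphism onto $L'$: by construction it restricts to the identification $[L, L] \cong \prod_c SL_{n_c + 1} = [L', L']$ from the first step, and on abelianisations it sends the $\mb{Z}$-basis $\{\varpi_k\}_{\alpha_k \in t}$ of $\mb{X}^*(L / [L, L])$ to the basis $\{\lambda_k\}$ of $\mb{X}^*(L' / [L', L'])$; a short diagram chase on the exact sequences $1 \to [L, L] \to L \to L/[L, L] \to 1$ (and similarly for $L'$) then gives bijectivity. The identification $\varpi_k \leftrightarrow \lambda_k$ is immediate from the construction. I expect the main obstacle to be the combinatorial identity in the determinant calculation, which requires careful bookkeeping of the Cartan integers $\langle \alpha_{c, j}, \alpha_m^\vee \rangle$ in the three cases $\alpha_m \in c$, $\alpha_m$ in another component of $\Delta \setminus t$, and $\alpha_m \in t$.
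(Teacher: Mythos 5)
Your proposal is correct, and it reaches the same isomorphism as the paper (one can check that the map your $\Pi_c$ and $\varpi_k$ induce on cocharacter lattices coincides with the explicit $\phi\colon \mb{X}_*(T)\to M^\vee$ the paper writes down), but the route is genuinely different. The paper works entirely at the level of root data: it spells out the root datum of the right-hand side of \eqref{eq:typealevi} as a quotient $M$ of $M_0 = \bigoplus_c \mb{Z}^{n_c+1} \oplus \bigoplus_k \mb{Z}\omega_k$, then defines $\phi$ directly on cocharacters and checks mechanically that it sends simple coroots to simple coroots and that $\phi^*$ sends simple roots to simple roots. You instead build the homomorphism at the level of group schemes: you use that $[L,L]$ is simply connected (hence $\cong \prod_c SL_{n_c+1}$, which relies on the standing hypothesis that $G$ is simply connected, so that $\mb{X}_*(T) = \bigoplus_i \mb{Z}\alpha_i^\vee$), extend via the minuscule Weyl modules $V_c$ of highest weight $\varpi_{c,1}$, do the determinant computation $\det\Pi_c = (n_c+1)\varpi_{c,1} - \sum_j(n_c+1-j)\alpha_{c,j} = \sum_k m_{c,k}(n_c+1-i_{c,k})\varpi_k$ (which I have checked, including the cancellations on $\Delta\setminus t$), and then conclude via the five lemma on the two abelianisation exact sequences using that $\{\varpi_k\}_{\alpha_k\in t}$ is a basis of $\mb{X}^*(L)$. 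Your approach buys a more structural explanation of where the map comes from and makes the role of simple connectedness explicit; the paper's approach avoids any appeal to representations or to $[L,L]$ and is arguably tighter to verify over $\spec\mb{Z}$ (though minuscule Weyl modules do behave well integrally, so this is not an actual gap). The combinatorial content — matching the Cartan integers against the coefficients $m_{c,k}(n_c+1-i_{c,k})$ — is the same in both, just packaged as a determinant in yours and as ``$\phi(\alpha_{c,j}^\vee) = \beta_{c,j}^\vee$'' in the paper's.
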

\begin{proof}
Since both sides of \eqref{eq:typealevi} are split reductive groups over $\spec \mb{Z}$, it is enough to specify an isomorphism between their root data.

The root datum $(M_0, \Psi_0, M_0^\vee, \Psi_0^\vee)$ of $\prod_{c \in \pi_0} GL_{n_c + 1} \times \prod_{\alpha_k \in t} \mb{G}_m$ is specified as follows. The weight lattice is
\[ M_0 = \bigoplus_{c \in \pi_0} \mb{Z}^{n_c + 1} \oplus \bigoplus_{\alpha_k \in t} \mb{Z}\omega_k.\]
The roots and coroots $\Psi_0$ and $\Psi_0^\vee$ are determined by requiring that
\[ \{\beta_{c, j} = e_{c, j} - e_{c, j + 1} \mid c \in \pi_0 \; \text{and}\; 1 \leq j \leq n_c\} \subseteq M_0 \]
be a set of positive simple roots for $\Psi_0$, and that
\[ \beta_{c, j}^\vee = e_{c, j}^* - e_{c, j + 1}^*\]
where $\{e_{c, j} \mid 1 \leq j \leq n_c + 1\}$ is the standard basis for $\mb{Z}^{n_c + 1}$, and $e_{c, j}^* \in M_0^\vee$ satisfies
\[ \langle e_{c', j'}, e_{c, j}^* \rangle = \begin{cases} 1, & \text{if}\;\; (c', j') = (c, j), \\ 0, & \text{otherwise},\end{cases} \quad \text{and} \quad \langle \omega_k, e_{c, j}^* \rangle = 0.\]
The root datum $(M, \Psi, M^\vee, \Psi^\vee)$ is given by setting
\[ M = \frac{M_0}{\mb{Z}\tn{-span}\left\{\left.\sum_{j = 1}^{n_c + 1} e_{c, j} - \sum_{\alpha_k \in t} m_{c, k}(n_c + 1 - i_{c, k})\omega_k \, \right|\, c \in \pi_0 \right\}},\]
setting $\Psi$ to be the image of $\Psi_0$ in $M$, and setting $\Psi^\vee \subseteq M^\vee$ to be the preimage of $\Psi_0^\vee$ under the injection $M^\vee \hookrightarrow M_0^\vee$. Note that $M$ is indeed a lattice, so this is the root datum of a connected reductive group.

We define an isomorphism of $(M, \Psi, M^\vee, \Psi^\vee)$ with the root datum $(\mb{X}^*(T), \Phi_t, \mb{X}_*(T), \Phi_t^\vee)$ of $L$ via the isomorphism
\begin{align*}
\phi \colon \mb{X}_*(T) &\overset{\sim}\longrightarrow M^\vee \\
\alpha_{c, j}^\vee &\longmapsto e_{c, j}^* - e_{c, j + 1}^* \\
\alpha_k^\vee &\longmapsto \omega_k^* + \sum_{c \in \pi_0}\sum_{j = i_{c, k} + 1}^{n_c + 1} m_{c, k} e_{c, j}^*,
\end{align*}
for $c \in \pi_0$, $1 \leq j \leq n_c$ and $\alpha_k \in t$, where $\omega_k^* \in M_0^\vee$ satisfies $\langle e_{c, j}, \omega_k^* \rangle = 0$ and $\langle \omega_{k'}, \omega_k^*\rangle = \delta_{k, k'}$. It is clear by inspection that $\phi$ is a well-defined homomorphism of free abelian groups such that the dual is surjective. Since $M^\vee$ and $\mb{X}_*(T)$ have the same rank, $\phi$ is therefore an isomorphism. To prove that $\phi$ defines an isomorphism of root data, it is enough to show that $\phi \colon \mb{X}_*(T) \to M^\vee$ sends $\alpha_{c, j}^\vee$ to $\beta_{c, j}^\vee$ and that $\phi^* \colon M \to \mb{X}^*(T)$ sends $\beta_{c, j}$ to $\alpha_{c, j}$ for all $c \in \pi_0$ and $1 \leq j \leq n_c$. This is easily checked by direct calculation, so we are done.
\end{proof}

Now fix a subregular Harder-Narasimhan class $(G, P, \mu)$ not of type $A_1$. It will be convenient to decompose the Dynkin diagram of $G$ as follows.

\begin{notation} \label{notation:dynkindecomposition}
If $(G, P, \mu)$ is of type $A$, then let $\{\alpha_i, \alpha_j\} = \{\alpha_i, \alpha_{i + 1}\} = t(P)$. Otherwise, we let $\{\alpha_i\} = t(P)$ and let $\alpha_j \in \Delta$ be the unique special root. (Recall \cite[Definition 3.1.1]{friedman-morgan00} \cite[Definition 4.2.1]{davis19} that a simple root $\alpha \in \Delta$ is \emph{special} if it is a long root such that the Dynkin diagram $\Delta \setminus \{\alpha\}$ is a union of components of type $A$ each meeting $\alpha$ at a single end.) Theorem \ref{thm:subregularclassification} shows that in each case, $\alpha_i$ is adjacent to $\alpha_j$. Deleting the edge joining $\alpha_i$ and $\alpha_j$ breaks the Dynkin diagram of $G$ into two connected components; we write $c_0$ (resp., $c_1$) for the component containing $\alpha_i$ (resp., $\alpha_j$) and $n_0$ (resp., $n_1$) for the number of vertices in $c_0$ (resp., $c_1$). Since $\alpha_j$ is special, the Dynkin diagram of $c_0$ is of type $A_{n_0}$. We write $\{\alpha_{c_0, 1}, \ldots, \alpha_{c_0, n_0}\} \subseteq \Delta$ for the vertices of $c_0$, labelled so that $\alpha_{c_0, k}$ is adjacent to $\alpha_{c_0, k + 1}$ for all $k < n_0$ and $\alpha_{c_0, n_0} = \alpha_i$. For $c \in \{c_0, c_1\}$ and $\alpha_{c, k}$ a root of $c$, we also write $\varpi_{c, k} \in \mb{X}^*(T)$ for the corresponding fundamental dominant weight.
\end{notation}

Our descriptions of the Levi subgroup $L \subseteq P$ fall into four distinct cases.

\vspace{1ex}
\noindent {\it Case 1: $(G, P, \mu)$ is of type $A$.} In this case, we have the following elementary description of the Levi $L$.

\begin{lem} \label{lem:typealeviiso}
In the setup above, there is an isomorphism
\[ L \cong GL_i \times GL_{l - i} = GL_{n_0} \times GL_{n_1}\]
so that the characters $\varpi_i$ and $\varpi_{i + 1}$ are identified with the determinants of the first and second factors respectively.
\end{lem}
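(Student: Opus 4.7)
The plan is to identify $L$ directly as a subgroup of $G = SL_{l+1}$ and then write down an explicit isomorphism. Since $t(P) = \{\alpha_i, \alpha_{i+1}\}$, and since the simple roots of $SL_{l+1}$ (with Borel the lower-triangular matrices) are $\alpha_k = e_k - e_{k+1}$, the roots of the Levi Lie algebra $\mf{l}$ are precisely those $e_p - e_q$ with $p, q$ in the same block of the partition $\{1, \dots, i\} \sqcup \{i+1\} \sqcup \{i+2, \dots, l+1\}$ of $\{1, \dots, l+1\}$. Hence
\[ L = \{(A, \lambda, B) \in GL_i \times \mb{G}_m \times GL_{l-i} \mid \det A \cdot \lambda \cdot \det B = 1\}, \]
embedded block-diagonally in $SL_{l+1}$. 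Alternatively, one can obtain this description by specialising Proposition \ref{prop:typealevi} to $t = \{\alpha_i, \alpha_{i+1}\}$.

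The unit-determinant constraint determines $\lambda$ uniquely from $(A, B)$, so the map $\phi \colon L \to GL_i \times GL_{l-i} = GL_{n_0} \times GL_{n_1}$ defined by $(A, \lambda, B) \mapsto (A, (B^{-1})^T)$ is a group isomorphism, where $B \mapsto (B^{-1})^T$ is the contragredient outer automorphism of $GL_{l-i}$. Under $\phi$, the fundamental weight $\varpi_i = e_1 + \cdots + e_i$ pulls back from the determinant of the first factor, since direct evaluation on $(A, \lambda, B)$ gives $\det A$. Similarly, $\varpi_{i+1} = e_1 + \cdots + e_{i+1}$ evaluates to $\det A \cdot \lambda$, which the $SL_{l+1}$-constraint rewrites as $(\det B)^{-1} = \det((B^{-1})^T)$, so $\varpi_{i+1}$ pulls back from the determinant of the second factor.

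The argument is a direct, low-technology calculation; there is no significant obstacle. The only mildly delicate point is the inclusion of the transpose-inverse in the definition of $\phi$, which compensates for the sign that would otherwise appear on $\varpi_{i+1}$ under the naive projection isomorphism $(A, \lambda, B) \mapsto (A, B)$.
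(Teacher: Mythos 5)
Your proof is correct and takes essentially the same approach as the paper: the paper also records an explicit block-matrix isomorphism $GL_i \times GL_{l-i} \to L \subseteq SL_{l+1}$, which is the inverse of your $\phi$ up to conjugating the last block by the reversal permutation matrix $v \in S_{l-i}$. The conjugation by $v$ is absent from your map; it is there in the paper to make the isomorphism carry $L \cap B$ to the product of lower-triangular Borels (as in the analogous Lemmas \ref{lem:typecdefleviiso}--\ref{lem:typebleviiso}), but since conjugation is inner and determinant-preserving, its omission does not affect the conclusion actually asserted in the statement of the lemma, and your argument is complete as written.
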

\begin{proof}
The desired isomorphism is given by applying Proposition \ref{prop:typealevi} with an appropriate labelling. Explicitly, it is given by
\begin{align*}
GL_i \times GL_{l - i} &\overset{\sim}\longrightarrow L \subseteq{SL_{l + 1}} \\
(A, B) &\longmapsto \left(\begin{matrix} A & 0 & 0 \\ 0 & (\det A)^{-1} \det B & 0 \\ 0 & 0 & v(B^t)^{-1}v^{-1} \end{matrix}\right),
\end{align*}
where $v \in S_{l - i}$ is the matrix of the permutation of $\{1, \ldots, l - i\}$ sending $j$ to $l - i - j + 1$.
\end{proof}

\noindent {\it Case 2: $(G, P, \mu)$ is of type $C$, $D$, $E$ or $F$.} In this case, the connected component $c_1$ containing $\alpha_j$ of the Dynkin diagram with the edge joining $\alpha_i$ and $\alpha_j$ deleted is of type $A_{n_1}$, and we can choose a labelling $\alpha_{c_1, 1}, \ldots, \alpha_{c_1, n_1}$ such that $\alpha_{c_1, p}$ is adjacent to $\alpha_{c_1, p + 1}$ for each $p$ and $\alpha_j$ is either $\alpha_{c_1, n_1}$ (in types $C$ and $F$) or $\alpha_{c_1, n_1 - 1}$ (in types $D$ and $E$). We have the following description of $L$.

\begin{lem} \label{lem:typecdefleviiso}
In the setup above, there is an isomorphism
\[ L \cong \{(A, B) \in GL_{n_0} \times GL_{n_1 + 1} \mid \det B = (\det A)^2\},\]
such that $\varpi_i$ is identified with the character $(A, B) \mapsto \det A$ and $L \cap B$ is the preimage of the Borel subgroup $Q_{n_0}^{n_0} \times Q_{n_1 + 1}^{n_1 + 1} \subseteq GL_{n_0} \times GL_{n_1 + 1}$. Moreover, the induced map
\[ \mb{X}^*(Q_{n_1 + 1}^{n_1 + 1}) \longrightarrow \mb{X}^*(L \cap B) = \mb{X}^*(T) \]
is given in types $D$ and $E$ by
\[ e_k \longmapsto \begin{cases} \varpi_{c_1, 1}, & \text{if}\;\; k = 1, \\ \varpi_{c_1, k} - \varpi_{c_1, k - 1}, & \text{if}\;\; 1 < k < n_1, \\
\varpi_{c_1, n_1} - \varpi_{c_1, n_1 - 1} + \varpi_i, & \text{if}\;\; k = n_1, \\
-\varpi_{c_1, n_1} + \varpi_i, & \text{if}\;\; k = n_1 + 1, \end{cases} \]
and in types $C$ and $F$ by
\[ e_k \longmapsto \begin{cases} \varpi_{c_1, 1}, & \text{if}\;\; k = 1, \\ \varpi_{c_1, k} - \varpi_{c_1, k - 1}, & \text{if}\;\; 1 < k \leq n_1, \\
-\varpi_{c_1, n_1} + 2\varpi_i, & \text{if}\;\; k = n_1 + 1. \end{cases} \]
\end{lem}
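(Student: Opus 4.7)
The plan is to read the lemma off from Proposition \ref{prop:typealevi} applied to $L$. Since $t(P) = \{\alpha_i\}$, deleting $\alpha_i$ from the Dynkin diagram of $G$ produces, via Notation \ref{notation:dynkindecomposition}, exactly two connected components: $c_0' := c_0 \setminus \{\alpha_i\}$ of type $A_{n_0 - 1}$ and $c_1$ of type $A_{n_1}$. Proposition \ref{prop:typealevi} then supplies an isomorphism
\[ L \cong \bigl\{(A_0, A_1, \lambda) \in GL_{n_0} \times GL_{n_1 + 1} \times \mb{G}_m \bigm| \det A_0 = \lambda^{d_0},\; \det A_1 = \lambda^{d_1}\bigr\}, \]
with $d_c = m_{c, i}(n_c + 1 - i_{c, i})$, identifying $\varpi_i$ with the projection onto $\lambda$.

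The main computational step is to evaluate the exponents. For $c_0'$, the node $\alpha_i = \alpha_{c_0, n_0}$ is adjacent to $\alpha_{c_0, n_0 - 1}$ in a type $A$ chain, giving $i_{c_0', i} = n_0 - 1$, $m_{c_0', i} = 1$, and hence $d_0 = 1$. For $c_1$ I would split on the type: in types $C$ and $F$ the long root $\alpha_j = \alpha_{c_1, n_1}$ is adjacent to the short $\alpha_i$, so $i_{c_1, i} = n_1$ and $m_{c_1, i} = -\langle \alpha_j, \alpha_i^\vee\rangle = 2$, giving $d_1 = 2$; in types $D$ and $E$ the simply-laced $\alpha_j = \alpha_{c_1, n_1 - 1}$ gives $i_{c_1, i} = n_1 - 1$, $m_{c_1, i} = 1$, and still $d_1 = 2$. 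Setting $A = A_0$, $B = A_1$ and eliminating $\lambda = \det A$ produces the presentation claimed in the lemma, with $\varpi_i$ corresponding to $\det A$. The identification of $L \cap B$ with the preimage of $Q_{n_0}^{n_0} \times Q_{n_1+1}^{n_1+1}$ is immediate from Proposition \ref{prop:typealevi}, which matches positive simple roots on both sides; together with the convention of \S\ref{subsection:notation} that negative roots belong to the Borel, this identifies the two Borels.

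For the induced character map, I would use the dual isomorphism $\phi^*$ constructed in the proof of Proposition \ref{prop:typealevi}, satisfying $\phi^*(\beta_{c, j}) = \alpha_{c, j}$ and $\phi^*(\omega_i) = \varpi_i$. The telescoping identity $\phi^*(e_k) - \phi^*(e_{k+1}) = \alpha_{c_1, k}$ reduces the problem to pinning down a single anchor; pairing $\phi^*(e_1)$ against every simple coroot, using the explicit formula for $\phi(\alpha_i^\vee)$ recorded in the proof of Proposition \ref{prop:typealevi}, shows $\phi^*(e_1) = \varpi_{c_1, 1}$. The remaining formulas then follow by induction, with the boundary corrections $\varpi_i$ (types $D, E$) and $2\varpi_i$ (types $C, F$) appearing precisely because of the additional contributions $e_{c_1, n_1}^* + e_{c_1, n_1 + 1}^*$ and $2 e_{c_1, n_1 + 1}^*$, respectively, to $\phi(\alpha_i^\vee)$. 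The main obstacle is the careful case analysis across types: the clean presentation $\det B = (\det A)^2$ and the coefficient $2$ appearing at the boundary arise from geometrically different mechanisms in types $C, F$ (short--long adjacency, i.e., a double bond in the Dynkin diagram) and in types $D, E$ (a simply-laced adjacency at a Dynkin fork), and the degenerate case $n_0 = 1$ in $D_4$ must also be checked by hand, though no new input beyond Proposition \ref{prop:typealevi} is required.
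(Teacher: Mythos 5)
Your proposal is correct and follows essentially the same route as the paper, which simply invokes Proposition \ref{prop:typealevi} and examines its explicit isomorphism; you have filled in the exponent computations ($d_0=1$, $d_1=2$ in each type, via the relevant Cartan integers) and the pairing argument identifying the images of the $e_k$, all of which check out. The only minor inaccuracy is that the degenerate case $n_0 = 1$ also occurs for $E_5$ (with $i \in \{4,5\}$), not only for $D_4$, but this does not affect the argument since the $GL_1 = \mathbb{G}_m$ convention handles it uniformly.
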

\begin{proof}
Apply Proposition \ref{prop:typealevi}; the expressions for $\mb{X}^*(Q_{n_1 + 1}^{n_1 + 1}) \to \mb{X}^*(T)$ follow by examining the specific isomorphism given in the proof.
\end{proof}

\noindent {\it Case 3: $(G, P, \mu)$ is of type $G$.} In type $G$, the Levi $L$ has a similarly simple form.

\begin{lem} \label{lem:typegleviiso}
Assume that $(G, P, \mu)$ is of type $G$. Then there is an isomorphism
\begin{equation} \label{eq:typegleviiso1}
 L \overset{\sim}\longrightarrow \{(\lambda, A) \in \mb{G}_m \times GL_2 \mid \det A = \lambda^3\}
\end{equation}
such that $\varpi_1$ is identified with the character $(\lambda, A) \mapsto \lambda$ and $L \cap B$ is the preimage of the Borel subgroup $\mb{G}_m \times Q^2_2 \subseteq \mb{G}_m \times GL_2$. Moreover, the induced morphism
\[ \mb{X}^*(Q^2_2) \longrightarrow \mb{X}^*(L \cap B) = \mb{X}^*(T) \]
sends the characters $e_1$ and $e_2$ to $\varpi_2$ and $3\varpi_1 -\varpi_2$ respectively.
\end{lem}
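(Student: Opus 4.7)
The plan is to deduce the lemma as a direct special case of Proposition \ref{prop:typealevi}. First I would identify the combinatorial data. With $G$ of type $G_2$ and $t(P) = \{\alpha_1\}$ (and $\alpha_2$ the unique special root by Notation \ref{notation:dynkindecomposition}), the Dynkin diagram of $L$ consists of the single node $\alpha_2$, so $\pi_0 = \{c\}$ with $n_c = 1$ and $\alpha_{c, 1} = \alpha_2$. Because $\alpha_2$ is adjacent to $\alpha_1$, we have $i_{c, 1} = 1$, and the Cartan integer $\langle \alpha_2, \alpha_1^\vee \rangle = -3$ for $G_2$ gives $m_{c, 1} = 3$. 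Plugging into \eqref{eq:typealevi}, the exponent $m_{c, 1}(n_c + 1 - i_{c, 1}) = 3$ produces exactly the asserted description
\[ L \cong \{(\lambda, A) \in \mb{G}_m \times GL_2 \mid \det A = \lambda^3\},\]
and Proposition \ref{prop:typealevi} further tells us that $\varpi_1$ corresponds to the projection $(\lambda, A) \mapsto \lambda$, as claimed.

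Next I would verify the description of $L \cap B$. Under the isomorphism above, $T$ is identified with the preimage of $\mb{G}_m \times T_{Q^2_2}$, and the unique positive simple root $\alpha_2$ of $L$ is identified with the root $\beta_{c, 1} = e_1 - e_2$ of $Q^2_2$. Hence the standard Borel $L \cap B \subseteq L$, generated by $T$ together with the root subgroup for $-\alpha_2$, matches the preimage of $\mb{G}_m \times Q_2^2$ (recall that $Q_2^2$ is the subgroup of lower-triangular matrices).

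Finally I would read off the images of $e_1, e_2 \in \mb{X}^*(Q^2_2)$ under the induced map $\mb{X}^*(Q^2_2) \to \mb{X}^*(T)$ from the explicit isomorphism $\phi$ constructed in the proof of Proposition \ref{prop:typealevi}. That isomorphism sends $\omega_1 \mapsto \varpi_1$ and $\beta_{c, 1} = e_{c, 1} - e_{c, 2} \mapsto \alpha_2 = -3\varpi_1 + 2\varpi_2$, while in $M$ one has the defining relation $e_{c, 1} + e_{c, 2} = 3\omega_1$, whence $e_{c, 1} + e_{c, 2} \mapsto 3\varpi_1$. Solving the resulting linear system gives $e_1 \mapsto \varpi_2$ and $e_2 \mapsto 3\varpi_1 - \varpi_2$. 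There is no real obstacle here: everything reduces to a single Cartan-integer computation in $G_2$ and the formulae for $\phi$ already written down in Proposition \ref{prop:typealevi}.
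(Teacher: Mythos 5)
Your proposal is correct and follows exactly the same route as the paper's (very terse) proof: apply Proposition \ref{prop:typealevi} with the $G_2$ combinatorial data, and then read off the character map from the explicit isomorphism $\phi$. The Cartan integer $\langle \alpha_2, \alpha_1^\vee\rangle = -3$, the exponent $m_{c,1}(n_c+1-i_{c,1}) = 3$, and the final linear-system solution $e_1 \mapsto \varpi_2$, $e_2 \mapsto 3\varpi_1 - \varpi_2$ all check out.
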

\begin{proof}
Apply Proposition \ref{prop:typealevi} again and inspect the explicit isomorphism given in the proof to compute $\mb{X}^*(Q^2_2) \to \mb{X}^*(T)$.
\end{proof}

\noindent {\it Case 4: $(G, P, \mu)$ is of type $B$.} This case is somewhat more complicated, as the Levi subgroup $L$ is not of type $A$. In what follows, we let
\[ GSp_4 = \{B \in GL_4 \mid B^tJB = \chi(B)J\; \text{for some}\; \chi(B) \in \mb{G}_m\},\]
where
\[ J = \left(\begin{matrix} 0 & 0 & 0 & 1 \\ 0 & 0 & 1 & 0 \\ 0 & -1 & 0 & 0 \\ -1 & 0 & 0 & 0 \end{matrix}\right).\]
Note that $GSp_4$ is a reductive group with weight lattice $\mb{X}^*(GSp_4 \cap Q^4_4) = \bigoplus_{k = 1}^4 \mb{Z}f_k/\mb{Z}(f_1 - f_2 - f_3 + f_4)$, where $f_k$ is the character sending a matrix to its $k$th diagonal entry, simple roots $\beta_1 = f_2 - f_3$ and $\beta_2 = f_1 - f_2$, and simple coroots $\beta_1^\vee = f_2^* - f_3^*$ and $\beta_2^\vee = f_1^* - f_2^* + f_3^* - f_4^*$. In this description, $\chi$ is the character $\chi = f_1 + f_4 = f_2 + f_3$.

\begin{lem} \label{lem:typebleviiso}
If $(G, P, \mu)$ is of type $B$, then there is an isomorphism
\[ L \overset{\sim}\longrightarrow \{(A, B) \in GL_{l - 2} \times GSp_4 \mid \det(A) = \chi(B)\},\]
such that $\varpi_i = \varpi_{l - 2}$ is identified with the character $(A, B) \mapsto \det(A) = \chi(B)$ and $L \cap B$ is the preimage of the Borel subgroup $Q^{l - 2}_{l - 2} \times (GSp_4 \cap Q^4_4) \subseteq GL_{l - 2} \times GSp_4$. Moreover, the induced morphism
\[ \mb{X}^*(GSp_4 \cap Q^4_4) = \bigoplus_{k = 1}^4 \mb{Z}f_k \longrightarrow \mb{X}^*(L \cap B) = \mb{X}^*(T) \]
sends $f_1$, $f_2$, $f_3$ and $f_4$ to $\varpi_l$, $\varpi_{l - 1} - \varpi_l$, $\varpi_{l - 2} - \varpi_{l - 1} + \varpi_l$ and $\varpi_{l - 2} - \varpi_l$ respectively.
\end{lem}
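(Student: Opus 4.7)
The plan is to mimic the route taken in Proposition \ref{prop:typealevi} and Lemmas \ref{lem:typealeviiso}--\ref{lem:typegleviiso}: produce the claimed isomorphism $L \cong H$, where $H := \{(A, B) \in GL_{l-2} \times GSp_4 \mid \det A = \chi(B)\}$, by constructing an explicit isomorphism of root data and then reading off the character identifications. The essential new feature is that, after deleting the node $\alpha_{l-2}$ from the Dynkin diagram of $B_l$, the component $c_1 = \{\alpha_{l-1}, \alpha_l\}$ is of type $B_2$ rather than of type $A$, so Proposition \ref{prop:typealevi} does not directly apply; the group $GSp_4$ is chosen precisely because its derived group $Sp_4$ is simply connected of type $B_2 = C_2$, and its similitude character $\chi$ supplies the extra central coordinate needed to glue with the $GL_{l-2}$ factor along the relation $\det A = \chi(B)$.

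First I would check that $H$ is a connected split reductive group of rank $l$: since $H$ is the kernel of the surjection $GL_{l-2} \times GSp_4 \to \mb{G}_m$, $(A, B) \mapsto \det(A)\chi(B)^{-1}$, its character lattice is the quotient of $\mb{X}^*(Q^{l-2}_{l-2}) \oplus \mb{X}^*(GSp_4 \cap Q^4_4)$ by the relation $\sum_{k=1}^{l-2} e_k = f_1 + f_4$, and its roots are the images of those of $GL_{l-2} \times GSp_4$. I would then define the map $\mb{X}^*(H) \to \mb{X}^*(T)$ by the four formulae for $f_1, \ldots, f_4$ in the statement together with
\[ e_k \longmapsto \varpi_k - \varpi_{k-1} \quad \text{for } 1 \leq k \leq l-2 \text{ (with the convention } \varpi_0 := 0\text{)},\]
and verify: (i) the map descends to the quotient, i.e.\ sends the relations $f_1 - f_2 - f_3 + f_4$ and $\sum_k e_k - f_1 - f_4$ to zero; and (ii) it sends simple roots to simple roots, namely $e_j - e_{j+1} \mapsto \alpha_j = -\varpi_{j-1} + 2\varpi_j - \varpi_{j+1}$ for $1 \leq j \leq l-3$, $\beta_1 = f_2 - f_3 \mapsto \alpha_{l-1} = -\varpi_{l-2} + 2\varpi_{l-1} - 2\varpi_l$, and $\beta_2 = f_1 - f_2 \mapsto \alpha_l = -\varpi_{l-1} + 2\varpi_l$, using the $B_l$ Cartan entries $\langle \alpha_{l-1}, \alpha_l^\vee\rangle = -2$ and $\langle \alpha_l, \alpha_{l-1}^\vee\rangle = -1$. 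A rank count then shows the map is an isomorphism of free abelian groups, and dualising gives an isomorphism on cocharacter lattices sending simple coroots to simple coroots, yielding the desired isomorphism of root data, and hence of split reductive groups, $L \cong H$. The Borel assertion follows from the preservation of positive roots, the identification $\varpi_{l-2} = \chi = f_1 + f_4$ is immediate from $f_1 + f_4 \mapsto \varpi_l + (\varpi_{l-2} - \varpi_l) = \varpi_{l-2}$, and the listed formulae for the images of $f_1, \ldots, f_4$ are built into the construction.

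The main obstacle is essentially bookkeeping: in contrast to the purely type-$A$ calculations of Proposition \ref{prop:typealevi} and the earlier case lemmas, the asymmetry in the $B_2$ Cartan matrix entries (the simultaneous appearance of $-2$ and $-1$, reflecting the different lengths of $\alpha_{l-1}$ and $\alpha_l$) means one must separately verify the image of each simple root of the $B_2$ factor rather than appealing to a single symmetric formula of the kind used for type $A$ components. None of the individual calculations is difficult, and once the correct formulae for the images of $f_1, \ldots, f_4$ have been written down, all that remains are direct computations in the weight lattice of $B_l$.
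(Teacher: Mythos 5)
Your proposal is correct and takes essentially the same approach as the paper: both construct the isomorphism by exhibiting an explicit isomorphism of root data, the only cosmetic difference being that the paper writes down the map $\phi \colon \mb{X}_*(T) \to M^\vee$ on cocharacters (sending simple coroots $\alpha_i^\vee$ to $\gamma_i^\vee$) and records its dual on roots, whereas you write down the dual map $\mb{X}^*(H) \to \mb{X}^*(T)$ on characters and record its effect on simple roots. Your formulae $e_k \mapsto \varpi_k - \varpi_{k-1}$ and the images of $f_1, \ldots, f_4$ are exactly the dual $\phi^*$ of the paper's $\phi$, and your verification checks match the paper's.
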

\begin{proof}
We describe the isomorphism at the level of root data. 

Write
\[ L_0 = \{(A, B) \in GL_{l - 2} \times GSp_4 \mid \det(A) = \chi(B)\} \subseteq GL_{l - 2} \times GSp_4.\]
The root datum $(M, \Psi, M^\vee, \Psi^\vee)$ of $L_0$ is specified as follows. The weight lattice is
\[ M = \frac{\bigoplus_{i = 1}^{l - 2} \mb{Z}e_i \oplus \bigoplus_{j = 1}^4 \mb{Z}f_j}{\langle f_1 - f_2 - f_3 + f_4, f_1 + f_4 - \sum_{i = 1}^{l - 2} e_i \rangle},\]
and the coweight lattice is therefore
\[ M^\vee = \left\{\left. \lambda \in \bigoplus_{i = 1}^{l - 2} \mb{Z}e_i^* \oplus \bigoplus_{j = 1}^4 \mb{Z}f_j^* \, \right| \, \langle f_1 + f_4, \lambda \rangle = \langle f_2 + f_3, \lambda \rangle = \sum_{i = 1}^{l - 2} \langle e_i, \lambda \rangle\right\}.\]
The roots $\Psi$ and coroots $\Psi^\vee$ and the bijection $\Psi \to \Psi^\vee$ are determined by requiring that
\[ \{\gamma_i \mid 1 \leq i \leq l,\, i \neq l - 2\}\]
be a set of simple roots, where
\[ \gamma_i = \begin{cases} e_i - e_{i + 1}, & \text{if}\;i < l - 2,\\ f_2 - f_3, & \text{if}\;i = l - 1, \\ f_1 - f_2, & \text{if}\;i = l, \end{cases} \qquad \text{and} \qquad \gamma_i^\vee = \begin{cases} e_i^* - e_{i + 1}^*, & \text{if}\;i < l - 2, \\ f_2^* - f_3^*,& \text{if}\;i = l - 1, \\ f_1^* - f_2^* + f_3^* - f_4^*, & \text{if}\; i = l.\end{cases}\]

There is an isomorphism
\[ \phi \colon \mb{X}_*(T) = \bigoplus_{i = 1}^l \mb{Z}\alpha_i^\vee \overset{\sim}\longrightarrow M^\vee \]
sending $\alpha_i^\vee$ to $\gamma_i^\vee$ for $i \neq l - 2$, and $\alpha_{l - 2}^\vee$ to $e_{l - 2}^* + f_3^* + f_4^*$, such that the dual $\phi^* \colon M \to \mb{X}^*(T)$ sends $\beta_i$ to $\alpha_i$ for $i \neq l - 2$. So $\phi$ defines an isomorphism of root data, which has the desired properties by inspection.
\end{proof}

\subsection{Existence of slices} \label{subsection:sliceexistence}

In this section, we give the proof of Theorem \ref{thm:subregularsliceexistence}. The proof we give here is somewhat ad hoc, and relies on the explicit descriptions of the Levi subgroups given in \S\ref{subsection:levistructure}.

We first give the construction in type $A$.

\begin{proof}[Proof of Theorem \ref{thm:subregularsliceexistence} in type $A$]
First note that since $d = 1$ in this case, the $\mu_d = \mu_1$-gerbe $\mf{G}^{uni}$ must be the trivial one.

Using the identification $L \cong GL_i \times GL_{l - i}$, Atiyah's classification of stable vector bundles (in the form \cite[Theorem 4.2.6]{davis19}) implies that the morphism
\[ (\varpi_i, \varpi_{i + 1}) \colon \bun_{L, rig}^{ss, \mu} \longrightarrow \mrm{Pic}^{-1}_S(E) \times_S \mrm{Pic}^{-1}_S(E)\]
is a trivial $Z(L)_{rig}$-gerbe. Note that in particular, all semistable $L$-bundles of slope $\mu$ are regular in this case.

By \cite[Proposition 4.1.15]{davis19}, the pullback of $\Theta$ to $\bun_{L, rig}^{ss, \mu}$ has $Z(L)_{rig}$-weight $(-\mu \mmid -) \in \mb{X}^*(Z(L)_{rig})$. (This means that tensoring a map $X \to \bun_{L, rig}$ with a $Z(L)_{rig}$-torsor $\eta$ on $X$ tensors the pullback of $\Theta_{\bun_{G, rig}}$ to $X$ with the line bundle $(-\mu \mmid \eta)$.) Since the corresponding homomorphism $\mb{X}_*(Z(L)_{rig}) \to \mb{Z}$ is surjective, it follows that there exists a section
\[ \mrm{Pic}^{-1}_S(E) \times_S \mrm{Pic}^{-1}_S(E) \longrightarrow \bun_{L, rig}^{ss, \mu} \]
such that the pullback of $\Theta_{\bun_{G, rig}}$ is trivial. Since such a section is necessarily smooth, composing it with any choice of section of
\[ \mrm{Pic}^{-1}_S(E) \times_S \mrm{Pic}^{-1}_S(E) \longrightarrow \mrm{Pic}^{-1}_S(E) \times_S \mrm{Pic}^{-1}_S(E)/E \cong E\]
gives a $\Theta$-trivial slice $Z_0 \to \bun_{L, rig}^{ss, \mu}$ with $Z_0 = E$, such that $Z_0 \to \bun_{L, rig}^{ss, \mu}/E$ is surjective with fibres isomorphic to $Z(L)_{rig}$, hence connected. So \eqref{itm:subregularsliceexistence1}, \eqref{itm:subregularsliceexistence2} and \eqref{itm:subregularsliceexistence3} are satisfied. A simple root-theoretic calculation shows that $-\langle 2\rho, \mu \rangle = l + 2$, so \eqref{itm:subregularsliceexistence4} follows from Proposition \ref{prop:inductionaffinebundle}. So this proves the theorem in this case.
\end{proof}

The construction in the exceptional types $E$, $F$ and $G$ is also fairly straightforward.

\begin{proof}[Proof of Theorem \ref{thm:subregularsliceexistence} in types $E$, $F$ and $G$]
In these cases, Proposition \ref{prop:typealevi} and Atiyah's theorem show that the morphism
\begin{equation} \label{eq:typeefgsubregularresolutions1}
\varpi_i \colon \bun_{L, rig}^{ss, \mu} \longrightarrow \mrm{Pic}^{-1}_S(E)
\end{equation}
is a $\mb{G}_m = Z(L)_{rig}$-gerbe. Let $Z_0 = S$, and let $\mf{G}'$ be the $Z(L)_{rig}$-gerbe given by the pullback along $\mc{O}(-O_E) \colon Z_0 \to \mrm{Pic}^{-1}_S(E)$. By \cite[Proposition 4.1.15]{davis19}, the pullback of the theta bundle defines a $\B Z(L)_{rig}$-equivariant morphism $\mf{G}' \to \B \mb{G}_m$, where $\B Z(L)_{rig}$ acts on $\B \mb{G}_m$ through the homomorphism
\[ -(\mu \mmid - ) \colon Z(L)_{rig} \longrightarrow \mb{G}_m.\]
So a section of $\mf{G}'$ such that the pullback of $\Theta_{\bun_{G, rig}}$ is trivial is the same thing as a section of the $\mu_d = \ker(\mu \mmid - )$-gerbe $\mf{G} = \mf{G}' \times_{\B \mb{G}_m} \spec \mb{Z}$. The $\mu_d$-gerbe is by construction pulled back from one $\mf{G}^{uni}$ on $M_{1, 1}$, defined in the same way, and if it is trivial then there is a morphism $Z_0 \to \bun_{L, rig}^{ss, \mu}$ lifting the section $\mc{O}(-O_E) \colon Z_0 \to \mrm{Pic}^{-1}_S(E)$ such that the pullback of $\Theta_{\bun_{G, rig}}$ is trivial.

It is immediately clear that \eqref{itm:subregularsliceexistence1} is satisfied. Letting $(\bun_{L, rig}^{ss, \mu})_0$ be the fibre of \eqref{eq:typeefgsubregularresolutions1} over $\mc{O}(-O_E) \colon S \to \mrm{Pic}^{-1}_S(E)$, we have that $(\bun_{L, rig}^{ss, \mu})_0 \cong \bun_{L, rig}^{ss, \mu}/E$ is a $Z(L)_{rig}$-gerbe over $S = Z_0$ and the map $Z_0 \to \bun_{L, rig}^{ss, \mu}/E$ is a section. In particular, it is smooth with connected fibres (isomorphic to $Z(L)_{rig}$), so \eqref{itm:subregularsliceexistence2} is satisfied, and surjective, so \eqref{itm:subregularsliceexistence3} is also satisfied. Finally, to prove \eqref{itm:subregularsliceexistence4}, simply note that Proposition \ref{prop:inductionaffinebundle} and a simple root-theoretic calculation show that $Z = \mrm{Ind}_L^G(Z_0) \to Z_0 = S$ is an affine space bundle of relative dimension $l + 3$.
\end{proof}

The proof of Theorem \ref{thm:subregularsliceexistence} in types $B$, $C$ and $D$ will require a few more preliminaries. First, we remark on the following realisation of $GSp_4$-bundles in terms of vector bundles.

\begin{defn}
A \emph{conformally symplectic vector bundle} is a tuple $(W, M, \omega)$, where $W$ is a vector bundle, $M$ is a line bundle, and $\omega \colon \wedge^2 W \to M$ is a morphism such that the induced morphism $W \to W^\vee \otimes M$ is an isomorphism.
\end{defn}

\begin{lem} \label{lem:gsp4bundles}
There is an isomorphism of $\bun_{GSp_4}$ with the relative stack of conformally symplectic vector bundles $(W, M, \omega)$ on $E$ over $S$, which identifies $\chi \colon \bun_{GSp_4} \to \bun_{\mb{G}_m}$ with the map $(W, M, \omega) \mapsto M$.
\end{lem}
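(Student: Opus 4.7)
The plan is to use the standard Tannakian-style argument that reductive group bundles can be recovered from their associated vector bundles attached to faithful representations together with the defining structure. Since $GSp_4$ is defined as the stabiliser of the form $J$ up to scalars on the standard representation $V = \mathbb{Z}^4$, the data of a $GSp_4$-bundle should be equivalent to the data of a rank $4$ vector bundle equipped with a conformally symplectic form.

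First I would construct the forward functor. $GSp_4$ comes with its standard $4$-dimensional representation $V$ and its character $\chi \colon GSp_4 \to \mathbb{G}_m$, together with the $GSp_4$-equivariant map $\omega_0 \colon \wedge^2 V \to \mathbb{Z}_\chi$ given by the matrix $J$ (the equivariance is immediate from the defining equation $B^t J B = \chi(B) J$). Given a $GSp_4$-bundle $\xi$ on $E$, form the associated bundles $W = \xi \times^{GSp_4} V$, $M = \xi \times^{GSp_4} \mathbb{Z}_\chi$, and let $\omega \colon \wedge^2 W \to M$ be induced from $\omega_0$. Since $J$ is nondegenerate, the induced map $V \to V^\vee \otimes \mathbb{Z}_\chi$ is a $GSp_4$-equivariant isomorphism, which produces the required isomorphism $W \to W^\vee \otimes M$ after passing to associated bundles. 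This assignment is functorial in $S$-morphisms and sends $\chi(\xi)$ to $M$, as required.

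Next I would construct the quasi-inverse. Given a triple $(W, M, \omega)$, define a presheaf of sets on $E$ by sending an $E$-scheme $T \to E$ to the set of pairs of isomorphisms
\[
\phi \colon V \otimes \mathcal{O}_T \xrightarrow{\ \sim\ } W|_T, \qquad \psi \colon \mathbb{Z}_\chi \otimes \mathcal{O}_T \xrightarrow{\ \sim\ } M|_T
\]
such that $\psi \circ (\omega_0 \otimes \mathrm{id}) = \omega \circ \wedge^2 \phi$. This presheaf is representable by a scheme over $E$, and the action of $GSp_4$ on the pair $(V, \mathbb{Z}_\chi)$ respecting $\omega_0$ makes it a $GSp_4$-torsor, because $GSp_4$ is by definition the automorphism group scheme of the triple $(V, \mathbb{Z}_\chi, \omega_0)$. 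It is locally trivial because the data $(W|_T, M|_T, \omega|_T)$ is étale-locally isomorphic to the pullback of $(V, \mathbb{Z}_\chi, \omega_0)$: the underlying vector bundle $W$ trivialises étale-locally, after which the nondegeneracy condition ensures that any two choices of symplectic structure differ by an element of $GSp_4$.

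Finally, I would check that these two constructions are mutually quasi-inverse. Both compositions produce canonical isomorphisms that can be written down directly from the universal properties, and functoriality in $S$ is automatic. The only step requiring real care is the local triviality in the quasi-inverse construction, but this follows from a Witt-type lemma: over any affine scheme the symplectic form on a trivialised rank $4$ bundle $V \otimes \mathcal{O}$ can be transformed into $\omega_0$ by an element of $GL_4$, and this element necessarily lies in $GSp_4$ since both forms take values in the (trivialised) line bundle $M$. Thus the claimed equivalence of stacks holds, and the compatibility with $\chi$ is built into the construction.
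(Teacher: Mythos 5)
Your proof is correct and follows essentially the same Tannakian approach as the paper: the paper simply writes down the forward map $\xi \mapsto (\xi \times^{GSp_4} V,\; \xi \times^{GSp_4} \mb{Z}_\chi,\; \xi \times^{GSp_4} J)$ and leaves the standard torsor-of-frames quasi-inverse implicit, whereas you spell out the inverse construction and the local triviality check. The content is the same; you have just filled in the routine details.
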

\begin{proof}
Let $V$ be the standard representation of $GSp_4$ coming from the inclusion $GSp_4 \subseteq GL_4$. Then $J$ defines a homomorphism of $GSp_4$-representations $J \colon \wedge^2 V \to \mb{Z}_\chi$. The isomorphism from $\bun_{GSp_4}$ to the stack of conformally symplectic vector bundles sends a $GSp_4$-bundle $\xi$ to $(\xi \times^{GSp_4} V, \xi \times^{GSp_4} \mb{Z}_\chi, \xi \times^{GSp_4} J)$.
\end{proof}

Now assume that $(G, P, \mu)$ is a subregular Harder-Narasimhan class of type $B$, $C$ or $D$ with corresponding Levi $L \subseteq P$. Let $P' \subseteq L$ denote the standard parabolic of type $t(P') = \{\alpha_l\}$, and $L' \subseteq P'$ its standard Levi subgroup. In types $C$ and $D$, let $\rho_L \colon L \to GL_{n_1 + 1}$ be the composition of the isomorphism of Lemma \ref{lem:typecdefleviiso} with the projection to the second factor (where for concreteness we choose the labelling so that $\alpha_{c_1, n_1} = \alpha_l$), and in type $B$ let $\rho_L \colon L \to GL_4$ be the composition of the isomorphism of Lemma \ref{lem:typebleviiso} with the projection to the second factor and the inclusion $GSp_4 \subseteq GL_4$.

\begin{lem} \label{lem:typebcdparabolicbundles}
Assume we are in types $B$, $C$ or $D$. Then there is an isomorphism of $\bun_{P'}$ with the stack of pairs $(\xi_L, M \subseteq W)$, where $\xi_L \in \bun_L$ and $M \subseteq W$ is a line subbundle of the vector bundle $W$ associated to $\xi_L$ under the representation $\rho_L$, such that the morphism
\[ \varpi_l \colon \bun_{P'} \longrightarrow \bun_{\mb{G}_m} \]
is identified with the morphism
\[ (\xi_L, M \subseteq W) \longmapsto \begin{cases} \varpi_i(\xi_L) \otimes M^{-1}, & \text{in types }B\text{ and }D, \\ \varpi_i(\xi_L)^{\otimes 2} \otimes M^{-1}, & \text{in type }C. \end{cases}\]
In types $C$ and $D$ (resp., type $B$), if $\xi_{P'}$ corresponds to $(\xi_L, M \subseteq W)$ and $V$ is the vector bundle induced by $\xi_L$ under the projection $L \to GL_{n_0}$ coming from Lemma \ref{lem:typecdefleviiso} (resp., \ref{lem:typebleviiso}), then the $L'$-bundle $\xi_{P'} \times^{P'} L'$ is semistable if and only if the vector bundles $V$ and $W/M$ (resp., $\ker(\omega\colon W/M \to \det V \otimes M^\vee)$) are semistable.
\end{lem}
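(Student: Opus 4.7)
The plan is to establish the three assertions of the lemma separately, using the explicit Levi descriptions from Lemmas \ref{lem:typecdefleviiso} and \ref{lem:typebleviiso}.

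For the identification of $\bun_{P'}$ with pairs $(\xi_L, M \subseteq W)$, the key observation is that, under $\rho_L$, the parabolic $P' \subseteq L$ is the preimage of a standard maximal parabolic of $GL_{n_1+1}$ (in types $C$ and $D$) or $GSp_4$ (in type $B$) that stabilises a line in the standard representation. Indeed, from Lemma \ref{lem:typecdefleviiso} (resp., \ref{lem:typebleviiso}), the simple root $\alpha_l$ of $L$ corresponds under $\rho_L$ to the simple root $e_{n_1} - e_{n_1+1}$ of $GL_{n_1+1}$ (resp., $\beta_2 = f_1 - f_2$ of $GSp_4$); removing this simple root yields the maximal parabolic stabilising the line $\langle e_{n_1+1}\rangle$ (resp., the isotropic line $\langle e_4 \rangle$). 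Thus $L/P' \cong \mb{P}(U)$, where $U$ denotes the standard representation of $GL_{n_1+1}$ or $GL_4$ pulled back along $\rho_L$, and a reduction of structure group $\xi_L \rightsquigarrow \xi_{P'}$ is the same thing as a section of $\xi_L \times^L \mb{P}(U) = \mb{P}(W)$, i.e., a line subbundle $M \subseteq W$. In type $B$, any line subbundle of the rank-$4$ symplectic bundle $W$ is automatically isotropic (since $\omega$ is antisymmetric), so no further condition on $M$ is needed.

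For the formula expressing $\varpi_l$ in terms of $(\xi_L, M)$, note that under the identification above, $M$ is the line bundle $\chi_M(\xi_{P'})$, where $\chi_M \in \mb{X}^*(P')$ is the weight by which $P'$ acts on its stabilised line. In types $C$ and $D$, $\chi_M = e_{n_1+1}$, which by Lemma \ref{lem:typecdefleviiso} equals $-\varpi_l + 2\varpi_i$ in type $C$ and $-\varpi_l + \varpi_i$ in type $D$; in type $B$, $\chi_M = f_4 = \varpi_i - \varpi_l$ by Lemma \ref{lem:typebleviiso}. Since $\varpi_i$ extends to a character of $L$, one has $\varpi_i(\xi_{P'}) = \varpi_i(\xi_L)$; rearranging the above expressions for $\chi_M$ then yields the stated formulas for $\varpi_l(\xi_{P'})$.

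For the semistability criterion, we identify the Levi $L' \subseteq P'$ explicitly in each case. In type $C$, $L' \cong \{(A, b_1, b_2) \in GL_{l-1} \times \mb{G}_m^2 \mid b_1 b_2 = (\det A)^2\}$, and an $L'$-bundle corresponds to a triple $(V, M, W/M)$ of vector bundles with $M \otimes (W/M) = (\det V)^{\otimes 2}$; in type $D$, $L' \cong \{(A, B, c) \in GL_{l-3} \times GL_3 \times \mb{G}_m \mid c \det B = (\det A)^2\}$, and an $L'$-bundle corresponds to a triple $(V, M, W/M)$ with $V$ of rank $l-3$, $M$ a line bundle, $W/M$ of rank $3$, and $M \otimes \det(W/M) = (\det V)^{\otimes 2}$. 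In type $B$, the Klingen Levi in $GSp_4$ gives $L' \cong \{(A, c, D) \in GL_{l-2} \times \mb{G}_m \times GL_2 \mid \det A = \det D\}$, whose associated bundles are $V$ (rank $l-2$), $M$ (a line), $M^\perp/M = \ker(\omega \colon W/M \to M^\vee \otimes \det V)$ (rank $2$, symplectic with form valued in $\det V$), and the quotient $W/M^\perp \cong M^\vee \otimes \det V$. Since semistability of an $L'$-bundle factors through semistability of the bundles associated to each semisimple factor---the central character constraints being equalities of line bundles, which are automatically semistable---it follows that $\xi_{L'} = \xi_{P'} \times^{P'} L'$ is semistable if and only if the higher-rank factors $V$ and $W/M$ (resp., $V$ and $M^\perp/M$) are semistable vector bundles. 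The main obstacle is the careful identification of the Klingen Levi in type $B$ and verifying that its $GL_2$-factor acts on the symplectic reduction $M^\perp/M$ with form valued in $\det V$; the analogous identifications in types $C$ and $D$ follow directly from Proposition \ref{prop:typealevi}.
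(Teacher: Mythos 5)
Your proof is correct and follows essentially the same approach as the paper's: identifying $P'$ via Lemmas \ref{lem:typecdefleviiso} and \ref{lem:typebleviiso} as the preimage of the line-stabilising parabolic $R_{n_1+1}$ (resp., $GSp_4 \cap R_4$), and computing the Levi $L'$ and relevant characters explicitly. Your argument simply spells out more of the details (the character computation for $\varpi_l$, the explicit $L'$ in types $C$ and $D$) that the paper dismisses as "routine."
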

\begin{proof}
In types $C$ and $D$, the isomorphism of Lemma \ref{lem:typecdefleviiso} identifies $P'$ with the parabolic $GL_{n_0} \times R_{n_1 + 1}$, where $R_{n_1 + 1} \subseteq GL_{n_1 + 1}$ is the maximal parabolic of type $\{\beta_{n_1}\}$, and the result follows routinely. In type $B$, using Lemma \ref{lem:typebleviiso} we have an $L$-equivariant identification $L/P' \cong GSp_4/(GSp_4 \cap R_4) \cong GL_4/R_4 \cong \mb{P}^4$ with the space of lines in the representation $\rho_L$, where
\[ R_4 = \left\{\left(\begin{matrix} * & * & * & 0 \\ * & * & * & 0 \\ * & * & * & 0 \\ * & * & * & * \end{matrix}\right)\right\} \subseteq GL_4.\]
The claimed isomorphism in this case now follows. To get the desired identification of the semistable bundles, notice that the Levi factor of $GSp_4 \cap R_4$ is
\[ \left\{\left.\left(\begin{array}{c|cc|c} \lambda^{-1}\det A & 0 & 0 & 0 \\ \hline 0 & & & 0 \\ 0 & \multicolumn{2}{c|}{\smash{\raisebox{.5\normalbaselineskip}{$A$}}} & 0 \\ \hline \\[-\normalbaselineskip] 0 & 0 & 0 & \lambda \end{array}\right)\right| A \in GL_2, \lambda \in \mb{G}_m \right\} \cong GL_2 \times \mb{G}_m,\]
so we have an isomorphism
\[ \bun_{L'} \cong \bun_{GL_{n_0}} \times_{\bun_{\mb{G}_m}} \bun_{GL_2} \times_S \bun_{\mb{G}_m},\]
such that the map $\bun_{P'} \to \bun_{L'}$ is identified with
\[ (\xi_L, M \subseteq W) \longmapsto (V, \ker(W/M \to \det V \otimes M^\vee), M). \]
This now implies the claim.
\end{proof}

\begin{lem} \label{lem:typebcdleviautos}
Let $(G, P, \mu)$ be of type $B$, $C$ or $D$, and assume that $\xi_L \to E_s$ is a semistable $L$-bundle of slope $\mu$ over a geometric fibre of $E \to S$. Then $\dim \mrm{Aut}(\xi_L) \geq 2$.
\end{lem}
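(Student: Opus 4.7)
The plan is to use the explicit descriptions of $L$ from \S\ref{subsection:levistructure} to identify $\xi_L$ with concrete vector-bundle data on $E_s$ and then to estimate $h^0$ of the adjoint Lie algebra bundle via Atiyah's classification of semistable bundles on an elliptic curve.

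In types $C$ and $D$, Lemma~\ref{lem:typecdefleviiso} identifies $\xi_L$ with a pair $(V, W)$ of vector bundles satisfying $\det W \cong (\det V)^{\otimes 2}$, where $V$ has rank $n_0$ and degree $-1$ and $W$ has rank $n_1 + 1 \in \{2, 4\}$ and degree $-2$. The short exact sequence
\[ 0 \longrightarrow \mrm{ad}(\xi_L) \longrightarrow \mrm{End}(V) \oplus \mrm{End}(W) \xrightarrow{(X, Y) \mapsto \mrm{tr}(Y) - 2\mrm{tr}(X)} \mc{O}_{E_s} \longrightarrow 0 \]
then gives $\dim \mrm{Aut}(\xi_L) \geq h^0(\mrm{End}(V)) + h^0(\mrm{End}(W)) - 1 = h^0(\mrm{End}(W))$, since $\gcd(n_0, 1) = 1$ makes $V$ stable. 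As $\gcd(\mrm{rank}(W), \deg(W)) = 2$, any semistable such $W$ has a Jordan--H\"older filtration with exactly two stable factors of the same slope, and a brief case analysis (split with distinct stable factors, split isotypic, or non-split self-extension) using Atiyah's classification gives $h^0(\mrm{End}(W)) \geq 2$ in every case.

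In type $B$, Lemmas~\ref{lem:typebleviiso} and~\ref{lem:gsp4bundles} identify $\xi_L$ with a triple $(V, (W, M, \omega))$ where $(W, M, \omega)$ is a conformally symplectic bundle with $W$ semistable of rank $4$ and degree $-2$. The same exact sequence argument reduces the claim to $h^0(\mf{gsp}(W, \omega)) \geq 2$. I would split into two cases according to whether a rank-$2$ Jordan--H\"older subbundle $W_1 \subset W$ is Lagrangian. If not, then $W_1^\perp$ furnishes an orthogonal decomposition $W \cong W' \oplus W'$ with $\det W' \cong M$, and the conformal orthogonal group $GO(\beta)$ for the induced symmetric form $\beta$ on $k^2$ contributes a $2$-dimensional family of automorphisms. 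If $W_1$ is Lagrangian, then $W/W_1 \cong W_1^\vee \otimes M$, and the Siegel parabolic subalgebra fits into $0 \to \mf{r}_u \to \mf{p}_{P_S} \to \mrm{End}(W_1) \oplus \mc{O} \to 0$ with $\mf{r}_u \cong \mrm{Sym}^2 W_1 \otimes M^{-1}$; one argues $h^0(\mf{p}_{P_S}) \geq 2$ by observing that the scalar section $1 \in H^0(\mc{O})$ always lifts, while either $H^1(\mf{r}_u) = 0$ (so that $\mrm{id}_{W_1} \in H^0(\mrm{End}(W_1))$ also lifts) or $H^1(\mf{r}_u) \neq 0$, in which case Riemann--Roch and Serre duality force $h^0(\mf{r}_u) \geq 1$.

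The main obstacle is the case analysis in type $B$, where one has to carefully track the interplay between the extension class of $W$, the determinant of $W_1$, and the cohomology of $\mrm{Sym}^2 W_1 \otimes M^{-1}$; in types $C$ and $D$ the argument is a direct and uniform invocation of Atiyah's classification.
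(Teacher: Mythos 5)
Your proof is correct in all essentials, but it takes a genuinely different route from the paper's. The paper avoids a case analysis entirely by an openness/\'etaleness argument: it observes that the direct sum map $\bun_{GL_r}^{ss,-1}\times\bun_{GL_r}^{ss,-1}\to\bun_{GL_{2r}}^{ss,-2}$ (resp.\ $\bun_{L''}\to\bun_{GSp_4}^{-1}$ with $L''$ a Levi of the Siegel parabolic) is \'etale near $(U,U')$ with $U\not\cong U'$ (resp.\ near $(U,M)$ with $U\not\cong U^\vee\otimes M$), that the locus where $\dim\mrm{Aut}<2$ is open and hence empty or dense by irreducibility, and that its nonemptiness would force a direct-sum (resp.\ hyperbolic) bundle with too few automorphisms, a contradiction. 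Your argument instead computes $h^0$ directly from Atiyah's classification: you peel off a Jordan--H\"older subbundle and run a case analysis. The trade-off is that the paper's argument is uniform and short once the \'etaleness computation is done, while yours is more concrete and self-contained but requires tracking several cases, especially in type $B$. Both work.

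A few small points worth tidying up in your write-up. In types $C$ and $D$, your three-case list (split distinct / split isotypic / non-split self-extension) is exhaustive only because $\mrm{Ext}^1(U_2,U_1)=0$ for non-isomorphic stable bundles of the same slope, which rules out the fourth a priori case of a non-split extension of distinct factors; this should be stated since the result would fail for that case ($h^0(\mrm{End}(W))=1$ there). In type $B$, for the non-Lagrangian case, the isomorphism $W_1\cong W_1^\perp$ (and hence $W\cong W'\oplus W'$) should be justified: nondegeneracy of $\omega|_{W_1}$ forces $\det W_1\cong M\cong\det W_1^\perp$, and stable rank-$2$ bundles of odd degree are determined by their determinant. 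Finally, in the Lagrangian case, ``the scalar section $1\in H^0(\mc{O})$ always lifts'' should read that $\mrm{id}_W\in H^0(\mf{p}_{P_S})$ maps to $(\mrm{id}_{W_1},2)$, so the image of $H^0(\mf{p}_{P_S})\to H^0(\mrm{End}(W_1)\oplus\mc{O})$ is at least one-dimensional; combined with your Riemann--Roch dichotomy for $\mf{r}_u=\mrm{Sym}^2 W_1\otimes M^{-1}$ this gives the required bound.
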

\begin{proof}
By Lemmas \ref{lem:typecdefleviiso}, \ref{lem:typebleviiso} and \ref{lem:gsp4bundles} and \cite[Theorem 4.2.6]{davis19}, it suffices to show that if $W$ is a semistable vector bundle of degree $-2$ and rank $2r$ (resp., $(W, M, \omega)$ is a conformally symplectic vector bundle with $W$ semistable and $\deg M = -1$), then $\dim \mrm{Aut}(W) \geq 2$ (resp., $\dim \mrm{Aut}(W, M, \omega) \geq 2$).

In the first case, observe that if $U$ and $U'$ are nonisomorphic semistable vector bundles of degree $-1$ and rank $r$, then $U \otimes (U')^\vee$ is a vector bundle of degree $0$ with $H^0(E, U \otimes (U')^\vee) = 0$, and hence $H^1(E, U \otimes (U')^\vee) = 0$ also. It follows that the morphism
\begin{align*}
\bun_{GL_r}^{ss, -1} \times \bun_{GL_r}^{ss, -1} &\longrightarrow \bun_{GL_{2r}}^{ss, -2} \\
(U, U') &\longmapsto U \oplus U'
\end{align*}
is \'etale at $(U, U')$ if $U \not\cong U'$. Since the locus of vector bundles $W$ in $\bun_{GL_{2d}}^{ss, -2}$ with $\dim \mrm{Aut}(W) < 2$ is open, it is either empty or dense. So by openness of \'etale morphisms, if it is nonempty, then there exists such a bundle $W = U \oplus U'$ with $U \not\cong U'$. But $\mrm{Aut}(W) = \mrm{Aut}(U) \times \mrm{Aut}(U') = \mb{G}_m \times \mb{G}_m$ for such bundles, so this is a contradiction and we are done in this case.

The proof for conformally symplectic bundles is similar. Consider the Levi subgroup
\[ GL_2 \times \mb{G}_m \cong L'' = \left\{\left.\left(\begin{array}{c|c} \lambda J_0 (A^t)^{-1} J_0 & 0 \\ \hline 0 & \lambda A \end{array}\right)\right| A \in GL_2, \lambda \in \mb{G}_m \right\},\]
where
\[ J_0 = \left(\begin{matrix} 0 & 1 \\ 1 & 0 \end{matrix}\right).\]
Given $(U, M) \in \bun_{GL_2}^{ss, -1} \times_S \bun_{\mb{G}_m}^{-1}$ corresponding to an $L''$-bundle $\xi_{L''}$, with $U \not\cong U^\vee \otimes M$, we have that
\[ \xi_{L''} \times^{L''} \mf{gsp}_4/\mf{l}'' \subseteq U^\vee \otimes (U^\vee \otimes M) \oplus U \otimes (U^\vee \otimes M)^\vee \]
is a degree $0$ vector bundle on $E_s$ with $H^0(E_s, \xi_{L''} \times^{L''} \mf{gsp}_4/\mf{l}'') = 0$ and hence $H^1(E_s, \xi_{L''} \times^{L''} \mf{gsp}_4/\mf{l}'') = 0$ also, where $\mf{gsp}_4 = \mrm{Lie}(GSp_4)$ and $\mf{l}'' = \mrm{Lie}(L'')$. So we conclude that the morphism
\[ \bun_{L''} \longrightarrow \bun_{GSp_4}^{-1} \]
is \'etale at $(U, M)$.

Since the locus of conformally symplectic vector bundles $(W, M, \omega)$ in $\bun_{GSp_4}^{ss, -1}$ with automorphism group of dimension $< 2$ is open, it is either empty or dense. If it is nonempty, then by openness of \'etale morphisms we can find such a bundle of the form $W = U \oplus U^\vee \otimes M$ as above. But $\dim \mrm{Aut}_{GSp_4}(W) = \dim \mrm{Aut}(U) \times \dim \mrm{Aut}(M) = 2$, so this is a contradiction and the lemma is proved.
\end{proof}

\begin{proof}[Proof of Theorem \ref{thm:subregularsliceexistence} in types $B$, $C$ and $D$]
Let $\mu' \in \mb{X}_*(Z(L'))_\mb{Q}$ be the unique vector with $\langle \varpi_i, \mu' \rangle = -1$ and $\langle \varpi_l, \mu' \rangle = 0$. Then Proposition \ref{prop:typealevi} and Atiyah's classification show that the morphism
\[ (\varpi_i, \varpi_l) \colon \bun_{L', rig}^{ss, \mu'} \longrightarrow \mrm{Pic}^{-1}_S(E) \times_S \mrm{Pic}^0_S(E) \]
is a $Z(L')_{rig}$-gerbe. Let $\mf{G}''$ be the $Z(L')_{rig}$-gerbe on $S$ given by pulling back along the section
\[ (\mc{O}(-O_E), \mc{O}) \colon S \to \mrm{Pic}^{-1}_S(E) \times_S \mrm{Pic}^0_S(E).\]
The pullback of the theta bundle gives a $\B Z(L')_{rig}$-equivariant morphism $\mf{G}'' \to \B \mb{G}_m$ where $\B Z(L')_{rig}$ acts through the homomorphism $(-\mu'\mmid -)$, so we get a $\ker(\mu' \mmid -)$-gerbe $\mf{G}' = \mf{G}'' \times_{\B \mb{G}_m} \spec \mb{Z}$. Let $\mf{G}$ be the rigidification of $\mf{G}'$ with respect to $\varpi_l^\vee(\mb{G}_m)$. Then $\mf{G}$ is a $\ker(\mu' \mmid -)/\varpi_l^\vee(\mb{G}_m) \cong \mu_d$-gerbe, pulled back from a gerbe $\mf{G}^{uni}$ on $M_{1, 1}$, and if it is trivial then we have a $\B \mb{G}_m$-equivariant morphism $\B_S\mb{G}_m \to \bun_{L', rig}^{ss, \mu'}$ (with $\B \mb{G}_m$ acting through $\varpi_l^\vee$) lifting the section $(\mc{O}(-O_E), \mc{O})$ such that the pullback of the theta bundle is trivial. Define
\[ Z_0 = \mrm{Ind}_{L'}^L(\B_S \mb{G}_m) \setminus \B_S \mb{G}_m \longrightarrow \bun_{L, rig}^\mu,\]
and observe that the pullback of $\Theta_{\bun_{G, rig}}$ to $Z_0$ is also trivial since $Z_0 \to \B_S \mb{G}_m$ is an affine space bundle.

\begin{table}[h]
\centering
\begin{tabular}{c|c|>{\rule{0pt}{2.6ex}}c|c}
Type & $\alpha \in \Phi_L$ with $\langle \alpha, \mu' \rangle < 0$ & $\langle \alpha, \mu' \rangle$ & $\langle \alpha, \varpi_l^\vee \rangle$ \\ \hline
\multirow{3}*{$B$} & $-\alpha_l$ & $-\frac{1}{2}$ & $-1$ \\
& $-\alpha_{l - 1} - \alpha_l$ & $-\frac{1}{2}$ & $-1$ \\
& $-\alpha_{l - 1} - 2\alpha_l$ & $-1$ & $-2$ \\ \hline
$C$ & $-\alpha_l$ & $-2$ & $-1$ \\ \hline
\multirow{3}*{$D$} & $-\alpha_l$ & $-\frac{2}{3}$ & $-1$ \\
& $-\alpha_{l - 2} - \alpha_l$ & $-\frac{2}{3}$ & $-1$ \\
& $-\alpha_{l - 2} - \alpha_{l - 1} - \alpha_l$ & $-\frac{2}{3}$ & $-1$
\end{tabular}
\vspace{1ex}
\caption{Roots of $L$ with $\langle \alpha, \mu' \rangle < 0$}
\label{tab:typebcdroots}
\end{table}

We now check that $Z_0$ satisfies the conditions of Theorem \ref{thm:subregularsliceexistence}. Since the claims are local on $S$, we can assume for convenience that the section $\B_S\mb{G}_m \to \bun_{L', rig}^{ss, \mu'}$ lifts to a morphism $S \to \bun_{L'}^{ss, \mu'}$ and that the line bundle on $E$ associated to this section via the character $\varpi_l$ is trivial. Note that in this case, we have a natural identification
\[ Z_0 \cong (\mrm{Ind}_{L'}^L(S) \setminus S)/\mb{G}_m.\]

First, the roots $\alpha \in \Phi_L$ with $\langle \alpha, \mu' \rangle < 0$ are given in Table \ref{tab:typebcdroots}, along with the values of $\langle \alpha, \mu' \rangle$ and $\langle \alpha, \varpi_l^\vee \rangle$. Using Proposition \ref{prop:inductionaffinebundle} and \cite[Proposition 5.2.7]{davis19a}, it follows that $\mrm{Ind}_{L'}^L(S) \to S$ is an $\mb{A}^2$-bundle on which $\mb{G}_m$ acts with weight $1$ in types $C$ and $D$, and weights $1$ and $2$ in type $B$. So $Z_0 \to S$ is a $\mb{P}(1, 2)$-bundle in type $B$ and a $\mb{P}^1$-bundle in types $C$ and $D$. In particular, \eqref{itm:subregularsliceexistence1} is satisfied.

We next show that $Z_0 \to \bun_{L, rig}^\mu$ factors through $\bun_{L, rig}^{ss, \mu}$. Note that Table \ref{tab:typebcdroots} shows that $-\mu'$ is a Harder-Narasimhan vector for $P' \subseteq L$, so $\mrm{Ind}_{L'}^L(S) = \bun_{P', rig}^{ss, \mu'} \times_{\bun_{L', rig}^{ss, \mu'}} S$. So Lemma \ref{lem:typebcdparabolicbundles} shows that $\xi_L$ is in the image of $\mrm{Ind}_{L'}^L(S)$ if and only if $V$ is semistable of determinant $\mc{O}(-O_E)$ and there exists a nonvanishing section of $W \otimes \mc{O}(d O_E)$ such that the vector bundle
\[ U = \begin{cases} W/\mc{O}(-d O_E), & \text{in types }C\text{ and }D, \\ \ker(W/\mc{O}(-(d O_E) \to \mc{O}), & \text{in type }B, \end{cases} \]
is semistable. Here $V$ and $W$ are as in the statement of Lemma \ref{lem:typebcdparabolicbundles}, and
\[ d = \begin{cases} 1, & \text{in types}\; B \;\text{and}\; D, \\ 2, & \text{in type}\;C\end{cases} \]
is as in the statement of the theorem. The bundle $\xi_L$ is in the image of $\mrm{Ind}_{L'}^L(S) \setminus S$ if and only if $\mc{O}(-d O_E) \to W$ can be chosen not to admit a retraction. Suppose that $\xi_L$ is such a bundle and that $\xi_L$ is unstable; we deduce a contradiction in each type.

In type $B$, $W$ is an unstable conformally symplectic vector bundle of rank $4$ and degree $-2$, so there exists a quotient $W \to N$ where $N$ has slope $< -1/2$. Replacing $N$ with $\coker(N^\vee \otimes \mc{O}(-O_E) \to W)$ if necessary, we can assume that $N$ has rank $\leq 2$. Since any vector bundle of rank $2$ and slope $<-1/2$ surjects onto some line bundle of negative degree, we can therefore assume without loss of generality that $N$ is a line bundle. Examining slopes, we see from semistability of $U$ that $W \to N$ does not factor through $W/\mc{O}(-O_E)$, and hence that $\mc{O}(-O_E) \to N$ is nonzero. So $\mc{O}(-O_E) \to N$ must be an isomorphism since $\deg N \leq \deg \mc{O}(-O_E)$, and we therefore have a retraction $W \to \mc{O}(-O_E) = N$. Since this is a contradiction, we are done in this case.

In type $C$, $W$ is an unstable vector bundle of rank $2$ and degree $-2$, so there exists a quotient $W \to N$ where $N$ is a a line bundle of degree $< -1$. Examining slopes, we see that $W \to N$ does not factor through $W/\mc{O}(-2O_E)$ and hence that $\mc{O}(-2O_E) \to N$ is nonzero. So $\mc{O}(-2O_E) \to N$ must be an isomorphism since $\deg N \leq \deg \mc{O}(-2O_E)$, and we therefore have a retraction $W \to \mc{O}(-2O_E) = N$. Since this is a contradiction, we are done in this case as well.

Finally, in type $D$, $W$ is an unstable vector bundle of rank $4$ and degree $-2$, so there exists a quotient $W \to N$ where $N$ is a semistable vector bundle of slope $< -1/2$. Examining slopes and using semistability of $W/\mc{O}(-O_E)$ and of $N$, we see that $W \to N$ does not factor through $W/\mc{O}(-O_E)$ and we again get a retraction $W \to N \cong \mc{O}(-O_E)$. So we have shown that $\xi_L$ must be semistable in all cases.

We next show that the morphism $Z_0 \to \bun_{L, rig}^{ss, \mu}/E$ is smooth with connected fibres, which proves \eqref{itm:subregularsliceexistence2} and that $Z_0 \to \bun_{L, rig}^{ss, \mu}$ is a $\Theta$-trivial slice. Write $(\bun_L^{ss, \mu})_0$ for the fibre of $\varpi_i \colon \bun_L^{ss, \mu} \to \mrm{Pic}^{-1}_S(E)$ over $\mc{O}(-O_E)$ and $(\bun_{P'}^{\mu'})^{ss}_0 = \bun_{P'}^{\mu'} \times_{\bun_{L}^{\mu}} (\bun_{L}^{ss, \mu})_0$. Then Lemma \ref{lem:typebcdparabolicbundles} gives an open immersion
\[ (\bun_{P'}^{\mu'})_0^{ss} \subseteq \mb{P}_{(\bun_L^{ss, \mu})_0} \pi_*(W^{uni} \otimes \mc{O}(d O_E)),\]
where we write $W^{uni}$ for the universal bundle on $(\bun_L^{ss, \mu})_0 \times_S E$ induced by the representation $\rho_L$ of $L$ and $\pi \colon (\bun_L^{ss, \mu})_0 \times_S E \to (\bun_L^{ss, \mu})_0$ for the natural projection. Moreover,
\[ Z_0 \times_{(\bun_{L, rig}^{ss, \mu})_0} (\bun_L^{ss, \mu})_0 \longrightarrow (\bun_{P'}^{\mu'})^{ss}_0 \]
is a $\mb{G}_m = Z(L)_{rig}/\varpi_l^\vee(\mb{G}_m)$-torsor over the open substack where the associated $L'$-bundle is semistable. This shows in particular that $Z_0 \times_{(\bun_{L, rig}^{ss, \mu})_0} (\bun_L^{ss, \mu})_0 \to (\bun_L^{ss, \mu})_0$ is smooth with connected fibres of dimension $2$, and hence that the same is true for $Z_0 \to (\bun_{L, rig}^{ss, \mu})_0 \cong \bun_{L, rig}^{ss, \mu}/E$ as claimed.

To prove \eqref{itm:subregularsliceexistence3}, first observe that since $Z_0 \to S$ has finite relative stabilisers, any $L$-bundle in the image of $Z_0 \to (\bun_{L, rig}^{ss, \mu})_0 \subseteq \bun_{L, rig}^{ss, \mu}$ can have automorphism group of dimension at most $2$, and is hence regular by Lemma \ref{lem:typebcdleviautos}. For the converse, note that since every regular semistable $L$-bundle is a translate of one in $(\bun_L^{ss, \mu})_0$, it suffices to show that any regular semistable bundle in $(\bun_L^{ss, \mu})_0$ is in the image of $(\bun_{P'}^{ss, \mu})_0 \to \bun_L^\mu$, and hence in the image of $Z_0 \to \bun_{L, rig}^{ss, \mu}$.

Suppose then that $\xi_L \to E_s$ is a semistable $L$-bundle in $(\bun_L^{ss, \mu})_0$ over $s \colon \spec k \to S$ that is not in the image of $(\bun_{P'}^{ss, \mu'})_0$. We show in each type that $\dim \mrm{Aut}(\xi_L) > 2$ so $\xi_L$ is not regular.

In type $B$, in the notation of Lemma \ref{lem:typebcdparabolicbundles}, we have that for every nonzero morphism $\gamma \colon \mc{O}(-O_E) \to W$, the vector bundle $U_\gamma = \ker(W/\mc{O}(-O_E) \to \mc{O})$ is unstable. (Note that $W$ is semistable of rank $4$ and degree $-2$, so any such morphism is a subbundle.) Using semistability of $W$, the Harder-Narasimhan decomposition of $U_\gamma$ must be of the form $U_\gamma = N_\gamma \oplus N_\gamma^\vee \otimes \mc{O}(-O_E)$, where $N_\gamma$ is a line bundle of degree $0$ on $E_s$ and the preimage of $N_\gamma$ in $W$ is the unique non-split extension $N_\gamma'$ of $N_\gamma$ by $\mc{O}(-O_E)$. By uniqueness of Harder-Narasimhan filtrations, it follows that we have a morphism $\mb{P}^1_k = \mb{P}H^0(E_s, W \otimes \mc{O}(O_E)) \to \mrm{Pic}^0(E_s)$ sending $\gamma$ to the isomorphism class of $N_\gamma$. Since there are no non-constant morphisms from $\mb{P}^1_k$ to any elliptic curve over $k$, we deduce that $N_\gamma = N$ and $N_\gamma' = N'$ are independent of $\gamma$. So every nonzero morphism $\mc{O}(-O_E) \to W$ factors through some Lagrangian inclusion $N' \hookrightarrow W$. Choosing any such morphism gives an exact sequence
\[ 0 \longrightarrow N' \longrightarrow W \longrightarrow (N')^\vee \otimes \mc{O}(-O_E) \longrightarrow 0.\]
Since $\dim \hom(\mc{O}(-O_E), N') = 1$ and $\dim \hom(\mc{O}(-O_E), W) = 2$, we can choose another homomorphism $\mc{O}(-O_E)$ not factoring through the given copy of $N'$, and hence get another Lagrangian inclusion $N' \hookrightarrow W$, which must map $N'$ isomorphically onto $(N')^\vee \otimes \mc{O}(-O_E)$. So the above exact sequence splits, and we have
\[ W \cong N' \oplus N',\]
where both summands are Lagrangian. In particular, $W$ and hence $\xi_L$ carries a faithful action of $Sp_2$, so $\dim \mrm{Aut}(\xi_L) > 2$ as claimed.

In type $C$, we have that every nonzero morphism $\gamma \colon \mc{O}(-2O_E) \to W$ must vanish at some unique point $x_\gamma \in E_s$. So again we have a morphism $\mb{P}^1_k = \mb{P}H^0(E_s, W \otimes \mc{O}(-2O_E)) \to E_s$ sending $\gamma$ to $x_\gamma$, which must be constant. So $x_\gamma = x$ is independent of $\gamma$, and every morphism $\mc{O}(-2O_E) \to W$ therefore factors through a subbundle $\mc{O}(x - 2O_E) \subseteq W$. Since $W$ is semistable of trivial determinant, choosing any two linearly independent morphisms gives an isomorphism $W \cong \mc{O}(x - 2O_E) \oplus \mc{O}(x - 2O_E)$. So $SL_2$ acts faithfully on $W$ and hence on $\xi_L$ and $\dim \mrm{Aut}(\xi_L) > 2$ as claimed.

In type $D$, we have that $U_\gamma = W/\mc{O}(-O_E)$ is unstable for every nonzero morphism $\gamma \colon \mc{O}(-O_E) \to W$. (Note that again any such $\gamma$ must be a subbundle since $W$ is semistable of slope $-1/2$.) Since $W$ is semistable, one sees that the Harder-Narasimhan decomposition of $U_\gamma$ must be of the form $U_\gamma = N_\gamma \oplus \det(N_\gamma)^\vee \otimes \mc{O}(-OE)$, where $N_\gamma$ is a rank $2$ semistable vector bundle of degree $-1$. Again we get a morphism $\mb{P}^1_k = \mb{P}H^0(E_s, W \otimes \mc{O}(O_E)) \to \mrm{Pic}^{-1}(E_s)$ sending $\gamma$ to the isomorphism class of $\det(N_\gamma)$, which again must be constant. So $\det(N_\gamma)$, and hence $N_\gamma = N$ are independent of $\gamma$, and every nonzero morphism $\mc{O}(-O_E) \to W$ factors through the kernel of some surjection $W \to N$. Choosing two linearly independent morphisms $\mc{O}(-O_E) \to W$ therefore gives a map $W \to N \oplus N$, which one easily sees must be an isomorphism. So again $SL_2$ acts faithfully on $W$ fixing the determinant, and hence on $\xi_L$, which proves that $\dim \mrm{Aut}(\xi_L) > 2$ in this case as well.

Finally, to prove \eqref{itm:subregularsliceexistence4}, simply note that Proposition \ref{prop:inductionaffinebundle} implies that $Z \to Z_0$ is an affine space bundle of relative dimension $- \langle 2 \rho, \mu \rangle = l + 2$, so $Z \to S$ has relative dimension $l + 3$ as required.
\end{proof}

\section{Computing resolutions} \label{section:resolutions}

The purpose of this section is to give the proof of Theorem \ref{thm:introsubregularresolutions}. We prove \eqref{itm:introsubregularresolutions1}, \eqref{itm:introsubregularresolutions2}, \eqref{itm:introsubregularresolutions3} and \eqref{itm:introsubregularresolutions4} separately (as Propositions \ref{prop:subregularresolutions1}, \ref{prop:subregularresolutions2}, \ref{prop:subregularresolutions3} and \ref{prop:subregularresolutions4}) in \S\ref{subsection:decomposition}, \S\ref{subsection:dalphaj}, \S\ref{subsection:dalphaij} and \S\ref{subsection:dalphai} respectively.

The proofs of Propositions \ref{prop:subregularresolutions2} and \ref{prop:subregularresolutions4} make use of the idea that sections of a flag variety bundle decompose naturally according to which Bruhat cells they meet. This is used to give decompositions of the divisor $D_{\alpha_i^\vee}(Z)$ and $D_{\alpha_j^\vee}(Z)$ into locally closed subsets, each of which can be identified in terms of an analogous set of sections of a flag variety for some copy of $GL_n$ inside $G$. We manage to show that these ``Bruhat cells'' fit together into the blowups in Theorem \ref{thm:introsubregularresolutions} by explicitly constructing the blow downs as spaces of (stable) sections of \emph{partial} flag variety bundles. The Bruhat cells are discussed in general in \S\ref{subsection:bruhat}, and the specific cells of interest for $GL_n$ are studied in \S\ref{subsection:glnbruhat}.

\subsection{Decomposition of $\tilde{\chi}_Z^{-1}(0_{\Theta_Y^{-1}})$} \label{subsection:decomposition}

In this subsection, we prove the following proposition, which is a slightly more general version of part \eqref{itm:introsubregularresolutions1} of Theorem \ref{thm:introsubregularresolutions}.

\begin{prop} \label{prop:subregularresolutions1}
Let $(G, P, \mu)$ be a subregular Harder-Narasimhan class not of type $A_1$. Assume that the $\mu_d$-gerbe $\mf{G}$ of Theorem \ref{thm:subregularsliceexistence} is trivial, let $Z_0 \to \bun_{L, rig}^{ss, \mu}$ be the corresponding $\Theta$-trivial slice, and let $Z = \mrm{Ind}_L^G(Z_0) \to \bun_{G, rig}$ be the induced equivariant slice. Then the preimage of the zero section of $\Theta_Y^{-1}$ in $\tilde{Z} = \tbun_{G, rig} \times_{\bun_G} Z$ decomposes as a divisor with normal crossings
\begin{equation} \label{eq:subregularresolutions1:1}
 \tilde{\chi}_Z^{-1}(0_{\Theta_Y^{-1}}) = dD_{\alpha_i^\vee}(Z) + D_{\alpha_j^\vee}(Z) + D_{\alpha_i^\vee + \alpha_j^\vee}(Z)
\end{equation}
such that each summand is smooth over $Y$, where $D_\lambda(Z)$ denotes the closure of the locus of stable maps with a single rational component of degree $\lambda \in \mb{X}_*(T)$.
\end{prop}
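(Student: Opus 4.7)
My plan is first to reduce the analysis to the Levi $L$ via parabolic induction, and then to carry out an enumeration of rational tail degrees together with a local multiplicity computation.

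Under the identification $Z = \bun_{P^+, rig} \times_{\bun_{L, rig}} Z_0$ from Definition \ref{defn:parabolicinduction}, together with a compatible description of $\tbun_{G, rig} \times_{\bun_G} Z$ in terms of Kontsevich--Mori compactifications of $(B \cap L)$-bundles on $L$, a point of $\tilde{Z}$ lying over the zero section of $\Theta_Y^{-1}$ corresponds essentially to a stable $(B \cap L)$-map for some regular semistable $L$-bundle $\xi_L$ of slope $\mu$. The elliptic component of such a stable map is a $(B \cap L)$-reduction of $\xi_L$ of some degree $-\lambda \in \mb{X}_*(T) = \mb{X}_*(T_L)$, and a single rational tail of degree $\lambda$ places the point on $D_\lambda(Z)$.

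The heart of the proof is then to enumerate the possible single-tail degrees $\lambda$. The degrees of $(B \cap L)$-reductions of a regular semistable $L$-bundle of slope $\mu$ form a bounded region of $\mb{X}_*(T)$ lying above a minimal degree $\nu_{\min}$ which projects to $\mu$, in the spirit of Friedman and Morgan \cite{friedman-morgan00}. The rational tail degree $\lambda = -\nu$ must further lie in $\mb{Z}_{\geq 0}\Phi_+^\vee \setminus \{0\}$. Using the explicit Levi structures from Lemmas \ref{lem:typealeviiso}, \ref{lem:typecdefleviiso}, \ref{lem:typebleviiso}, and \ref{lem:typegleviiso}, together with Atiyah's classification of vector bundles on $E$ and (in type $B$) the conformally symplectic realisation from Lemma \ref{lem:gsp4bundles}, I would verify case by case in each of the four Levi families that the only admissible single-tail degrees are $\lambda \in \{\alpha_i^\vee, \alpha_j^\vee, \alpha_i^\vee + \alpha_j^\vee\}$, corresponding to $\nu \in \{-\alpha_i^\vee, -\alpha_j^\vee, -\alpha_i^\vee - \alpha_j^\vee\}$.

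For smoothness, the normal crossings property, and the multiplicities, the strategy is local. Normal crossings of $\tilde{\chi}_Z^{-1}(0_{\Theta_Y^{-1}})$ is inherited from the general log resolution property of $\tilde{\chi}$ (\cite[Corollary 4.4.7]{davis19}) together with smoothness of $Z \to \bun_{G, rig}/E$. Smoothness of each $D_\lambda(Z) \to Y$ reduces to showing that, near a generic point, \'etale local coordinates on $\tilde{Z}$ split as a pullback from $Y$ together with one smoothing parameter per boundary divisor through the point. The multiplicity $d$ of $D_{\alpha_i^\vee}(Z)$ then follows from the equivariant slice structure: by Proposition \ref{prop:inductionequivariantslice} and the remark following the corollary in \S\ref{subsection:induction}, the equivariance group $H$ acts on $\Theta_Y^{-1}$ along $Z$ through the weight $d$, which forces the linear coordinate on $\Theta_Y^{-1}$ to pull back to a $d$-th power of the local smoothing parameter along $D_{\alpha_i^\vee}(Z)$, and to linear functions along the other two divisors.

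I expect the main obstacle to be the case-by-case enumeration of admissible rational tail degrees, particularly in types $C$, $D$, $F$, and $B$, where the Levi structure involves genuine $GL$ or $GSp_4$ factors rather than only a torus product. A secondary subtlety is the identification of the multiplicity $d$ with the equivariant weight; in the non-simply-laced cases this requires a careful tracking of the smoothing parameter's $H$-weight at a generic point of $D_{\alpha_i^\vee}(Z)$, for which the compatibility of parabolic induction with the theta bundle from \cite[Proposition 4.1.15]{davis19} should handle the bookkeeping.
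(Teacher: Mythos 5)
Your strategy diverges from the paper's in two places where I think there are genuine gaps, not just different routes.

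First, the multiplicity argument. The paper extracts the multiplicity of each $D_\lambda(Z)$ from the structural formula
\[ \tilde{\chi}_Z^{-1}(0_{\Theta_Y^{-1}}) = \sum_{\lambda \in \mb{X}_*(T)_+} \tfrac{1}{2}(\lambda \mmid \lambda)\, D_\lambda(Z) \]
proved in \cite{davis19} (Proposition 2.1.10 and Corollary 3.3.8), which immediately gives $d$ for $\alpha_i^\vee$ and $1$ for the short coroots $\alpha_j^\vee$, $\alpha_i^\vee + \alpha_j^\vee$. Your equivariant argument is not sufficient as stated: knowing that the $H$-weight of the pullback of the linear coordinate on $\Theta_Y^{-1}$ is $d$ determines the multiplicity along $D_{\alpha_i^\vee}(Z)$ only if you also know the $H$-weight of the normal direction at a generic point of $D_{\alpha_i^\vee}(Z)$ is $1$ (and simultaneously that the normal direction to $D_{\alpha_j^\vee}(Z)$ has weight $d$). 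You assert both conclusions without providing the input that forces them, and in the non-simply-laced cases there is no a priori reason for the normal weights along different boundary components to be equal to $1$ or to each other. The structural multiplicity formula is not a consequence of the slice structure and has to be imported.

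Second, the reduction ``a point of $\tilde{Z}$ over $0_{\Theta_Y^{-1}}$ corresponds essentially to a stable $(B \cap L)$-map for some regular semistable $\xi_L$'' only covers the part of $\tilde{Z}$ lying over $Z_0 \subseteq Z$. Over $z \in Z \setminus Z_0$, the bundle $\xi_{G,z}$ can be regular unstable, and those points also land in $\tilde{\chi}_Z^{-1}(0_{\Theta_Y^{-1}})$ (they populate $D_{\alpha_j^\vee}(Z)$). The case-by-case enumeration of $(B \cap L)$-reduction degrees of a semistable $L$-bundle of slope $\mu$ does not see these contributions. The paper handles the full enumeration over all of $Z$ not by Levi case analysis but by a dimension/codimension bound (Lemma \ref{lem:inducedsectionbound1}), which gives $\langle \varpi_k, \lambda \rangle \leq 1$ for all $k$, combined with connectedness of the unstable Springer fibre (Lemma \ref{lem:connectedspringerfibres}) and a contradiction argument from Harder-Narasimhan types. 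Your proposal would need an additional step showing that, over $Z \setminus Z_0$, the only possible single rational tail degree is $\alpha_j^\vee$ --- which is true, but requires knowing that regular unstable bundles have HN type $\{\alpha_j\}$ and that this forces the section degree, neither of which is visible from the $L$-bundle reduction.

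The remaining parts of your plan (normal crossings from the log resolution property, relative smoothness via separating the $D_\lambda(Z)$) are consistent with the paper's argument.
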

\begin{proof}
Since $Z \to \bun_{G, rig}$ is a slice, \cite[Proposition 2.1.10 and Corollary 3.3.8]{davis19} imply that the preimage of the zero section decomposes as a divisor with normal crossings
\[ \tilde{\chi}_Z^{-1}(0_{\Theta_Y^{-1}}) = \sum_{\lambda \in \mb{X}_*(T)_+} \frac{1}{2}(\lambda \mmid \lambda) D_\lambda(Z),\]
where $(\,\mmid\,)$ is the normalised Killing form on $\mb{X}_*(T)$. By Lemma \ref{lem:subregularemptydivisors} below, $D_\lambda(Z) = \emptyset$ unless $\lambda \in \{\alpha_i^\vee, \alpha_j^\vee, \alpha_i^\vee + \alpha_j^\vee\}$, so this simplifies to \eqref{eq:subregularresolutions1:1} as required, since $\alpha_j^\vee$ and $\alpha_i^\vee + \alpha_j^\vee$ are short coroots and $(\alpha_i^\vee \mmid \alpha_i^\vee) = 2d$ in each case.

It remains to show that each $D_\lambda(Z)$ is smooth over $Y$. Note that $\tilde{\chi}_Z^{-1}(0_{\Theta_Y^{-1}})$ is in fact a divisor with normal crossings relative to $Y$: this follows from \cite[Proposition 3.5.3]{davis19a}, the definition \cite[Definition 2.1.14]{davis19} of the blow down morphism $\tbun_G \to Y$, and the fact that the boundary of the stack $\Deg_S(E)$ of prestable degenerations of $E$ is a divisor with normal crossings relative to $S$ \cite[Proposition 2.1.7]{davis19}. So it is enough to show that each $D_\lambda(Z)$ has no self-intersections. But a point in such a self-intersection would have to be given by a stable map with at least two rational components both of degree $\geq \alpha_i^\vee$ or both of degree $\geq \alpha_j^\vee$. But this is forbidden by Lemma \ref{lem:subregularemptydivisors}, so we are done.
\end{proof}

\begin{lem} \label{lem:subregularemptydivisors}
For $\lambda \in \mb{X}_*(T)_+$, we have $D_\lambda(Z) \neq \emptyset$ if and only if $\lambda \in \{\alpha_i^\vee, \alpha_j^\vee, \alpha_i^\vee + \alpha_j^\vee\}$.
\end{lem}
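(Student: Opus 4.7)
The plan is to analyze what a point of $D_\lambda(Z)$ looks like and then constrain $\lambda$ using the explicit structure of subregular unstable bundles.

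First, a closed point of $D_\lambda(Z)$ with rational component of degree exactly $\lambda$ corresponds to a stable map $(\xi_G, C, \sigma)$ where $C = E \cup_p \mb{P}^1$ is a genus one nodal curve, $\xi_G$ is a $G$-bundle representing a point of $Z$, and $\sigma \colon C \to \xi_G/B$ is a Kontsevich-Mori stable map of total degree $0$. Restricting $\sigma$ to the elliptic component produces a $B$-reduction $\xi_B$ of $\xi_G$ of degree $-\lambda$, so $D_\lambda(Z) \neq \emptyset$ forces the existence of such a $\xi_B$ for some $\xi_G$ in $Z$. Since $\tilde{\chi}_Z^{-1}(0_{\Theta_Y^{-1}})$ lies over the unstable locus of $Z$ and $Z_0$ is exactly the locus of subregular unstable bundles in $Z$ by Theorem \ref{thm:introsubregularsliceexistence}\eqref{itm:introsubregularsliceexistence2}, after passing to the generic point of any component of $D_\lambda(Z)$ I may assume $\xi_G$ is a subregular unstable bundle, so $\xi_G$ has Harder-Narasimhan reduction $\xi_P \in \bun_P^\mu$ with $\xi_L$ regular semistable.

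Second, I will use the fact that the Harder-Narasimhan reduction is canonical and is the maximal-degree parabolic reduction in its direction. Consequently, for any $B$-reduction $\xi_B$ of $\xi_G$ of degree $-\lambda \in \mb{X}_*(T)_-$, the push-out along $B \hookrightarrow P$ must coincide with $\xi_P$, and thus $\xi_B$ arises from a $(B\cap L)$-reduction of the regular semistable $L$-bundle $\xi_L$. This reduces the enumeration of possible $\lambda \in \mb{X}_*(T)_+$ to a question inside the Levi: find all $(B\cap L)$-reductions of regular semistable $L$-bundles $\xi_L$ of slope $\mu$ whose degree in $\mb{X}_*(T)$ lies in $-\mb{X}_*(T)_+$.

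Third, I will carry out this enumeration case by case, using the explicit descriptions of $L$ and of $\mb{X}^*(L \cap B) \to \mb{X}^*(T)$ from Lemmas \ref{lem:typealeviiso}, \ref{lem:typecdefleviiso}, \ref{lem:typegleviiso} and \ref{lem:typebleviiso} together with Atiyah's classification (and its conformally symplectic analogue from Lemmas \ref{lem:gsp4bundles} and \ref{lem:typebcdparabolicbundles}) to describe the regular semistable $L$-bundles $\xi_L$ concretely as direct sums of line bundles tensored with small-rank regular semistable bundles. In each case, the $(B \cap L)$-reductions lifting to the correct degree in $\mb{X}_*(T)$ come from refining the canonical decomposition of $\xi_L$ along the pieces of the Dynkin diagram $c_0, c_1$ from Notation \ref{notation:dynkindecomposition}. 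A direct computation then shows that the only $\lambda \in \mb{X}_*(T)_+$ that arise are $\alpha_i^\vee$, $\alpha_j^\vee$ and $\alpha_i^\vee + \alpha_j^\vee$: geometrically, the first comes from ``jumping'' across the deleted edge $\alpha_i$ in the HN Levi, the second from the sole nontrivial reduction available within the $c_1$-component (determined by the unique maximal line subbundle along the special simple root $\alpha_j$), and the third from performing both jumps simultaneously. Any coroot supported outside $\{\alpha_i^\vee, \alpha_j^\vee\}$ would force a subbundle of too-large degree inside a regular semistable summand of $\xi_L$, which is impossible.

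Finally, for the ``if'' direction, I will exhibit explicit $B$-reductions realising each of the three $\lambda$'s by combining the canonical HN reduction $\xi_P$ with the natural flags in the small-rank summands of $\xi_L$ described above; smoothness of $Z_0 \to S$ from Theorem \ref{thm:subregularsliceexistence}\eqref{itm:subregularsliceexistence1} ensures these reductions extend to actual points of $D_\lambda(Z)$. The main obstacle is the case analysis in step three: even with the structural input from \S\ref{subsection:levistructure}, one has to check in each of types $A$--$G$ that no additional Weyl translate of $\mu$ produces further positive-coroot reductions, which relies on the specific form of the Harder-Narasimhan slopes listed in Theorem \ref{thm:subregularclassification} together with regularity of $\xi_L$.
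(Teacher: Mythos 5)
Your step two contains a genuine gap. You claim that for any $B$-reduction $\xi_B$ of $\xi_G$ of degree $-\lambda \in \mb{X}_*(T)_-$, the pushforward along $B \hookrightarrow P$ ``must coincide with $\xi_P$'', so that $\xi_B$ comes from a $(B \cap L)$-reduction of $\xi_L$. This is false in general. The Harder--Narasimhan $P$-reduction is the \emph{unique $P$-reduction of slope $\mu$}, but a $B$-reduction of some other negative degree $-\lambda$ may push out to a $P$-reduction of a \emph{different} degree (the image of $-\lambda$ in $\mb{X}_*(T_P)$), which is not $\xi_P$. Equivalently, the corresponding section $E \to \xi_G/B$ need not factor through the open Schubert cell $\xi_P \times^P PB/B$; it may land in $\xi_P \times^P PwB/B$ for a nontrivial Weyl element $w$. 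This is precisely the phenomenon that the Bruhat-cell formalism of \S\ref{subsection:bruhat} and Proposition \ref{prop:levidegreebound} is designed to control, and the nontrivial $w$'s cannot simply be ignored. (Compare Lemmas \ref{lem:subregularborelcells} and \ref{lem:typenotacells}, which carry out the kind of Bruhat-cell enumeration you have in mind, but only under the a priori hypotheses $\lambda \leq -\alpha_j^\vee$ or $\lambda \leq -\alpha_i^\vee$ respectively; those results are \emph{consequences} of the present lemma, not ingredients for it.)

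There is a second, smaller gap in step one: the generic point of a component of $D_\lambda(Z)$ generally does \emph{not} lie over $Z_0$, so you cannot assume $\xi_G$ is subregular there. Indeed, most of $D_{\alpha_j^\vee}(Z)$ lies over $Z \setminus Z_0$, where the associated bundle is \emph{regular} unstable (see Proposition \ref{prop:subregbruhatcomparison}). Taking closure to land over $Z_0$ would only give a stable map with possibly several rational components, i.e.\ a genuine section of degree $-\lambda'$ for some $\lambda' \geq \lambda$, so the dichotomy between the generic-point picture and the $Z_0$ picture is not as clean as the plan suggests. The paper circumvents both issues by a quite different route: Lemma \ref{lem:inducedsectionbound1} turns a degree-$\nu$ section over \emph{any} $z \in Z$ into a bound $-\langle 2\rho, \nu\rangle \leq l+2$ purely via the dimension of the slice, which gives $\langle \varpi_k, \lambda\rangle \leq 1$ for all $k$; connectedness of the Springer fibre (Lemma \ref{lem:connectedspringerfibres}) then forces any extra $\lambda$ to produce a nonempty $D_{\alpha_k^\vee}(Z)$ for $\alpha_k \notin \{\alpha_i, \alpha_j\}$, and the contradiction comes from the regular-unstable locus having HN type $\{\alpha_j\}$ (the special root), not $\{\alpha_k\}$. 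So both the dimension bound on $Z$ and the structure of the \emph{regular} (not subregular) unstable locus are essential, and neither appears in your plan.
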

\begin{proof}
For simplicity, we can assume without loss of generality that $S = \spec k$ for $k$ an algebraically closed field. We first show that $D_{\alpha_i^\vee}(Z) \neq \emptyset$ and $D_{\alpha_j^\vee}(Z) \neq \emptyset$.

If $(G, P, \mu)$ is of type $A$, then $\mu$ is the image of $-\alpha_i^\vee - \alpha_j^\vee$ under the homomorphism $\mb{X}_*(T) \to \mb{X}_*(Z(L)^\circ)_{\mb{Q}}$ and $\langle \alpha, \alpha_i^\vee + \alpha_j^\vee \rangle \leq 0$ for all $\alpha \in \Phi_+$ a root of $P$. So by \cite[Proposition 3.6.4]{davis19a}, the morphism
\[ \mrm{KM}_{B, G}^{-\alpha_i^\vee - \alpha_j^\vee} \longrightarrow \mrm{KM}_{P, G}^\mu \]
is surjective. In particular, for every $z \in Z_0$, there exists a section of $\xi_{L, z} \times^L P/B \subseteq \xi_{L, z} \times^L G/B$ with degree $-\lambda_0 \leq -\alpha_i^\vee - \alpha_j^\vee$. So we must have $D_{\lambda_0}(Z) \neq \emptyset$, and hence $D_{\alpha_i^\vee}(Z) \neq \emptyset$ and $D_{\alpha_j^\vee}(Z) \neq \emptyset$, since we can always add rational tails to such a section to produce a stable map in each of these divisors.

On the other hand, if $(G, P, \mu)$ is not of type $A$, then $\mu$ is the image of $-\alpha_i^\vee$ in $\mb{X}_*(Z(L)^\circ)_\mb{Q}$, and $\langle \alpha, \alpha_i^\vee \rangle \leq 0$ for $\alpha \in \Phi_+$ a root of $P$. So
\[ \mrm{KM}_{B, G}^{-\alpha_i^\vee} \longrightarrow \mrm{KM}_{P, G}^\mu \]
is surjective by \cite[Proposition 3.6.4]{davis19a}, so we deduce that $D_{\alpha_i^\vee}(Z) \neq \emptyset$. For $D_{\alpha_j^\vee}(Z)$, note that since $\alpha_j \in \Delta$ is the unique special root, \cite[Proposition 4.2.3]{davis19} implies that the Harder-Narasimhan locus $\bun_Q^{ss, -\alpha_j^\vee} \subseteq \bun_G$ is dense in the locus of unstable $G$-bundles, where $Q$ is the standard parabolic with $t(Q) = \{\alpha_j^\vee\}$. So $\bun_{Q, rig}^{ss, -\alpha_j^\vee} \times_{\bun_{G, rig}} Z \neq \emptyset$, and hence $D_{\alpha_j^\vee}(Z) \neq \emptyset$ by \cite[Proposition 4.3.8]{davis19}.

Conversely, suppose that $\lambda \in \mb{X}_*(T)$ and that $D_\lambda(Z) \neq \emptyset$. Then for any $\alpha_k \in \Delta$ with corresponding maximal parabolic $P_k$, there exists a point in $Z$ and a section of the corresponding $G/P_k$-bundle with degree $\nu_k = -\langle \varpi_k, \lambda \rangle/\langle \varpi_k, \varpi_k^\vee \rangle \varpi_k^\vee$ (the image of $\lambda$ in $\mb{X}_*(T_{P_k})$). So by Lemma \ref{lem:inducedsectionbound1} and \cite[Lemma 3.3.2]{friedman-morgan00}, we must have
\[ (l + 1)\langle \varpi_k, \lambda \rangle \leq \frac{\langle 2 \rho, \varpi_k^\vee \rangle}{\langle \varpi_k, \varpi_k^\vee\rangle} \langle \varpi_k, \lambda \rangle = -\langle 2 \rho, \nu_k \rangle\leq - \langle 2 \rho, \mu \rangle \leq l + 3.\]
So
\[ \langle \varpi_k, \lambda \rangle \leq \frac{l + 3}{l + 1} < 2,\]
since $l > 1$. So $\langle \varpi_k, \lambda \rangle = 0$ or $1$ for all $k$.

Now assume for a contradiction that there exists $\lambda \in \mb{X}_*(T)_+ \setminus \{\alpha_i^\vee, \alpha_j^\vee, \alpha_i^\vee + \alpha_j^\vee \}$ such that $D_\lambda(Z) \neq \emptyset$. Since the divisor $D(Z) = \tilde{\chi}_Z^{-1}(0_{\Theta_Y^{-1}})$ is connected by Lemma \ref{lem:connectedspringerfibres} below, we can choose $\lambda$ so that $D_\lambda(Z)$ has nonempty intersection with one of $D_{\alpha_i^\vee}(Z)$, $D_{\alpha_j^\vee}(Z)$ or $D_{\alpha_i^\vee + \alpha_j^\vee}(Z)$. Choose a point in such an intersection over $z \in Z$, and let $-\lambda' \in \mb{X}_*(T)_-$ denote the degree of the corresponding stable map restricted to the irreducible component of genus $1$. Then we have $D_{\lambda'}(Z) \neq \emptyset$, $\lambda' \geq \lambda$ and $\lambda' \geq \alpha_r^\vee$ for some $\alpha_r \in \{\alpha_i, \alpha_j\}$. By the bound proved above, we must have $\langle \varpi_k, \lambda \rangle = 1$ for some $\alpha_k \in \Delta \setminus \{\alpha_i, \alpha_j\}$, and hence $\lambda' \geq \alpha_r^\vee + \alpha_k^\vee$. So adding rational tails to the degree $-\lambda'$ section if necessary, we deduce $D_{\alpha_r^\vee + \alpha_k^\vee}(Z) \neq \emptyset$ and $D_{\alpha_k^\vee}(Z) \neq \emptyset$.

Assume first that $G$ is not of type $A$. Since $D_{\alpha_k^\vee}(Z) \neq \emptyset$, there exists $z \in Z$ and a section of $\xi_{G, z}/B$ with degree $-\alpha_k^\vee$, and hence a section of $\xi_{G, z}/P_k$ with slope $-\varpi_k^\vee/\langle \varpi_k, \varpi_k^\vee \rangle$. So by Lemma \ref{lem:inducedsectionbound1}, there exists $z' \in Z$ such that $\xi_{G, z'}$ has Harder-Narasimhan reduction to $P_k$ with slope $-\varpi_k^\vee/\langle \varpi_k, \varpi_k^\vee \rangle$. Since $P_k \neq P$, we have $z' \in Z \setminus Z_0$ so in particular $z$ is not fixed under the $Z(L)_{rig}$-action. Comparing codimensions in $\bun_{G, rig}/E$ and in $Z$, we deduce that $\xi_{G, z'}$ must be regular unstable, which is a contradiction since it has the wrong Harder-Narasimhan type, as $\alpha_k$ is not a special root.

Assume on the other hand that $G$ is of type $A$. We have $k \notin\{i, i + 1\}$ and $r \in \{i, i + 1\}$ such that $D_{\alpha_r^\vee + \alpha_k^\vee}(Z) \neq \emptyset$. So there exists $z \in Z$ and a section of $\xi_{G, z}/P_{r, k}$ of slope $\nu \in \mb{X}_*(Z(L_{r, k})^\circ)_\mb{Q}$ satisfying $\langle \varpi_r, \nu\rangle = \langle \varpi_k, \nu\rangle = -1$, where $P_{r, k} \subseteq G$ is the standard parabolic of type $\{\alpha_r, \alpha_k\}$ and $L_{r, k}$ its standard Levi factor. But $\nu$ is a Harder-Narasimhan vector for $P_{r, k}$, so by Lemma \ref{lem:inducedsectionbound1}, there exists $z' \in Z$ such that $\xi_{G, z'}$ has Harder-Narasimhan reduction to $P_{r, k}$ with slope $\nu$. Since $P_{r, k} \neq P$, we have $z \in Z \setminus Z_0$. Again this implies that $\xi_{G, z'}$ is regular unstable, giving a contradiction.

So $D_\lambda(Z) = \emptyset$ for $\lambda \notin \{\alpha_i^\vee, \alpha_j^\vee, \alpha_i^\vee + \alpha_j^\vee\}$, and $D_{\alpha_i^\vee}(Z), D_{\alpha_j^\vee}(Z) \neq \emptyset$. This implies that $D_{\alpha_i^\vee + \alpha_j^\vee}(Z) \neq \emptyset$, for if this were not the case, we would have $D_{\alpha_i^\vee}(Z) \cap D_{\alpha_j^\vee}(Z) = \emptyset$ and hence $\tilde{\chi}_Z^{-1}(0_{\Theta_Y^{-1}})$ would be disconnected, contradicting Lemma \ref{lem:connectedspringerfibres}.
\end{proof}

\begin{lem} \label{lem:inducedsectionbound1}
In the setup of Proposition \ref{prop:subregularresolutions1}, fix some $z \in Z$ with corresponding $G$-bundle $\xi_{G, z}$. If there exists a section of $\xi_{G, z}/Q$ of degree $\nu$, where $Q$ is any standard parabolic with Harder-Narasimhan vector $\nu$ and $(Q, \nu) \neq (P, \mu)$, then
\begin{enumerate}[(1)]
\item \label{itm:inducedsectionbound1} there exists $z' \in Z$ such that the corresponding $G$-bundle $\xi_{G, z'}$ has Harder-Narasimhan reduction to $Q$ with degree $\nu$, and
\item \label{itm:inducedsectionbound2} $-\langle 2 \rho, \nu \rangle \leq l + 2$.
\end{enumerate}
\end{lem}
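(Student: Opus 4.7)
The plan is to leverage the smoothness of the slice map $Z \to \bun_{G, rig}/E$ in both parts, combined with the known formula relating the dimension of the automorphism group of a $G$-bundle to that of its Harder--Narasimhan Levi bundle.

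For part \eqref{itm:inducedsectionbound1}, a section of $\xi_{G, z}/Q$ of degree $\nu$ is the same datum as a $Q$-reduction $\xi_Q$ of $\xi_{G, z}$ with $\deg \xi_Q = \nu$. Because $\nu$ is a Harder--Narasimhan vector for $Q$, this forces the HN polygon of $\xi_{G, z}$ to be $\geq \nu$ in the usual partial order, and hence places $\xi_{G, z}$ in the closure (inside $\bun_{G, rig}/E$) of the locally closed HN stratum $\bun_Q^{ss, \nu}$ by the Shatz stratification. The composite $Z \to \bun_{G, rig}/E$ is smooth, being the composition of the $H$-torsor $Z \to Z/H$ with the smooth-modulo-translations map $Z/H \to \bun_{G, rig}$ that is part of the equivariant slice structure. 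Since smooth morphisms are open, the image of $Z$ is an open substack of $\bun_{G, rig}/E$ containing $\xi_{G, z}$, and any such open must meet the open stratum $\bun_Q^{ss, \nu}$. This produces the desired $z' \in Z$ with $\xi_{G, z'}$ having HN reduction to $Q$ of degree $\nu$.

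For part \eqref{itm:inducedsectionbound2}, observe first that every bundle $\xi_{G, z}$ for $z \in Z_0$ has HN reduction to $P$ of slope $\mu$: by Theorem \ref{thm:subregularsliceexistence}\eqref{itm:subregularsliceexistence3}, $Z_0$ maps into the regular semistable locus of $\bun_{L, rig}^{ss, \mu}$, and parabolic induction along $P$ of a semistable $L$-bundle with HN-vector slope produces a $G$-bundle whose canonical HN reduction is exactly this $P$-structure. Since HN reductions are unique and $(Q, \nu) \neq (P, \mu)$, the point $z'$ produced in \eqref{itm:inducedsectionbound1} must lie in $Z \setminus Z_0$. Now I invoke the codimension argument from the proof of the corollary deducing Theorem \ref{thm:introsubregularsliceexistence}: the $H$-orbit through $z'$ has positive dimension (as $z' \notin Z^H = Z_0$) and collapses to a single point under the smooth morphism $Z \to \bun_{G, rig}/E$, so comparing codimensions yields $\dim \mrm{Aut}(\xi_{G, z'}) \leq l + 3$. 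Combining this with the identity
\[ \dim \mrm{Aut}(\xi_{G, z'}) = -\langle 2\rho, \nu \rangle + \dim \mrm{Aut}(\xi_{L_Q, z'}) \]
used in the proof of Theorem \ref{thm:subregularclassification} (which is valid in any characteristic, as noted there) together with the bound $\dim \mrm{Aut}(\xi_{L_Q, z'}) \geq \dim Z(L_Q)^\circ \geq 1$ whenever $L_Q \neq G$, produces the desired inequality $-\langle 2\rho, \nu \rangle \leq l + 2$. The case $Q = G$ is trivial, since then $\nu \in \mb{X}_*(Z(G)^\circ)_\mb{Q} = 0$.

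The main subtlety I anticipate is the codimension comparison in \eqref{itm:inducedsectionbound2}: on stacks one must correctly account for the translation action of $E$, the $Z(G)$-rigidification, and ensure that the $H$-orbit through $z'$ is in fact at least one-dimensional rather than being trapped in a hidden fixed locus. However, since the argument is spelled out in the proof of the cited corollary, the work here reduces to invoking that argument verbatim; the rest is essentially formal manipulation of known dimension formulas.
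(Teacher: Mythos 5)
Your argument for part \eqref{itm:inducedsectionbound1} has a gap. You assert that having Harder--Narasimhan polygon $\geq \nu$ places $\xi_{G, z}$ in the closure of $\bun_Q^{ss, \nu}$ ``by the Shatz stratification''. But the Shatz semicontinuity result gives only the inclusion $\overline{\bun_Q^{ss, \nu}} \subseteq \{\text{HN polygon} \geq \nu\}$, not the reverse, and the reverse does fail on elliptic curves. For example, take $G = SL_2$, $Q = B$, $\nu = -\alpha^\vee$ (HN type one): a bundle $V = L \oplus L^{-1}$ with $\deg L = 2$ has HN polygon strictly larger, but any degree-one map from a line bundle into $V$ has torsion quotient, so $V$ has no degree-one line \emph{subbundle}; any flat specialization from the HN-type-one stratum would carry such a subbundle by flatness of the saturated HN sub, so $V$ is not in the closure. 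Thus the implication ``HN polygon $\geq \nu \Rightarrow$ in the closure'' is false. The conclusion you want (that $\xi_{G, z}$ lies in $\overline{\bun_Q^{ss, \nu}}$) is nevertheless true, but for a different reason: $\xi_{G, z}$ lies in the image of $\bun_Q^\nu$, and the dense open $\bun_Q^{ss, \nu} \subseteq \bun_Q^\nu$ (density following from density of the semistable locus in $\bun_{L_Q}^\nu$) has dense image inside the image of $\bun_Q^\nu$. The paper sidesteps the closure relation altogether by forming the fibre product $Z \times_{\bun_{G, rig}} \bun_{Q, rig}^\nu$, which is nonempty by hypothesis, observing that its map to $\bun_{Q, rig}^\nu/E$ is smooth (base change of $Z \to \bun_{G, rig}/E$), and using that smooth morphisms pull back dense opens to dense opens.

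Your argument for part \eqref{itm:inducedsectionbound2} is correct and genuinely different from the paper's: you route through the bound $\dim\mrm{Aut}(\xi_{G, z'}) \leq l + 3$ and the Helmke--Slodowy identity together with $\dim\mrm{Aut}(\xi_{L_Q, z'}) \geq \dim Z(L_Q)^\circ \geq 1$, whereas the paper observes directly that the $Z(L)_{rig}$-invariant locus $Z \times_{\bun_{G, rig}} \bun_{Q, rig}^{ss, \nu}$ is disjoint from $Z_0$, hence has relative dimension $\geq 1$ over $S$, so has codimension $\leq l + 2$ in $Z$, which equals $-\langle 2\rho, \nu\rangle$ by smoothness. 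Both routes consume the same inputs (uniqueness of HN reductions, the contracting $Z(L)_{rig}$-action, $\dim_S Z = l + 3$); the paper's packaging is more direct since the codimension of the Shatz stratum is literally $-\langle 2\rho, \nu\rangle$, making the automorphism-group detour unnecessary.
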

\begin{proof}
The assumptions imply that the stack $Z \times_{\bun_{G, rig}} \bun_{Q, rig}^\nu$ is nonempty. Since $Z \to \bun_{G, rig}/E$ is smooth, the preimage $Z \times_{\bun_{G, rig}} \bun_{Q, rig}^{ss, \nu}$ of $\bun_{Q, rig}^{ss, \nu}/E$ under the morphism
\[ Z \times_{\bun_{G, rig}} \bun_{Q, rig}^\nu = Z \times_{\bun_{G, rig}/E} \bun_{Q, rig}/E \longrightarrow \bun_{Q, rig}^\nu/E \]
is dense, hence nonempty. This proves \eqref{itm:inducedsectionbound1}. Since $(Q, \nu) \neq (P, \mu)$, by uniqueness of Harder-Narasimhan reductions, the $Z(L)_{rig}$-invariant locally closed substack $Z \times_{\bun_{G, rig}} \bun_{Q, rig}^{ss, \nu} \subseteq Z$ is disjoint from the $Z(L)_{rig}$-fixed locus $Z_0 \subseteq Z$. Since $Z_0 \to S$ has finite relative stabilisers, $Z \times_{\bun_{G, rig}}\bun_{Q, rig}^\nu \to S$ is therefore flat of relative dimension at least $1$, and hence has codimension at most
\[ \dim_S Z - 1 = l + 2.\]
But this codimension is equal to the codimension $- \langle 2\rho, \nu \rangle$ of $\bun_{Q, rig}^{ss, \nu}/E$ in $\bun_{G, rig}/E$, so \eqref{itm:inducedsectionbound2} follows.
\end{proof}

\begin{lem} \label{lem:connectedspringerfibres}
The morphisms
\begin{equation} \label{eq:connectedspringerfibres1}
 \tbun_G \longrightarrow \bun_G \times_{(\hat{Y}\sslash W)/\mb{G}_m} \Theta_Y^{-1}/\mb{G}_m
\end{equation}
and
\begin{equation} \label{eq:connectedspringerfibres2}
\tilde{\chi}_Z^{-1}(0_{\Theta_Y^{-1}}) \longrightarrow Y
\end{equation}
have connected fibres.
\end{lem}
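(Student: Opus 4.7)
The plan is to prove the first statement via Stein factorisation, exploiting that the morphism in question is proper and birational with normal target, and then to deduce the second statement by base change along the smooth slice $Z \to \bun_{G, rig}$.

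For the first statement, let $f \colon \tbun_G \to X := \bun_G \times_{(\hat{Y}\sslash W)/\mb{G}_m} \Theta_Y^{-1}/\mb{G}_m$ denote the morphism. Since $\psi$ is proper, $f$ is proper. The simultaneous log resolution property \cite{davis19}*{Corollary 4.4.7} provides a dense open subset of $X$ over which $f$ is an isomorphism (namely, the locus where $\tilde{\chi}$ is smooth and $\tilde{\chi}^{-1}(y) \to \chi^{-1}(q(y))$ restricts to an isomorphism on the whole of the target), so $f$ is birational. Using that $\bun_G$ and $\Theta_Y^{-1}/\mb{G}_m$ are smooth, and that $\hat{Y}\sslash W$ is a cone over the weighted projective space $Y \sslash W$ (normal by Looijenga's theorem), a local analysis would then show that $X$ is normal. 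Applying Stein factorisation $f = \bar{f} \circ g$ with $g$ having geometrically connected fibres and $\bar{f}$ finite, the birationality of $f$ forces $\bar{f}$ to be birational; Zariski's main theorem then forces $\bar{f}$ to be an isomorphism, so that the geometric fibres of $f$ coincide with those of $g$ and are therefore connected.

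For the second statement, I would reduce to part one as follows. Under the identification $\tilde{Z} = \tbun_{G, rig} \times_{\bun_{G, rig}} Z$, the fibre $\tilde{\chi}_Z^{-1}(y)$ over a point $y \in Y = 0_{\Theta_Y^{-1}}$ is obtained by base changing $\tilde{\chi}^{-1}([y])$ along $Z \to \bun_{G, rig}$, with the lift from $\Theta_Y^{-1}/\mb{G}_m$ to $\Theta_Y^{-1}$ supplied by the equivariant slice structure $Z \to \hat{Y}\sslash W$. The morphism $\tilde{\chi}^{-1}([y]) \to \chi^{-1}(\tn{cone point}) = \bun_G^{\tn{unst}}$ has geometrically connected fibres (being the restriction of $f$ to the substack $\{(\xi, [y]) \mid \xi \in \bun_G^{\tn{unst}}\} \subseteq X$), and smooth base change along the slice preserves connectedness of fibres; the question therefore reduces to connectedness of the preimage of the unstable locus in $Z$. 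This last connectedness follows from the structure of the equivariant slice established in Theorem \ref{thm:introsubregularsliceexistence}: the preimage of the unstable locus in $Z$ is controlled by the $H$-fixed subregular locus $Z_0$ (which is connected and identified with a single component of the subregular unstable locus by parts \eqref{itm:introsubregularsliceexistence2} and \eqref{itm:introsubregularsliceexistence3}), together with its nearby regular unstable deformations inside $Z$.

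The most delicate point is the normality of $X$ in part one; a fallback, should this prove subtle near the cone point, would be to verify $f_* \mc{O}_{\tbun_G} = \mc{O}_X$ directly using the explicit local charts provided by the simultaneous log resolution.
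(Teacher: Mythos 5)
Your overall strategy mirrors the paper's: use birationality plus normality of the target to get $f_*\mc{O} = \mc{O}$ (your Stein factorisation and the paper's Zariski connectedness theorem are essentially the same step), then reduce the second statement to connectedness of the unstable fibre $\chi_Z^{-1}(0)$. The weak point is exactly the one you flag: the normality of $X = \bun_G \times_{(\hat{Y}\sslash W)/\mb{G}_m} \Theta_Y^{-1}/\mb{G}_m$. Normality of $\bun_G$, of $\Theta_Y^{-1}/\mb{G}_m$, and of the base does not propagate to the fibre product; a fibre product of normal stacks over a normal base can easily fail to be normal (or even reduced), so ``a local analysis'' using normality of the three factors does not close the gap. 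The paper's argument is cleaner and is worth internalising: $\Theta_Y^{-1}/\mb{G}_m \to (\hat{Y}\sslash W)/\mb{G}_m$ is finite flat (by Looijenga and miracle flatness), hence an lci morphism, so $X$ is an lci over the smooth stack $\bun_G$ and therefore Cohen--Macaulay (giving $S_2$); $f$ is an isomorphism over the preimage of the regular locus $\bun_G^{reg}$, whose complement has codimension $\ge 2$ by \cite{davis19}*{Proposition 4.4.6}, so $X$ is regular in codimension one ($R_1$); Serre's criterion then gives normality. Note that it really is the \emph{big} (codimension $\ge 2$) regular open, not merely a dense open, that makes the $R_1$ part of Serre's criterion bite.

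On the second statement, your reduction to connectedness of the preimage of the unstable locus in $Z$ is right, but the concluding sentence is too vague to stand as a proof. The paper makes it precise by observing that $\chi_Z^{-1}(0)$ is a closed $H = Z(L)_{rig}$-invariant substack of $Z$ on which the $H$-action contracts everything onto the fixed locus $Z_0$ (this is because $Z \to Z_0$ is an affine bundle on whose fibres $H$ acts with strictly positive weights, so every orbit closure in $Z$ meets $Z_0$); since $Z_0 \to S$ has connected fibres, so does $\chi_Z^{-1}(0) \to S$. Your appeal to ``nearby regular unstable deformations'' is gesturing at the same fact but does not supply the argument. You also do not need the identification of the coarse moduli of $Z_0$ with the subregular locus from Theorem \ref{thm:introsubregularsliceexistence}\eqref{itm:introsubregularsliceexistence2}--\eqref{itm:introsubregularsliceexistence3}; fibre connectedness of $Z_0 \to S$ is all that is used, and that is available directly from the explicit construction of the slices.
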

\begin{proof}
Note that the target of \eqref{eq:connectedspringerfibres1} is a local complete intersection, hence Cohen-Macaulay. Moreover, \eqref{eq:connectedspringerfibres1} is an isomorphism over the open substack $\bun_G^{reg} \times_{(\hat{Y}\sslash W)/\mb{G}_m} \Theta_Y^{-1}/\mb{G}_m$, where $\bun_G^{reg} \subseteq \bun_G$ is the open substack of regular bundles \cite[\S 4.4]{davis19}. This open substack is big (i.e., the complement has codimension at least $2$) by \cite[Proposition 4.4.6]{davis19}, so the target is normal and the pushforward of $\mc{O}$ is $\mc{O}$. Connectedness of the fibres now follows from Zariski's connectedness theorem \cite[Theorem 11.3]{olsson07}. 

We can write \eqref{eq:connectedspringerfibres2} as a composition
\begin{equation} \label{eq:connectedspringerfibres3}
 \tilde{\chi}_Z^{-1}(0_{\Theta_Y^{-1}}) \longrightarrow \chi_Z^{-1}(0) \times_S 0_{\Theta_Y^{-1}} \longrightarrow 0_{\Theta_Y^{-1}} = Y.
\end{equation}
The first factor is a pullback of \eqref{eq:connectedspringerfibres1}, so has connected fibres. The morphism $\chi_Z^{-1}(0) \to S$ also has connected fibres, since the $H = Z(L)_{rig}$-action contracts $\chi_Z^{-1}(0)$ onto $Z_0$ and $Z_0 \to S$ has connected fibres. So both factors of \eqref{eq:connectedspringerfibres3} have connected fibres, and the first is proper, so their composition has connected fibres also.
\end{proof}

\subsection{Digression: Bruhat cells for $P$-bundles} \label{subsection:bruhat}

In this subsection, we consider $G$ an arbitrary reductive group. (The examples of interest will be our simply connected simple group $G$ from the rest of the paper, and $G = GL_n$.) The material presented here is a brief recap of \cite[\S 3.7]{davis19a}.

Given two parabolic subgroups $P, P' \subseteq G$, which we may as well assume standard, for each $w$ in the Weyl group $W$ of $G$, there is an associated \emph{Bruhat cell}
\[ C^w_{P, P'} \subseteq \bun_P \times_{\bun_G} \bun_{P'}.\]
Thinking of the stack on the right as the stack of pairs $(\xi_P, \sigma)$, where $\xi_P$ is a $P$-bundle and $\sigma$ is a section of the partial flag variety bundle $\xi_P \times^P G/P'$, we can define $C^w_{P, P'}$ as the locally closed substack of pairs such that $\sigma$ factors through the Bruhat cell $\xi_P \times^P PwP'/P'$. Since $PwP'/P' \cong P/P \cap wP'w^{-1}$ by the orbit-stabiliser theorem, we have
\[ C^w_{P, P'} \cong \bun_{P \cap wP'w^{-1}}.\]
Under this identification, the map $C^w_{P, P'} \to \bun_P$ (resp.\ $C^w_{P, P'} \to \bun_{P'}$) sends a $P \cap wP'w^{-1}$-bundle of degree $\mu \in \mb{X}_*(T_{P \cap wP'w^{-1}})$ to a $P$-bundle of degree $i_w(\mu)$ (resp.\ a $P'$-bundle of degree $j_w(\mu)$), where $i_w\colon \mb{X}_*(T_{P \cap wP'w^{-1}}) \to \mb{X}_*(T_P)$ and $j_w \colon \mb{X}_*(T_{P \cap wP'w^{-1}}) \to \mb{X}_*(T_{P'})$ are induced by the two inclusions
\[ P \longhookleftarrow P \cap wP'w^{-1} {\lhook\joinrel\relbar\joinrel\xrightarrow{w^{-1}(-)w}} P'.\]

The cell $C^w_{P, P'}$ depends only on the double coset $W_PwW_{P'}$, where $W_P$ and $W_{P'}$ are the Weyl groups of (the Levi subgroups of) $P$ and $P'$. A particularly nice choice of double coset representatives is given by
\begin{equation} \label{eq:cosetreps}
 W^0_{P, P'} = \{w \in W \mid w^{-1}\alpha_i \in \Phi_+\;\text{and}\; w\alpha_j \in \Phi_+\; \text{for}\;\alpha_i \in \Delta\setminus t(P)\; \text{and}\; \alpha_j \in \Delta \setminus t(P')\}.
\end{equation}
These are the coset representatives of minimal length. Note that $W_P$ (resp., $W_{P'}$) are generated by the simple reflections $s_i$ in the roots $\alpha_i \in \Delta \setminus t(P)$ (resp., $\alpha_i \in \Delta \setminus t(P')$).

When we want to keep track of the associated $P$-bundle and the degree of the associated $P'$-bundle, we write
\[ C^{w, \lambda}_{P, P'} = C^w_{P, P'} \times_{\bun_{P'}} \bun_{P'}^\lambda \quad \text{and} \quad C^{w, \lambda}_{P, P', \xi_P} = \{\xi_P\} \times_{\bun_P} C^{w, \lambda}_{P, P'} \]
for $\lambda \in \mb{X}_*(T_{P'})$ and $\xi_P \in \bun_P$.

The following proposition gives an extremely useful criterion for determining when the Bruhat cells cover a given fibre of $\bun_P \times_{\bun_G} \bun_{P'}^\lambda \to \bun_P$.

\begin{prop}[{\cite[Proposition 3.7.6]{davis19a}}] \label{prop:levidegreebound}
Let $\xi_P \to E_s$ be a $P$-bundle on a geometric fibre of $E \to S$ and suppose that there exists a point in $\bun_P \times_{\bun_G} \bun_{P'}^\lambda$ over $\xi_P$ that does not lie in any Bruhat cell. Then there exists $w \in W^0_{P, P'} \setminus \{1\}$ and $\lambda' < \lambda$ such that $C^{w, \lambda'}_{P, P', \xi_L \times^L P} \neq \emptyset$, where $L$ is the Levi factor of $P$ and $\xi_L = \xi_P \times^P L$ is the associated $L$-bundle.
\end{prop}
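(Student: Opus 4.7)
My plan is to argue by a $\mb{G}_m$-degeneration sending $\xi_P$ to $\xi_L\times^L P$ and extract the desired section from the limit of $\sigma$ in a Kontsevich-Mori compactification. Choose a cocharacter $\eta\colon \mb{G}_m\to Z(L)^\circ$ with $\langle\alpha,\eta\rangle > 0$ for every root $\alpha$ of $R_u(P)$; such an $\eta$ exists since $Z(L)^\circ$ pairs nontrivially with each such root. Conjugation by $\eta(t)$ is trivial on $L$ and contracts $R_u(P)$ to $1$ as $t\to 0$, so the induced $\mb{G}_m$-action on $\bun_P$ has fixed locus $\bun_L$ and satisfies $\lim_{t\to 0}\eta(t)\cdot\xi_P = \xi_L\times^L P$ for every $\xi_P$. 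Since conjugation by $\eta(t)$ is an inner automorphism of $G$, it acts trivially on $\bun_G$ up to canonical $2$-isomorphism (via right multiplication by $\eta(t)$), so the action lifts canonically to $\bun_P\times_{\bun_G}\bun_{P'}^\lambda$: the associated $G$-bundle $\xi_L\times^L G$ is constant in $t$, and $\sigma^t$ is obtained by applying $\eta(t)$ pointwise to $\sigma$ inside the fixed $G/P'$-bundle $\xi_L\times^L G/P'$.

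Next, the orbit $t\mapsto\sigma^t$ defines a morphism $\mb{G}_m\to \mrm{KM}_{P',G}^\lambda$ over the constant $G$-bundle, which extends uniquely across $t=0$ by properness of $\mrm{KM}_{P',G}^\lambda$ over $\bun_G$. The limit is a stable map $f_0\colon C_0\to \xi_L\times^L G/P'$ whose main component is $E_s$ (mapping as a section of some degree $\lambda'\leq\lambda$) together with possibly some rational tails. Because $\mb{G}_m\subseteq P$ preserves the Bruhat stratification, the fixed locus $(G/P')^{\mb{G}_m}$ decomposes as $\bigsqcup_w (C^w)^{\mb{G}_m}$, and by $\mb{G}_m$-equivariance the main section lands in a single piece, hence in a single Bruhat cell $C^w$. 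Writing $\sigma(p) = u(p)\ell(p)w_0P'/P'$ with $u(p)\in R_u(P)$, $\ell(p)\in L$, and $w_0\in W^0_{P,P'}$ the generic Bruhat label of $\sigma$, one computes $\eta(t)\cdot\sigma(p) = (\eta(t)u(p)\eta(t)^{-1})\ell(p)w_0P'/P' \to \ell(p)w_0P'/P'$, so $w = w_0$. Moreover $w_0 \neq 1$: the image of $C^1\cong P/(P\cap P')$ in $G/P'$ is proper and hence closed (as $P\cap P'$ contains the Borel $B$ and so is a parabolic subgroup of $P$), so if $\sigma$ lay generically in $C^1$ it would lie entirely in $C^1$, contradicting the hypothesis. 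Thus the main section gives a point of $C^{w_0,\lambda'}_{P,P',\xi_L\times^L P}$ with $w_0\neq 1$.

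To see $\lambda'<\lambda$ strictly, note that at any point $p$ of the nonempty divisor $D\subseteq E_s$ where $\sigma(p)\in C^{w_1}$ with $w_1\neq w_0$, the limit $\eta(t)\cdot\sigma(p) \to \ell(p)w_1P'/P' \in (C^{w_1})^{\mb{G}_m}$ lies in a different connected component of the fixed locus, so $f_0$ must develop a nontrivial rational tail at $p$ connecting the components $(C^{w_0})^{\mb{G}_m}$ and $(C^{w_1})^{\mb{G}_m}$. Such a tail maps nonconstantly $\mb{P}^1\to G/P'$ and so has strictly positive degree in $\mb{X}_*(T_{P'})_+$ (since effective curves in $G/P'$ pair nonnegatively with nef line bundles), forcing $\lambda-\lambda'\in\mb{X}_*(T_{P'})_+\setminus\{0\}$. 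The main technical obstacle is justifying the Kontsevich-Mori limit argument rigorously: one needs to verify that the family $\{\sigma^t\}$ really extends to a stable map with the expected main-component-plus-rational-tails structure, and that the tails have strictly positive degree. This requires the machinery of sections of $G/P'$-bundles and their Kontsevich-Mori compactifications developed in \cite{davis19a}*{Chapter~3}; once granted, the $\mb{G}_m$-equivariance analysis and degree bookkeeping are essentially formal.
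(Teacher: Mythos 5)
Your overall strategy---degenerating $\xi_P$ to $\xi_L\times^L P$ via a one-parameter subgroup of $Z(L)^\circ$ and reading off the main component of the limiting stable map---is the natural one, and the local computations you give are correct: the limit $\eta(t)u\eta(t)^{-1}\ell w_0 P'/P'\to \ell w_0 P'/P'$, the identification of the $\mb{G}_m$-fixed locus of $G/P'$ with $\bigsqcup_w LwP'/P'$, the closedness of $C^1 \cong P/(P\cap P')$ in $G/P'$ forcing $w_0\neq 1$, and the positivity of tail degrees are all fine. However, there is a genuine error in your description of the degeneration. You claim that the associated $G$-bundle ``is constant in $t$'' and that the arc $\mb{G}_m\to\mrm{KM}_{P',G}^\lambda$ lies ``over the constant $G$-bundle.'' This is not correct. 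Writing $\xi_P^t$ for the Rees-type family of $P$-bundles obtained by twisting $\xi_P$ by conjugation by $\eta(t)$, the associated family $\xi_P^t\times^P G$ is isomorphic to $\xi_G:=\xi_P\times^P G$ for every $t\neq 0$, but the identifying isomorphisms (right translation by $\eta(t)^{-1}$) have no limit as $t\to 0$, and the $t=0$ fibre is $\xi_L\times^L G$, which in general is a \emph{different} $G$-bundle from $\xi_G$. For $G=SL_2$, $P=B$, for instance, the $t=0$ fibre is the direct sum $L_1\oplus L_2$ while $\xi_G$ can be a non-split extension. So the arc $\mb{A}^1\to\bun_G$ is constant on $\mb{G}_m$ but jumps at the origin.

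This matters because when you invoke properness of $\mrm{KM}_{P',G}^\lambda\to\bun_G$ to extend the family across $t=0$, you must do so over the \emph{jumping} arc $\mb{A}^1\to\bun_G$ given by $\xi_P^\bullet\times^P G$; using the constant extension by $\xi_G$ would produce a stable map into $\xi_G/P'$ rather than into $\xi_L\times^L G/P'$, and the conclusion $C^{w_0,\lambda'}_{P,P',\xi_L\times^L P}\neq\emptyset$ would simply not follow, since that cell concerns sections of $(\xi_L\times^L P)\times^P G/P'=\xi_L\times^L G/P'$. Relatedly, ``applying $\eta(t)$ pointwise to $\sigma$ inside the fixed $G/P'$-bundle $\xi_L\times^L G/P'$'' is imprecise: $\sigma^t$ is a section of $\xi_P^t\times^P G/P'$, whose transition functions vary with $t$, and the left-multiplication description is only valid after choosing a local trivialisation of $\xi_P$. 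With this bookkeeping corrected the degeneration argument does appear to go through as sketched, and you are right to flag that the extension-and-rational-tails analysis rests on the stable-maps formalism of the reference. (Since the proposition is cited from \cite{davis19a} and its proof is not reproduced here, I cannot directly compare your route to the one taken there.)
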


\subsection{Digression: some Bruhat cells for unstable vector bundles} \label{subsection:glnbruhat}

The aim of this subsection is to describe certain spaces of stable maps to partial flag variety bundles associated to particular minimally unstable $GL_n$-bundles on $E$. The spaces considered here will crop up again and again in the subregular part of the elliptic Grothendieck-Springer resolution $\tbun_G$.

Recall the notation for the root datum of $GL_n$ and the standard parabolic subgroups $Q^n_k \subseteq GL_n$ given in \S\ref{subsection:notation} We also consider the standard parabolic subgroup
\begin{align*}
R_n &= \{(a_{p,q})_{1 \leq p, q \leq n} \in GL_n \mid a_{p, q} = 0\;\text{for}\; q > \mrm{max}(p, n - 1)\} = \left\{\left(\begin{matrix} * & * & \cdots & * & 0 \\ \vdots & \vdots & & \vdots & \vdots \\ * & * & \cdots & * & 0 \\ * & * & \cdots & * & * \end{matrix}\right)\right\}
\end{align*}
of type $\{\beta_{n - 1}\}$. For $1 \leq k \leq n$, let
\[ X_k^n = Y_{Q^n_n}^{-e_n^*} \times_{Y_{Q^n_k}^{-e_n^*}} \mrm{KM}^{-e_n^*}_{Q^n_k, GL_n} \times_{\bun_{GL_n}^{-1}} \bun_{R_n}^{ss, -e_1^*},\]
where, for any standard parabolic subgroup $P \supseteq Q^n_n$, we use the same notation for a cocharacter $\lambda \in \mb{X}_*(Q^n_n)$ and for its image in $\mb{X}_*(P/[P, P])$. In words, a point of the stack $X_k^n$ over $s \in S$ consists of a tuple $(y, \sigma \colon C \to \xi_{R_n} \times^{R_n} GL_n/Q^n_k, \xi_{R_n})$, where $\xi_{R_n} \to E_s$ is a semistable $R_n$-bundle of degree $-e_1^*$ (which is the Harder-Narasimhan reduction of the unstable $GL_n$-bundle of the subsection title), $\sigma$ is a stable section of degree $-e_n^*$, and $y$ is a lift of (the isomorphism class of) the associated $T_{Q^n_k}$-bundle to a $T_{Q^n_n}$-bundle of degree $-e_n^*$.

For $1 \leq p \leq n - 1$, let $w_p \in W_{GL_n} = S_n$ be the cyclic permutation
\[ w_p = (n, n - 1, \ldots, p + 1, p) = s_{n - 1}s_{n - 2} \cdots s_p\]
and let $w_n = 1$ be the identity, where $W_{GL_n}$ is the Weyl group of $GL_n$, and $s_i = (i, i + 1)$ is the reflection in the root $\beta_i$. For $1 \leq p, k \leq n$, we write $C_{k, p}^{GL_n} \subseteq X^n_k$ for the locally closed substack of tuples $(y, \sigma, \xi_{R_n})$ such that the restriction of $\sigma$ to the genus $1$ component factors through the Bruhat cell
\[ \xi_{R_n} \times^{R_n} R_nw_pQ^n_k/Q^n_k \subseteq \xi_{R_n} \times^{R_n} GL_n/Q^n_k.\]

\begin{prop} \label{prop:glndecomposition}
For $1 \leq k \leq n$, there is a decomposition
\[ X_k^n = \bigcup_{1 \leq p < k} C_{k, p}^{GL_n} \cup C_{k, n}^{GL_n}\]
into disjoint locally closed substacks.
\end{prop}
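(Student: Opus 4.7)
The plan is to establish the decomposition in three conceptual steps: identify the minimal-length double coset representatives $W^0_{R_n, Q^n_k}$ as $\{w_1, \ldots, w_{k-1}, w_n\}$, invoke the general Bruhat decomposition of \S\ref{subsection:bruhat}, and use Proposition \ref{prop:levidegreebound} to promote the decomposition to the Kontsevich--Mori compactification.

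For the first step, since $t(R_n) = \{\beta_{n-1}\}$ and $t(Q^n_k) = \{\beta_1, \ldots, \beta_{k-1}\}$, the defining condition \eqref{eq:cosetreps} becomes $w^{-1}\beta_i \in \Phi_+$ for $i \leq n-2$ and $w\beta_i \in \Phi_+$ for $i \geq k$. Using the explicit action of $w_p = s_{n-1} \cdots s_p$ on the standard basis ($p \mapsto n$, $p + 1 \mapsto p$, \ldots, $n \mapsto n - 1$, fixing $1, \ldots, p - 1$), a direct root-by-root calculation shows $w_p \beta_i \in \Phi_-$ precisely when $i = p$, and $w_p^{-1} \beta_i \in \Phi_-$ precisely when $i = n - 1$. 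Hence $w_p$ satisfies the defining condition iff $p \in \{1, \ldots, k-1\}$, and $w_n = 1$ is trivially in $W^0_{R_n, Q^n_k}$. A cardinality count using that the double cosets $S_{n-1} \backslash S_n / S_{n-k+1}$ are classified by the position of $n$ in $w \cdot \{1, \ldots, k-1\}$ (giving exactly $k$ representatives) confirms $W^0_{R_n, Q^n_k} = \{w_1, \ldots, w_{k-1}, w_n\}$.

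For the second and third steps, the general Bruhat decomposition of \S\ref{subsection:bruhat} gives a stratification of the partial flag variety bundle $\xi_{R_n} \times^{R_n} GL_n/Q^n_k$ into locally closed $R_n$-orbit sub-bundles indexed by $W^0_{R_n, Q^n_k}$, which pulls back to the (locally closed and automatically disjoint) substacks $C^{GL_n}_{k, p} \subseteq X^n_k$ via the restriction-to-$C_1$ map. For the covering, I would argue by contradiction using Proposition \ref{prop:levidegreebound}: if the restriction $\sigma|_{C_1}$ of some point of $X^n_k$ did not factor through any Bruhat cell, then for the associated Levi bundle $\xi_L = (V', \ell) = \xi_{R_n} \times^{R_n} L$ (with $V'$ semistable of rank $n-1$, degree $-1$, and $\ell$ of degree $0$) and the split $R_n$-bundle $\xi_L \times^L R_n$ with underlying $GL_n$-bundle $V' \oplus \ell$, the cell $C^{w, \lambda'}_{R_n, Q^n_k, \xi_L \times^L R_n}$ would be nonempty for some $w \in W^0 \setminus \{1\}$ and some $\lambda' < \lambda_0$. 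A direct slope computation then rules out such cells: for each candidate $w = w_p$, nonemptiness would force a sub- or quotient-bundle of $V'$ with slope strictly greater than $-1/(n-1)$, contradicting semistability of $V'$.

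The main obstacle is the covering step. Identifying $W^0_{R_n, Q^n_k}$ reduces to routine Weyl-group combinatorics, and disjointness is automatic from the disjointness of Bruhat cells in $GL_n/Q^n_k$. What requires care is enumerating, for each $w \in W^0 \setminus \{1\}$ and each $\lambda'$ strictly below the restricted degree, the flag data on $V' \oplus \ell$ whose existence would contradict the semistability hypothesis; the bookkeeping is made tractable by the concrete cyclic structure of the $w_p$ and by the fact that $\xi_L \times^L R_n$ is split, so subbundles of $V' \oplus \ell$ decompose transparently into pieces coming from $V'$ and from $\ell$.
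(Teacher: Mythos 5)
Your proposal follows essentially the same strategy as the paper: identify the minimal-length representatives $W^0_{R_n, Q^n_k} = \{w_1, \ldots, w_{k-1}, w_n\}$, observe that disjointness of the Bruhat cells is automatic, and establish the covering via Proposition \ref{prop:levidegreebound} combined with semistability constraints on the rank-$(n-1)$ summand $V'$ of $V = V' \oplus \ell$. The one genuine structural difference is that you propose to apply Proposition \ref{prop:levidegreebound} directly at the level of the partial flag variety $GL_n/Q^n_k$, whereas the paper (Lemmas \ref{lem:glnbruhat}--\ref{lem:glnpartialbruhat}) first settles the full-flag case $k = n$ and then pushes down to $k < n$ via surjectivity of $\mrm{KM}^{-e_n^*}_{Q^n_n, GL_n} \to \mrm{KM}^{-e_n^*}_{Q^n_k, GL_n}$. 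Both routes work; yours avoids one lifting step but requires ruling out cells attached to the partial parabolic $R_n \cap w_p Q^n_k w_p^{-1}$ directly.

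A few points you should tighten when writing this up. First, the contradiction in Proposition \ref{prop:levidegreebound} is not that the cells $C^{w,\lambda'}$ are all empty --- for instance $(w_p, -e_n^*)$ for $1 \leq p < n$ are genuinely nonempty --- but rather that nonemptiness with $\lambda' < \lambda$, combined with the constraint $\lambda' \in \{-e_n^*, -e_{n-1}^*\}$ coming from the slope bound, forces $\lambda > -e_n^*$, contradicting the a priori bound $\lambda \leq -e_n^*$. Your phrase ``a direct slope computation then rules out such cells'' should be rewritten along those lines. Second, the semistability statement as written has a sign slip: a quotient bundle of $V'$ of slope \emph{greater} than $-1/(n-1)$ is allowed by semistability; the contradiction comes from a subbundle with slope $> -1/(n-1)$ or a quotient with slope $< -1/(n-1)$. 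Third, and most importantly, your sketch does not explicitly separate the cases $k < n$ and $k = n$. For $k < n$ the KM compactification adds nothing (since $-e_n^*$ and $-e_{n-1}^*$ agree in $\mb{X}_*(T_{Q^n_k})$), so the covering by genuine-section Bruhat cells suffices, whereas for $k = n$ you must account for stable maps with a rational tail of class $\beta_{n-1}^\vee$, whose genus-one restriction has degree $-e_{n-1}^*$ and lands in the big cell, giving the extra stratum $C^{GL_n}_{n,n}$. This distinction is load-bearing and should not be left implicit.

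Finally, the paper's Lemma \ref{lem:glnbruhat} is not purely a slope argument: it also exploits the explicit action of $w_p$ on the degree lattice and the constraint $\langle e_n, w\lambda\rangle = 0$ coming from the fixed degree $-e_1^*$ of the Levi bundle. You allude to this with ``the concrete cyclic structure of the $w_p$,'' but describing the whole mechanism as ``a direct slope computation'' undersells the Weyl-group combinatorics involved.
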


We break the proof of Proposition \ref{prop:glndecomposition} into several lemmas.

\begin{lem} \label{lem:glndegrees}
Assume that $\xi_{R_n} \to E_s$ is a semistable $R_n$-bundle on a geometric fibre of $E \to S$ of degree $-e_1^*$ and that $\sigma \colon E_s \to \xi_{R_n} \times^{R_n} GL_n/Q^n_n$ is a section of degree $\lambda \leq -e_n^*$. Then $\lambda \in \{-e_n^*, -e_{n - 1}^*\}$.
\end{lem}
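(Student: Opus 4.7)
The plan is to rephrase the problem in terms of vector bundles and complete flags, and then apply the standard degree estimates that follow from semistability of the Harder-Narasimhan quotient.

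First I would set up the translation. An $R_n$-bundle is equivalent to a pair $(V, L)$ consisting of a rank $n$ vector bundle $V$ on $E_s$ together with a line subbundle $L \subseteq V$; the degree $-e_1^*$ unwinds to $\deg L = 0$ and $\deg V/L = -1$, and semistability of $\xi_{R_n}$ amounts to semistability of $V/L$, which then has slope $-1/(n-1) < 0$. A short Harder-Narasimhan argument shows that $0 \subset L \subset V$ is then the HN filtration of $V$. A section $\sigma$ of $\xi_V/Q^n_n$ is the same data as a complete flag of subbundles $0 = F_0 \subset F_1 \subset \cdots \subset F_n = V$. Matching the character $e_i$ to the appropriate graded piece, the components $c_i := \langle e_i, \lambda \rangle$ of $\lambda = \sum c_i e_i^*$ satisfy $c_i = \deg F_{n-i+1}/F_{n-i}$. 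Writing out the condition $\lambda \leq -e_n^*$ in the basis of simple coroots $\beta_k^\vee = e_k^* - e_{k+1}^*$, a telescoping computation converts it into the single requirement
\[ \deg F_k \geq -1 \quad \text{for all}\;\; 1 \leq k \leq n-1.\]

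Next I would upper-bound $\deg F_k$ by splitting on whether $F_k$ meets $L$. If $F_k \cap L = 0$, then $F_k$ injects into $V/L$, and semistability of $V/L$ yields $\deg F_k \leq -k/(n-1)$, whose integer part is $\leq -1$ for $1 \leq k \leq n-1$. Otherwise $F_k \cap L$ is a rank $1$ subsheaf of $L$ of degree $\leq 0$, while $F_k/(F_k \cap L) \hookrightarrow V/L$ has rank $k-1$, hence degree $\leq -(k-1)/(n-1)$; summing gives $\deg F_k \leq -(k-1)/(n-1)$, which is $\leq -1$ when $k \geq 2$ and $\leq 0$ when $k = 1$. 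Combining the two cases, $\deg F_k \leq -1$ for $k \geq 2$ and $\deg F_1 \leq 0$.

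Matching these upper bounds against the lower bounds from the previous step forces $\deg F_k = -1$ for $2 \leq k \leq n-1$ (and trivially for $k = n$), while $\deg F_1 \in \{-1, 0\}$. From here one simply reads off the $c_i$: all of the differences $\deg F_{k+1} - \deg F_k$ vanish except possibly the bottom two, giving $\lambda = -e_n^*$ when $\deg F_1 = -1$ and $\lambda = -e_{n-1}^*$ when $\deg F_1 = 0$. The only genuinely fiddly step is setting up the dictionary in the first paragraph (matching the convention in which the Borel $Q^n_n$ is lower triangular against the degrees of successive graded pieces of $F_\bullet$); once that is pinned down, the rest is a routine application of semistability.
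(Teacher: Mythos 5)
Your proof is correct and takes essentially the same route as the paper: translate the section to a complete flag of subbundles $F_\bullet$ of $V$ and use the Harder--Narasimhan structure (with $L$ the degree-$0$ piece and $V/L$ the semistable degree-$(-1)$ quotient) to bound $\deg F_k$ above, then compare against the lower bound $\deg F_k \geq -1$ coming from $\lambda \leq -e_n^*$. The paper obtains the same upper bounds in a single step by noting that every quotient bundle of $V$ has slope $\geq -1/(n-1)$ and applying this to $V/V_i$, which avoids your case split on whether $F_k$ meets $L$; the content is otherwise identical.
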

\begin{proof}
The section $\sigma$ corresponds to a complete flag
\[ 0 = V_n \subsetneq V_{n - 1} \subsetneq \cdots \subsetneq V_0 = V,\]
where $V$ is the vector bundle associated to the $GL_n$-bundle $\xi_{GL_n} = \xi_{R_n} \times^{R_n} GL_n$, such that $V_{i - 1}/V_i$ is a line bundle of degree $\langle e_i, \lambda \rangle$ for $i = 1, \ldots, n$. Since $\xi_{R_n}$ is the Harder-Narasimhan reduction of $\xi_{GL_n}$, $V$ has Harder-Narasimhan decomposition $V = M \oplus U$, where $U$ is a semistable vector bundle of rank $n - 1$ and degree $-1$ and $M$ is a line bundle of degree $0$. In particular, any quotient bundle of $V$ has slope $\geq -1/(n - 1)$, so we deduce that
\begin{equation} \label{eq:glndegrees1}
\langle e_1 + \cdots + e_i, \lambda \rangle = \deg V/V_i \geq \frac{-i}{n - 1}
\end{equation}
for $i = 1, \ldots, n - 1$.

Since $\lambda \leq -e_n^*$ by assumption, we have
\[ \lambda = -e_n^* - \sum_{i = 1}^{n - 1}d_i\beta_i^\vee \]
for some $d_i \in \mb{Z}_{\geq 0}$, where $\beta_i^\vee = e_i^* - e_{i + 1}^*$. Applying \eqref{eq:glndegrees1}, we have $d_i = 0$ for $1 \leq i \leq n - 2$ and $d_{n - 1} \in \{0, 1\}$, which implies the lemma.
\end{proof}

In what follows, we will write
\[ C^{w, \lambda}_k = \bun_{R_n}^{ss, -e_1^*} \times_{\bun_{R_n}} C^{w, \lambda}_{R_n, Q^n_k} \subseteq \bun_{R_n} \times_{\bun_{GL_n}} \bun_{Q^n_k}\]
for $w \in W^0_{R_n, Q^n_k}$ and $\lambda \in \mb{X}_*(T_{Q^n_k})$. Here $C^{w, \lambda}_{R_n, Q^n_k}$ is the Bruhat cell of \S\ref{subsection:bruhat}.

\begin{lem} \label{lem:glnbruhat}
Assume that $w \in W^0_{R_n, Q^n_n}$ and $\lambda \in \mb{X}_*(T_{Q^n_n})$ with $C^{w, \lambda}_n \neq \emptyset$ and $\lambda \leq -e_n^*$. Then
\[ (w, \lambda) \in \{(1, -e_{n - 1}^*)\} \cup \{(w_p, -e_n^*) \mid 1 \leq p < n\}.\]
\end{lem}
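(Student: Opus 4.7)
The plan is to combine Lemma \ref{lem:glndegrees} with an explicit geometric description of the Bruhat cells in terms of relative position of flags, and then rule out all forbidden pairs using semistability.

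First I would enumerate $W^0_{R_n, Q^n_n}$ via \eqref{eq:cosetreps}. Since $Q^n_n$ is the Borel, the condition $w\alpha_j \in \Phi_+$ is vacuous, while $w^{-1}\beta_k \in \Phi_+$ for $k = 1, \ldots, n-2$ reduces to $w^{-1}(1) < w^{-1}(2) < \cdots < w^{-1}(n-1)$; this forces $w$ to be the cyclic permutation $w_p$ with $p = w^{-1}(n) \in \{1, \ldots, n\}$, where $w_n = 1$. Combined with Lemma \ref{lem:glndegrees}, this reduces the problem to the $2n$ pairs $(w_p, \lambda)$ with $p \in \{1, \ldots, n\}$ and $\lambda \in \{-e_n^*, -e_{n-1}^*\}$.

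Next I would interpret the Bruhat cells geometrically. A semistable $R_n$-bundle of degree $-e_1^*$ amounts to a pair $(V, L \subset V)$ with $V$ a rank-$n$ vector bundle, $L$ a degree-$0$ line subbundle, and $V/L$ semistable of rank $n-1$ and degree $-1$ (here $R_n$ is the stabilizer of $\langle e_n \rangle \subset k^n$). A section of $\xi_{R_n} \times^{R_n} GL_n/Q^n_n$ factors through the cell $R_n w_p Q^n_n / Q^n_n$ precisely when the associated flag $V_\bullet$ of $V$ has pointwise relative position $p$ with respect to $L$, i.e., $L \subset V_{p-1}$ and $L \cap V_p = 0$. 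Over a smooth curve this forces the composition $L \hookrightarrow V_{p-1} \twoheadrightarrow V_{p-1}/V_p$ to be an isomorphism of line bundles, so $V_{p-1}/V_p \cong L$ has degree $0$, giving the constraint $\langle e_p, \lambda \rangle = 0$.

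A short case analysis then eliminates the forbidden pairs. The pair $(1, -e_n^*)$ (i.e., $p = n$ and $\lambda = -e_n^*$) fails since $\langle e_n, -e_n^* \rangle = -1 \neq 0$, and $(w_{n-1}, -e_{n-1}^*)$ fails since $\langle e_{n-1}, -e_{n-1}^* \rangle = -1 \neq 0$. The remaining and only nontrivial case is $(w_p, -e_{n-1}^*)$ with $p < n-1$: here $n - 1 > p$ gives $V_{n-1} \subset V_p$, hence $L \cap V_{n-1} \subset L \cap V_p = 0$, so $V_{n-1}$ injects into $V/L$ as a line subbundle of degree $\langle e_n, -e_{n-1}^* \rangle = 0$; but $V/L$ is semistable of slope $-1/(n-1) < 0$, so no such subbundle exists. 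The main obstacle will be the middle step, namely identifying the Bruhat cell in a family of flags with the pointwise relative-position condition and promoting it to an isomorphism of line bundles; once that is in place, Atiyah's semistability bound on line subbundles does the rest.
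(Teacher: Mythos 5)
Your proof is correct and follows essentially the same approach as the paper's. The constraint $\langle e_p, \lambda\rangle = 0$ you extract from the pointwise relative-position condition is identical to the paper's $\langle e_n, w\lambda\rangle = 0$ (since $w_p^{-1}(n) = p$), and the only difference is in the final case $(w_p, -e_{n-1}^*)$ with $p < n-1$, which the paper disposes of by re-invoking Lemma \ref{lem:glndegrees} applied to the degree $w\lambda = -e_{n-2}^*$ section of the full flag bundle, whereas you give a direct semistability argument for $V/L$ — the same idea, differently packaged.
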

\begin{proof}
First note that by Lemma \ref{lem:glndegrees}, we know that $\lambda \in \{-e_n^*, -e_{n - 1}^*\}$. Moreover, we have from the definition \eqref{eq:cosetreps} that
\[ W^0_{R_n, Q^n_n} = \{w \in S_n \mid w^{-1}(i) < w^{-1}(i + 1) \;\text{for}\; 1 \leq i < n - 1\} = \{w_p \mid 1 \leq p \leq n\}.\]
Since $Q^n_n \subseteq GL_n$ is the standard Borel subgroup, the homomorphism
\[ j_w \colon \mb{X}_*(T_{Q^n_n}) = \mb{X}_*(T_{R_n \cap Q^n_n}) = \mb{X}_*(T_{R_n \cap wQ^n_nw^{-1}}) \longrightarrow \mb{X}_*(T_{Q^n_n}) \]
defined in \S\ref{subsection:bruhat} is just the isomorphism given by $w^{-1}$. So by nonemptiness of $C^{w, \lambda}_n$ there exists a semistable $L_n$-bundle $\xi_{L_n} \to E_s$ on a geometric fibre of $E \to S$ of degree $-e_1^*$, where $L_n \cong GL_{n - 1} \times \mb{G}_m$ is the standard Levi factor of $R_n$ and a section $\sigma_L \colon E_s \to \xi_{L_n}/(L_n \cap Q^n_n)$ of degree $w\lambda$. In particular, since $e_n \in \mb{X}^*(L_n)$, $\langle e_n, w\lambda \rangle = \langle e_n, -e_1^*\rangle = 0$ and $w\lambda$ is the degree of a section
\[ E_s \xrightarrow{\sigma_L} \xi_{L_n}/(L_n \cap Q^n_n) \longhookrightarrow \xi_{L_n} \times^{L_n} GL_n/Q^n_n.\]

If $\lambda = -e_n^*$ and $w = w_p$, then
\[ w\lambda = \begin{cases} -e_{n - 1}^*, & \text{if}\;\; p < n, \\ -e_n^*, & \text{if}\;\; p = n,\end{cases}\]
so from the above discussion we must have $p \in \{1, \ldots, n - 1\}$. If $\lambda = -e_{n - 1}^*$, on the other hand, then
\[ w\lambda = \begin{cases} -e_{n - 2}^*, & \text{if}\;\; p < n - 1, \\ -e_n^*, & \text{if}\;\; p = n - 1, \\ -e_{n - 1}^*, & \text{if}\;\; p = n,\end{cases} \]
so the above discussion and Lemma \ref{lem:glndegrees} imply that $p = n$. Combining these two cases gives that $(w, \lambda)$ is in the desired set.
\end{proof}

\begin{lem} \label{lem:glnbruhatsurjective}
For all $\lambda \in \mb{X}_*(T_{Q^n_n})$ with $\lambda \leq -e_n^*$, we have
\[ \bigcup_{w \in W^0_{R_n, Q^n_n}} C^{w, \lambda}_n = \bun_{Q^n_n}^\lambda \times_{\bun_{GL_n}^{-1}} \bun_{R_n}^{ss, -e_1^*}.\]
\end{lem}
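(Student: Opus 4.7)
The plan is to argue by contradiction using Proposition \ref{prop:levidegreebound} and the classification of Bruhat cells established in Lemma \ref{lem:glnbruhat}. Suppose there is a point of $\bun_{Q^n_n}^\lambda \times_{\bun_{GL_n}^{-1}} \bun_{R_n}^{ss, -e_1^*}$ over some geometric fibre $E_s$ that is not contained in any Bruhat cell $C^{w, \lambda}_n$. Applying Proposition \ref{prop:levidegreebound} to the pair of parabolics $(R_n, Q^n_n)$ in $GL_n$, there exist $w \in W^0_{R_n, Q^n_n} \setminus \{1\}$ and $\lambda' < \lambda$ such that the Bruhat cell $C^{w, \lambda'}_{R_n, Q^n_n, \xi_{L_n} \times^{L_n} R_n}$ is nonempty, where $L_n = GL_{n-1} \times \mb{G}_m$ is the standard Levi of $R_n$ and $\xi_{L_n}$ is the Levi reduction of the underlying $R_n$-bundle.

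Since $\xi_{L_n}$ is itself semistable of degree $-e_1^*$ (it is the Levi reduction of a semistable $R_n$-bundle of that degree), the induced $R_n$-bundle $\xi_{L_n} \times^{L_n} R_n$ is again semistable of degree $-e_1^*$. Hence $C^{w, \lambda'}_n \neq \emptyset$. Transitivity of the partial order gives $\lambda' < \lambda \leq -e_n^*$, so Lemma \ref{lem:glnbruhat} applies and forces
\[ (w, \lambda') \in \{(1, -e_{n-1}^*)\} \cup \{(w_p, -e_n^*) \mid 1 \leq p < n\}.\]

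The first option is excluded since $w \neq 1$. In the second option, $\lambda' = -e_n^*$ combined with $\lambda' < \lambda$ gives $\lambda > -e_n^*$, contradicting the hypothesis $\lambda \leq -e_n^*$. So no such point exists, which is exactly the assertion of the lemma. The only step requiring any real care is checking that the hypothesis $\lambda' \leq -e_n^*$ of Lemma \ref{lem:glnbruhat} is preserved under the strict inequality produced by Proposition \ref{prop:levidegreebound}, and that passing from the $L_n$-bundle back to the induced $R_n$-bundle keeps us inside the semistable locus parametrised by $\bun_{R_n}^{ss, -e_1^*}$; both are immediate, so no genuine obstacle arises.
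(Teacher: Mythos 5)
Your proof is correct and follows essentially the same route as the paper's: contradiction via Proposition \ref{prop:levidegreebound}, then Lemma \ref{lem:glnbruhat} to pin down $(w,\lambda')$ and derive the contradiction with $\lambda' < \lambda \leq -e_n^*$. You spell out one step the paper leaves implicit — that the cell produced by Proposition \ref{prop:levidegreebound} lies over $\xi_{L_n} \times^{L_n} R_n$, which is again semistable of degree $-e_1^*$, so it does land in $C^{w,\lambda'}_n$ — which is a useful bit of extra care but not a different argument.
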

\begin{proof}

Assume for a contradiction that this fails for some $\lambda \leq -e_n^*$. Then by Proposition \ref{prop:levidegreebound} there exist $w \in W^0_{R_n, Q^n_n} \setminus \{1\}$ and $\lambda' < \lambda$ such that $C^{w, \lambda'}_n \neq \emptyset$. So Lemmas \ref{lem:glndegrees} and \ref{lem:glnbruhat} imply that $\lambda' = -e_n^*$ and $\lambda \in \{-e_n^*, -e_{n - 1}^*\}$. But this contradicts $\lambda' < \lambda$ so we are done.
\end{proof}

\begin{lem} \label{lem:glnpartialbruhat}
Let $1 \leq k < n$. Then
\[ W^0_{R_n, Q^n_k} = \{w_p \mid 1 \leq p < k\} \cup \{w_n\} \]
and
\begin{equation} \label{eq:glnpartialbruhat1}
\bun_{Q^n_k}^{-e_n^*} \times_{\bun_{GL_n}} \bun_{R_n}^{ss, -e_1^*} = \bigcup_{w \in W^0_{R_n, Q^n_k}} C^{w, -e_n^*}_k.
\end{equation}
\end{lem}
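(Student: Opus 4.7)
The first claim is a direct computation from the characterisation \eqref{eq:cosetreps}. The condition $w^{-1}\beta_i \in \Phi_+$ for $1 \leq i \leq n-2$ is the same condition that appears in the proof of Lemma \ref{lem:glnbruhat} and forces $w = w_p$ for some $1 \leq p \leq n$. The additional condition $w\beta_j \in \Phi_+$ for $k \leq j \leq n-1$ requires the sequence $w_p(k), w_p(k+1), \ldots, w_p(n)$ to be strictly increasing. A direct calculation shows that this is the case precisely when $p < k$, in which case the sequence is $(k-1, k, \ldots, n-1)$, or when $p = n$, in which case $w_p = 1$ and the sequence is $(k, k+1, \ldots, n)$; for $k \leq p < n$ the sequence contains a descent from $w_p(p) = n$ to $w_p(p+1) = p$ and is excluded.

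For the second claim the inclusion $\supseteq$ is immediate, and I argue $\subseteq$ by contradiction along the lines of Lemma \ref{lem:glnbruhatsurjective}. Suppose there is a point of $\bun_{Q^n_k}^{-e_n^*} \times_{\bun_{GL_n}} \bun_{R_n}^{ss, -e_1^*}$ not contained in any $C^{w, -e_n^*}_k$. By Proposition \ref{prop:levidegreebound}, there exist $w_p \in W^0_{R_n, Q^n_k} \setminus \{1\}$ (so $1 \leq p < k$, by the first claim) and $\lambda' < -e_n^*$ in $\mb{X}_*(T_{Q^n_k})$ such that the cell $C^{w_p, \lambda'}_{R_n, Q^n_k, \xi_{L_n}\times^{L_n}R_n}$ is nonempty, where $\xi_{L_n}$ is the Levi bundle. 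The associated split $GL_n$-bundle is $V^{\mathrm{split}} = \xi_{L_n}\times^{L_n} GL_n \cong L' \oplus U'$, where $L'$ is a degree $0$ line bundle and $U'$ is semistable of rank $n-1$ and degree $-1$; the nonemptiness of the cell produces a $Q^n_k$-flag $V^{\mathrm{split}} = V'_0 \supset V'_1 \supset \cdots \supset V'_{k-1}$ of degree $\lambda'$ satisfying the Bruhat cell conditions $L' \subset V'_{p-1}$ and $L' \not\subset V'_p$.

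The contradiction comes from a slope analysis of the rank $n-k+1$ subbundle $V'_{k-1}$. Pairing $\lambda'$ with the determinant character $e_k + \cdots + e_n \in \mb{X}^*(T_{Q^n_k})$ shows $\deg V'_{k-1} \geq -1$. If $L' \subseteq V'_{k-1}$, then $V'_{k-1}/L'$ is a subsheaf of $U'$ of rank $n-k$ and degree $\geq -1$, so its saturation in $U'$ has degree exactly $-1$ (by semistability of $U'$) and slope $-1/(n-k)$, which strictly exceeds the slope $-1/(n-1)$ of $U'$ whenever $k \geq 2$, contradicting semistability. If $L' \not\subseteq V'_{k-1}$, the analogous argument applied to the embedding $V'_{k-1} \hookrightarrow U'$ forces $k = 2$, $\deg V'_{k-1} = -1$, and hence $V'_1 = U'$ as subbundles of $V^{\mathrm{split}}$; but then the line quotient $V^{\mathrm{split}}/V'_1 \cong L'$ has degree $0$, which forces $\lambda' = -e_n^*$ and contradicts the strict inequality.

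The main obstacle is the bundle-theoretic case analysis in the final paragraph: making the slope inequalities precise requires carefully tracking saturations, ranks and degrees of subsheaves of $U'$, and checking that the Bruhat cell condition interacts correctly with the two possibilities for the position of $L'$ relative to $V'_{k-1}$.
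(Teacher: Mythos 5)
Your proof of the first claim matches the paper's (a direct computation from the characterisation of $W^0_{P,P'}$ as minimal double coset representatives). For \eqref{eq:glnpartialbruhat1} you take a genuinely different route: the paper lifts a $Q^n_k$-reduction to a Borel reduction using the surjectivity of $\mrm{KM}_{Q^n_n, GL_n}^{-e_n^*} \to \mrm{KM}_{Q^n_k, GL_n}^{-e_n^*}$ from \cite[Proposition 3.6.4]{davis19a}, constrains its degree by Lemma \ref{lem:glndegrees}, and then invokes Lemma \ref{lem:glnbruhatsurjective}, which packages the Bruhat-covering statement at the Borel level; you instead apply Proposition \ref{prop:levidegreebound} directly to $P' = Q^n_k$ and run the slope analysis by hand on the partial flag. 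Both are valid: the paper's route is shorter given that the Borel-level lemmas are already in place, while yours avoids the KM lifting step entirely and is more self-contained. Your bundle-theoretic argument is correct; I would only flag two small points. First, since $w_p \in W^0_{R_n,Q^n_k}\setminus\{1\}$ forces $p < k$, the Bruhat cell condition $L' \not\subset V'_p$ already implies $L' \not\subset V'_{k-1}$, so your first case ($L' \subseteq V'_{k-1}$) never actually occurs --- it is harmless to include but worth noting. Second, in the $k=2$ subcase, the conclusion should be $V'_1 \cong U'$ rather than $V'_1 = U'$ as subbundles of $V^{\mathrm{split}}$: since $\mu(U') < \mu(L')$, nonzero maps $U' \to L'$ exist and $V'_1$ may be a nontrivial graph over $U'$. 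This does not affect the contradiction, which only uses $\deg(V^{\mathrm{split}}/V'_1) = \deg V^{\mathrm{split}} - \deg V'_1 = 0$ to force $\lambda' = -e_n^*$, but the statement as written is not quite right.
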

\begin{proof}
From the definition,
\[ W^0_{R_n, Q^n_k} = \{ w \in W^0_{R_n, Q^n_n} \mid w(i) < w(i + 1) \; \text{for}\; k \leq i \leq n - 1\} = \{w_p \mid 1 \leq p < k\} \cup \{w_n\}\]
as claimed. Next, note that by \cite[Proposition 3.6.4]{davis19a} the natural morphism
\[ \mrm{KM}_{Q^n_n, GL_n}^{-e_n^*} \longrightarrow \mrm{KM}_{Q^n_k, GL_n}^{-e_n^*}\]
is surjective. So any geometric point of $\bun_{Q^n_k}^{-e_n^*} \times_{\bun_{GL_n}} \bun_{R_n}^{ss, -e_1^*}$ lifts to a point of $\bun_{Q^n_n}^\lambda \times_{\bun_{GL_n}} \bun_{R_n}^{ss, -e_1^*}$ for some $\lambda \leq -e_n^*$, and hence  $\lambda \in \{-e_n^*, -e_{n - 1}^*\}$ by Lemma \ref{lem:glndegrees}. So by Lemma \ref{lem:glnbruhatsurjective}, the morphism
\[ \coprod_{\substack{w \in W^0_{R_n, Q^n_n} \\ \lambda \in \{-e_n^*, -e_{n - 1}^*\}}} C^{w, \lambda}_n \longrightarrow \coprod_{w \in W^0_{R_n, Q^n_k}} C^{w, -e_n^*}_k \longrightarrow \bun_{Q^n_k}^{-e_n^*} \times_{\bun_{GL_n}} \bun_{R_n}^{ss, -e_1^*} \]
is surjective, which proves \eqref{eq:glnpartialbruhat1}.
\end{proof}

\begin{proof}[Proof of Proposition \ref{prop:glndecomposition}]
Suppose first that $k < n$. Since any $Q^n_k$-bundle of degree $\leq -e_n^*$ can be reduced to a $Q^n_n$-bundle of degree $\leq -e_n^*$ by \cite[Proposition 3.6.4]{davis19a}, Lemma \ref{lem:glndegrees} implies that
\[ \mrm{KM}_{Q_k^n, GL_n}^{-e_n^*} \times_{\bun_{GL_n}} \bun_{R_n}^{ss, -e_1^*} = \bun_{Q^n_k}^{-e_n^*} \times_{\bun_{GL_n}} \bun_{R_n}^{ss, -e_1^*}, \]
since $-e_n^*$ and $-e_{n - 1}^*$ have the same image in $\mb{X}_*(T_{Q^n_k})$. So we have the desired decomposition of $X_k^n$ into locally closed substacks by Lemma \ref{lem:glnpartialbruhat} (note that $C^{GL_n}_{k, p}$ is the preimage of $C^{w_p, \lambda}_k$ in $X^n_k$ in this case).

On the other hand, if $k = n$, then Lemma \ref{lem:glndegrees} implies that $X^n_n$ decomposes as a disjoint union
\[ X_n^n = (\bun_{Q^n_n}^{-e_n^*} \times_{\bun_{GL_n}} \bun_{R_n}^{ss, -e_1^*}) \cup (\bun_{Q^n_n}^{-e_{n - 1}^*} \times_{\bun_{GL_n}} \bun_{R_n}^{ss, -e_1^*} \times_S E)\]
of locally closed substacks, where the first factor is the locus of stable sections with irreducible domain and the second factor is the locus of stable sections with a single rational component of degree $\beta_{n - 1}^\vee = e_{n - 1}^* - e_n^*$. By Lemmas \ref{lem:glnbruhat} and \ref{lem:glnbruhatsurjective}, this decomposes further as the desired decomposition
\[ X_n^n = \bigcup_{1 \leq p < n} C_{n, p}^{GL_n} \cup C_{n, n}^{GL_n}\]
so we are done.
\end{proof}

From the proof of Proposition \ref{prop:glndecomposition}, we have that
\[ C^{GL_n}_{n, n} \cong C^{1, -e_{n - 1}^*}_n \times_S E = \bun_{Q^n_n}^{-e_{n - 1}^*} \times_{\bun_{GL_n}} \bun_{R_n}^{ss, -e_1^*} \times_S E \]
is the locus of stable maps with a single rational component of degree $\beta_{n - 1}^\vee$. The natural projection to $E$ keeps track of the point of attachment of the rational component, and the projection to the other factors keeps track of the restriction to the elliptic component. Note that the projection to $E$ agrees with composition of $C_{n, n}^{GL_n} \to C_{1, n}^{GL_n}$ with the morphism
\begin{align}
C_{1, n}^{GL_n} \longrightarrow Y_{Q^n_n}^{-e_n^*} \times_{\mrm{Pic}^{-1}_S(E)} Y_{R_n}^{-e_1^*} &\longrightarrow \mrm{Pic}^1_S(E) = E \label{eq:glnbruhatpoint2}\\
(y, y')&\longmapsto e_n(y') - e_n(y). \nonumber
\end{align}

For $1 \leq p < n$, we let
\[ M_p^{GL_n} \subseteq C_{1, n}^{GL_n}\]
be the closed substack given by the fibre product
\[
\begin{tikzcd}
M_p^{GL_n} \ar[r] \ar[d] & C_{1, n}^{GL_n} \ar[d] \\
Y_{Q^n_n}^{-e_n^*} \ar[r, "\theta_p^{GL_n}"] & Y_{Q^n_n}^{-e_n^*} \times_S E,
\end{tikzcd}
\]
where the morphism $C_{1, n}^{GL_n} \to E$ is \eqref{eq:glnbruhatpoint2}, and the morphism $Y_{Q^n_n}^{-e_n^*} \to Y_{Q^n_n}^{-e_n^*} \times_S \mrm{Pic}^1_S(E)$ is given by
\begin{align*}
\theta^{GL_n}_p \colon Y_{Q^n_n}^{-e_n^*} &\longrightarrow Y_{Q^n_n}^{-e_n^*} \times_S \mrm{Pic}^1_S(E) = Y_{Q^n_n}^{-e_n^*} \times_S E \\
y &\longmapsto (y, e_p(y) - e_n(y)).
\end{align*}

\begin{prop} \label{prop:glnbruhatcomparison}
For all $1 \leq k < n$, the morphism $X^n_{k + 1} \to X^n_k$ restricts to isomorphisms
\[ C^{GL_n}_{k + 1, n} \overset{\sim}\longrightarrow C^{GL_n}_{k, n} \quad \text{and} \quad C^{GL_n}_{k + 1, p} \overset{\sim}\longrightarrow C^{GL_n}_{k, p} \]
for $1 \leq p < k$, and a morphism
\[ C^{GL_n}_{k + 1, k} \longrightarrow M_k^{GL_n} \subseteq C^{GL_n}_{k, n} \cong C^{GL_n}_{1, n}\]
that exhibits $C_{k + 1, k}^{GL_n}$ as an $\mb{A}^1$-bundle over $M_k^{GL_n}$.
\end{prop}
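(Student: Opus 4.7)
The plan is to analyse the natural morphism $X^n_{k+1} \to X^n_k$, induced from the forgetful map of partial flag varieties $GL_n/Q^n_{k+1} \to GL_n/Q^n_k$, cell-by-cell using the Bruhat decomposition of Proposition \ref{prop:glndecomposition}. By Lemma \ref{lem:glnpartialbruhat}, the cells of $X^n_{k+1}$ are indexed by $W^0_{R_n, Q^n_{k+1}} = \{w_p : 1 \le p \le k\} \cup \{w_n\}$ and those of $X^n_k$ by $W^0_{R_n, Q^n_k} = \{w_p : 1 \le p \le k-1\} \cup \{w_n\}$. For $p \in \{1, \ldots, k-1\} \cup \{n\}$, the element $w_p$ lies in both sets, so $C^{GL_n}_{k+1, p}$ maps tautologically to $C^{GL_n}_{k, p}$. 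For the new index $p = k$, I would verify the factorisation $w_k = uv$ with $u = (k, n-1, n-2, \ldots, k+1) \in W_{R_n}$ and $v = (k, n) \in W_{Q^n_k}$ (as permutations in $S_n$), from which $w_k \in W_{R_n} \cdot 1 \cdot W_{Q^n_k}$ and hence its image in $W^0_{R_n, Q^n_k}$ is $w_n$. Consequently $C^{GL_n}_{k+1, k}$ maps into $C^{GL_n}_{k, n}$, as claimed.

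For the isomorphism statements with $p \in \{1, \ldots, k-1\} \cup \{n\}$, I would identify the Bruhat cells $C^{w_p, -e_n^*}_{R_n, Q^n_j}$ with $\bun$-stacks of the intersections $R_n \cap w_p Q^n_j w_p^{-1}$ (carrying the appropriate $T_{Q^n_j}$-degree) and analyse the natural inclusion $R_n \cap w_p Q^n_{k+1} w_p^{-1} \subseteq R_n \cap w_p Q^n_k w_p^{-1}$. The extra root spaces on the right are the $w_p$-conjugates of $\{e_k - e_j : k+1 \le j \le n\}$ that lie in the Lie algebra of $R_n$, and contribute an extension problem whose classifying $\mrm{Ext}^1$-group on $E$ vanishes once the full $T_{Q^n_n}$-degree is fixed by the fibre product with $Y^{-e_n^*}_{Q^n_n}$. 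This reduces in each case to a Riemann-Roch computation for a line bundle of explicit degree on $E$, forcing the isomorphism of stacks. Iterating $C^{GL_n}_{k+1, n} \overset{\sim}\longrightarrow C^{GL_n}_{k, n}$ in $k$, with base case $X^n_1 = C^{GL_n}_{1, n}$, justifies the identification $C^{GL_n}_{k, n} \cong C^{GL_n}_{1, n}$ used in the statement.

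For the new cell $C^{GL_n}_{k+1, k}$, its projection to $C^{GL_n}_{1, n}$ keeps track of the pair $(y, y')$ of $T_{Q^n_n}$- and $T_{R_n}$-bundles. The Bruhat factorisation $w_k = uv$ constrains their relationship: tracking the effect of $j_{w_k}$ on the $T$-weights, the attachment point computed via \eqref{eq:glnbruhatpoint2} is forced to satisfy $e_n(y') - e_n(y) = e_k(y) - e_n(y)$, which is exactly the pullback condition along $\theta_k^{GL_n}$ defining $M_k^{GL_n}$; hence the image lies in $M_k^{GL_n}$. For the $\mb{A}^1$-bundle structure, $R_n \cap w_k Q^n_{k+1} w_k^{-1}$ differs from $R_n \cap Q^n_k$ by a single extra unipotent root space (as can be checked from the explicit matrix descriptions), whose $\mrm{Ext}^1$-contribution on $E$ is one-dimensional on each geometric fibre over $M_k^{GL_n}$ (again by Riemann-Roch, using semistability of $\xi_{R_n}$ and the line-bundle degrees recorded by $y$), giving the desired $\mb{A}^1$-torsor.

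The hard part will be the $\mrm{Ext}^1$-computations underlying both the ``rigidification'' in the isomorphism cases and the ``one-dimensional fibre'' in the $\mb{A}^1$-bundle case: one must enumerate the extra root spaces in each $w_p Q^n_k w_p^{-1} \setminus w_p Q^n_{k+1} w_p^{-1}$, translate them into explicit line-bundle degrees via the $T_{Q^n_n}$-degree $-e_n^*$ and the $R_n$-bundle $\xi_{R_n}$ (using its splitting into a line bundle of degree $0$ and a semistable rank $n-1$ bundle of degree $-1$), and verify the requisite vanishing or one-dimensionality on $E$. The remaining steps are routine bookkeeping with the explicit Bruhat cell parameterisations for $GL_n$.
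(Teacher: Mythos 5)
Your cell-by-cell framework and the combinatorial analysis are sound: the factorisation $w_k = uv$ with $u \in W_{R_n}$ and $v = (k,n) \in W_{Q^n_k}$ is correct, so $C^{GL_n}_{k+1,k}$ does map into the identity cell $C^{GL_n}_{k,n}$, and the degree-tracking argument for why the image lands in $M_k^{GL_n}$ is also correct. The paper proceeds along essentially these lines, using its auxiliary Lemmas \ref{lem:glnbruhatbasecomparison} and \ref{lem:glnunipotent} to express each cell $C^{GL_n}_{j,p}$ (for $p < j$) as the relative space of sections over $M_p^{GL_n}$ of an explicit vector bundle associated to the quotient $R_u(R_n)/(R_u(R_n) \cap w_p Q^n_j w_p^{-1})$.

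However, there is a concrete error in your root-space bookkeeping which breaks the last step. You claim that $R_n \cap w_k Q^n_{k+1} w_k^{-1}$ and $R_n \cap Q^n_k$ differ by a \emph{single} extra unipotent root space. This is false for $k < n-1$. The roots of $R_u(R_n)$ are $\{e_n - e_q : 1 \le q \le n-1\}$, and one checks directly (using $w_k^{-1}(n) = k$, $w_k^{-1}(q) = q+1$ for $k \le q < n$, $w_k^{-1}(q) = q$ for $q < k$) that $w_k^{-1}(e_n - e_q)$ is a root of $Q^n_{k+1}$ precisely when $q < k$. Hence $R_u(R_n) \cap w_k Q^n_{k+1} w_k^{-1}$ has $k-1$ root spaces and the quotient $R_u(R_n)/(R_u(R_n) \cap w_k Q^n_{k+1} w_k^{-1})$ has dimension $n-k$, not $1$. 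Your proposed line-bundle Riemann-Roch therefore has the wrong input: what actually appears is a \emph{rank $n-k$} vector bundle of degree $1$ (Lemma \ref{lem:glnunipotent} identifies it as $U_{k+1,k}^\vee \otimes \mb{Z}_{e_n}$), and the one-dimensionality of the fibre comes from the fact that $H^0(E, V)$ is one-dimensional and $H^1(E,V) = 0$ for a \emph{stable} bundle $V$ of any rank and degree $1$ on $E$ (stability of $V^\vee$, of negative slope, kills $H^1$). The same issue infects your treatment of the isomorphism cases $C_{k+1,p}^{GL_n} \to C_{k,p}^{GL_n}$ for $p < k$: there too the relevant quotients have rank $n - p > 1$, and the paper's argument is that both are relative spaces of sections of families of stable degree-$1$ bundles over the same base $M_p^{GL_n}$, related by a pushed-forward surjection, rather than a line-bundle $\mrm{Ext}^1$-vanishing. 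The combinatorial skeleton of your proof is right, but the degree-and-rank computation is the missing step, and it is not "routine bookkeeping": without identifying the bundle as stable of degree $1$ (which requires showing the $GL_{n-p}$-quotient of the underlying semistable Levi bundle is stable), the dimension count for the $\mb{A}^1$-fibre does not close.
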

\begin{proof}
If $k < n - 1$, then the morphism $C_{k + 1, n}^{GL_n} \to C_{k, n}^{GL_n}$ can be identified with
\[ Y_{Q^n_n}^{-e_n^*} \times_{Y_{Q^n_{k + 1}}^{-e_n^*}} \bun_{R_n \cap Q^n_{k + 1}}^{-e_{n - 1}^*} \times_{\bun_{R_n}^{-e_1^*}} \bun_{R_n}^{ss, -e_1^*} \longrightarrow Y_{Q^n_n}^{-e_n^*} \times_{Y_{Q^n_{k}}^{-e_n^*}} \bun_{R_n \cap Q^n_{k}}^{-e_{n - 1}^*} \times_{\bun_{R_n}^{-e_1^*}} \bun_{R_n}^{ss, -e_1^*}. \]
This is a pullback of
\[ \bun_{Q^{n - 1}_{k + 1}}^{-e_{n - 1}^*} \times_{\bun_{GL_{n - 1}}} \bun_{GL_{n - 1}}^{ss, -1} \longrightarrow Y^{-e_{n - 1}^*}_{Q^{n - 1}_{k + 1}} \times_{Y^{-e_{n - 1}^*}_{Q^{n - 1}_{k}}} \bun_{Q^{n - 1}_{k}}^{-e_{n - 1}^*} \times_{\bun_{GL_{n - 1}}} \bun_{GL_{n - 1}}^{ss, -1} \]
under the morphism $R_n \to GL_{n - 1}$ forgetting the last row and column, hence an isomorphism by \cite[Lemma 4.3.7]{davis19}.

If $k = n - 1$, then we can identify $C^{GL_n}_{k + 1, n} \to C^{GL_n}_{k, n}$ with the morphism
\[ \bun_{Q^n_n \cap R_n}^{-e_{n - 1}^*} \times_{\bun_{R_n}} \bun_{R_n}^{ss, -e_1^*} \times_S E \longrightarrow Y_{Q^n_n}^{-e_n^*} \times_{Y_{Q^n_{n - 1}}^{-e_n^*}} \bun_{R_n \cap Q^n_{n - 1}}^{-e_{n - 1}^*} \times_{\bun_{R_n}} \bun_{R_n}^{ss, -e_1^*}.\]
Since $R_n \cap Q^n_n = R_n \cap Q^n_{n - 1} = Q^n_n$, this is naturally a pullback of the isomorphism
\begin{align*}
Y_{Q^n_n}^{-e_{n - 1}^*} \times_S E &\overset{\sim}\longrightarrow Y_{Q^n_n}^{-e_n^*} \times_{Y_{Q^n_{n - 1}}^{-e_n^*}} Y_{Q_n^n}^{-e_{n - 1}^*} \\
(y, x) &\longmapsto (y + \beta_{n - 1}^\vee(x), y),
\end{align*}
hence an isomorphism itself.

If $k \leq n$ and $1 \leq p < k$, then $L_n \cap w_pQ^n_kw_p^{-1} = L_n \cap Q^n_{k - 1}$, where $L_n \subseteq R_n$ is the standard Levi factor. One easily checks that, in the notation of \S\ref{subsection:bruhat}, the morphism
\[ (i_{w_p}, j_{w_p}) \colon \mb{X}_*(T_{R_n \cap w_pQ^n_kw_p^{-1}}) \longrightarrow \mb{X}_*(T_{R_n}) \oplus \mb{X}_*(T_{Q^n_k}) \]
is injective and sends $-e_{n - 1}^*$ to $(-e_1^*, -e_n^*)$. So
\[ C_k^{w_p, -e_n^*} = \bun_{R_n \cap w_pQ^n_kw_p^{-1}}^{-e_{n - 1}^*} \times_{\bun_{R_n}} \bun_{R_n}^{ss, -e_1^*}.\]
By general nonsense, the right hand side is the relative space of sections of
\[ \eta_{k, p} \times^{(L_n \cap w_pQ^n_kw_p^{-1})R_u(R_n)} \frac{R_u(R_n)}{R_u(R_n) \cap w_pQ^n_kw_p^{-1}} \longrightarrow \bun_{L_n \cap w_pQ^n_kw_p^{-1}}^{-e_{n - 1}^*} \times_{\bun_{L_n}} \bun_{R_n}^{ss, -e_1^*} \times_S E\]
over
\[ \bun_{L_n \cap w_pQ^n_kw_p^{-1}}^{-e_{n - 1}^*} \times_{\bun_{L_n}} \bun_{R_n}^{ss, -e_1^*} \subseteq \bun_{(L_n \cap w_pQ^n_kw_p^{-1})R_u(R_n)}, \]
where $\eta_{k, p}$ is the universal $(L_n \cap w_pQ^n_kw_p^{-1})R_u(R_n)$-bundle. By Lemma \ref{lem:glnbruhatbasecomparison} below, we can therefore identify $C_{k, p}^{GL_n} = Y_{Q^n_n}^{-e_n^*} \times_{Y_{Q^n_k}^{-e_n^*}} C_k^{w_p, -e_n^*}$ with the relative space of sections of
\begin{equation} \label{eq:glnbruhatcomparison1}
 \bar{\eta}_{k, p} \times^{(L_n \cap w_pQ^n_kw_p^{-1})R_u(R_n)} \frac{R_u(R_n)}{R_u(R_n) \cap w_pQ^n_kw_p^{-1}} \longrightarrow M_p^{GL_n} \times_S E
\end{equation}
over $M_p \subseteq C_{1, n}^{GL_n}$, where $\bar{\eta}_{k, p}$ is a pullback of $\eta_{k, p}$. Note that by Lemma \ref{lem:glnunipotent} below, there is an isomorphism
\[ \frac{R_u(R_n)}{R_u(R_n) \cap w_pQ^n_kw_p^{-1}} \cong U_{k, p}^\vee \otimes \mb{Z}_{e_n},\]
of $L_n \cap w_pQ^n_kw_p^{-1}$-varieties, where $U_{k, p}$ is the representation described immediately before Lemma \ref{lem:glnunipotent}. So after pulling back along the smooth surjection $\bun_{L_n}^{ss, -e_1^*} \to \bun_{R_n}^{ss, -e_1^*}$, \eqref{eq:glnbruhatcomparison1} becomes a family of stable vector bundles on $E$ of degree $1$.

If $k \leq n - 1$, then by the above discussion, the morphism $C^{GL_n}_{k + 1, p} \to C^{GL_n}_{k, p}$ becomes the pushforward of a surjective morphism between families of stable vector bundles of degree $1$ over $M_p^{GL_n}$ after pulling back along $\bun_{L_n}^{ss, -e_1^*} \to \bun_{R_n}^{ss, -e_1^*}$, and is therefore an isomorphism as claimed. On the other hand, the morphism $C_{k + 1, k}^{GL_n} \to C_{k, n}^{GL_n}$ becomes the relative space of sections over $M_k^{GL_n} \subseteq C_{1, n}^{GL_n} \cong C_{k, n}^{GL_n}$ of a family of stable vector bundles of degree $1$, and is therefore an $\mb{A}^1$-bundle over $M_k^{GL_n}$.
\end{proof}

\begin{lem} \label{lem:glnbruhatbasecomparison}
If $p < k \leq n$, then the morphism
\begin{equation} \label{eq:glnbruhatbasecomparison1}
Y_{Q^n_n}^{-e_n^*} \times_{Y_{Q^n_k}^{-e_n^*}}(\bun_{L_n \cap w_pQ^n_kw_p^{-1}}^{-e_{n - 1}^*} \times_{\bun_{L_n}^{-e_1^*}} \bun_{R_n}^{ss, -e_1^*}) \longrightarrow Y_{Q^n_n}^{-e_n^*} \times_{\mrm{Pic}^{-1}_S(E)} \bun_{R_n}^{ss, -e_1^*} = C^{GL_n}_{1, n} = X^n_1
\end{equation}
induced by the inclusion $L_n \cap w_pQ^n_kw_p^{-1} \subseteq L_n$ factors through an isomorphism onto $M_p^{GL_n}$. Here the morphisms to $\mrm{Pic}^{-1}_S(E)$ in the fibre product in the right hand side of \eqref{eq:glnbruhatbasecomparison1} are both given by the determinant.
\end{lem}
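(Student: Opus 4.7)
The plan is to construct an inverse morphism $M_p^{GL_n} \to$ LHS to the natural forgetful morphism and show they are mutually inverse, thereby exhibiting the isomorphism claimed in the lemma.

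First I would verify that the forgetful morphism LHS $\to C^{GL_n}_{1, n}$ factors through $M_p^{GL_n}$. Writing $L_i = e_i(y)$, the central calculation is that the character $e_p$ of $Q^n_k$ pulls back via the conjugation homomorphism $\phi_p \colon L_n \cap w_p Q^n_k w_p^{-1} \to Q^n_k$ to the character $e_n$, since $w_p(p) = n$ by definition of the cyclic permutation. Hence the fibre product over $Y_{Q^n_k}^{-e_n^*}$ identifies the line bundle attached to $\xi$ by the character $e_n$ of $L_n$ with $L_p$, while the fibre product over $\bun_{L_n}$ identifies the same line bundle with $e_n(\xi_{R_n})$; thus $e_n(\xi_{R_n}) = L_p$, which is the defining condition of $M_p^{GL_n} \subseteq C^{GL_n}_{1, n}$.

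For the inverse, I would use the observation that as a subgroup of $L_n = GL_{n-1} \times \mb{G}_m$, $L_n \cap w_p Q^n_k w_p^{-1}$ is independent of $p$ and equals $Q^{n-1}_{k-1} \times \mb{G}_m$, where $Q^{n-1}_{k-1}$ is the standard parabolic of $GL_{n-1}$. Given $(y, \xi_{R_n}) \in M_p^{GL_n}$, the underlying $L_n$-bundle of the sought $\xi$ is forced to be $(U, L_p)$, where $U$ is the rank $n-1$ factor of the Levi of $\xi_{R_n}$ (stable of degree $-1$ by Atiyah's classification). The remaining data is a $Q^{n-1}_{k-1}$-reduction of $U$, i.e., a flag $U = U_0 \supset U_1 \supset \cdots \supset U_{k-2}$ of subbundles of rank $n - 1 - j$, with line subquotients $U_{j-1}/U_j$ forced by character matching to be $L_j$ for $1 \leq j \leq p - 1$ and $L_{j+1}$ for $p \leq j \leq k - 2$; the condition $\det U_{k-2} \cong L_k \otimes \cdots \otimes L_n$ then follows automatically from $\det U = (L_1 \cdots L_n) \otimes L_p^{-1}$.

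The main obstacle is showing existence and uniqueness of this flag. I would argue by induction on $j$ that $U_j$ is stable of rank $n - 1 - j$ and degree $-1$, the base case being Atiyah. For the inductive step, stability of $U_j$ and Riemann-Roch applied to the stable bundle $L \otimes U_j^\vee$ (of positive slope $1/(n-1-j)$, where $L$ is the prescribed line bundle of degree $0$) give $\dim H^0(E, L \otimes U_j^\vee) = 1$. The unique nonzero map $f \colon U_j \to L$ must be surjective: if the image were $L(-D)$ with $\deg D \geq 1$, then $U_{j+1} = \ker f$ would satisfy $\mu(U_{j+1}) \geq \mu(U_j)$ by a direct slope calculation, contradicting stability of $U_j$. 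So $U_{j+1}$ is uniquely determined. Its stability follows from a slope comparison: any proper subbundle $W \subset U_{j+1}$ has rank $r \leq n - 3 - j$ and lies inside the stable $U$, forcing $\deg W \leq -1$ and $\mu(W) \leq -1/r < -1/(n-2-j) = \mu(U_{j+1})$; the rank-one case is trivial. Once the flag is pinned down, checking that the two constructions are mutually inverse is formal.
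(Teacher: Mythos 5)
Your argument is essentially correct, but it takes a more hands-on route than the paper. You identify the defining condition on $M_p^{GL_n}$ the same way (the character chase $j_{w_p}^*e_p = e_n$ forcing $e_n(\xi_{R_n}) \cong e_p(y)$), but where the paper then simply invokes \cite[Lemma 4.3.7]{davis19} --- which says that
$\bun_{Q^{n-1}_{k-1}}^{-e_{n-1}^*} \times_{\bun_{GL_{n-1}}} \bun_{GL_{n-1}}^{ss, -1} \to Y_{Q^{n-1}_{k-1}}^{-e_{n-1}^*} \times_{\mrm{Pic}^{-1}_S(E)} \bun_{GL_{n-1}}^{ss, -1}$
is an isomorphism, so that the morphism of the lemma is a pullback of an isomorphism composed with the isomorphism $j_{w_p}$, followed by a definition-chase to identify the source with $M_p^{GL_n}$ --- you instead re-derive the content of that cited lemma from scratch. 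Your induction (for $U_j$ stable of slope $-1/(n-1-j)$ and a prescribed degree-$0$ line bundle $L$, one has $h^0(E, L\otimes U_j^\vee) = 1$, the unique map must be onto since otherwise the kernel would destabilise $U_j$, and the kernel is again stable) is exactly the reason the cited lemma holds. Your version is more self-contained and makes transparent exactly what data the Bruhat cell $C^{GL_n}_{k, p}$ carries over $X^n_1$; the paper's is modular and shorter. The one point you should tighten is the step you call ``formal'': you establish unique existence of the flag over geometric points, but an isomorphism of stacks requires the flag to form functorially over an arbitrary test scheme $T$. This does follow --- cohomology and base change makes $\pi_{T*}(L \otimes U_j^\vee)$ a line bundle, and constancy of ranks makes the evaluation map a surjection onto $L$ with locally free kernel --- but since the whole content of the lemma is that a pointwise-unique reduction is a reduction in families, this deserves a sentence rather than a wave of the hand.
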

\begin{proof}
First note that the morphism
\[ \bun_{L_n \cap w_pQ^n_kw_p^{-1}}^{-e_{n - 1}^*} \times_{\bun_{L_n}^{-e_1^*}} \bun_{R_n}^{ss, -e_1^*} \longrightarrow Y_{L_n \cap w_pQ^n_kw_p^{-1}}^{-e_{n - 1}^*} \times_{Y_{R_n}^{-e_1^*}} \bun_{R_n}^{ss, -e_1^*} \]
is a pullback of
\[ \bun_{Q^{n - 1}_{k - 1}}^{-e_{n - 1}^*} \times_{\bun_{GL_{n - 1}}} \bun_{GL_{n - 1}}^{ss, -1} \longrightarrow Y_{Q^{n - 1}_{k - 1}}^{-e_{n - 1}^*} \times_{\mrm{Pic}^{-1}_S(E)} \bun_{GL_{n - 1}}^{ss, -1} \]
and hence an isomorphism by \cite[Lemma 4.3.7]{davis19}. Composing with the isomorphism
\[ j_{w_p} \colon Y_{L_n \cap w_pQ^n_kw_p^{-1}}^{-e_{n - 1}^*} \overset{\sim}\longrightarrow Y_{Q^n_k}^{-e_n^*} \]
allows us to identify \eqref{eq:glnbruhatbasecomparison1} with the closed immerison 
\[ Y_{Q^n_n}^{-e_n^*} \times_{Y_{R_n}^{-e_1^*}} \bun_{R_n}^{ss, -e_1^*} \longrightarrow Y_{Q^n_n}^{-e_n^*} \times_{\mrm{Pic}^{-1}_S(E)} \bun_{R_n}^{ss, -e_1^*},\]
where the morphism $Y_{Q^n_n}^{-e_n^*} \to Y_{R_n}^{-e_1^*}$ is the composition
\[ Y_{Q^n_n}^{-e_n^*} \longrightarrow Y_{Q^n_k}^{-e_n^*} \xrightarrow{j_{w_p}^{-1}} Y_{L_n \cap w_pQ^n_kw_p^{-1}}^{-e_{n - 1}^*} \xrightarrow{i_{w_p}} Y_{R_n}^{-e_1^*}.\]
Chasing through the various definitions now shows that the source of this morphism is precisely $M_p^{GL_n}$, so we are done.
\end{proof}

In the following lemma, we write $U_{k, p}$ for the $L_n \cap w_pQ^n_kw_p^{-1}$-representation induced by the homomorphism
\[ L_n \cap w_pQ^n_kw_p^{-1} = Q^{n - 1}_{k - 1} \times \mb{G}_m \longrightarrow Q^{n - 1}_{k - 1} \longrightarrow GL_{n - p} \]
given by deleting the last row and column and the first $p - 1$ rows and columns.

\begin{lem} \label{lem:glnunipotent}
If $p < k$, then there is an $L_n \cap w_pQ^n_kw_p^{-1}$-equivariant isomorphism
\begin{equation} \label{eq:glnunipotent1}
 R_u(R_n)/(R_u(R_n) \cap w_pQ^n_kw_p^{-1}) \overset{\sim}\longrightarrow U_{k, p}^\vee \otimes \mb{Z}_{e_n}.
\end{equation}
\end{lem}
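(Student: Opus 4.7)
The plan is to reduce the lemma to an elementary matrix calculation by first identifying $R_u(R_n)$ with a standard representation of $L_n$. Namely, $R_u(R_n)$ consists precisely of the matrices $I + \sum_{i = 1}^{n-1} a_i E_{n, i}$, and so is canonically $L_n = GL_{n-1} \times \mb{G}_m$-equivariantly isomorphic to $V^\vee \otimes \mb{Z}_{e_n}$, where $V = \langle e_1, \ldots, e_{n-1}\rangle$ is the standard representation of $GL_{n-1}$, via the assignment $I + \sum_i a_i E_{n, i} \leftrightarrow \sum_i a_i e_i^\vee \otimes e_n$.

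Next I would compute the intersection directly. Using $w_p^{-1} E_{n, i} w_p = E_{p, w_p^{-1}(i)}$, with $w_p^{-1}(i) = i$ for $i < p$ and $w_p^{-1}(i) = i + 1$ for $p \le i \le n - 1$, the conjugate $w_p^{-1}(I + \sum a_i E_{n, i}) w_p$ is the identity plus entries in row $p$ at columns $1, \ldots, p - 1$ (with values $a_1, \ldots, a_{p - 1}$) and at columns $p + 1, \ldots, n$ (with values $a_p, \ldots, a_{n - 1}$). The defining condition for $Q^n_k$ (vanishing of the $(q, r)$ entry whenever $q < \min(r, k)$), combined with $p < k$, forces all row-$p$ entries to vanish in columns $r > p$, giving $a_p = \cdots = a_{n - 1} = 0$. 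Hence $R_u(R_n) \cap w_p Q^n_k w_p^{-1}$ corresponds to the subspace $\langle e_1^\vee, \ldots, e_{p-1}^\vee\rangle \otimes \mb{Z}_{e_n}$.

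Finally I would identify the quotient intrinsically. Under the isomorphism $L_n \cap w_p Q^n_k w_p^{-1} \cong Q^{n-1}_{k-1} \times \mb{G}_m$, the factor $Q^{n-1}_{k-1}$ stabilises $V' = \langle e_p, \ldots, e_{n-1}\rangle \subset V$ (a member of the partial flag it preserves, using $p \le k - 1$) and acts on $V'$ through the quotient $Q^{n-1}_{k-1} \to GL(V') = GL_{n-p}$ obtained by deleting the first $p - 1$ rows and columns---which is by definition the representation $U_{k, p}$. Dualising $0 \to V' \to V \to V/V' \to 0$ identifies $\langle e_1^\vee, \ldots, e_{p-1}^\vee\rangle$ with $(V/V')^\vee$ (the annihilator of $V'$ in $V^\vee$), and so identifies the quotient $V^\vee/\langle e_1^\vee, \ldots, e_{p-1}^\vee\rangle$ with $(V')^\vee = U_{k, p}^\vee$; tensoring with $\mb{Z}_{e_n}$ then gives the desired equivariant isomorphism. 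The only subtle bookkeeping point is recognising the correct $Q^{n-1}_{k-1}$-invariant subspace of $V$ (which is $V'$, not $\langle e_1, \ldots, e_{p-1}\rangle$); after this, the result is immediate, and no genuine obstacle arises.
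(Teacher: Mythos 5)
Your proof is correct and follows essentially the same approach as the paper, just more explicitly: the paper determines which root subgroups $U_\beta$ survive in the quotient and then appeals to an inspection of weights, whereas you carry out the same identification by realising $R_u(R_n)$ concretely as $V^\vee \otimes \mb{Z}_{e_n}$ (using that it is an abelian vector group), computing the intersection with $w_pQ^n_kw_p^{-1}$ by a direct matrix conjugation, and then identifying the quotient via the dual of $0 \to V' \to V \to V/V' \to 0$. Your version usefully spells out the step the paper leaves implicit — that the invariant subspace of $V$ under $Q^{n-1}_{k-1}$ is $V' = \langle e_p, \ldots, e_{n-1}\rangle$ (requiring $p \le k-1$), so that $U_{k,p}$ is a \emph{sub}representation of $V$ and hence $U_{k,p}^\vee$ a quotient of $V^\vee$ — which is the precise content of the paper's phrase ``by inspection of the weights.''
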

\begin{proof}

If $\beta$ is a root of $R_u(R_n)$, then the root subgroup $U_\beta \cong \mb{G}_a \subseteq R_u(R_n)$ maps injectively into $R_u(R_n)/(R_u(R_n) \cap w_pQ^n_kw_p^{-1})$ if and only if $w_p^{-1}\beta$ is not a root of $Q^n_k$. In particular, this implies that $\beta$ is a negative root and $w_p^{-1}\beta$ is a positive root, and hence that
\[ \beta \in \Sigma = \{-\beta_{n - 1}, -\beta_{n - 1}- \beta_{n - 2}, \ldots, - \beta_{n - 1} - \beta_{n - 2} - \cdots - \beta_p\},\]
and
\[ w_p^{-1}\beta \in \{\beta_{n - 1} + \beta_{n - 2} + \cdots + \beta_p, \beta_{n - 2} + \cdots + \beta_p, \ldots, \beta_p\}.\]
Note that if $\beta \in \Sigma$, then $U_\beta \subseteq R_u(P)$, and $w_p^{-1}\beta$ is not a root of $Q^n_k$, so $\Sigma$ is precisely the set of roots appearing in $R_u(R_n)/(R_u(R_n) \cap w_pQ^n_kw_p^{-1})$.

It is clear from the above that $R_u(R_n)/(R_u(R_n) \cap w_pQ^n_kw_p^{-1})$ is isomorphic to an $L_n \cap w_pQ^n_kw_p^{-1}$-representation. The isomorphism \eqref{eq:glnunipotent1} follows by inspection of the weights of this representation.
\end{proof}

\subsection{The divisor $D_{\alpha_j^\vee}(Z)$} \label{subsection:dalphaj}

The purpose of this subsection is to prove Proposition \ref{prop:subregularresolutions2} below, which refines Theorem \ref{thm:introsubregularresolutions}, \eqref{itm:introsubregularresolutions2}. For the statement, recall Notation \ref{notation:dynkindecomposition}. For $1 \leq k \leq n_0 + 1$, we write $\theta_k$ for the section
\begin{align*}
\theta_k \colon Y &\longrightarrow Y \times_S \mrm{Pic}^0_S(E) \\
y &\longmapsto \begin{cases} (y, \varpi_j(y) - \varpi_i(y) - \varpi_{c_0, 1}(y)), & \text{if}\;\; k = 1, \\ (y, \varpi_j(y) - \varpi_i(y) - \varpi_{c_0, k}(y) + \varpi_{c_0, k - 1}(y)), & \text{if}\;\; 1 < k \leq n_0, \\ (y, 0), & \text{if}\;\; k = n_0 + 1.\end{cases}
\end{align*}

\begin{prop} \label{prop:subregularresolutions2}
Assume we are in the setup of Proposition \ref{prop:subregularresolutions1}. Then there is a sequence of $n_0 + 1$ morphisms
\[ D_{\alpha_j^\vee}(Z) = D_{n_0 + 2} \longrightarrow D_{n_0 + 1} \longrightarrow \cdots \longrightarrow D_1\]
over $Y \times_S Z$ such that $D_1$ is a line bundle over $Y \times_S \mrm{Pic}^0_S(E)$ and $D_{k + 1} \to D_k$ is the blowup along the section $\theta_k \colon Y \to Y \times_S \mrm{Pic}^0_S(E) \subseteq D_k$ of the proper transform of the zero section of $D_1$.
\end{prop}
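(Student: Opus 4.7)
The plan is to identify $D_{\alpha_j^\vee}(Z)$ with (a base change of) the stack $X^{n_0+2}_{n_0+2}$ studied in \S\ref{subsection:glnbruhat} and then deduce the iterated blowup structure from Proposition \ref{prop:glnbruhatcomparison}. First I would observe that since $\alpha_j \in \Delta$ is the special root (or, in type $A$, plays the analogous role with $\{\alpha_i,\alpha_j\} = t(P)$), a stable map parametrised by $D_{\alpha_j^\vee}(Z)$ consists of an elliptic section of $\xi_G/B$ together with a rational tail of degree $\alpha_j^\vee$, which necessarily collapses in the $G/Q_j$-bundle, where $Q_j \subseteq G$ is the standard maximal parabolic with $t(Q_j) = \{\alpha_j\}$. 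Thus $D_{\alpha_j^\vee}(Z)$ fibers over $Y \times_S Z$ through the pair (genus $1$ component as a section to $G/B$, point of attachment).

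Next I would use the explicit description of the Levi $L_j$ of $Q_j$ from Lemmas \ref{lem:typealeviiso}, \ref{lem:typecdefleviiso}, \ref{lem:typebleviiso} and \ref{lem:typegleviiso} to exhibit a homomorphism $GL_{n_0+2} \to L_j$ (embedding into the $A$-type factor of $L_j$, and, in types $B$, $C$, $F$, $G$, taking the obvious $GL$-like factor; in type $B$ the relevant subgroup of $GSp_4$ is still of type $GL_2$-flavoured) under which $Q_j$ corresponds to $Q^{n_0+2}_{n_0+1}$, the root $\alpha_j$ to $\beta_{n_0+1}$, and the simple roots of $c_0$ to $\beta_1,\ldots,\beta_{n_0}$ in order. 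Because $Z = \mrm{Ind}_L^G(Z_0)$ is built by parabolic induction, the universal $G$-bundle over $Z$ is equipped with a canonical reduction to $L$, and the resulting $GL_{n_0+2}$-bundle is (fiberwise) the extension associated to the unique unstable $GL_{n_0+2}$-bundle with Harder-Narasimhan reduction a semistable $R_{n_0+2}$-bundle of degree $-e_1^*$, exactly as in \S\ref{subsection:glnbruhat}. This identifies
\[ D_{\alpha_j^\vee}(Z) \cong X^{n_0+2}_{n_0+2} \times_S Z_0 \]
(suitably twisted by torsors trivialising $\Theta$ and recording the extra data from the induction construction), the morphism to $Y \times_S Z$ being induced by the morphism $X^{n_0+2}_{n_0+2} \to Y_{Q^{n_0+2}_{n_0+2}}^{-e_{n_0+2}^*}$ together with the canonical map $Z \to Z_0 \times_S E$.

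I would then run Proposition \ref{prop:glnbruhatcomparison} backwards in $k = n_0+1, n_0, \ldots, 1$ to obtain the tower $D_{n_0+2} \to D_{n_0+1} \to \cdots \to D_1$, with $D_{k+1} \to D_k$ a blowup along the proper transform of the section $\theta_k^{GL_{n_0+2}}$ of the line bundle $D_1 = X^{n_0+2}_1 = C^{GL_{n_0+2}}_{1,n_0+2}$, which is manifestly a line bundle over $Y_{Q^{n_0+2}_{n_0+2}}^{-e_{n_0+2}^*} \times_S E = Y_{Q^{n_0+2}_{n_0+2}}^{-e_{n_0+2}^*} \times_S \mrm{Pic}^0_S(E)$ (the fibres being $\mrm{Ext}^1$-spaces of the Harder-Narasimhan extension). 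Base change along $Z_0 \to S$ and pullback along the morphism $Y \to Y_{Q^{n_0+2}_{n_0+2}}^{-e_{n_0+2}^*}$ coming from the $GL_{n_0+2} \hookrightarrow L_j$ embedding (i.e.\ from the character maps of Lemmas \ref{lem:typecdefleviiso}, \ref{lem:typebleviiso}, \ref{lem:typegleviiso}) translates the sections $\theta_k^{GL_{n_0+2}}$ into the sections $\theta_k$ displayed in the statement; this is where the explicit formulas for the images of the $e_k$ in those lemmas give exactly the relative degrees $\varpi_j - \varpi_i - \varpi_{c_0,k} + \varpi_{c_0,k-1}$.

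The main obstacle is this last bookkeeping: keeping track of the pullback of the line bundle $D_1$ under the various identifications and verifying that the character computation pins the sections down correctly, especially in type $B$ where the relevant $GL_2$ sits inside $GSp_4$ and the character $e_{n_0+2} = f_4$ requires using the relation $f_1 + f_4 = \chi$ to match with $\varpi_j = \varpi_{l-2}$. Once these match-ups are made, smoothness of each $D_k$ over $Y$ follows inductively from smoothness of the blowup centres over $Y$, and the proposition is immediate.
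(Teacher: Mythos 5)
Your approach is in the right spirit -- the paper does reduce the claim to the $GL_n$ Bruhat cell analysis of \S\ref{subsection:glnbruhat} via Propositions \ref{prop:glndecomposition} and \ref{prop:glnbruhatcomparison} -- but the way you set it up contains a structural error. The group $GL_{n_0+2}$ you want, with simple roots $\beta_1, \ldots, \beta_{n_0+1}$ matched in order to $\alpha_{c_0,1}, \ldots, \alpha_{c_0,n_0}, \alpha_j$, does not exist as described. In types $C$, $F$ and $G$ (precisely the cases with $d>1$) the root $\alpha_i = \alpha_{c_0,n_0}$ is short and $\alpha_j$ is long, joined by a multiple bond, so the chain $c_0 \cup \{\alpha_j\}$ is not of type $A_{n_0+1}$ and no such $GL_{n_0+2}$ embedding exists. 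Moreover, the Levi of $Q_j$ has simple roots $\Delta \setminus \{\alpha_j\}$; the factor it gets from $c_0$ is $GL_{n_0+1}$, not $GL_{n_0+2}$. The paper's fix is Proposition \ref{prop:subregp1hom}: work with the parabolic $P_1$ of type $t(P_1) = \Delta \setminus c_0$ and build a \emph{surjective} homomorphism $\pi_{P_1}\colon P_1 \to GL_{n_0+1}$ under which $\alpha_j^\vee \mapsto e_{n_0+1}^*$ -- a cocharacter of $GL_{n_0+1}$ that is not a coroot -- so that the bond multiplicity is invisible and $\pi_{P_1}^{-1}(Q^{n_0+1}_k) = P_k$. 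This forces $n_0+1$, not $n_0+2$, and is a quotient rather than an embedding.

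Even granting the Bruhat picture, your proposal conflates $D_1$ with $D_1 \times_Z Z_0$. The cell comparison, applied through $\pi_{P_1}$, only produces a morphism $D_k \times_Z Z_0 \to X^{n_0+1}_{k,rig}$; the stack $X^n_1 = Y_{Q^n_n}^{-e_n^*} \times_{\mrm{Pic}^{-1}_S(E)} \bun_{R_n}^{ss,-e_1^*}$ is not a line bundle, and after base change one obtains only $D_1 \times_Z Z_0 \cong Y \times_S \mrm{Pic}^0_S(E)$ (Lemma \ref{lem:bruhatcellcurvecomparison}), which is the \emph{zero section}, not the total space. Promoting this to a line bundle structure on $D_1$ requires a separate step -- choosing a $\mb{G}_m \subseteq Z(L)_{rig}$ contracting $Z$ onto $Z_0$, noting that the normal cone of $D_1 \times_Z Z_0$ in $D_1$ then has a single nonzero weight, and appealing to \cite[Lemma 4.3.11]{davis19} -- for which your sketch offers no substitute. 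Finally, you do not address the locus $D_k \times_Z (Z \setminus Z_0)$, where the relevant $G$-bundles are regular unstable and the $GL_{n_0+1}$ Bruhat decomposition does not directly apply; the paper shows separately (Proposition \ref{prop:subregbruhatcomparison}) that $D_{k+1}\to D_k$ is an isomorphism there, and this is a necessary input before Lemma \ref{lem:blowuprecognition} can be invoked, since the blowup-recognition lemma concerns the entire morphism $D_{k+1}\to D_k$, not merely its restriction over $Z_0$.
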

\begin{proof}
The spaces $D_k$ are defined as follows. For $1 \leq k \leq n_0$, let $P_k \subseteq G$ be the standard parabolic with type $t(P_k) = \Delta \setminus \{\alpha_{c_0, k}, \ldots, \alpha_{c_0,n_0}\} = \Delta \setminus \{\alpha_{c_0,k}, \ldots, \alpha_{c_0, n_0 - 1}, \alpha_i\}$, and let $P_{n_0 + 1} = B$. Then for $1 \leq k \leq n_0 + 1$, we define
\begin{align*}
D_k &= Y_B^{-\alpha_j^\vee} \times_{Y_{P_k}^{-\alpha_j^\vee}} \mrm{KM}_{P_k, G, rig}^{-\alpha_j^\vee} \times_{\bun_{G, rig}} Z \times_S E\\
&\cong Y \times_{Y_{P_k}} (\mrm{KM}_{P_k, G, rig}^{-\alpha_j^\vee} \times_{\bun_{G, rig}} Z \times_S E),
\end{align*}
where the morphism to $Y_{P_k}$ in the last fibre product is given by the composition
\begin{align*}
\mrm{KM}_{P_k, G/S, rig}^{-\alpha_j^\vee} \times_{\bun_{G, rig}} Z \times_S E \xrightarrow{\mrm{Bl}_{P_k}} Y_{P_k}^{-\alpha_j^\vee} \times_S E &\longrightarrow Y_{P_k} \\
(y, x) &\longmapsto y + \alpha_j^\vee(x).
\end{align*}
For $k \leq n_0$, the morphism $D_{k + 1} \to D_k$ is the obvious one induced by the inclusion $P_{k + 1} \subseteq P_k$. To describe the morphism $D_{\alpha_j^\vee}(Z) \to D_{n_0 + 1}$, note that every stable map parametrised by a point in $D_{\alpha_j^\vee}(Z)$ has a unique rational irreducible component of degree $\alpha_j^\vee$. Deleting this rational component and recording the point of $E$ over which it was attached defines the morphism
\[ D_{\alpha_j^\vee}(Z) \longrightarrow \mrm{KM}_{B, G, rig}^{-\alpha_j^\vee} \times_{\bun_{G, rig}} Z \times_S E = D_{n_0 + 1}.\]

For $k \leq n_0 + 1$, the spaces $D_k$ can be decomposed into locally closed subsets as follows. First, by Proposition \ref{prop:subregp1bundles}, for $z \in Z_0 \subseteq Z$, every stable section of $\xi_{G, z}/P_1 = \xi_{P, z} \times^P G/P_1$ of degree $-\alpha_j^\vee$ is in fact a genuine section of the subvariety $\xi_{P, z} \times^P PP_1/P_1 \cong \xi_{P, z}/(P \cap P_1)$. So there is an isomorphism
\begin{equation} \label{eq:subregularresolutions2:3}
\mrm{KM}_{P_k, G, rig}^{-\alpha_j^\vee} \times_{\bun_{G, rig}} Z_0 \cong \mrm{KM}_{P_k, P_1, rig}^{-\alpha_j^\vee} \times_{\bun_{P_1, rig}} \bun_{P \cap P_1, rig}^{-\alpha_i^\vee - \alpha_j^\vee} \times_{\bun_{P, rig}} Z_0.
\end{equation}
Explicitly, the right hand side is tautologically identified with the space of pairs $(z, \sigma)$ where $z \in Z_0$ and $\sigma$ is a stable section of $\xi_{P, z} \times^{P \cap P_1} P_1/P_k$ of appropriate degree such that the image in $\xi_{P, z}/(P \cap P_1)$ is a genuine section, while the left hand side is the space of pairs $(z, \sigma)$ where $z \in Z_0$ and $\sigma$ is a stable section of $\xi_{P, z} \times^P G/P_k$. The isomorphism \eqref{eq:subregularresolutions2:3} is obtained in this description by applying the isomorphism $P \times^{P \cap P_1} P_1/P_k \cong PP_1/P_k$ and the inclusion $PP_1/P_k \hookrightarrow G/P_k$. The homomorphism $\pi_{P_1} \colon P_1 \to GL_{n_0 + 1}$ of Proposition \ref{prop:subregp1hom} therefore induces a morphism
\begin{equation} \label{eq:subregularresolutions2:1}
 D_k \times_Z Z_0 \longrightarrow X_{k, rig}^{n_0 + 1},
\end{equation}
where $X_{k, rig}^{n_0 + 1}$ is the rigidification of the space $X_k^{n_0 + 1}$ of \S\ref{subsection:glnbruhat} with respect to the image of $Z(G)$ in $Z(GL_{n_0 + 1})$. We therefore get a decomposition
\begin{equation} \label{eq:subregularresolitions2:2}
D_k = (D_k \times_Z (Z \setminus Z_0)) \cup \bigcup_{1 \leq p < k} C_{k, p} \cup C_{k, n_0 + 1},
\end{equation}
where $C_{k, p}$ is the preimage of $C_{k, p}^{GL_{n_0 + 1}} \subseteq X^{n_0 + 1}_k$ under \eqref{eq:subregularresolutions2:1}. The behaviour of these locally closed subsets under the morphisms $D_{k + 1} \to D_k$ is described by Proposition \ref{prop:subregbruhatcomparison}.

There is a morphism $C_{1, n_0 + 1} \to Y \times_S \mrm{Pic}^0_S(E)$ \eqref{eq:bruhatcellcurvecomparison1}, which is defined so that the image of a stable section in $D_{\alpha_j^\vee}(Z)$ over $y \in Y$ with two rational components is sent to $(y, x_j - x_i)$, where $x_j \in E$ (resp.\ $x_i \in E$) is the point of attachment of the rational component of degree $\alpha_j^\vee$ (resp.\ $\alpha_i^\vee$). This morphism is an isomorphism by Lemma \ref{lem:bruhatcellcurvecomparison}.

Since $\mrm{KM}_{P_k, G, rig}^{-\alpha_j^\vee} \times_{\bun_{G, rig}} Z$ is smooth over $Y_{P_k}^{-\alpha_j^\vee}$, each space $D_k$ is smooth over $Y$. Proposition \ref{prop:subregbruhatcomparison} implies that they are all isomorphic to $D_{n_0 + 1}$, and hence to $D_{\alpha_j^\vee}(Z)$, outside $Z_0$. So the spaces $D_k$ are all smooth surfaces over $Y$.

In particular, $C_{1, n_0 + 1} = D_1 \times_Z Z_0$ is a Cartier divisor on $D_1$. Moreover, choosing any cocharacter of the torus $Z(L)_{rig}$ whose negative is a Harder-Narasimhan vector for the parabolic $P^+$ opposite to $P$, we get compatible actions of $\mb{G}_m$ on $Z$ and $D_1$ acting trivially on $Z_0$ and $D_1 \times_Z Z_0$, such that $\mb{G}_m$ acts on the fibres of the affine space bundle $Z \to Z_0$ with positive weights. Since the normal cone of $D_1 \times_Z Z_0$ in $D_1$ is a line bundle and $\mb{G}_m$ acts nontrivially on it, $\mb{G}_m$ acts on it with a single nonzero weight. So \cite[Lemma 4.3.11]{davis19} shows that $D_1$ is isomorphic to a line bundle over $C_{1, n_0 + 1} = Y \times_S \mrm{Pic}^0_S(E)$ as claimed.

It remains to show that $D_{k + 1} \to D_k$ is the blowup along the proper transform of $\theta_k$ for $1 \leq k \leq n_0 + 1$. If $k \leq n_0$, this follows from Proposition \ref{prop:subregbruhatcomparison} and Lemma \ref{lem:blowuprecognition}. For $k = n_0 + 1$, note that $D_{n_0 + 2} = D_{\alpha_j^\vee}(Z) \to D_{n_0 + 1}$ is an isomorphism outside the proper transform of $\theta_{n_0 + 1}$ (the locus of curves with a degree $\alpha_i^\vee$ rational component over the point of attachment of the degree $\alpha_j^\vee$ rational curve), and the fibre over any point in this proper transform is an irreducible curve. The claim now follows by Lemma \ref{lem:blowuprecognition} again.
\end{proof}

The rest of the subsection is devoted to the various lemmas and propositions quoted in the proof of Proposition \ref{prop:subregularresolutions2}.

\begin{prop} \label{prop:subregp1bundles}
Let $z \in Z_0 \subseteq Z$ and let $\xi_{P, z}$ and $\xi_{G, z} = \xi_{P, z} \times^P G$ be the corresponding $P$ and $G$-bundles. Then any stable section of $\xi_{G, z}/P_1 = \xi_{P, z} \times^G G/P_1$ of degree $-\alpha_j^\vee$ is a genuine section, and factors through $\xi_{P, z} \times^P PP_1/P_1$.
\end{prop}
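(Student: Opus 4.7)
The plan is to combine the Bruhat cell machinery of \S\ref{subsection:bruhat} with the explicit description of the regular semistable $L$-bundle $\xi_{L, z}$ from \S\ref{subsection:levistructure}. First, I would decompose any stable section $\sigma \colon C \to \xi_{G, z}/P_1$ of degree $-\alpha_j^\vee$ into its genus-one component $E \subseteq C$ and its rational tails. The restriction $\sigma|_E$ is then a genuine section of $\xi_{G, z}/P_1$ of some degree $\lambda \leq -\alpha_j^\vee$ in $\mb{X}_*(T_{P_1})$, and the rational tails account for the difference $-\alpha_j^\vee - \lambda \in \mb{X}_*(T_{P_1})_+$. By Proposition \ref{prop:levidegreebound}, either $\sigma|_E$ factors through a single Bruhat cell $C^{w, \lambda}_{P, P_1}$, or there exist $w \in W^0_{P, P_1} \setminus \{1\}$ and $\lambda' < \lambda$ with $C^{w, \lambda'}_{P, P_1, \xi_{L, z} \times^L P}$ non-empty.

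The heart of the proof is then to show that among the Bruhat cells $C^{w, \lambda'}_{P, P_1, \xi_{L, z} \times^L P}$ with $\lambda' \leq -\alpha_j^\vee$, only the trivial cell $(w, \lambda') = (1, -\alpha_j^\vee)$ is non-empty. Under the isomorphism $C^w_{P, P_1} \cong \bun_{P \cap w P_1 w^{-1}}$, non-emptiness for the split bundle $\xi_{L, z} \times^L P$ is equivalent to the existence of a reduction of $\xi_{L, z}$ to the Levi of $L \cap w P_1 w^{-1}$ with prescribed image in $\mb{X}_*(T_{P_1})$. Combining the structural descriptions of $L$ from \S\ref{subsection:levistructure} with Atiyah's classification of $\xi_{L, z}$ as a tuple of stable vector bundles (and, in type $B$, a conformally symplectic bundle), this becomes a concrete vector-bundle computation. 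For the type-$A_{n_0}$ piece $c_0$, the homomorphism $\pi_{P_1} \colon P_1 \to GL_{n_0 + 1}$ of Proposition \ref{prop:subregp1hom} transfers the relevant Bruhat cell analysis directly to Lemmas \ref{lem:glndegrees}--\ref{lem:glnbruhatsurjective}; the remaining components of $\xi_{L, z}$ involve stable summands, whose admissible reductions are rigid and easily excluded by slope comparisons.

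Once this exclusion is in hand, $\sigma|_E$ must lie in $C^{1, -\alpha_j^\vee}_{P, P_1}$, which over $\{\xi_{P, z}\}$ is $\bun_{P \cap P_1}^{-\alpha_j^\vee}$; this forces $\lambda = -\alpha_j^\vee$ so that no rational tails can exist, and $\sigma$ factors through $\xi_{P, z} \times^P PP_1/P_1 \cong \xi_{P, z}/(P \cap P_1)$, as required. The main obstacle is this case-by-case ruling-out of non-trivial Bruhat cells: while the $c_0$-component is handled uniformly via the $GL_{n_0 + 1}$-reduction, the $c_1$-component (and especially the $GSp_4$-factor appearing in type $B$) requires direct vector-bundle arguments in the spirit of Lemma \ref{lem:typebcdleviautos}, worked out type by type.
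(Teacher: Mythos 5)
Your plan diverges from the paper's proof in a way that introduces a genuine gap. The paper does not analyze the $(P, P_1)$-Bruhat cells directly. Instead, it first uses the surjectivity of $\mrm{KM}_{B, G}^{-\alpha_j^\vee} \to \mrm{KM}_{P_1, G}^{-\alpha_j^\vee}$ (from \cite[Proposition 3.6.4]{davis19a}) to lift any stable $P_1$-section to a stable $B$-section, and then works with the $(P, B)$-cells $C^{w, \lambda}(Z_0)$. This is not a cosmetic choice: for $B$-sections the degree lives in the full lattice $\mb{X}_*(T)$ rather than the quotient $\mb{X}_*(T_{P_1})$, and in particular the genuine section $\sigma_L$ of $\xi_{L,z}/(L \cap B)$ obtained from a $B$-cell gives a nonempty divisor $D_{-w\lambda}(Z)$. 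This is exactly what feeds Lemma \ref{lem:subregularemptydivisors} and then the combinatorial Lemma \ref{lem:weylgroupelements}, which together pin $w$ down to $W_{L_1}$ (Lemma \ref{lem:subregularborelcells}). Working only with $\mb{X}_*(T_{P_1})$, you lose precisely the coordinates $\langle \varpi_{c_0, k}, \lambda\rangle$ that make this chain of deductions work, so your exclusion problem is strictly harder than the one the paper actually solves.

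The more serious problem is the mechanism you propose for the exclusion. You suggest that $\pi_{P_1} \colon P_1 \to GL_{n_0+1}$ transfers the Bruhat cell analysis to Lemmas \ref{lem:glndegrees}--\ref{lem:glnbruhatsurjective}. But $\pi_{P_1}$ is a homomorphism out of $P_1$, and $P \cap wP_1w^{-1}$ is not a subgroup of $P_1$ for $w \notin W_{L_1} = W_{P_1}$ --- which are exactly the cells you need to rule out. In the paper the map $\pi_{P_1}$ is only brought in \emph{after} Proposition \ref{prop:subregp1bundles} is established, via the isomorphism \eqref{eq:subregularresolutions2:3}, to analyze sections already known to factor through $\xi_{P,z} \times^P PP_1/P_1 \subseteq \xi_{G,z}/P_1$; it cannot be used to prove that factorization in the first place. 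As written, your argument is circular on the $c_0$-piece. The complementary ``slope comparison'' step for the $c_1$-piece is also left entirely unspecified, and since the nontrivial $w$ genuinely mix $c_0$ and $c_1$, one cannot cleanly split the analysis into the two components as you propose. To repair this you would need either an honest case-by-case computation of reductions of $\xi_{L,z} \times^L P$ to $P \cap wP_1w^{-1}$ for each $w \in W^0_{P, P_1} \setminus \{1\}$ (substantially more work, and with no assistance from the $GL_n$ lemmas), or to follow the paper and lift to $(P, B)$-cells so that Lemmas \ref{lem:subregularemptydivisors} and \ref{lem:weylgroupelements} can do the heavy lifting.
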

\begin{proof}
The proposition is equivalent to the claim that
\begin{equation} \label{eq:subregularp1bundles3}
C^{1, -\alpha_j^\vee}_{P_1}(Z_0) \longhookrightarrow \bun_{P_1, rig}^{-\alpha_j^\vee} \times_{\bun_{G, rig}} Z_0 \longhookrightarrow \mrm{KM}_{P_1, G, rig}^{-\alpha_j^\vee} \times_{\bun_{G, rig}} Z_0
\end{equation}
is surjective, where, for $1 \leq k \leq n_0 + 1$, $w \in W^0_{P, P_k}$ and $\lambda \in \mb{X}_*(T_{P_k})$, we write
\[ C^{w, \lambda}_{P_k}(Z_0) = C^{w, \lambda}_{P, P_k, rig} \times_{\bun_{P, rig}} Z_0,\]
where $C^{w, \lambda}_{P, P_k, rig}$ is the rigidification of the Bruhat cell $C^{w, \lambda}_{P, P_k}$ of \S\ref{subsection:bruhat}. Lemma \ref{lem:subregularborelcells} below and Proposition \ref{prop:levidegreebound} imply that the morphism
\[ \coprod_{\substack{w \in W^0_{P, B} \cap W_{L_1}\\ \lambda =- w^{-1}(\alpha_i^\vee + \alpha_j^\vee)}} C^{w, \lambda}(Z_0) \longrightarrow \bun_{B, rig}^\lambda \times_{\bun_{G, rig}} Z_0 \]
is surjective for all $\lambda \leq -\alpha_j^\vee$, where $W_{L_1}$ is the Weyl group of the Levi factor $L_1 \subseteq P_1$ and $C^{w, \lambda}(Z_0) = C^{w, \lambda}_{P_{n_0 + 1}}(Z_0)$. Since the morphism
$\mrm{KM}_{B, G}^{-\alpha_j^\vee} \to \mrm{KM}_{P_1, G}^{-\alpha_j^\vee}$ is also surjective by \cite[Proposition 3.6.4]{davis19a}, and maps sections coming from $C^{w, \lambda}(Z_0)$ to $C^{1, -\alpha_j^\vee}_{P_1}(Z_0)$, surjectivity of \eqref{eq:subregularp1bundles3} now follows.
\end{proof}

\begin{lem} \label{lem:subregularborelcells}
Assume that $w \in W^0_{P, B}$, $\lambda \leq -\alpha_j^\vee$ and $C^{w, \lambda}(Z_0) \neq \emptyset$. Then $w \in W_{L_1}$ and $\lambda = -w^{-1}(\alpha_i^\vee + \alpha_j^\vee) \in \{-\alpha_j^\vee, -\alpha_i^\vee - \alpha_j^\vee\}$, where $L_1 \subseteq P_1$ is the standard Levi subgroup.
\end{lem}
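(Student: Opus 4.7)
My plan is to translate the nonemptiness of $C^{w, \lambda}(Z_0)$ into a condition on Borel reductions of the regular semistable $L$-bundle $\xi_L$ arising from a point of $Z_0$, and then constrain the possible degrees using Atiyah's classification together with the explicit descriptions of $L$ given in \S\ref{subsection:levistructure}.

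First I would observe that every Borel subgroup of $P$ is of the form $B' R_u(P)$ for some Borel $B' \subseteq L$, so the data of a $P \cap wBw^{-1}$-reduction of $\xi_P = \xi_L \times^L P$ is the same as a Borel reduction of $\xi_L$. Combined with the Bruhat-cell formalism of \S\ref{subsection:bruhat}, nonemptiness of $C^{w, \lambda}(Z_0)$ is equivalent to the existence of a regular semistable $L$-bundle $\xi_L$ of slope $\mu$ on some geometric fibre $E_s$ admitting a Borel reduction whose degree $\lambda' \in \mb{X}_*(T)$ satisfies $j_w(\lambda') = \lambda$, where $j_w$ is the isomorphism induced by conjugation by $w^{-1}$; equivalently, $\lambda' = w\lambda$.

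Next I would apply the descriptions of $L$ from Lemmas \ref{lem:typealeviiso}, \ref{lem:typecdefleviiso}, \ref{lem:typegleviiso}, and \ref{lem:typebleviiso}, which identify $L$ as a constrained product of general linear groups in all types except type $B$, where a $GSp_4$-factor appears. Via Atiyah's theorem, $\xi_L$ corresponds to a specific tuple of semistable vector bundles of low degree (with a conformally symplectic structure in type $B$), and a Borel reduction of $\xi_L$ is the data of a complete flag on each vector-bundle factor (an isotropic flag in the $GSp_4$-factor). The Harder-Narasimhan inequalities for subbundles, combined with the regularity hypothesis, then force the degree of each successive line-bundle quotient in such a flag to be nearly equal; this narrows the possible values of $\lambda'$ to a short list determined by the Bruhat stratum within $L$, in the same spirit as Lemma \ref{lem:glndegrees} for $GL_n$.

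Finally I would translate these permissible $\lambda'$ back into constraints on $(w, \lambda)$ via $j_w$, intersect with the hypothesis $\lambda \leq -\alpha_j^\vee$, and read off the conclusion. The key simplification is that the slope $\mu$ is supported on the node $\alpha_i$ (and on $\alpha_j = \alpha_{i+1}$ in type $A$), so the degree bounds force the $w$-action to be trivial on all simple coroots outside the component $c_0$, giving $w \in W_{L_1}$; the $A$-type structure of $c_0$ then pins down $\lambda' = w\lambda$ to be $-(\alpha_i^\vee + \alpha_j^\vee)$, which upon applying $j_w$ yields the stated two possibilities for $\lambda$ (corresponding to whether or not $w$ fixes $\alpha_i^\vee$). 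The main obstacle will be the type-$B$ case, where the $GSp_4$-factor requires the conformally symplectic analysis of Lemmas \ref{lem:gsp4bundles} and \ref{lem:typebcdparabolicbundles}; in all other types the calculation reduces to the general-linear Bruhat analysis of \S\ref{subsection:glnbruhat}, applied to each type-$A$ Levi factor in turn.
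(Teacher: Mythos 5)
Your opening translation of nonemptiness of $C^{w, \lambda}(Z_0)$ into the existence of a Borel reduction of the regular semistable $L$-bundle $\xi_{L, z}$ of degree $w\lambda$ is exactly the paper's starting point, but from there there are two genuine gaps. First, you propose to constrain the admissible degrees $w\lambda$ by redoing an Atiyah/Harder-Narasimhan analysis on the explicit Levi descriptions from \S\ref{subsection:levistructure}, but the paper gets these constraints much more cheaply: the Borel reduction of $\xi_{L, z}$ composes with $\xi_{L, z}/(L \cap B) \hookrightarrow \xi_{G, z}/B$ to give a section of degree $w\lambda$, so $D_{-w\lambda}(Z) \neq \emptyset$; since $\langle \varpi_i, w\lambda\rangle = \langle \varpi_i, \mu \rangle = -1$ and $w\lambda$ is a coroot, $w\lambda$ is a \emph{negative} coroot, and the already-established Lemma \ref{lem:subregularemptydivisors} then forces $w\lambda \in \{-\alpha_i^\vee, -\alpha_i^\vee - \alpha_j^\vee\}$. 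The option $w\lambda = -\alpha_i^\vee$ is ruled out because $w \in W^0_{P, B}$ would then force $w = 1$ and hence $\lambda = -\alpha_i^\vee$, contradicting $\lambda \leq -\alpha_j^\vee$. Redoing all this via HN inequalities on the explicit Levis would be redundant given Lemma \ref{lem:subregularemptydivisors}, and is substantially harder than the analogy with Lemma \ref{lem:glndegrees} suggests, especially in the conformally-symplectic ($GSp_4$) case.

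Second, and more seriously, your route to $w \in W_{L_1}$ (``degree bounds force the $w$-action to be trivial on all simple coroots outside $c_0$'') is not a valid inference; membership in $W_{L_1}$ is not a degree fact at all. What the paper actually uses is the purely combinatorial Lemma \ref{lem:weylgroupelements}, applied to the conditions $w^{-1}\alpha_k^\vee \in \Phi_+^\vee$ for $\alpha_k \in \Delta \setminus t(P)$ (coming from $w \in W^0_{P, B}$) together with the new condition $w^{-1}(\alpha_i^\vee + \alpha_j^\vee) \in \Phi_+^\vee$. This pins $w$ to $\{1\} \cup \{s_{c_0, n_0}\cdots s_{c_0, k} \mid 1 \leq k \leq n_0\} \subseteq W_{L_1}$. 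Moreover, in type $A$ the hypotheses of Lemma \ref{lem:weylgroupelements} admit a further alternative $w \in \{s_{i+1}\cdots s_k \mid i < k \leq l\}$ lying \emph{outside} $W_{L_1}$, which must be separately excluded using $\lambda \leq -\alpha_j^\vee$ (it would force $\lambda = -\alpha_i^\vee$); your sketch does not detect this case, and without Lemma \ref{lem:weylgroupelements} or an equivalent combinatorial input you have no mechanism to produce it.
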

\begin{proof}
It is immediate from Lemma \ref{lem:subregularemptydivisors} that $\lambda \in \{-\alpha_j^\vee, -\alpha_i^\vee - \alpha_j^\vee\}$. If $C^{w, \lambda}(Z_0) \neq \emptyset$, then there exists a geometric point $z \colon \spec k \to Z_0$ over $s \colon \spec k \to S$ and a section $\sigma_L \colon E_s \to \xi_{L, z}/(L \cap B)$ of degree $w\lambda \in \mb{X}_*(T)$. Since $\xi_{L, z}$ has slope $\mu$, we must have
\[ \langle \varpi_i, w\lambda \rangle = \langle \varpi_i, \mu \rangle = -1. \]
Since $\lambda$ and hence $w \lambda$ is a coroot, we therefore have $w\lambda \in \Phi^\vee_- \subseteq \mb{X}_*(T)_-$. Since composing $\sigma_L$ with the inclusion $\xi_{L, z}/(L \cap B) \to \xi_{G, z}/B$ defines a section of degree $w\lambda$, we deduce that $D_{-w\lambda}(Z) \neq \emptyset$, and hence that $w \lambda \in \{-\alpha_i^\vee, -\alpha_i^\vee - \alpha_j^\vee\}$.

If $w\lambda = -\alpha_i^\vee$, then $w^{-1}\alpha_i^\vee \in \Phi^\vee_+$, so $w = 1$ since $w \in W^0_{P, B}$. So $\lambda = -\alpha_i^\vee$, contradicting $\lambda \leq - \alpha_j^\vee$. So we must have $w\lambda = -\alpha_i^\vee - \alpha_j^\vee$, and in particular $w^{-1}(\alpha_i^\vee + \alpha_j^\vee) \in \Phi_+^\vee$.

If $(G, P, \mu)$ is not of type $A$, then $w^{-1}(\alpha_k^\vee) \in \Phi_+^\vee$ for $\alpha_k \neq \alpha_i$ (since $w \in W^0_{P, B}$ and $t(P) = \{\alpha_i\}$) so Lemma \ref{lem:weylgroupelements} implies that $w \in W_{L_1}$. If $(G, P, \mu)$ is of type $A$, then $w^{-1}(\alpha_k^\vee) \in \Phi_+^\vee$ for $\alpha_k \neq \alpha_i, \alpha_j$. If $w^{-1}(\alpha_j^\vee) \in \Phi_+^\vee$ then $w \in W_{L_1}$ by Lemma \ref{lem:weylgroupelements} again. Otherwise, we must have $w^{-1}(\alpha_i^\vee) \in \Phi_+^\vee$ and hence
\[ w \in \{s_{i + 1} s_{i + 2} \cdots s_k \mid i < k \leq l \}\]
by Lemma \ref{lem:weylgroupelements}. But this implies that $\lambda = w^{-1}(-\alpha_i^\vee - \alpha_j^\vee) = w^{-1}(-\alpha_i^\vee - \alpha_{i + 1}^\vee) = -\alpha_i^\vee$, contradicting $\lambda \leq -\alpha_j^\vee$, so we are done.
\end{proof}

\begin{lem} \label{lem:weylgroupelements}
Let $(M, \Psi, M^\vee, \Psi^\vee)$ be a root datum with Weyl group $W(\Psi)$, and let $\Gamma \subseteq \Psi$ be a complete set of positive simple roots. Let $\beta_j \in \Gamma$ be a simple root, and let $c \in \pi_0(\Gamma\setminus \{\beta_j\})$ be a connected component of the Dynkin diagram of $\Gamma \setminus \{\beta_j\}$ of type $A_n$ such that $\beta_j$ is adjacent to one end of $c$. Let $\beta_{c, 1}, \ldots, \beta_{c, n} \in \Gamma$ denote the nodes of $c$, labelled so that $\beta_{c, k}$ is adjacent to $\beta_{c, k + 1}$ for all $k$ and $\beta_{c, n}$ is adjacent to $\beta_j$, and let
\[ \Sigma = \{ w \in W(\Psi) \mid w^{-1}\beta_k^\vee \in \Psi^\vee_+ \;\text{for all}\; \beta_k \in \Gamma \setminus \{\beta_{c, n}\}\; \text{and} \; w^{-1}(\beta_{c, n}^\vee + \beta_j^\vee) \in \Psi^\vee_+\},\]
Then
\[ \Sigma = \{1\} \cup \{s_{c, n}s_{c, n - 1} \cdots s_{c, k} \mid 1 \leq k \leq n\}\]
where $s_{c, k} \in W(\Psi)$ is the reflection in the root $\beta_{c, k}$
\end{lem}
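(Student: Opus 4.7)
The plan is to prove the two inclusions separately.

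For the forward inclusion $\{1\} \cup \{s_{c,n}s_{c,n-1}\cdots s_{c,k} \mid 1 \leq k \leq n\} \subseteq \Sigma$, fix $1 \leq k \leq n$, set $w = s_{c,n}s_{c,n-1}\cdots s_{c,k}$, and compute $w^{-1} = s_{c,k}s_{c,k+1}\cdots s_{c,n}$ applied to each simple coroot. Using the type-$A_n$ structure of $c$ and repeated application of the reflection formula, one finds, with $m_2 = -\langle \beta_{c,n}, \beta_j^\vee\rangle \geq 1$,
\[
w^{-1}(\beta_{c,n}^\vee) = -\sum_{m=k}^{n}\beta_{c,m}^\vee, \qquad w^{-1}(\beta_j^\vee) = \beta_j^\vee + m_2 \sum_{m=k}^{n}\beta_{c,m}^\vee,
\]
while $w^{-1}$ sends every other simple coroot in $\Gamma \setminus \{\beta_{c,n}\}$ to a positive coroot (either fixing it, or adding some $\beta_{c,m}^\vee$). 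Adding the first two formulas gives $w^{-1}(\beta_{c,n}^\vee + \beta_j^\vee) = \beta_j^\vee + (m_2 - 1)\sum_{m=k}^{n}\beta_{c,m}^\vee$, which is a positive coroot since $\beta_{c,n}^\vee + \beta_j^\vee$ is a coroot in the rank-two subsystem $\langle \beta_{c,n}, \beta_j\rangle$ and $w^{-1}$ permutes $\Psi^\vee$.

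For the reverse inclusion, let $w \in \Sigma$, set $v = w^{-1}$, and consider the biclosed inversion set $N(v) = \{\alpha \in \Psi_+ : v(\alpha) \in \Psi_-\}$. The defining conditions of $\Sigma$ translate to (i) $N(v) \cap \Gamma \subseteq \{\beta_{c,n}\}$, and (ii) $\gamma \notin N(v)$, where $\gamma \in \Psi_+$ is the positive root with $\gamma^\vee = \beta_{c,n}^\vee + \beta_j^\vee$. An induction on height using the second biclosedness property together with (i) shows every $\alpha \in N(v)$ has $\beta_{c,n}$ in its support. The key step is to show no $\alpha \in N(v)$ has $\beta_j$ in its support: assuming one did, iteratively apply biclosedness to decompositions $\alpha = \alpha' + \beta_k$ with $\beta_k \in \Gamma \setminus \{\beta_{c,n}, \beta_j\}$ to reduce to an element of $N(v)$ supported in the rank-two subsystem $\langle \beta_{c,n}, \beta_j\rangle$ with $\beta_j$ still in its support, and then a case analysis on the bond type (combined with repeated first biclosedness inside the rank-two subsystem) forces $\gamma \in N(v)$, contradicting (ii). Together with the connectedness of the support of any positive root and the fact that $c$ is the connected component of $\beta_{c,n}$ in $\Gamma \setminus \{\beta_j\}$, this forces $\mrm{supp}(\alpha) \subseteq \{\beta_{c,1}, \ldots, \beta_{c,n}\}$ for every $\alpha \in N(v)$, so $v$ lies in the parabolic subgroup $W_c = \langle s_{c,1}, \ldots, s_{c,n}\rangle \cong S_{n+1}$.

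Inside $W_c$, condition (i) restricted to $\beta_{c,1}, \ldots, \beta_{c,n-1}$ identifies $v$ as a minimal-length coset representative of $W_c/W_{c'}$, where $W_{c'} \cong S_n$ is generated by $s_{c,1}, \ldots, s_{c,n-1}$. A standard computation identifies these $n+1$ representatives as exactly $\{1\} \cup \{s_{c,k}s_{c,k+1} \cdots s_{c,n} \mid 1 \leq k \leq n\}$, and inverting yields the claimed description of $\Sigma$.

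The main obstacle will be the biclosedness argument ruling $\beta_j$ out of the supports of elements of $N(v)$: the specific root $\gamma$ is $\beta_{c,n} + \beta_j$ in the simple-bond case but a higher root (e.g.\ $2\beta_{c,n} + \beta_j$, $\beta_{c,n} + 2\beta_j$, or $3\beta_{c,n} + \beta_j$) in the multi-bond cases, and one must verify carefully that $\gamma$ is indeed forced into $N(v)$ in each case via repeated biclosedness.
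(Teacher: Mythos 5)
Your argument takes a genuinely different route from the paper's: rather than inducting on $n$, you analyze the inversion set $N(v)$ via biclosedness and then identify minimal coset representatives in $W_c/W_{c'}$. The forward inclusion by direct computation is fine. The reverse inclusion, however, has a real gap at exactly the ``main obstacle'' you flag, and it is not merely a matter of care: the key claim (that no $\alpha \in N(v)$ has $\beta_j$ in its support) fails unless $\langle \beta_{c,n}, \beta_j^\vee\rangle = -1$, and in fact the conclusion of the Lemma itself can fail without that hypothesis. Concretely, in $B_2$ with $\alpha_1 = e_1 - e_2$ long, $\alpha_2 = e_2$ short, take $\beta_j = \alpha_2$ and $c = \{\alpha_1\}$, so $n = 1$ and $\langle\alpha_1, \alpha_2^\vee\rangle = -2$. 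Then $w = s_1s_2$ lies in $\Sigma$: one computes $w^{-1}\alpha_2^\vee = 2e_1$ and $w^{-1}(\alpha_1^\vee + \alpha_2^\vee) = e_1 - e_2$, both positive coroots. Yet $w \notin \{1, s_1\}$, and indeed $N(w^{-1}) = \{\alpha_1, \alpha_1 + \alpha_2\}$ contains a root with $\beta_j$ in its support while $\gamma = \alpha_1 + 2\alpha_2 \notin N(w^{-1})$. So whatever ``case analysis on bond types'' you envisage must break down when the arrow points from $\beta_{c,n}$ towards $\beta_j$. Every application of the Lemma in the paper does have $\langle\beta_{c,n}, \beta_j^\vee\rangle = -1$ (there $\beta_j$ is the special root, hence long, or the type is simply laced), so you need to make explicit where this hypothesis enters. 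There is also a secondary issue: the reduction-to-rank-two step, ``iteratively apply biclosedness to decompositions $\alpha = \alpha' + \beta_k$ with $\beta_k \notin \{\beta_{c,n}, \beta_j\}$'', is not automatic, because one cannot always choose such a $\beta_k$; for instance the highest root of $C_3$ has $\alpha_1$ as its only subtractable simple root, even though $\alpha_3$ is a leaf of its support.

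By contrast, the paper's argument is a one-step induction on $n$: if $w \in \Sigma$ and $w \neq 1$, then $w^{-1}\beta_{c,n}^\vee \in \Psi^\vee_-$, and one checks directly that $s_{c,n}w$ lies in the analogous set $\Sigma'$ with $(c, \beta_j)$ replaced by $(\{\beta_{c,1}, \ldots, \beta_{c,n-1}\}, \beta_{c,n})$. The single nontrivial verification is $(s_{c,n}w)^{-1}\beta_j^\vee = w^{-1}(\beta_j^\vee - \langle\beta_{c,n},\beta_j^\vee\rangle\beta_{c,n}^\vee) \in \Psi^\vee_+$, which follows from $w^{-1}(\beta_j^\vee + \beta_{c,n}^\vee) \in \Psi^\vee_+$ precisely when $\langle\beta_{c,n},\beta_j^\vee\rangle = -1$. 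This localizes the bond hypothesis to a single line and avoids the inversion-set combinatorics entirely.
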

\begin{proof}
First note that an easy inspection shows that
\[ \{1\} \cup \{s_{c, n}s_{c, n - 1} \cdots s_{c, k}\mid 1 \leq k \leq n\} \subseteq \Sigma,\]
so it suffices to prove the reverse inclusion.

We prove the claim by induction on $n \geq 1$. Suppose that $w \in \Sigma$. Then either $w = 1$ or $w^{-1}\beta_{c, n} \in \Psi_-$. In the second case, we see that $(s_{c, n}w)^{-1}\beta_k^\vee \in \Psi^\vee_+$ for $\beta_k \in \Gamma \setminus \{\beta_{c, n - 1}\}$ and $(s_{c, n}w)^{-1}(\beta_{c, n - 1}^\vee + \beta_{c, n}^\vee) \in \Psi^\vee_+$ if $n > 1$. So either $n = 1$ and $w \in \{1, s_{c, n}\}$, or $n > 1$ and by induction we have
\[ s_{c, n} w \in \{s_{c, n - 1}\cdots s_{c, k} \mid 1 \leq k \leq n - 1\},\]
and hence
\[ w \in \{1\} \cup \{s_{c, n}s_{c, n - 1} \cdots s_{c, k} \mid 1 \leq k \leq n\}.\]
This proves the lemma.
\end{proof}

\begin{prop} \label{prop:subregp1hom}
There exists a surjective homomorphism
\[ \pi_{P_1} \colon P_1 \longrightarrow GL_{n_0 + 1} \]
such that $\pi_{P_1}^{-1}(R_{n_0 + 1}) = P \cap P_1$ and $\pi_{P_1}^{-1}(Q^{n_0 + 1}_k) = P_k$ for $1 \leq k \leq n_0 + 1$, and such that the induced map $T = T_{P_{n_0 + 1}} \to Q^{n_0 + 1}_{n_0 + 1}$ is given on cocharacters by
\begin{align*}
\mb{X}_*(T) &\longrightarrow \mb{X}_*(T_{Q^{n_0 + 1}_{n_0 + 1}}) \\
\alpha_{c_0, k}^\vee &\longmapsto e_k^* - e_{k + 1}^* \\
\alpha_j^\vee &\longmapsto e_{n_0 + 1}^* \\
\alpha_p^\vee &\longmapsto 0, \quad \text{if}\;\; \alpha_p \notin \{\alpha_{c_0, 1}, \ldots, \alpha_{c_0, n_0}, \alpha_j\}.
\end{align*}
\end{prop}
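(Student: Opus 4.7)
The plan is to build $\pi_{P_1}$ as the composite of the canonical quotient $P_1 \twoheadrightarrow L_1$ with a surjection $L_1 \to GL_{n_0+1}$ extracted from the explicit description of $L_1$ supplied by Proposition \ref{prop:typealevi}. Since $\Delta \setminus t(P_1) = \{\alpha_{c_0, 1}, \ldots, \alpha_{c_0, n_0}\} = c_0$ is a single connected component of type $A_{n_0}$, that proposition identifies $L_1$ with the subgroup of $GL_{n_0+1} \times \prod_{\alpha_k \in t(P_1)} \mb{G}_m$ cut out by a single determinantal relation. The key combinatorial input is that, because $c_0$ and $c_1$ are separated by deleting only the edge $\alpha_i\!-\!\alpha_j$, the unique simple root in $t(P_1)$ adjacent to any vertex of $c_0$ is $\alpha_j$, and it is adjacent to $\alpha_{c_0, n_0} = \alpha_i$. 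Hence $i_{c_0, j} = n_0$ and $i_{c_0, k} = n_0 + 1$ for $\alpha_k \in t(P_1) \setminus \{\alpha_j\}$, while $m_{c_0, j} = -\langle \alpha_i, \alpha_j^\vee \rangle = 1$: in type $A$ this is immediate, and in the remaining types it follows from Theorem \ref{thm:subregularclassification} together with the observation that $\alpha_j$ is always long and adjacent to $\alpha_i$, so the Cartan pairing equals $-1$. Thus the defining relation reduces to $\det A = \lambda_j$, so the projection $L_1 \to GL_{n_0+1}$ is surjective (take $\lambda_j = \det A$, other $\lambda_k$ free), and I define $\pi_{P_1}$ to be the composite $P_1 \twoheadrightarrow L_1 \hookrightarrow GL_{n_0+1} \times \prod_k \mb{G}_m \twoheadrightarrow GL_{n_0+1}$.

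The cocharacter formula is then read off directly from the explicit isomorphism constructed inside the proof of Proposition \ref{prop:typealevi}. That proof sends $\alpha_{c_0, k}^\vee$ to $e_{c_0, k}^* - e_{c_0, k+1}^*$, which projects to $e_k^* - e_{k+1}^*$ in $\mb{X}_*(T_{Q^{n_0+1}_{n_0+1}})$; it sends $\alpha_j^\vee$ to $\omega_j^* + m_{c_0, j}\, e_{c_0, n_0+1}^* = \omega_j^* + e_{c_0, n_0+1}^*$, which projects to $e_{n_0+1}^*$; and for any $\alpha_p \in t(P_1) \setminus \{\alpha_j\}$ it sends $\alpha_p^\vee$ to $\omega_p^*$, which projects to $0$. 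This is exactly the formula in the statement.

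Finally, the preimage identities follow from a root-datum bookkeeping. Since $\pi_{P_1}$ factors through $L_1$, both preimages contain $R_u(P_1)$, and it suffices to identify their intersection with $L_1$. Under $L_1 \to GL_{n_0+1}$ the simple roots correspond via $\alpha_{c_0, m} \leftrightarrow \beta_m$, so the preimage of $Q^{n_0+1}_k$ (of type $\{\beta_1, \ldots, \beta_{k-1}\}$) is the standard parabolic of $L_1$ of type $\{\alpha_{c_0, 1}, \ldots, \alpha_{c_0, k-1}\}$, and the preimage of $R_{n_0+1}$ (of type $\{\beta_{n_0}\}$) is the standard parabolic of $L_1$ of type $\{\alpha_i\}$. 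Combining with $R_u(P_1)$ gives standard $G$-parabolics of types $\{\alpha_{c_0, 1}, \ldots, \alpha_{c_0, k-1}\} \cup t(P_1) = \Delta \setminus \{\alpha_{c_0, k}, \ldots, \alpha_{c_0, n_0}\} = t(P_k)$ and $\{\alpha_i\} \cup t(P_1) = t(P) \cup t(P_1) = t(P \cap P_1)$ (in the type $A$ case one uses that $\alpha_j \in t(P_1)$). This yields $\pi_{P_1}^{-1}(Q^{n_0+1}_k) = P_k$ and $\pi_{P_1}^{-1}(R_{n_0+1}) = P \cap P_1$ as required. The only step requiring any genuine casework is the verification $m_{c_0, j} = 1$; everything else is a mechanical unpacking of Proposition \ref{prop:typealevi}.
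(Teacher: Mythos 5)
Your proof is correct and follows essentially the same route as the paper, which merely observes that $\Delta \setminus t(P_1) = c_0$ is a single type-$A$ component, invokes Proposition \ref{prop:typealevi} to get the embedding $L_1 \hookrightarrow GL_{n_0+1} \times \mb{G}_m^{n_1}$, defines $\pi_{P_1}$ as the composite with the projection, and leaves the remaining verifications as ``routine.'' You have supplied exactly those routine checks — in particular the reduction of the determinantal relation to $\det A = \lambda_j$ (via $i_{c_0,j} = n_0$, $i_{c_0,k} = n_0+1$ for $\alpha_k \in t(P_1)\setminus\{\alpha_j\}$, and $m_{c_0,j} = -\langle\alpha_i,\alpha_j^\vee\rangle = 1$ because $\alpha_j$ is long), the cocharacter formula read off from the map $\phi$ in Proposition \ref{prop:typealevi}, and the type computation for the preimage parabolics — all of which are accurate.
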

\begin{proof}
Since the Dynkin diagram $\Delta \setminus t(P_1)$ has exactly one connected component of type $A_{n_0}$, Proposition \ref{prop:typealevi} gives an embedding
\begin{equation} \label{eq:subregularlevirep1}
L_1 \longhookrightarrow GL_{n_0 + 1} \times \mb{G}_m^{n_1}.
\end{equation}
Let $\pi_{L_1}$ be the composition of \eqref{eq:subregularlevirep1} with the projection to the first factor, and let $\pi_{P_1}$ be the composition of $\pi_{L_1}$ with the quotient $P_1 \to L_1$. The remaining claims can now be checked routinely using the explicit isomorphism of Proposition \ref{prop:typealevi}.
\end{proof}

By construction, the morphism $C_{1, n_0 + 1} \to C_{1, n_0 + 1}^{GL_{n_0 + 1}}$ factors through a morphism
\[ C_{1, n_0 + 1} \longrightarrow Y^{-\alpha_j^\vee} \times_{Y_{Q^{n_0 + 1}_{n_0 + 1}}^{-e_{n_0 + 1}^*}} (C_{1, n_0 + 1}^{GL_{n_0 + 1}} \times_S E) = Y \times_{Y_{Q^{n_0 + 1}_{n_0 + 1}}} (C_{1, n_0 + 1}^{GL_{n_0 + 1}} \times_S E),\]
where the morphism $C_{1, n_0 + 1}^{GL_{n_0 + 1}} \times_S E \to Y_{Q^{n_0 + 1}_{n_0 + 1}}$ is given by the natural morphism to $Y_{Q^{n_0 + 1}_{n_0 + 1}}^{-e_{n_0 + 1}^*} \times_S E$ composed with $(y, x) \mapsto y + e_{n_0 + 1}^*(x)$. Composing with the morphism \eqref{eq:glnbruhatpoint2} gives a morphism $C_{1, n_0 + 1} \to Y \times_S E \times_S E$ and hence a morphism
\begin{align} 
C_{1, n_0 + 1} \longrightarrow Y \times_S E \times_S E &\longrightarrow Y \times_S \mrm{Pic}^0_S(E) \label{eq:bruhatcellcurvecomparison1} \\
(y, x_i, x_j) &\longmapsto (y, x_j - x_i) \nonumber
\end{align}
over $Y$. We remark that for the image of a stable map with two rational components of degree $\alpha_i^\vee$ and $\alpha_j^\vee$, $x_i$ and $x_j$ above are just the points of attachment of the two rational curves.

For $1 \leq p \leq n_0 + 1$, we let
\[ M_p \subseteq C_{1, n_0 + 1}\]
be the closed substack given by the fibre product
\[
\begin{tikzcd}
M_p \ar[r] \ar[d] & C_{1, n_0 + 1} \ar[d, "\eqref{eq:bruhatcellcurvecomparison1}"] \\
Y \ar[r, "\theta_p"] & Y \times_S \mrm{Pic}^0_S(E).
\end{tikzcd}
\]

\begin{prop} \label{prop:subregbruhatcomparison}
For all $1 \leq k \leq n_0$, the morphism $D_{k + 1} \to D_k$ restricts to isomorphisms
\[ D_{k + 1} \times_Z (Z \setminus Z_0) \overset{\sim}\longrightarrow D_k \times_Z (Z \setminus Z_0), \quad C_{k + 1, n_0 + 1} \overset{\sim}\longrightarrow C_{k, n_0 + 1} \quad \text{and} \quad C_{k + 1, p} \overset{\sim}\longrightarrow C_{k, p} \]
for $1 \leq p < k$, and a morphism
\[ C_{k + 1, k} \longrightarrow M_k \subseteq C_{k, n_0 + 1} \cong C_{1, n_0 + 1}\]
that realises $C_{k + 1, k}$ as an $\mb{A}^1$-bundle over $M_k$.
\end{prop}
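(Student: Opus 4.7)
The plan is to analyse the morphism $D_{k+1} \to D_k$ separately over the two pieces $Z \setminus Z_0$ and $Z_0$ appearing in the decomposition \eqref{eq:subregularresolitions2:2}, and in each case reduce to something already established.

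Over $Z \setminus Z_0$, the associated $G$-bundle $\xi_{G,z}$ is no longer subregular unstable, so by Lemma \ref{lem:inducedsectionbound1} (together with the argument of Lemma \ref{lem:subregularemptydivisors}) its Harder--Narasimhan reduction must be to the parabolic $Q$ with $t(Q) = \{\alpha_j\}$ and slope $-\varpi_j^\vee/\langle \varpi_j, \varpi_j^\vee\rangle$. Such a bundle admits a unique section of $\xi_{G,z}/P_k$ of the relevant degree for every $k$, namely the Harder--Narasimhan section, and moreover this section lies in the open Bruhat cell, so $D_{k+1} \to D_k$ is a bijection on geometric points over $Z \setminus Z_0$. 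Combined with smoothness over $Y$ and the fact that $D_{\alpha_j^\vee}(Z) \to D_{n_0+1}$ is already an isomorphism over $Z \setminus Z_0$ by the absence of rational tails, this gives the isomorphism claim over $Z \setminus Z_0$.

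Over $Z_0$, the idea is to exploit the homomorphism $\pi_{P_1} \colon P_1 \to GL_{n_0+1}$ of Proposition \ref{prop:subregp1hom}. By construction $\pi_{P_1}^{-1}(Q^{n_0+1}_k) = P_k$ and $\pi_{P_1}^{-1}(R_{n_0+1}) = P \cap P_1$, and the weight of $\Theta$ and the Harder--Narasimhan data match up so that the morphism $D_k \times_Z Z_0 \to X^{n_0+1}_{k,\mrm{rig}}$ fits into a commutative square
\[
\begin{tikzcd}
D_{k+1} \times_Z Z_0 \ar[r] \ar[d] & X^{n_0+1}_{k+1,\mrm{rig}} \ar[d] \\
D_k \times_Z Z_0 \ar[r] & X^{n_0+1}_{k,\mrm{rig}}.
\end{tikzcd}
\]
I would then argue that this square is Cartesian on each $GL_n$-Bruhat stratum. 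The essential input is that the kernel of $\pi_{P_1}|_{P\cap P_1}$ is a product of a torus (the centre $Z(L)_{rig}$, whose contribution is already rigidified away) and a unipotent group whose associated bundle has all appropriate higher cohomology controlled by stability of the Levi bundle $\xi_L$; consequently, reducing from a $P \cap P_1$-bundle to the image under $\pi_{P_1}$ loses no data transverse to the $GL_n$-stratification. The definitions of $C_{k,p}$ and $M_p$ are pulled back directly from $C^{GL_{n_0+1}}_{k,p}$ and $M_p^{GL_{n_0+1}}$ via this square (for $M_p$ this requires chasing through the character identification $\pi_{P_1}^*e_r$ computed in Proposition \ref{prop:subregp1hom}, which converts the section $\theta^{GL_{n_0+1}}_p$ of $Y_{Q^{n_0+1}_{n_0+1}}^{-e_{n_0+1}^*} \to Y_{Q^{n_0+1}_{n_0+1}}^{-e_{n_0+1}^*} \times_S \mrm{Pic}^1_S(E)$ into the section $\theta_p$). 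Given this, Proposition \ref{prop:glnbruhatcomparison} immediately yields the claimed isomorphisms on the strata $C_{k,p}$ (for $p<k$) and $C_{k,n_0+1}$, and exhibits $C_{k+1,k} \to M_k$ as the pullback of the $\mb{A}^1$-bundle $C^{GL_{n_0+1}}_{k+1,k} \to M^{GL_{n_0+1}}_k$.

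The step I expect to be the main obstacle is verifying carefully that the square above is Cartesian on each stratum — equivalently, that passing from a $P \cap P_1$-bundle to its image $GL_{n_0+1}$-bundle under $\pi_{P_1}$ neither loses nor adds moduli transverse to the Bruhat stratification. This amounts to a vanishing statement for $H^1(E_s, \xi \times^{P\cap P_1} \mf{k})$, where $\mf{k}$ is the Lie algebra of the unipotent kernel of $\pi_{P_1}|_{P \cap P_1}$, for the $P\cap P_1$-bundles arising in each cell; these vanishings follow from the semistability of $\xi_L$ and the explicit list of weights appearing in $\mf{k}$, but the bookkeeping is where the real content lies. Once this is in hand, the remaining checks --- comparing sections of the two partial flag variety bundles, and matching $\theta_p$ with $\theta_p^{GL_{n_0+1}}$ --- are routine consequences of Proposition \ref{prop:subregp1hom}.
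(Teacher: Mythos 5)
Your approach matches the paper's in broad outline — transport the $GL_{n_0+1}$ computation over $Z_0$ via the homomorphism $\pi_{P_1}$ and a Cartesian square, and argue separately over $Z \setminus Z_0$ via the Harder--Narasimhan reduction to $Q$ — but both places you flag as the ``real content'' are off, and the second is a genuine gap.

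On the square over $Z_0$: it is indeed Cartesian, but for a reason having nothing to do with cohomology of the unipotent kernel of $\pi_{P_1}|_{P\cap P_1}$. Since $\pi_{P_1}$ is surjective with $\pi_{P_1}^{-1}(Q^{n_0+1}_k) = P_k$, it induces an isomorphism of $P_1$-varieties $P_1/P_k \cong GL_{n_0+1}/Q^{n_0+1}_k$. After the identification \eqref{eq:subregularresolutions2:3}, the (partial) flag variety bundles whose stable sections define $D_k \times_Z Z_0$ and $X^{n_0+1}_k$ are therefore \emph{literally the same}, so Cartesianness of the comparison square is formal. No analysis of the kernel of $\pi_{P_1}|_{P \cap P_1}$, nor any $H^1$-vanishing, is needed or relevant; your focus on it suggests a misunderstanding of what the square is comparing.

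On the $Z \setminus Z_0$ part: your argument hinges on the claim that a regular unstable bundle with Harder--Narasimhan reduction to $Q$ admits a \emph{unique} section of $\xi_{G,z}/P_k$ of degree $-\alpha_j^\vee$. This is false. Such sections correspond to reductions of the $Q$-bundle $\xi_Q$ to $P_k$, equivalently to sections of $\xi_M \times^M M/(M \cap P_k)$ of appropriate degree for the Levi $M$ of $Q$, which form a positive-dimensional moduli for $k > 1$ (this is what produces the factor $\bun_{M \cap P_k, \mrm{rig}}^{-\alpha_j^\vee} \times_{\bun_{M, \mrm{rig}}^{-\alpha_j^\vee}}$ in the paper's description). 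Moreover, even where uniqueness held, ``bijection on geometric points plus smooth over $Y$'' would not by itself give an isomorphism of stacks over a general base. The argument that actually works is to produce, for each $k$, a natural isomorphism
\[ D_k \times_Z (Z \setminus Z_0) \cong Y \times_{Y_{P_k}}\bigl(\bun_{M \cap P_k, \mrm{rig}}^{-\alpha_j^\vee} \times_{\bun_{M, \mrm{rig}}^{-\alpha_j^\vee}} \bun_{Q, \mrm{rig}}^{\mathrm{ss}, -\alpha_j^\vee} \times_{\bun_{G, \mrm{rig}}} (Z \setminus Z_0) \times_S E\bigr)\]
via the argument of \cite[Proposition 4.3.8]{davis19}, and then show these are compatible across $k$ using Proposition \ref{prop:typealevi} and \cite[Lemma 4.3.7]{davis19}.
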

\begin{proof}
Chasing through the definitions, we have
\[ M_k = C_{1, n_0 + 1} \times_{C_{1, n_0 + 1}^{GL_{n_0 + 1}}} M_k^{GL_{n_0 + 1}}.\]
Since the diagram
\[
\begin{tikzcd}
D_{k + 1} \times_Z Z_0 \ar[r] \ar[d] & X^{n_0 + 1}_{k + 1, rig} \ar[d] \\
D_k \times_Z Z_0 \ar[r] & X^{n_0 + 1}_{k, rig}
\end{tikzcd}
\]
is Cartesian, Proposition \ref{prop:glnbruhatcomparison} implies everything except the claim that
\begin{equation} \label{eq:subregbruhatcomparison1}
 D_{k + 1} \times_Z (Z\setminus Z_0) \longrightarrow D_k \times_Z (Z \setminus Z_0)
\end{equation}
is an isomorphism. To prove this, first note that every $G$-bundle in the image of $D_k \times_Z (Z \setminus Z_0)$ is regular unstable, as follows from comparing the codimensions of its $Z(L)_{rig}$-orbit in $Z$ and in $\bun_{G, rig}/E$. Since $\mrm{KM}_{B, G}^{-\alpha_j^\vee} \to \mrm{KM}_{P_k, G}^{-\alpha_j^\vee}$ is surjective for all $k$ by \cite[Proposition 3.6.4]{davis19a}, all such bundles necessarily have Harder-Narasimhan reduction to the parabolic $Q$ of type $t(Q) = \{\alpha_j\}$ by \cite[Lemma 4.3.4]{davis19}. So the morphism to $\bun_{G, rig}$ factors as
\[ D_k \times_Z (Z \setminus Z_0) \longrightarrow \bun_{Q, rig}^{ss, -\alpha_j^\vee} \longhookrightarrow \bun_{G, rig}.\]
The argument of the proof of \cite[Proposition 4.3.8]{davis19}, shows that we have isomorphisms
\[ D_k \times_Z (Z \setminus Z_0) \cong Y \times_{Y_{P_k}}(\bun_{M \cap P_k, rig}^{-\alpha_j^\vee} \times_{\bun_{M, rig}^{-\alpha_j^\vee}} \bun_{Q, rig}^{ss, -\alpha_j^\vee} \times_{\bun_{G, rig}} (Z \setminus Z_0) \times_S E) \]
for all $k$, where $M$ is the Levi factor of $Q$ (note that the first two factors in the parentheses above are just the Bruhat cell $C^{1, -\alpha_j^\vee}_{P_k, Q}$). So Proposition \ref{prop:typealevi} and \cite[Lemma 4.3.7]{davis19} show that \eqref{eq:subregbruhatcomparison1} is an isomorphism as claimed.
\end{proof}

\begin{lem} \label{lem:bruhatcellcurvecomparisonsmooth}
The morphism \eqref{eq:bruhatcellcurvecomparison1} is smooth with connected fibres.
\end{lem}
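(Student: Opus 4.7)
The plan is to factor the morphism \eqref{eq:bruhatcellcurvecomparison1} as a composition
\begin{equation*}
C_{1, n_0+1} \longrightarrow Y \times_{Y_{Q^{n_0+1}_{n_0+1}}}(C_{1, n_0+1}^{GL_{n_0+1}} \times_S E) \longrightarrow Y \times_S E \times_S E \longrightarrow Y \times_S \mrm{Pic}^0_S(E)
\end{equation*}
as in the discussion preceding the lemma statement, and to verify that each of the three arrows is smooth with connected fibres. The rightmost arrow $(y, a, b) \mapsto (y, a - b)$ is a pullback of the subtraction map $E \times_S E \to E$, hence an $E$-torsor, and the claim is immediate.

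For the middle arrow, which uses \eqref{eq:glnbruhatpoint2} on the last factor, the claim reduces to showing that the $GL_{n_0+1}$-morphism $C_{1, n_0+1}^{GL_{n_0+1}} = X^{n_0+1}_1 \to \mrm{Pic}^1_S(E)$, $(y, y') \mapsto e_{n_0+1}(y') - e_{n_0+1}(y)$, has this property. Identifying an $R_{n_0+1}$-bundle of degree $-e_1^*$ with a pair $(V, \ell \hookrightarrow V)$, where $V$ has rank $n_0+1$ with $\det V$ of degree $-1$, $\ell$ has degree $0$, and $W = V/\ell$ is semistable of rank $n_0$ and degree $-1$, I would analyse fibres directly. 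Fixing $y$ determines $\det V$ and the isomorphism class of $\ell$ (via the constraint relating $e_{n_0+1}(y')$ and $e_{n_0+1}(y)$); then $W$ is uniquely determined as the stable bundle of the prescribed numerical class, and $\mrm{Ext}^1(W, \ell) = H^1(E, W^\vee \otimes \ell) = 0$ since $W^\vee \otimes \ell$ is semistable of strictly positive slope. Hence $V \cong \ell \oplus W$ uniquely up to isomorphism, and each fibre is a connected smooth gerbe (the classifying stack of $\mrm{Aut}(V, \ell)$), giving the claim.

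For the first arrow, I would use Proposition \ref{prop:subregp1bundles} to identify $C_{1, n_0+1}$ with a moduli of $(P \cap P_1)$-bundle reductions of the associated $G$-bundle $\xi_L \times^L G$ for $z \in Z_0$, together with an attachment point $x_j \in E$. The first arrow records only the data pushed forward along $\pi_{P_1}|_{P \cap P_1} \colon P \cap P_1 \to R_{n_0+1}$, so its fibres are torsors under $H^0(E_s, \xi_L \times^{L \cap P_1} \mrm{Lie}(U))$, where $U = \ker(\pi_{P_1}|_{P \cap P_1})$ is unipotent. Using the explicit Levi descriptions of \S\ref{subsection:levistructure}, the weights of $L \cap P_1$ on $\mrm{Lie}(U)$ together with the regular semistability of $\xi_L$ of slope $\mu$ force $H^1(E_s, \xi_L \times^{L \cap P_1} \mrm{Lie}(U)) = 0$, so the fibres are affine spaces of constant dimension, giving smoothness with connected fibres.

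The main obstacle is this last step: identifying the unipotent group $U$, the $L \cap P_1$-action on $\mrm{Lie}(U)$, and the associated $H^1$-vanishing on the elliptic fibre $E_s$. This requires a case-by-case root-theoretic analysis across the types of Theorem \ref{thm:subregularclassification}, in the same spirit as the arguments used in the proof of Theorem \ref{thm:subregularsliceexistence}.
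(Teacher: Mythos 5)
Your factorisation of \eqref{eq:bruhatcellcurvecomparison1} into three composites is consistent with the way the map is defined in the text, and the third arrow (a pullback of subtraction on $E$) is handled cleanly. However, the overall strategy diverges substantially from the paper's proof, and there are two genuine gaps that prevent the argument from closing.

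First, and most importantly, you explicitly leave the first arrow unresolved: you reduce it to a vanishing statement $H^1(E_s, \xi_L \times^{L \cap P_1} \mrm{Lie}(U)) = 0$ and say that a case-by-case root-theoretic analysis across all types of Theorem \ref{thm:subregularclassification} is still required. This is exactly what the paper's proof is designed to avoid. The paper sets up a commutative diagram (equation \eqref{eq:bruhatcellcurvecomparison3}) in which both squares are Cartesian and the horizontal arrows are faithfully flat; the outer vertical arrows are $\bun_{L \cap P_1, rig} \to Y_{L \cap P_1}$ (smooth with connected fibres because the semisimple part of $L \cap P_1$ is simply connected, a uniform statement) and $Z_0 \to \bun_{L, rig}^\mu/E$ (smooth with connected fibres by Theorem \ref{thm:subregularsliceexistence}~\eqref{itm:subregularsliceexistence2}). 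Faithfully flat descent then gives the middle column without any new type-dependent cohomology vanishing. In other words, the hard case analysis is already embedded once and for all in the construction of the slice $Z_0$, and the paper's proof re-uses it abstractly, whereas your approach would redo it from scratch.

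Second, your treatment of the middle arrow stops at showing that each geometric fibre is a connected gerbe. For a morphism of Artin stacks, having smooth connected fibres does not imply the morphism is smooth; you would still need flatness (or an infinitesimal lifting criterion). The paper's diagram argument handles this for free, since smoothness is stable under pullback along a faithfully flat map. You would need to supply this additional step, for instance by exhibiting the middle arrow as a base change of a map that you already know to be smooth, but as written this is a hole.

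In short: the proposal is a genuinely different, more computational route, but it both leaves the heart of the matter (the $H^1$-vanishing) as an admitted to-do, and glosses over flatness in the middle factor. The paper's Cartesian-diagram/descent argument is worth internalising precisely because it avoids both of these pitfalls by leaning on the already-established smoothness and connectedness of $Z_0 \to \bun_{L, rig}^\mu/E$.
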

\begin{proof}
From the construction, we have
\[ C_{1, n_0 + 1} = D_1 \times_Z Z_0 = Y^{-\alpha_j^\vee} \times_{Y_{P_1}^{-\alpha_j^\vee}} \bun_{L \cap P_1, rig}^{-\alpha_i^\vee - \alpha_j^\vee} \times_{\bun_{L, rig}} Z_0 \times_S E.\]
There is an isomorphism
\begin{align}
Y \times_{Y_{P_1}} Y_{L \cap P_1} &\overset{\sim}\longrightarrow Y \times_S \mrm{Pic}^0_S(E) \label{eq:bruhatcellcurvecomparisonsmooth1} \\
(y_1, y_2) &\longmapsto (y_1, \varpi_i(y_2) - \varpi_i(y_1)). \nonumber
\end{align}
Chasing through the definitions of the various morphisms involved, we deduce that there is a pullback
\[
\begin{tikzcd}
C_{1, n_0 + 1} \ar[rr, "\eqref{eq:bruhatcellcurvecomparison1}"] \ar[d] & & Y \times_S \mrm{Pic}^0_S(E) \ar[d] \\
\bun_{L \cap P_1, rig}^{-\alpha_i^\vee - \alpha_j^\vee} \times_{\bun_{L, rig}} Z_0 \times_S E \ar[r] & Y_{L \cap P_1}^{-\alpha_i^\vee -\alpha_j^\vee} \times_S E \ar[r] & Y_{L \cap P_1},
\end{tikzcd}
\]
where the morphisms $Y \times_S \mrm{Pic}^0_S(E) \to Y_{L \cap P_1}$ is the composition of the inverse to \eqref{eq:bruhatcellcurvecomparisonsmooth1} with the natural projection.

It therefore suffices to show that the composition $f$ of the first two morphisms in the bottom row is smooth with connected fibres. Note that the morphism
\[ Y_{L \cap P_1}^{-\alpha_i^\vee - \alpha_j^\vee} \times_S E \longrightarrow Y_{L \cap P_1}\]
naturally identifies $Y_{L \cap P_1}$ with the quotient $(Y_{L \cap P_1}^{-\alpha_i^\vee - \alpha_j^\vee} \times_S E)/E$ by the diagonal action of $E$ by translations. So we can identify $f$ with the composition of the middle vertical arrows in the diagram
\begin{equation} \label{eq:bruhatcellcurvecomparison3}
\begin{tikzcd}
& \bun_{L \cap P_1, rig}^{-\alpha_i^\vee - \alpha_j^\vee} \times_{\bun_{L, rig}^\mu} Z_0 \times_S E \ar[r] \ar[d] & Z_0 \ar[d] \\
\bun_{L \cap P_1, rig}^{-\alpha_i^\vee - \alpha_j^\vee} \times_S E \ar[r] \ar[d] & (\bun_{L \cap P_1, rig}^{-\alpha_i^\vee - \alpha_j^\vee} \times_S E)/E \ar[r] \ar[d] & \bun_{L, rig}^\mu/E \\
Y_{L \cap P_1}^{-\alpha_i^\vee - \alpha_j^\vee} \times_S E \ar[r] & (Y_{L \cap P_1}^{-\alpha_i^\vee - \alpha_j^\vee} \times_S E)/E.
\end{tikzcd}
\end{equation}
The vertical arrow on the left in \eqref{eq:bruhatcellcurvecomparison3} is smooth, and has connected fibres since the semisimple part of $L \cap P_1$ is simply connected. The vertical arrow on the right in \eqref{eq:bruhatcellcurvecomparison3} is smooth with connected fibres by assumption. Since both squares are Cartesian, and the horizontal arrows in the square on the left are faithfully flat, it follows that both vertical arrows in the middle are smooth with connected fibres, and hence so is their composition $f$.
\end{proof}

\begin{lem} \label{lem:bruhatcellcurvecomparison}
The morphism \eqref{eq:bruhatcellcurvecomparison1} is an isomorphism.
\end{lem}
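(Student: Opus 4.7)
My plan is to combine Lemma \ref{lem:bruhatcellcurvecomparisonsmooth} with a dimension count, a properness check, and a connectedness argument. By Lemma \ref{lem:bruhatcellcurvecomparisonsmooth}, the morphism \eqref{eq:bruhatcellcurvecomparison1} is smooth with connected fibres. Both sides have relative dimension $1$ over $Y$: the source $C_{1, n_0 + 1} = D_1 \times_Z Z_0$ is a Cartier divisor in the smooth relative surface $D_1 \to Y$ constructed in the proof of Proposition \ref{prop:subregularresolutions2}, and the target is the product of $Y$ with the elliptic curve $\mrm{Pic}^0_S(E)$. Hence the geometric fibres of \eqref{eq:bruhatcellcurvecomparison1} are smooth, connected and $0$-dimensional, so single reduced points. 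It follows that \eqref{eq:bruhatcellcurvecomparison1} is \'etale and a monomorphism, hence an open immersion.

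Next I would verify properness over $Y$. For the source, $Z_0 \to S$ is proper by Theorem \ref{thm:subregularsliceexistence} \eqref{itm:subregularsliceexistence1}, $\mrm{KM}_{P_1, G, rig}^{-\alpha_j^\vee} \to \bun_{G, rig}$ is proper, and $Y \to S$ is an abelian scheme, so $C_{1, n_0 + 1} \to Y$ is proper; for the target this is immediate. Thus \eqref{eq:bruhatcellcurvecomparison1} is proper over $Y$, and a proper open immersion is a closed immersion, so it identifies $C_{1, n_0 + 1}$ with a clopen substack of $Y \times_S \mrm{Pic}^0_S(E)$. Since $S$ is a connected regular stack and both $Y$ and $\mrm{Pic}^0_S(E)$ are abelian schemes over $S$ with geometrically connected fibres, the product $Y \times_S \mrm{Pic}^0_S(E)$ is connected, and it then suffices to see that $C_{1, n_0 + 1}$ is nonempty. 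This in turn follows from the nonemptiness of $D_{\alpha_j^\vee}(Z)$ (Lemma \ref{lem:subregularemptydivisors}) by tracing any stable map through the contractions $D_{\alpha_j^\vee}(Z) = D_{n_0 + 2} \to \cdots \to D_1$, since by construction the two rational tails get absorbed into a genuine section over $Z_0$.

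The main obstacle, which I expect to require some care, is ensuring that the dimension count does not become circular. Specifically, the smoothness of $D_1$ as a relative surface over $Y$ and the identification $C_{1, n_0 + 1} = D_1 \times_Z Z_0$ must be established without appealing to Lemma \ref{lem:bruhatcellcurvecomparison}. Inspecting the proof of Proposition \ref{prop:subregularresolutions2} shows that these assertions follow only from smoothness of $\mrm{KM}_{P_k, G, rig}^{-\alpha_j^\vee} \times_{\bun_{G, rig}} Z$ over $Y_{P_k}^{-\alpha_j^\vee}$, Proposition \ref{prop:subregbruhatcomparison}, and the decomposition \eqref{eq:subregularresolitions2:2}, none of which invoke the present lemma; the only appearance of Lemma \ref{lem:bruhatcellcurvecomparison} in that proof is to upgrade ``line bundle over $C_{1, n_0 + 1}$'' to ``line bundle over $Y \times_S \mrm{Pic}^0_S(E)$'', which is downstream of everything used here.
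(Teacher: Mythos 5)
Your overall strategy matches the paper's: dimension count, Lemma \ref{lem:bruhatcellcurvecomparisonsmooth}, properness, and irreducibility of the target to force surjectivity and then an isomorphism. The circularity concern you raise in the last paragraph is legitimate and you resolve it correctly.

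There is, however, a genuine gap in the first paragraph. You argue that since the geometric fibres of \eqref{eq:bruhatcellcurvecomparison1} are smooth, connected and $0$-dimensional they must be single reduced points, hence the morphism is a monomorphism and so an open immersion. This deduction is valid for schemes but not for Artin stacks: a gerbe $\B G$ with $G$ a nontrivial finite group is smooth, connected and $0$-dimensional, so ``smooth + connected + $0$-dimensional'' does not yield a reduced point. This matters here because the source $C_{1, n_0 + 1}$ is built from $Z_0$, which by Theorem \ref{thm:subregularsliceexistence} \eqref{itm:subregularsliceexistence1} has only \emph{generically} trivial inertia, while the target $Y \times_S \mrm{Pic}^0_S(E)$ is a scheme over $S = \spec k$. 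So a priori the fibres could be gerbes over the locus where $Z_0$ has nontrivial stabilisers, and the ``monomorphism, hence open immersion'' step breaks down there. The paper circumvents exactly this: it only claims that the morphism is representable over the dense open where $Z_0 \to S$ is representable, and then invokes Lemma \ref{lem:smoothconfibiso}, whose proof uses normality of the source (smooth over $S$) to promote ``diagonal is an isomorphism over a dense open'' to ``diagonal is an isomorphism everywhere.'' That normality argument is the ingredient your proof is missing; once you insert it (i.e.\ restrict first to the dense representable open, conclude an open immersion there, and then apply the finite-diagonal-plus-normality argument of Lemma \ref{lem:smoothconfibiso}), the rest of your argument goes through.
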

\begin{proof}
Observe that the cell
\[ C_{n_0 + 1, n_0 + 1} \subseteq D_{n_0 + 1} = \mrm{KM}_{B, G, rig}^{-\alpha_j^\vee} \times_{\bun_{G, rig}} Z \times_S E \]
is equal to the locus of singular domain curves, and is therefore a divisor in $D_{n_0 + 1}$ flat over $Y$. Since $D_{n_0 + 1} \to Y$ has relative dimension $2$, $C_{n_0 + 1, n_0 + 1} \to Y$ therefore has relative dimension $1$. So by Lemma \ref{lem:bruhatcellcurvecomparisonsmooth}, \eqref{eq:bruhatcellcurvecomparison1} is a smooth proper morphism with connected fibres and finite relative stabilisers between smooth stacks of the same dimension over $S$. Since $C_{n_0 + 1, n_0 + 1} \to S$ is representable over the dense open substack where $Z_0 \to S$ is representable (note that $\tilde{Z} \to Z$ is representable, and even projective since all stable maps involved in the definition have trivial automorphism group), so is $C_{n_0 + 1, n_0 + 1} \to Y \times_S \mrm{Pic}^0_S(E)$. Since $Y \times_S \mrm{Pic}^0_S(E) \to S$ has irreducible fibres, \eqref{eq:bruhatcellcurvecomparison1} is therefore surjective, so by Lemma \ref{lem:smoothconfibiso} below, it is an isomorphism as claimed.
\end{proof}

\begin{lem} \label{lem:smoothconfibiso}
Let $X$ and $X'$ be stacks that are smooth and of the same dimension over $S$, and let $f \colon X \to X'$ be a smooth surjective proper morphism with connected fibres and finite relative stabilisers. Assume that there exists some open set $U \subseteq X$ that is dense in every fibre of $X \to S$ such that $f|_U$ is representable. Then $f$ is an isomorphism.
\end{lem}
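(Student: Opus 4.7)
The plan is to prove that $f$ is representable, after which the statement reduces to the well-known fact that a proper \'etale morphism with connected geometric fibres is an isomorphism.

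First, since $f$ is smooth and $\dim_S X = \dim_S X'$, $f$ has relative dimension $0$ everywhere, so $f$ is \'etale. Combined with the finite relative stabilisers hypothesis, this forces the relative inertia $I_f \to X$ to be finite \'etale (the diagonal of an \'etale morphism is \'etale). The identity section $e \colon X \to I_f$ is then a clopen immersion: it is an open immersion by cancellation (sections are monomorphisms and \'etale monomorphisms are open immersions) and closed by separatedness of $I_f \to X$. Hence $I_f$ decomposes as $e(X) \sqcup J$ with $J \to X$ finite \'etale, and representability of $f|_U$ translates into $J|_U = \emptyset$.

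The main step is to conclude that $J = \emptyset$. The image of $J$ in $X$ is clopen, being both open (finite \'etale morphisms are open) and closed (finite morphisms are proper). A clopen subset of $X$ disjoint from a dense open is necessarily empty, so I first need to know that $U$ is dense in $X$, not merely in each fibre of $X \to S$. Here the flatness of $X \to S$ (coming from smoothness) enters: for any closed $V \subseteq X$ containing $U$, each intersection $V \cap X_s$ is a closed subspace of $X_s$ containing the dense subspace $U \cap X_s$, and is therefore all of $X_s$; since $X = \bigcup_s X_s$, this gives $V = X$. Hence $U$ is dense in $X$, the image of $J$ is empty, $J = \emptyset$, and $f$ is representable.

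Once $f$ is representable, it is a representable proper \'etale morphism, hence finite \'etale; having connected geometric fibres, it is of degree $1$ and thus an isomorphism. The only real obstacle is the density reduction, which however is routine once flatness of $X \to S$ is invoked.
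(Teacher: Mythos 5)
Your approach differs from the paper's: the paper works directly with the diagonal $\Delta_f \colon X \to X \times_{X'} X$, which is finite by the stabiliser hypothesis and an isomorphism over the dense open $U \times_{X'} U$; since $X \times_{X'} X$ is smooth over the regular $S$ and hence normal, the finite morphism $\Delta_f$, being an isomorphism over a dense open onto a normal target, is an isomorphism, and then the flat surjective monomorphism $f$ is an isomorphism. You instead try to trivialise the relative inertia $I_f \to X$ by a clopen decomposition.

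The gap is the assertion that $I_f \to X$ is finite \emph{\'etale}, justified by ``the diagonal of an \'etale morphism is \'etale.'' That inference is valid for schemes and algebraic spaces, but for a non-representable smooth morphism of Artin stacks of relative dimension $0$, \'etaleness (or even unramifiedness) of the relative inertia is an additional condition, not a consequence of smoothness plus finiteness of $I_f$. In characteristic $p$, $\B\mu_p \to \spec k$ is smooth and proper of relative dimension $0$ with connected fibre and finite inertia, yet $\mu_p$ is not \'etale and the identity section $\spec k \hookrightarrow \mu_p$ is closed but not open, so the decomposition $I_f = e(X) \sqcup J$ you rely on need not exist. (This example violates the ``$f|_U$ representable'' hypothesis and so does not contradict the lemma; but your argument never uses that hypothesis to obtain \'etaleness of $I_f$---only to conclude $J|_U = \emptyset$---so the step is genuinely unjustified as written.) Once $I_f$ is merely finite, the identity section being trivial over a dense open does not by itself split off a clopen complement, and the argument stalls. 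The paper's use of normality of $X \times_{X'} X$ is exactly what sidesteps this, giving the conclusion in all characteristics without needing $I_f$ to be \'etale. Your density reduction (which, incidentally, does not in fact use flatness of $X \to S$) and the finishing steps are fine.
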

\begin{proof}
First note that $f|_U \colon U \to X'$ is \'etale and representable with connected fibres, and hence an open immersion. Moreover, the morphism $X \times_{X'} X \to X$ is smooth with connected fibres, so the preimage of $U$ under either projection is dense. So the diagonal $X \to X \times_{X'} X$, which is finite by assumption, is an isomorphism over the dense open subset $U$, and hence surjective. Since $X \times_{X'} X$ is smooth over $S$, and hence normal, it follows that $X \to X \times_{X'} X$ is an isomorphism. Since $f$ is smooth and surjective, by flat descent it follows that $f \colon X \to X'$ is also an isomorphism as claimed.
\end{proof}

\begin{lem} \label{lem:blowuprecognition}
Let $U$ be a regular stack, let $X \to U$ and $X' \to U$ be smooth representable morphisms of relative dimension $2$, and let $f \colon X \to X'$ be a projective morphism over $U$. Suppose that there exists a section $g \colon U \to X'$ such that $f^{-1}(X' \setminus g(U)) \to X' \setminus g(U)$ is an isomorphism, and such that every fibre of $f$ over a point in $g(U)$ is an irreducible curve. Then $f$ is the blowup of $X'$ along $g(U)$.
\end{lem}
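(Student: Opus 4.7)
My plan is to construct a morphism $\tilde f \colon X \to \tilde X' := \mathrm{Bl}_{g(U)} X'$ via the universal property of blowups and then show it is an isomorphism. The question is local on $X'$, so I may work in a neighbourhood where $g(U) \subset X'$ is cut out by a regular sequence $x, y \in \Gamma(X', \mathcal{O}_{X'})$ (using that $g$ is a section of a smooth representable morphism of relative dimension $2$).

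First I would analyse the set-theoretic exceptional locus. Set $E = X \times_{X'} g(U)$ and $D = E_{\mathrm{red}}$. The hypothesis that each fibre of $f$ over $g(U)$ is an irreducible curve, combined with a dimension count (the fibres are $1$-dimensional, $g(U)$ has dimension $\dim U$, and $X$ has dimension $\dim U + 2$), forces $D$ to be pure of codimension $1$ in $X$; irreducibility follows because two distinct components of $D$ surjecting onto $g(U)$ would produce reducible generic fibres. Since $X$ is smooth over the regular base $U$ hence regular, the codimension $1$ closed substack $D$ is automatically a Cartier divisor.

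The key step is to show that the preimage ideal $\mathcal{I}_E = (f^*x, f^*y)$ is invertible. Working locally at a point $p \in D$, let $t$ generate the ideal of $D$ in the regular (hence factorial) local ring $\mathcal{O}_{X, p}$. I would show $f^*x = t^a u_x$ and $f^*y = t^b u_y$ with $u_x, u_y$ units: the point is that $V(f^*x)_{\mathrm{red}} = D$ (it contains $D$ because $x$ vanishes on $g(U) \supseteq f(D)$, and it is contained in $V(\mathcal{I}_E)_{\mathrm{red}} = D$), so by factoriality the only prime divisor appearing in $f^*x$ is the one defining $D$. Then $(f^*x, f^*y) = (t^{\min(a, b)})$ is invertible, and the universal property of the blowup produces a unique $U$-morphism $\tilde f \colon X \to \tilde X'$ factoring $f$.

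Finally I would show $\tilde f$ is an isomorphism. It is proper (as $f$ is proper and $\tilde X' \to X'$ is separated), birational (both source and target are isomorphic to $X'$ outside $g(U)$) and surjective (its image is closed and dense). It therefore suffices to show it is quasi-finite, for then it is finite (proper plus quasi-finite) and birational onto the smooth hence normal $\tilde X'$, hence an isomorphism. For $q \in g(U)$ the exceptional fibre of the blowup is $\tilde D_q \cong \mathbb{P}^1$, and $\tilde f$ restricts to a proper morphism $f^{-1}(q) \to \tilde D_q$ from an irreducible curve to $\mathbb{P}^1$. This cannot be constant: were it constant with image $\tilde q_0$, no other point of $\tilde D_q$ could lie in the image of $\tilde f$, contradicting surjectivity. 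Hence the map is a nonconstant proper morphism of irreducible curves, which is automatically finite. The main obstacle I anticipate is the local computation showing $\mathcal{I}_E$ is invertible: it requires careful use of the UFD structure of $\mathcal{O}_{X, p}$ to rule out spurious components of $V(f^*x)$, and it is there that the hypothesis that $f$ is a genuine isomorphism outside $D$ really gets used.
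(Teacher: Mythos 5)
Your reduction to the universal property of blowup is a reasonable strategy, and the final step (proper, birational, quasi\nobreakdash-finite onto a smooth target, hence an isomorphism) is correct. However, the key step — showing that the preimage ideal $\mathcal{I}_E = (f^*x, f^*y)$ is invertible — contains a genuine error, and it is precisely the step you flagged as the main obstacle. You claim $V(f^*x)_{\mathrm{red}} = D$ and justify the inclusion ``$\subseteq$'' by writing $V(f^*x) \subseteq V(\mathcal{I}_E)_{\mathrm{red}}$. This inclusion is backwards: since $(f^*x) \subseteq \mathcal{I}_E$, one has $V(f^*x) \supseteq V(\mathcal{I}_E) = E$, not the reverse. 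In fact $V(f^*x) = f^{-1}(V(x))$, and $V(x) \subseteq X'$ is a divisor strictly containing the codimension-$2$ locus $g(U)$, so $V(f^*x)$ strictly contains $D$: it also contains the strict transform of $V(x)$, which meets $D$ in a nonempty set. At such points the factorisation $f^*x = t^a u_x$ with $u_x$ a unit fails. More fundamentally, even writing $f^*x = t^a x'$ and $f^*y = t^b y'$ with $x', y' \notin (t)$, the conclusion $(x', t^{b - a}y') = (1)$ does not follow from factoriality alone: in $k[u,v]_{(u,v)}$ with $t = u+v$ one can have $(t u, t v) = t\,\mathfrak{m}$, which is not principal. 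Ruling this out requires using the geometric hypotheses (smoothness of $X$, irreducibility of the contracted curve) in an essential way, not just the algebraic structure of $\mathcal{O}_{X,p}$.

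This is exactly the gap that the paper's proof fills by a different mechanism. Instead of proving $\mathcal{I}_E$ invertible directly, the paper writes $K_{X/U} = f^*K_{X'/U}(nD)$ for some $n > 0$, restricts to a geometric fibre, and applies adjunction: if $D|_{X_u} = m_u C_u$, then $-2 \leq \deg K_{C_u} = (m_u n + 1)C_u^2$ with $C_u^2 < 0$, forcing $m_u = n = 1$, $C_u^2 = -1$, and $C_u \cong \mathbb{P}^1$. Castelnuovo then identifies each fibre as a blowup, and a relative $\mathrm{Proj}$ argument using $f$-ampleness of $-D$ globalises the statement. The adjunction step is where the smoothness of $X$ does genuine work; your UFD argument has no counterpart for it. If you want to salvage your approach, you would essentially need to import the fibre-wise Castelnuovo argument to show that $E$ (scheme-theoretically) equals $D$ as a Cartier divisor, at which point the universal-property route closes the proof — but that is most of the work the paper's proof already does.
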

\begin{proof}
Since the claim is local in the smooth topology on $U$ and in the \'etale topology on $X'$, we can reduce to the case where $X' \to U$ is a smooth morphism of schemes with $U$ connected and regular.

First note that the underlying reduced scheme $D$ of the exceptional locus $f^{-1}(g(U))$ is an integral closed subscheme of codimension $1$ in a regular scheme, and hence a Cartier divisor. Since $X$ and $X'$ are smooth over $U$ and $f$ is an isomorphism outside $D$, we therefore have $K_{X/U} = f^*K_{X'/U}(nD)$ for some $n > 0$. If $k$ is any field and $u \colon \spec k \to U$ is a $k$-point, we have $D|_{X_u} = m_u C_u$ for some $m_u > 0$, where $C_u \subseteq X_u$ is the irreducible curve contracted under $f$, and hence, by adjunction
\[ -2 \leq \deg K_{C_u} = (m_u n + 1)C_u^2.\]
Since $C_u^2 < 0$, we deduce that $m_u = n = 1$, $C_u^2 = -1$, $\deg K_{C_u} = -2$, and hence that $C_u$ is a smooth rational curve. In particular, by Castelnuovo's theorem, $f_u \colon X_u \to X'_u$ is the blowup at $g(u)$.

Now let $\eta \colon \spec K \to U$ be the generic point of $U$. We have shown that on the generic fibre $f_\eta \colon X_\eta \to X'_\eta$ is the blowup along $g(\eta)$, so the same must be true on some dense open set $U$ with complement $V$. So we get an isomorphism
\[h \colon X \setminus f^{-1}(g(V)) \overset{\sim}\longrightarrow \tilde{X}' \setminus \pi^{-1}(g(V))\]
over $X'$, where $\pi \colon \tilde{X}' \to X'$ is the blowup of $X'$ along $g(U)$. Since $f$ is projective and is an isomorphism outside $D$, it follows that either $D$ or $-D$ is $f$-ample. Since $D\cdot C_u = (C_u^2)_{X_u} = -1$ for all points $u \colon \spec k \to X'$, it follows that $-D$ is $f$-ample. But $h$ is an isomorphism in codimension $1$ between regular schemes projective over $X'$, $h(D \setminus f^{-1}(g(V))) = \pi^{-1}(g(U)) \setminus \pi^{-1}(g(V))$, and $-\pi^{-1}(g(U))$ is $f$-ample, so
\[ X \overset{\sim}\longrightarrow \mrm{Proj}_{X'} \bigoplus_{d \geq 0} f_*\mc{O}(-dD) \cong \mrm{Proj}_{X'} \bigoplus_{d \geq 0} \pi_*\mc{O}(-d\pi^{-1}(g(U))) \overset{\sim}\longleftarrow \tilde{X}',\]
which proves that $X$ is the blowup as claimed.
\end{proof}

\subsection{The divisor $D_{\alpha_i^\vee + \alpha_j^\vee}(Z)$} \label{subsection:dalphaij}

In this subsection, we prove the following proposition, which is essentially Theorem \ref{thm:introsubregularresolutions} \eqref{itm:introsubregularresolutions3}.

\begin{prop} \label{prop:subregularresolutions3}
Assume we are in the setup of Proposition \ref{prop:subregularresolutions1}. Then every fibre of the morphism
\[ D_{\alpha_i^\vee + \alpha_j^\vee}(Z) \longrightarrow Y \]
is isomorphic to the Hirzebruch surface $\mb{F}_{d - 1}$.
\end{prop}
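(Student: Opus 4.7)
The strategy parallels Proposition \ref{prop:subregularresolutions2}, but is considerably simpler since no iterated blow-ups are needed: the goal is to identify each fibre $F_y = D_{\alpha_i^\vee+\alpha_j^\vee}(Z)_y$ as a $\mathbb{P}^1$-bundle over $\mathbb{P}^1$ and then compute the Hirzebruch invariant.

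First, by a Bruhat-cell analysis analogous to Lemma \ref{lem:subregularborelcells} (using Proposition \ref{prop:levidegreebound}, Lemma \ref{lem:inducedsectionbound1}, and Theorem \ref{thm:subregularclassification}), I would show that $D_{\alpha_i^\vee+\alpha_j^\vee}(Z)$ is concentrated over $Z_0 \subseteq Z$: for $z \in Z \setminus Z_0$, the regular bundle $\xi_{G,z}$ has Harder--Narasimhan reduction of type $\{\alpha_j\}$ by Theorem \ref{thm:subregularclassification}, so supports only rational bubbles of class $\alpha_j^\vee$. Over $Z_0$, parabolic induction forces the elliptic section to reduce through $P \cap P_{ij}$, where $P_{ij}$ is of type $\Delta \setminus \{\alpha_i, \alpha_j\}$. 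A point of $F_y$ is then described by a triple $(z, x, \sigma_R)$, with $x \in E$ the attachment point and $\sigma_R \colon \mathbb{P}^1 \to \xi_{G,z}/B|_x \cong G/B$ a stable map of class $\alpha_i^\vee + \alpha_j^\vee$ passing through the prescribed value of the elliptic section at $x$.

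Second, composing $\sigma_R$ with the natural projection $G/B \to G/P'$, where $P'$ is the minimal standard parabolic strictly containing $B$ whose Levi has $\alpha_j$ as a root, collapses the $\alpha_j^\vee$-direction and produces a morphism $F_y \to \mathbb{P}^1$ whose fibres parametrise the remaining $\alpha_j^\vee$-bubble. Together with smoothness over $Y$ from Proposition \ref{prop:subregularresolutions1}, this exhibits $F_y$ as a smooth proper ruled surface, so $F_y \cong \mathbb{F}_e$ for some $e \geq 0$.

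Third, to identify $e = d-1$, I would exhibit two disjoint sections of $F_y \to \mathbb{P}^1$ as the intersections $F_y \cap D_{\alpha_i^\vee}(Z)_y$ and $F_y \cap D_{\alpha_j^\vee}(Z)_y$: these are curves in the smooth surface $F_y$ by the normal-crossings statement of Proposition \ref{prop:subregularresolutions1}, and they are disjoint because no stable map can simultaneously bubble in both ways by Lemma \ref{lem:subregularemptydivisors}. Their self-intersections are computed by triple intersection products on the smooth ambient $\tilde{Z}$ applied to the decomposition \eqref{eq:subregularresolutions1:1}, with the multiplicity $d$ of $D_{\alpha_i^\vee}(Z)$ producing exactly the shift from $d$ to $d-1$. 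This triple-product calculation is the main obstacle, and requires careful normal-bundle bookkeeping for the three components $D_{\alpha_i^\vee}(Z)$, $D_{\alpha_j^\vee}(Z)$, $D_{\alpha_i^\vee+\alpha_j^\vee}(Z)$ relative to $Y$.
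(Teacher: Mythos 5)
Your overall strategy differs genuinely from the paper's: you propose to (i) show $F_y \to \mb{P}^1$ is a ruling by projecting to a partial flag variety, then (ii) pin down the Hirzebruch invariant by intersection theory on $\tilde Z$. The paper instead first reduces $D_{\alpha_i^\vee+\alpha_j^\vee}(Z)$ to a twisted family of the single absolute object $\bar M^+_{0,1}(G/B,\alpha_i^\vee+\alpha_j^\vee)$ over $Y$, and then computes that space in one shot as a $\mb P^1$-bundle $\mb P_{Q_i/B}(\mc O\oplus\mc O(d-1))$ by realising it inside a Bott--Samelson variety and taking a direct image of $\mc L_{\varpi_i}$; the number $d-1$ drops out of the weight of a three-dimensional $B$-module, with no intersection numbers ever needed.

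The concrete problem is in your Step 3. The two loci $F_y\cap D_{\alpha_i^\vee}(Z)_y$ and $F_y\cap D_{\alpha_j^\vee}(Z)_y$ are \emph{not} disjoint. Lemma \ref{lem:subregularemptydivisors} controls which $D_\lambda(Z)$ are nonempty; it says nothing about the triple intersection $D_{\alpha_i^\vee}(Z)\cap D_{\alpha_j^\vee}(Z)\cap D_{\alpha_i^\vee+\alpha_j^\vee}(Z)$, and that intersection is in fact nonempty. The common point is the ``tripod'' stable map whose rational tree is a contracted $\mb P^1$ carrying the marked point with an $\alpha_i^\vee$-bubble and an $\alpha_j^\vee$-bubble hanging off it; it lies in the closure of both two-component chain strata and hence on both curves. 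Consistent with this, the paper's Proposition \ref{prop:subregqflagcurves} identifies one of the two loci as a \emph{fibre} of the ruling (self-intersection $0$) and the other as a section of self-intersection $1-d$ --- a fibre and a section of a $\mb P^1$-bundle always meet, so they cannot both be ``disjoint sections'' as you assert. Once the disjointness hypothesis falls, your plan of reading $e$ off two disjoint $\pm e$-sections collapses, and the intersection computation you defer to ``careful normal-bundle bookkeeping'' would also have to feed in the intersection numbers of these curves inside the other components $D_{\alpha_i^\vee}(Z)_y$, $D_{\alpha_j^\vee}(Z)_y$ --- but those surfaces are only described in Propositions \ref{prop:subregularresolutions2} and \ref{prop:subregularresolutions4}, so there is a real danger of circularity if you try to recover $e$ this way rather than computing $\bar M^+_{0,1}(G/B,\alpha_i^\vee+\alpha_j^\vee)$ directly as the paper does.

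Two smaller remarks. Your ruling comes from $G/B \to G/Q_j$ rather than the paper's $G/B \to G/Q_i$; either should work, but you would still need an analogue of Lemma \ref{lem:subregqpartialschubertcurves} to know the target is $\mb P^1$ and that the stabilised composition is a genuine morphism of families. And your Step 2 asserts that smoothness over $Y$ plus a surjection to $\mb P^1$ ``exhibits $F_y$ as a ruled surface''; you still need to rule out reducible fibres, which again requires some understanding of the stable-map moduli, not just formal smoothness.
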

\begin{proof}
By Proposition \ref{prop:subregularresolutions1}, we have $D_\lambda(Z) = \emptyset$ for all $\lambda > \alpha_i^\vee + \alpha_j^\vee$, so any stable map parametrised by a point in $D_{\alpha_i^\vee + \alpha_j^\vee}(Z)$ must be the union of a section of the relevant $G/B$-bundle of degree $-\alpha_i^\vee - \alpha_j^\vee$ and a single connected stable map of genus $0$ and degree $\alpha_i^\vee + \alpha_j^\vee$ to a fibre of the $G/B$-bundle. We deduce that
\[ D_{\alpha_i^\vee + \alpha_j^\vee}(Z) \cong \eta \times^{B/Z(G)} \bar{M}_{0, 1}^+(G/B, \alpha_i^\vee + \alpha_j^\vee),\]
where
\[ \eta \longrightarrow \bun_{B, rig}^{-\alpha_i^\vee - \alpha_j^\vee} \times_{\bun_{G, rig}} Z \times_S E \]
is the pullback of the universal $B/Z(G)$-bundle on $\bun_{B, rig} \times_S E$, and
\[ \bar{M}_{0, 1}^+(G/B, \alpha_i^\vee + \alpha_j^\vee) \]
is the moduli space of $1$-pointed stable maps of genus $0$ and degree $\alpha_i^\vee + \alpha_j^\vee$ sending the marked point to the base point $B/B \in G/B$.The morphism $D_{\alpha_i^\vee + \alpha_j^\vee}(Z) \to Y$ factors through
\begin{equation} \label{eq:subregularresolutions3:1}
 \bun_{B, rig}^{-\alpha_i^\vee - \alpha_j^\vee} \times_{\bun_{G, rig}} Z \times_S E \xrightarrow{\Delta_E} \bun_{B, rig}^{-\alpha_i^\vee - \alpha_j^\vee} \times_{\bun_{G, rig}} Z \times_S E \times_S E = C_{n_0 + 1, n_0 + 1} \longrightarrow Y, \end{equation}
where the last morphism is the map $C_{n_0 + 1, n_0 + 1} \to C_{1, n_0 + 1}$ composed with \eqref{eq:bruhatcellcurvecomparison1} and the projection to $Y$. Proposition \ref{prop:subregbruhatcomparison} and Lemma \ref{lem:bruhatcellcurvecomparison} identify the last morphism with $Y \times_S \mrm{Pic}^0_S(E) \to Y$ and the first with the zero section. So \eqref{eq:subregularresolutions3:1} is an isomorphism, so we can identify $D_{\alpha_i^\vee + \alpha_j^\vee}(Z)$ with the morphism
\[ \eta \times^{B/Z(G)} \bar{M}_{0, 1}^+(G/B, \alpha_i^\vee + \alpha_j^\vee) \to Y \]
for some $B/Z(G)$-bundle $\eta \to Y$. The proposition now follows from Proposition \ref{prop:subregqflagcurves} below.
\end{proof}

\begin{prop} \label{prop:subregqflagcurves}
With notation as in the proof of Proposition \ref{prop:subregularresolutions3}, there is an isomorphism
\[ \bar{M}^+_{0, 1}(G/B, \alpha_i^\vee + \alpha_j^\vee) \cong \mb{F}_{d - 1}.\]
such that the closure of the locus of stable maps with dual graph
\[
\begin{tikzpicture}[every label/.style=dlabel]
\draw (0, 0) node [dnode, label=below:{\alpha_i^\vee}] {} -- (1, 0) node [dnode, label=below:{\alpha_j^\vee}] {} -- (1.5, 0);
\draw (3, 0) node {(resp.};
\draw (4, 0) node [dnode, label=below:{\alpha_j^\vee}] {} -- (5, 0) node [dnode, label=below:{\alpha_i^\vee}] {} -- (5.5, 0);
\draw (6, 0) node {)};
\end{tikzpicture}
\]
is a fibre of $\mb{F}_{d - 1} \to \mb{P}^1$ (resp.\ a section $\mb{P}^1 \to \mb{F}_{d - 1}$ with self-intersection $1 - d$).
\end{prop}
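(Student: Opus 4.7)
My plan is to first reduce to a rank-2 calculation, and then describe the moduli space explicitly. Since $\langle \varpi_k, \alpha_i^\vee + \alpha_j^\vee \rangle = 0$ for every $\alpha_k \in \Delta \setminus \{\alpha_i, \alpha_j\}$, every stable map $(C, p, f)$ in $\bar{M}^+_{0, 1}(G/B, \alpha_i^\vee + \alpha_j^\vee)$ projects to a constant map under the morphism $\pi_{ij} \colon G/B \to G/P_{ij}$, where $P_{ij}$ is the standard parabolic with $t(P_{ij}) = \{\alpha_i, \alpha_j\}$. Combined with the constraint $f(p) = B/B$, this forces $f$ to factor through the single fibre $\pi_{ij}^{-1}(\pi_{ij}(B/B)) \cong L/B_L$, where $L = L_{ij}$ is the standard Levi of $P_{ij}$. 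Hence
\[
\bar{M}^+_{0, 1}(G/B, \alpha_i^\vee + \alpha_j^\vee) \cong \bar{M}^+_{0, 1}(L/B_L, \alpha_i^\vee + \alpha_j^\vee),
\]
reducing the problem to the rank-2 case in which the derived subgroup of $L$ is of type $A_2$, $B_2 = C_2$ or $G_2$ according as $d = 1, 2, 3$.

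In the rank-2 case, I would describe the moduli space using the two projections $\pi_k \colon L/B_L \to L/P_k$ for $k \in \{i, j\}$. A smooth rational curve $C$ of class $\alpha_i^\vee + \alpha_j^\vee$ through $B/B$ projects under each $\pi_k$ to a minimal rational curve of degree $\langle \varpi_k, \alpha_i^\vee + \alpha_j^\vee \rangle = 1$ in $L/P_k$ passing through $\pi_k(B/B)$; the pair of these two lines, subject to a natural incidence condition from the embedding $L/B_L \hookrightarrow L/P_i \times L/P_j$, determines $C$. Working this out in each of the three rank-2 types using coordinates on the flag variety---for $A_2$ via a pair $(\Pi, \ell)$ with $\Pi \supset F_1^*$ a $2$-plane and $\ell \subset F_2^*$ a line in $\mb{C}^3$, and analogously via the symplectic incidence for $B_2 = C_2$ and the $7$-dimensional representation of $G_2$---gives an explicit smooth projective $2$-dimensional variety that one can identify directly with $\mb{F}_{d - 1}$. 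Smoothness and dimension $2$ are also guaranteed abstractly by convexity of $L/B_L$ (so that $H^1(\mb{P}^1, f^*T_{L/B_L}) = 0$ for every stable map) and the dimension count $\dim \bar{M}^+_{0, 1}(L/B_L, \alpha_i^\vee + \alpha_j^\vee) = \langle 2\rho, \alpha_i^\vee + \alpha_j^\vee \rangle - 2 = 2$.

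The two boundary loci consist of stable maps whose domain is a chain of two $\mb{P}^1$'s of classes $\alpha_i^\vee$ and $\alpha_j^\vee$, distinguished by which component carries the marked point; each is a $\mb{P}^1$-family parametrised by the node position. The $\mb{P}^1$-fibration $\mb{F}_{d - 1} \to \mb{P}^1$ would be induced, for instance, by the assignment $(C, p, f) \mapsto \pi_j(f(C))$, viewed as a point of the $\mb{P}^1$ of lines through $\pi_j(B/B)$ in $L/P_j$; under this morphism one boundary locus collapses to a point (the fibre) while the other maps isomorphically onto the base (the section), with the matching between the two dual graph types and the fibre/section roles then being read off from the explicit description. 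The self-intersection $1 - d$ of the section is computed using the Kontsevich deformation exact sequence $0 \to T_C \to f^*T_{L/B_L} \to N_f \to 0$ applied to a stable map on the section: smoothing the node imposes a vanishing condition on the relevant local section of $N_f$, of order determined by the Cartan pairing $\langle \alpha_j, \alpha_i^\vee \rangle = -d$, yielding the claimed normal bundle degree. The main technical obstacle is to make this self-intersection calculation rigorous and uniform in a single argument; in practice a case-by-case verification via the explicit coordinates introduced above, while somewhat lengthy, seems to be the most direct route.
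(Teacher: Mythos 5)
Your plan is broadly sound, but it differs from the paper's route and leaves more to be checked than you suggest. Let me record the comparison and a few concrete pitfalls.

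The opening reduction to the rank-two Levi is valid, modulo a conventions slip: with the paper's definition $t(P) = \{\alpha_k \in \Delta \mid \alpha_k \text{ not a root of } P\}$, you need $t(P_{ij}) = \Delta \setminus \{\alpha_i, \alpha_j\}$, not $\{\alpha_i, \alpha_j\}$, so that the Levi of $P_{ij}$ is the rank-two group generated by $\alpha_i, \alpha_j$ and $\mathrm{Pic}(G/P_{ij})$ is spanned by $\{\varpi_k : k \neq i, j\}$, against which $\alpha_i^\vee + \alpha_j^\vee$ pairs to zero. With that fixed, the argument that every $(C, p, f)$ factors through the fibre $P_{ij}/B \cong L_{ij}/B_{L_{ij}}$ is fine, as are the smoothness and dimension statements via convexity. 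The paper effectively works in the same fibre but restricts further to the Schubert variety $X_{s_i s_j s_i}$ and verifies $M = M'$ by a separate connectedness-plus-dimension argument; it then computes $M'$ once and for all by realising it as a projective bundle inside the Hilbert scheme of a Bott--Samelson fibration $\tilde{X}_{s_i s_j s_i} \to Q_i/B$, with the rank-three $B$-module $\mrm{Ind}_B^{Q_j}\mrm{Ind}_B^{Q_i}\mb{Z}_{\varpi_i}$ producing $\mc{O} \oplus \mc{O} \oplus \mc{O}(d-1)$ and hence $\mb{F}_{d-1}$ uniformly in $d$. Your approach instead proposes case-by-case explicit coordinates.

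The genuine gap is that the "pair of lines through the base point, subject to incidence" parametrisation is clean only in type $A_2$, where $L/Q_i$ and $L/Q_j$ are dual projective planes and the two pencils give $\mb{P}^1 \times \mb{P}^1$ directly. For $d > 1$ the two pencils have the \emph{wrong} dimensions for a direct product description: e.g.\ for $Sp_4$, lines through the base point in one of the partial flag varieties $L/Q_i \cong \mb{P}^3$ form a $\mb{P}^2$, not a $\mb{P}^1$. The paper copes with this by first restricting to the two-dimensional partial Schubert variety $X_{s_i s_j}/Q_i \subset L/Q_i$, whose one-pointed line space is $\mb{P}^1$ (Lemma 3.4.5); an analogue of that restriction (or the connectedness-plus-dimension argument showing all maps factor through $X_{s_i s_j s_i}$) is not optional in your set-up and would have to be supplied before the $\mb{P}^1$-fibration via $\pi_j$ can even be defined. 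Likewise the self-intersection $1 - d$ of the distinguished section --- which is where the Cartan integer $\langle \alpha_j, \alpha_i^\vee \rangle = -d$ actually enters --- is gestured at but not computed; in the paper this number falls out of the explicit rank-three bundle $\mc{O} \oplus \mc{O} \oplus \mc{O}(d-1)$ rather than from a separate normal-bundle argument. So the overall strategy is plausible, but as written it underestimates the work needed in types $B_2$ and $G_2$ and omits the key intermediate step (restriction to the Schubert variety) that the paper uses to keep the geometry two-dimensional.
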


An important role in the proof of Proposition \ref{prop:subregqflagcurves} is played by the Schubert varieties in $G/B$. Given $w \in W$, recall that the \emph{Schubert variety associated to $w$} is the closed subvariety
\[ X_w = \overline{BwB/B} \subseteq G/B.\]
In what follows, we write $Q_i, Q_j \subseteq G$ for the standard minimal parabolics of types $t(Q_i) = \Delta \setminus \{\alpha_i\}$ and $t(Q_j) = \Delta \setminus \{\alpha_j\}$.

\begin{lem} \label{lem:2dimschubert}
There are isomorphisms
\[ X_{s_is_j} \cong \mb{F}_d, \quad \text{(resp.} \quad X_{s_js_i} \cong \mb{F}_1\;\;\text{)}\]
such that $X_{s_j}$ is identified with a fibre of $\mb{F}_d \to \mb{P}^1$ (resp., the unique section $\mb{P}^1 \to \mb{F}_1$ of self-intersection $-1$) and $X_{s_i}$ is identified with the unique section $\mb{P}^1 \to \mb{F}_d$ of self-intersection $-d$ (resp., a fibre of $\mb{F}_1 \to \mb{P}^1$).
\end{lem}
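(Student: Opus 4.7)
The plan is to realise $X_{s_is_j}$ as a Bott--Samelson variety, which gives it the structure of a $\mb{P}^1$-bundle over $\mb{P}^1$, and then pin down the Hirzebruch number via a normal bundle calculation. Since $\alpha_i \neq \alpha_j$ we have $\ell(s_is_j) = 2$, so $(s_i, s_j)$ is a reduced expression and the multiplication map
\[
BS(s_i, s_j) := Q_i \times^B Q_j/B \longrightarrow G/B, \qquad [q_i, q_jB] \longmapsto q_iq_jB,
\]
is an isomorphism onto $X_{s_is_j}$. The first projection turns $X_{s_is_j}$ into a $\mb{P}^1$-bundle over $X_{s_i} = Q_i/B$; I will check by direct inspection that the section $q_iB \mapsto [q_i, eB]$ has image exactly the Schubert subvariety $X_{s_i} \subseteq X_{s_is_j}$, while the fibre over $eB$ coincides with $X_{s_j} = Q_j/B$.

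Next I will compute the normal bundle of $X_{s_i}$ in $X_{s_is_j}$. Since $T_{eB}(Q_j/B) = \mf{g}_{\alpha_j}$ as a $B$-representation (using the paper's convention that $\mrm{Lie}(B)$ carries the negative roots, so $\mrm{Lie}(Q_j) = \mrm{Lie}(B) \oplus \mf{g}_{\alpha_j}$), this normal bundle equals $Q_i \times^B \mb{Z}_{\alpha_j} = \mc{L}_{\alpha_j}|_{X_{s_i}}$, whose degree is $\langle \alpha_j, \alpha_i^\vee \rangle$. The sign here is forced by the paper's convention: since $\mc{L}_\lambda$ is nef iff $\lambda$ is dominant, and the tangent bundle of $Q_i/B \cong \mb{P}^1$ is $\mc{L}_{\alpha_i}|_{X_{s_i}}$ of degree $2 = \langle \alpha_i, \alpha_i^\vee \rangle$, the clean formula $\deg \mc{L}_\lambda|_{X_{s_i}} = \langle \lambda, \alpha_i^\vee \rangle$ holds without a sign flip.

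The main arithmetic input will then be the identity $\langle \alpha_j, \alpha_i^\vee \rangle = -d$ in all cases of Notation \ref{notation:dynkindecomposition}. When $\alpha_i$ and $\alpha_j$ have the same length (types $A, B, D, E$, so $d = 1$) this is the standard Cartan entry for adjacent simple roots in a simply-laced subsystem. When $\alpha_i$ is short and $\alpha_j$ is long (types $C, F, G$) the rank-$2$ subsystem generated by $\{\alpha_i, \alpha_j\}$ is of type $B_2$ or $G_2$, and inspecting the Cartan matrix gives $\langle \alpha_j, \alpha_i^\vee \rangle = -d$ directly. So $X_{s_i}$ is a section of self-intersection $-d$ in $X_{s_is_j}$, forcing $X_{s_is_j} \cong \mb{F}_d$ with $X_{s_i}$ the unique $(-d)$-section and $X_{s_j}$ a fibre.

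The analogous statement for $X_{s_js_i}$ follows by interchanging $i$ and $j$: $X_{s_js_i} \cong BS(s_j, s_i)$ is a $\mb{P}^1$-bundle over $X_{s_j}$, and the normal bundle of the section $X_{s_j} \hookrightarrow X_{s_js_i}$ has degree $\langle \alpha_i, \alpha_j^\vee \rangle$. This Cartan entry equals $-1$ in every case, since either both simple roots have the same length, or $\alpha_j$ is long and hence $\alpha_j^\vee$ is a short coroot. Thus $X_{s_js_i} \cong \mb{F}_1$ with $X_{s_j}$ the $(-1)$-section and $X_{s_i}$ a fibre. I expect the main obstacle to be not conceptual but rather the careful bookkeeping of sign conventions imposed by the paper's nonstandard choice that $\mrm{Lie}(B)$ carries the negative roots; once that is calibrated against the tangent bundle of a line $\mb{P}^1$, everything falls out of the Bott--Samelson.
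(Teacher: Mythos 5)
Your proof is correct and shares the paper's central idea: the Bott--Samelson model $X_{s_is_j} \cong Q_i \times^B Q_j/B$, a $\mb{P}^1$-bundle over $\mb{P}^1$. The difference is in how the Hirzebruch index is extracted. The paper writes $Q_j/B \cong \mb{P}(V^\vee)$ with $V = \mrm{Ind}_B^{Q_j}(\mb{Z}_{\varpi_j})$, splits $V$ as a $B_{SL_2}$-representation into its two weight lines, and reads off $X_{s_is_j} \cong \mb{P}_{\mb{P}^1}(\mc{O} \oplus \mc{O}(-d))$ directly. You instead compute the normal bundle of the tautological section $X_{s_i}$, which is $Q_i \times^B \mf{g}_{\alpha_j} = \mc{L}_{\alpha_j}|_{X_{s_i}}$ of degree $\langle\alpha_j, \alpha_i^\vee\rangle = -d$, and then invoke the classification fact that a $\mb{P}^1$-bundle over $\mb{P}^1$ containing a section of self-intersection $-d < 0$ must be $\mb{F}_d$ (in $\mb{F}_n$ with $n > 0$ the negative section is the unique one of square less than $n$). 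Your route trades the splitting argument for this standard classification fact, which is a fair exchange; arguably it is a bit more robust, since it does not require checking that the extension of $B_{SL_2}$-modules is split. Your calibration of signs, $\deg \mc{L}_\lambda|_{Q_i/B} = \langle\lambda, \alpha_i^\vee\rangle$ (pinned by $\deg T_{Q_i/B} = 2 = \langle\alpha_i,\alpha_i^\vee\rangle$), matches the paper's convention that $\mrm{Lie}(B)$ carries the negative roots, and the Cartan arithmetic $\langle\alpha_j, \alpha_i^\vee\rangle = -d$, $\langle\alpha_i, \alpha_j^\vee\rangle = -1$ (since $\alpha_j$ is the long root, so $\alpha_j^\vee$ is short) is correct in every case of Notation \ref{notation:dynkindecomposition}.
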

\begin{proof}
We prove the claim for $X_{s_is_j}$; the proof for $X_{s_js_i}$ is identical after noting that $\langle \alpha_i, \alpha_j^\vee \rangle = -1$.

There is an isomorphism
\[ SL_2 \times^{B_{SL_2}, \rho_{\alpha_i}} Q_j/B = Q_i \times^B Q_j/B \overset{\sim}\longrightarrow X_{s_is_j},\]
given by multiplication, where $B_{SL_2} \subseteq SL_2$ is the Borel subgroup of lower triangular matrices, and $\rho_{\alpha_i}\colon SL_2 \to G$ is the root homomorphism corresponding to $\alpha_i$. We also have an isomorphism of $Q_j$-varieties $Q_j/B \cong \mb{P}(V^\vee)$, where $V$ is the $Q_j$-representation $V = \mrm{Ind}_B^{Q_j}(\mb{Z}_{\varpi_j})$, and an exact sequence
\[ 0 \longrightarrow \mb{Z}_{\varpi_j - \alpha_j} \longrightarrow V \longrightarrow \mb{Z}_{\varpi_j} \longrightarrow 0\]
of $B$-representations, which splits uniquely as an exact sequence of $B_{SL_2}$-representations. So we have
\[ X_{s_is_j} = SL_2 \times^{B_{SL_2}} \mb{P}(V^\vee) = \mb{P}_{\mb{P}^1}(\mc{O}(-\langle \varpi_j, \alpha_i^\vee \rangle) \oplus \mc{O}(-\langle \varpi_j - \alpha_j, \alpha_i^\vee\rangle)) = \mb{P}_{\mb{P}^1}(\mc{O} \oplus \mc{O}(-d))) = \mb{F}_{d}.\]
(Recall that $\mb{P}(-)$ denotes the projective space of $1$-dimensional \emph{sub}spaces or rank $1$ \emph{sub}bundles of a vector space or vector bundle.) The identifications of $X_{s_i} = Q_i/B$ and $X_{s_j} = Q_j/B$ under this isomorphism follow immediately.
\end{proof}

\begin{lem} \label{lem:2dimschubertblowdown}
The partial Schubert variety $X_{s_is_j}/Q_i = \overline{Bs_is_jQ_i/Q_i} \subseteq G/Q_i$ is isomorphic to the projective cone $\hat{\mb{P}}^1_d$ on $\mb{P}^1$ of degree $d$, and the morphism
\begin{equation} \label{eq:2dimschubertblowdown1}
X_{s_is_j} \longrightarrow X_{s_is_j}/Q_i
\end{equation}
is the blowup of $X_{s_is_j}/Q_i$ at the origin $Q_i/Q_i$.
\end{lem}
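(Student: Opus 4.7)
The plan is to combine the Bruhat decomposition of the morphism $\pi \colon X_{s_is_j} \to X_{s_is_j}/Q_i$ with the classical geometry of $\mb{F}_d$. First, the source $X_{s_is_j} = \bigcup_{w \in \{e, s_i, s_j, s_is_j\}} BwB/B$ and the target $X_{s_is_j}/Q_i = \bigcup_{w \in \{e, s_j, s_is_j\}} BwQ_i/Q_i$ decompose into Bruhat cells, where the representatives for the target are the minimal-length elements in the cosets of $W/W_{Q_i}$ with $W_{Q_i} = \langle s_i \rangle$. Standard Bruhat theory then implies that $\pi$ restricts to isomorphisms $BwB/B \to BwQ_i/Q_i$ for $w \in \{s_j, s_is_j\}$, while collapsing the cells $B/B$ and $Bs_iB/B$ -- whose union is exactly the Schubert line $X_{s_i} = Q_i/B$ -- to the single point $Q_i/Q_i$. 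Hence $\pi$ contracts $X_{s_i}$ to $Q_i/Q_i$ and is bijective on the complement.

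By Lemma \ref{lem:2dimschubert}, $X_{s_is_j} \cong \mb{F}_d$ with $X_{s_i}$ identified as the unique section of self-intersection $-d$. Since Schubert varieties are normal (a standard fact), $X_{s_is_j}/Q_i$ is normal, and Zariski's main theorem upgrades the bijectivity of $\pi$ on the complement of $X_{s_i}$ to an isomorphism onto $X_{s_is_j}/Q_i \setminus \{Q_i/Q_i\}$. Thus $\pi$ is a proper birational morphism from the smooth surface $\mb{F}_d$ to a normal projective surface, contracting precisely the $(-d)$-section. Classical surface theory then identifies $X_{s_is_j}/Q_i$ with the projective cone $\hat{\mb{P}}^1_d$: concretely, the linear system $|D_\infty|$ of the positive section $D_\infty = X_{s_i} + dF$ (with $F$ a fibre of the ruling $\mb{F}_d \to \mb{P}^1$) has $d+2$ global sections and realises this contraction as the embedding of the cone on the rational normal curve of degree $d$ in $\mb{P}^{d+1}$.

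To recognise $\pi$ as the blowup at $Q_i/Q_i$, note that $\pi^{-1}(Q_i/Q_i) = X_{s_i}$ is a Cartier divisor in the smooth surface $\mb{F}_d$, so by the universal property of the blowup $\pi$ factors through $\mrm{Bl}_{Q_i/Q_i}\hat{\mb{P}}^1_d \to \hat{\mb{P}}^1_d$. It is standard that this blowup is again $\mb{F}_d$ with the $(-d)$-section as exceptional divisor -- indeed, this is the inverse construction to the contraction above -- so the induced map $\mb{F}_d \to \mb{F}_d$ is a proper birational morphism between smooth projective surfaces that is an isomorphism outside a single point, hence an isomorphism. The main delicate point is the upgrade from bijective to isomorphism in the second step, which relies on normality of $X_{s_is_j}/Q_i$ to rule out non-trivial finite covers; the rest of the argument is a direct application of Bruhat theory and standard surface geometry.
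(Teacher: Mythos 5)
Your first two paragraphs closely parallel the paper's argument: Bruhat cells to see that $\pi$ contracts exactly $X_{s_i}$ to $Q_i/Q_i$, normality of partial Schubert varieties plus ZMT to upgrade to an isomorphism on the complement, and Lemma~\ref{lem:2dimschubert} to reduce to classical Hirzebruch surface geometry. The explicit linear-system description via $|X_{s_i} + dF|$ is a nice concretization of what the paper only implicitly invokes (uniqueness of normal contractions), so this portion of the argument is solid.

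The final paragraph has a genuine gap. You assert that $\pi^{-1}(Q_i/Q_i) = X_{s_i}$ is a Cartier divisor ``since $\mb{F}_d$ is smooth'', but for the universal property of the blowup you need the \emph{scheme-theoretic} preimage ideal $\pi^{-1}\mc{I}_{Q_i/Q_i}\cdot\mc{O}_{\mb{F}_d}$ to be invertible, and this is not automatic on a smooth surface even when the vanishing set is a divisor (e.g., $(x^2, xy) \subseteq k[x,y]$ cuts out $\{x=0\}$ but is not principal). It happens to be true here --- pulling back the generators of the vertex ideal gives exactly $\mc{O}(-X_{s_i})$ --- but that requires a local computation you don't give. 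More seriously, the concluding claim that ``a proper birational morphism between smooth projective surfaces that is an isomorphism outside a single point is an isomorphism'' is false: a point blowup is precisely such a morphism and is not an isomorphism. Moreover the induced map $\phi: \mb{F}_d \to \mrm{Bl}_{Q_i/Q_i}\hat{\mb{P}}^1_d$ is not an isomorphism outside a single point but outside the whole curve $X_{s_i}$ in the source (resp.\ the exceptional curve $E$ in the target); what you actually need to show is that $\phi$ contracts no curve, i.e.\ that $\phi(X_{s_i})$ is one-dimensional (which follows from surjectivity of $\phi$ and $\phi^{-1}(E)=X_{s_i}$), after which ZMT applies. The conclusion you want is correct and standard --- indeed, once $\pi$ is identified as the contraction of the $(-d)$-section and the blowup at the vertex is known to be the inverse construction, no further argument is needed, which is precisely the route the paper takes --- but the extra step you supply as justification does not work as written.
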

\begin{proof}
First note that the morphisms $Bs_is_jB/B \to Bs_is_jQ_i/Q_i$ and $Bs_jB/B \to Bs_jQ_i/Q_i$ are isomorphisms. So \eqref{eq:2dimschubertblowdown1} is birational and finite outside $Q_i/Q_i$, and hence an isomorphism outside $Q_i/Q_i$ since partial Schubert varieties are always normal. Since the preimage of $Q_i/Q_i$ under \eqref{eq:2dimschubertblowdown1} is $Q_i/B = X_{s_i}$, using normality of $X_{s_is_j}/Q_i$ and of $\hat{\mb{P}}^1_d$, we can conclude from Lemma \ref{lem:2dimschubert} that \eqref{eq:2dimschubertblowdown1} can be identified with the morphism
\[ \mb{F}_{d} \longrightarrow \hat{\mb{P}}^1_{d} \]
contracting the curve of self-intersection $-d$. But this is indeed the blowup at the cone point, so we are done.
\end{proof}

\begin{lem} \label{lem:subregqpartialschubertcurves}
There is a $Q_i$-equivariant isomorphism
\[ \bar{M}^+_{0, 1}(X_{s_is_j}/Q_i, \alpha_j^\vee) \cong Q_i/B \cong \mb{P}^1,\]
identifying the universal stable map with
\begin{equation} \label{eq:subregpartialschubertcurves1}
 Q_i \times^B Q_j/B \longrightarrow X_{s_is_j} \longrightarrow X_{s_is_j}/Q_i.
\end{equation}
\end{lem}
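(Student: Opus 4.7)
The plan is to exhibit \eqref{eq:subregpartialschubertcurves1} as the universal family of ruling lines on the projective cone $X_{s_is_j}/Q_i \cong \hat{\mb{P}}^1_d$. By Lemmas \ref{lem:2dimschubert} and \ref{lem:2dimschubertblowdown}, each fibre of $Q_i \times^B Q_j/B \to Q_i/B$ is the proper transform in $X_{s_is_j} \cong \mb{F}_d$ of a ruling line of the cone through the cone point $Q_i/Q_i$, and it maps isomorphically onto this ruling line in $X_{s_is_j}/Q_i$. The section $qB \mapsto [q, B/B]$ marks the cone point fibrewise, so we obtain a $Q_i$-equivariant family of $1$-pointed genus $0$ stable maps of degree $\alpha_j^\vee$, hence a $Q_i$-equivariant morphism
\[ \phi \colon Q_i/B \longrightarrow \bar{M}^+_{0, 1}(X_{s_is_j}/Q_i, \alpha_j^\vee) \]
whose composition with the universal stable map is tautologically \eqref{eq:subregpartialschubertcurves1}.

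Next I would show that $\phi$ is a bijection on geometric points. Since $\mb{X}_*(T_{Q_i}) = \mb{X}_*(T)/\mb{Z}\alpha_i^\vee$ and the effective classes on $\mb{F}_d$ are non-negative combinations $a\alpha_j^\vee + b\alpha_i^\vee$ of the fibre class $\alpha_j^\vee$ and the negative section class $\alpha_i^\vee$, the only irreducible rational curves in $X_{s_is_j}/Q_i$ of class $\alpha_j^\vee$ are the ruling lines. Given a $1$-pointed genus $0$ stable map $\sigma \colon C \to X_{s_is_j}/Q_i$ of degree $\alpha_j^\vee$ with marked point sent to $Q_i/Q_i$, additivity of degrees forces exactly one non-contracted irreducible component of $C$, whose image is a ruling line. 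Stability excludes all contracted bubbles (any contracted rational component of a $1$-pointed genus $0$ stable map would need at least three special points, and an inductive argument on the tree structure rules this out when there is only a single marked point), so $C$ is irreducible and $\sigma$ maps it isomorphically onto a unique ruling line, recovering a unique point of $Q_i/B$.

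Finally I would promote $\phi$ to an isomorphism using the $Q_i$-equivariance. Both $Q_i/B$ and the moduli carry $Q_i$-actions, and the previous step shows that $Q_i$ acts transitively on geometric points of the moduli (as it acts transitively on ruling lines through the cone point). A direct computation identifies the stabiliser in $Q_i$ of the pointed stable map associated to the ruling $\{[1, y] \mid y \in Q_j/B\}$: the set-theoretic stabiliser in $Q_i$ of this ruling is $B$, and the pointed stable map itself has no nontrivial automorphisms since it is an isomorphism onto its image with the marked point determined by the cone point. Hence the moduli is a single $Q_i$-orbit with reduced stabiliser $B$, and therefore isomorphic to $Q_i/B$ via $\phi$; the identification of the universal stable map with \eqref{eq:subregpartialschubertcurves1} is automatic by construction.

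The main obstacle is the second step: the degree-minimality of $\alpha_j^\vee$ among effective curve classes through the cone point and the bubbling analysis enforced by the single-marked-point stability condition must be pinned down precisely to exclude all alternative stable-map configurations in the moduli space.
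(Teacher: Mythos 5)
Your proposal takes a genuinely different route from the paper: you compare moduli spaces directly (construct $\phi \colon Q_i/B \to \bar{M}^+_{0,1}$, show bijectivity on geometric points, conclude by orbit analysis), whereas the paper verifies the universal property of $Q_i/B$ directly, taking an arbitrary $1$-pointed stable family over a scheme $U$, lifting it to $X_{s_is_j} \cong \mb{F}_d$ via Lemma \ref{lem:2dimschubertblowdown}, and reading off a classifying map $U \to Q_i/B$ from the intersection numbers $C_u \cdot X_{s_i} = 1$, $C_u \cdot X_{s_j} = 0$. Your first two steps (construction of $\phi$, irreducibility of the domain, identification of the image with a ruling line through the cone point) are essentially the same geometric observations as the paper's, and are fine; note only that the marked-point constraint is essential for ruling out, e.g., non-ruling lines when $d = 1$.

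The genuine gap is in your final step: ``Hence the moduli is a single $Q_i$-orbit with reduced stabiliser $B$, and therefore isomorphic to $Q_i/B$ via $\phi$.'' Bijectivity on geometric points together with the orbit-stabiliser computation yields a monomorphism $Q_i/B \hookrightarrow \bar{M}^+_{0,1}(X_{s_is_j}/Q_i, \alpha_j^\vee)$ that is surjective on points, but this is \emph{not} enough to conclude it is an isomorphism of schemes: you would need to know in advance that the moduli scheme is reduced (and, in positive characteristic, that $\phi$ is not a Frobenius-like universal homeomorphism). That reducedness is far from obvious here: the target $X_{s_is_j}/Q_i \cong \hat{\mb{P}}^1_d$ has a singular cone point for $d \geq 2$ (and is not even LCI for $d \geq 3$), and every stable map in the moduli passes through that singular point, so the usual unobstructedness for genus-$0$ maps into smooth targets does not apply. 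In other words, your argument implicitly assumes the conclusion (smoothness of the moduli) to finish. Moreover the lemma is stated over $\spec \mb{Z}$, while a pointwise orbit argument works at best over an algebraically closed field. The paper's proof sidesteps both issues: by lifting the \emph{entire} family $C \to U$ to $\mb{F}_d$ (using that $f^{-1}(Q_i/Q_i)$ is a section, hence a Cartier divisor, so the universal property of the blowup applies), it reduces to intersection theory on the \emph{smooth} surface $\mb{F}_d$, where the degree computation pins down the family as a pullback of the tautological one, scheme-theoretically and over $\mb{Z}$. You could repair your argument by making that lift to $\mb{F}_d$ part of the family-level proof rather than merely a fibrewise observation.
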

\begin{proof}
Assume that $U$ is a scheme and $(f \colon C \to X_{s_is_j}/Q_i, x\colon U \to C)$ is a $1$-pointed stable map over $U$ of degree $\alpha_j^\vee$ sending $x$ to the base point. We need to show that there is a unique morphism $U \to Q_i/B$ such that $f$ and $x$ are the pullbacks of \eqref{eq:subregpartialschubertcurves1} and the canonical section $Q_i/B = Q_i \times^B B/B \to Q_i \times^B Q_j/B$.

We first claim that $C \to U$ is smooth and that every geometric fibre of $f^{-1}(Q_i/Q_i) \to U$ is a reduced point. Since $f^{-1}(Q_i/Q_i) \to U$ has a section $x$, it then follows that it is an isomorphism.

To prove the claim, fix a geometric point $u \colon \spec k \to U$, and consider the stable map $f_u \colon C_u \to (X_{s_is_j}/Q_i)_k$. Since $\alpha_j^\vee$ is not the sum of two nonzero effective curve classes, it follows that $C_u$ is irreducible, hence smooth over $\spec k$. Since this holds for all geometric points, $C \to U$ is smooth as claimed, and $f_u^{-1}(Q_i/Q_i)$ is a Cartier divisor on $C_u$. So by Lemmas \ref{lem:2dimschubert} and \ref{lem:2dimschubertblowdown}, $f_u$ lifts to a morphism $\bar{f}_u \colon C_u \to (X_{s_is_j})_k \cong (\mb{F}_{d})_k$ such that $C_u \cdot X_{s_i} > 0$ and $C_u \cdot (dX_{s_j} + X_{s_i}) = 1$. (Note that $dX_{s_j} + X_{s_i}$ is linearly equivalent to the pullback of $\mc{L}_{\varpi_j}$.) Since $d > 0$, it follows that $C_u \cdot X_{s_i} = 1$ and $C_u \cdot X_{s_j} = 0$. In particular, $f_u^{-1}(Q_i/Q_i) = C_u \cap X_{s_i}$ is a reduced closed point on $C_u$, so $f_u^{-1}(Q_i/Q_i) \cong \spec k$ as claimed.

Since $f^{-1}(Q_i/Q_i) \subseteq C$ is a section of the smooth curve $C \to U$, it is a Cartier divisor, so by Lemma \ref{lem:2dimschubertblowdown}, $f$ lifts uniquely to a morphism $\bar{f} \colon C \to X_{s_is_j}$. Since the above argument shows that the composition $\bar{f} \colon C \to X_{s_is_j} = Q_i \times^B Q_j/B \to Q_i/B$ has degree $0$ on every fibre, this descends to a unique morphism $U \to Q_i/B$. The induced morphism
\begin{equation} \label{eq:subregqpartialschubertcurves1}
C \longrightarrow U \times_{Q_i/B} (Q_i \times^B Q_j/B)
\end{equation}
has degree $1$ on every fibre and is therefore an isomorphism. Since \eqref{eq:subregqpartialschubertcurves1} sends the section $x$ to the section $Q_i/B \to Q_i \times^B Q_j/B$ (as both are the preimage of $Q_i/Q_i \subseteq X_{s_is_j}/Q_i$), this proves the lemma.
\end{proof}

\begin{proof}[Proof of Proposition \ref{prop:subregqflagcurves}]
For the sake of brevity, write
\[ M = \bar{M}_{0, 1}^+(G/B, \alpha_i^\vee + \alpha_j^\vee).\]
We first claim that $M$ is connected. To see this, observe that $B$ acts on $M$, that any $B$-fixed point corresponds to a stable map factoring through $X_{s_i} \cup X_{s_j} \subseteq G/B$, and that there is a unique such pointed stable map of class $\alpha_i^\vee + \alpha_j^\vee$ defined over $k$ for any algebraically closed field $k$. Since every connected component of $M$ must have at least one $B$-fixed point over every algebraically closed field, connectedness of $M$ follows immediately.

We now compute the closed subscheme
\[ M' = \bar{M}_{0, 1}^+(X_{s_is_js_i}, \alpha_i^\vee + \alpha_j^\vee) \subseteq M \]
consisting of stable maps factoring through the Schubert variety $X_{s_is_js_i}$. We will show that $M' \cong \mb{F}_d$ is smooth and projective of relative dimension $2$ over $\spec \mb{Z}$. Since the same is true for $M$ and $M$ is connected, it follows that $M' = M$.

Since $X_{s_is_js_i}/Q_i = X_{s_is_j}/Q_i$, by Lemma \ref{lem:subregqpartialschubertcurves} we have a morphism
\[ M' \longrightarrow \bar{M}_{0, 1}^+(X_{s_is_j}/Q_i, \alpha_j^\vee) \cong Q_i/B = \mb{P}^1\]
sending a stable map to the stabilisation of its composition with $G/B \to G/Q_i$. The pullback of the universal domain curve of $\bar{M}_{0, 1}^+(X_{s_is_j}/Q_i, \alpha_j^\vee)$ along $X_{s_is_js_i} \to X_{s_is_j}/Q_i$ is
\[ X_{s_is_js_i} \times_{X_{s_is_j}/Q_i}(Q_i \times^B Q_j/B) = G/B \times_{G/Q_i} (Q_i \times^B Q_j/B),\]
which is identified with the Bott-Samelson variety $\tilde{X}_{s_is_js_i}$ via
\begin{align*}
\tilde{X}_{s_is_js_i} = Q_i \times^B Q_j \times^B Q_i/B &\overset{\sim}\longrightarrow G/B \times_{G/Q_i} (Q_i \times^B Q_j/B) \\
(g_1, g_2, g_3B) &\longmapsto (g_1g_2g_3B, (g_1, g_2B)).
\end{align*}
So we can identify $M'$ with the relative space of stable maps
\[ M' \cong \bar{M}_{0, 1, Q_i/B}^+(\tilde{X}_{s_is_js_i}, \alpha_i^\vee + \alpha_j^\vee),\]
where $\bar{M}_{0, 1, Q_i/B}^+(\tilde{X}_{s_is_js_i}, \alpha_i^\vee + \alpha_j^\vee)$ is the fibre product
\[
\begin{tikzcd}
\bar{M}_{0, 1, Q_i/B}^+(\tilde{X}_{s_is_js_i}, \alpha_i^\vee + \alpha_j^\vee) \ar[r] \ar[d] & Q_i/B \ar[d, "\sigma"] \\
\bar{M}_{0, 1, Q_i/B}(\tilde{X}_{s_is_js_i}, \alpha_i^\vee + \alpha_j^\vee) \ar[r] &  \tilde{X}_{s_is_js_i}.
\end{tikzcd}
\]
Here $\sigma$ is the section defined by $Q_i/B \cong m^{-1}(B/B) \to \tilde{X}_{s_is_js_i}$, for $m \colon \tilde{X}_{s_is_js_i} \to G/B$ the natural morphism given by multiplication. Note that $\bar{M}_{0, 1, Q_i/B}(\tilde{X}_{s_is_js_i}, \alpha_i^\vee + \alpha_j^\vee)$ is naturally identified with the universal domain curve over the space $\bar{M}_{0, Q_i/B}(\tilde{X}_{s_is_js_i}, \alpha_i^\vee + \alpha_j^\vee)$ of unpointed stable maps.

By Lemma \ref{lem:2dimschubert}, every fibre of $\tilde{X}_{s_is_js_j} \to Q_i/B$ is isomorphic to $\mb{F}_1 = X_{s_js_i} = Q_j \times^B Q_i/B$, and $\alpha_i^\vee + \alpha_j^\vee$ is the class $X_{s_i} + X_{s_j}$ of the $(-1)$-curve plus a fibre of $\mb{F}_1 \to \mb{P}^1$. Unpointed stable maps of class $\alpha_i^\vee + \alpha_j^\vee$ are the same things as closed subschemes with ideal sheaf $\mc{O}(-X_{s_i} - X_{s_j}) = m^*\mc{L}_{-\varpi_i}$. So we can identify $M_{0, Q_i/B}(\tilde{X}_{s_is_js_i}, \alpha_i^\vee + \alpha_j^\vee)$ with the Hilbert scheme $\mb{P}_{Q_i/B}(\pi_*m^*\mc{L}_{\varpi_i})$ and $M'$ with the closed subscheme
\[ M' = \mb{P}_{Q_i/B}(\ker \pi_*m^*\mc{L}_{\varpi_i} \to \sigma^*m^*\mc{L}_{\varpi_i})\]
of curves meeting $\sigma(Q_i/B)$, where $\pi \colon \tilde{X}_{s_is_js_i} \to Q_i/B$ is the natural projection.

It therefore remains to identify the vector bundle $\pi_*m^*\mc{L}_{\varpi_i}$ on $Q_i/B \cong \mb{P}^1$ and the morphism $\pi_*m^*\mc{L}_{\varpi_i} \to \sigma^*m^*\mc{L}_{\varpi_i} = \mc{O}$. It is clear from the identification $\tilde{X}_{s_is_js_i} = Q_i \times^B Q_j \times^B Q_i/B$ that $\pi_*m^*\mc{L}_{\varpi_i}$ is the $Q_i$-linearised vector bundle associated to the $B$-representation
\[ V = \mrm{Ind}_B^{Q_j} \mrm{Ind}_B^{Q_i} \mb{Z}_{\varpi_i}.\]
The representation $V$ has rank $3$, with weights $\varpi_i$, $\varpi_i - \alpha_i$ and $\varpi_i - \alpha_i - \alpha_j$, and restricting $V$ to a $B_{SL_2}$-representation via the root homomorphism $\rho_{\alpha_i}\colon SL_2 \to Q_i \subseteq G$, we have
\[ V = U \oplus \mb{Z}_{\langle \varpi_i - \alpha_i - \alpha_j, \alpha_i^\vee \rangle} = U \oplus \mb{Z}_{d - 1},\]
where $U$ is the standard representation of $SL_2$ and $\mb{Z}_{d - 1}$ is the rank $1$ $B_{SL_2}$-module of weight $d - 1$. So we get
\[ \pi_*m^*\mc{L}_{\varpi_i} = U \otimes \mc{O}_{\mb{P}^1} \oplus \mc{O}(d - 1).\]
Since $d > 0$, the kernel of
\[ \pi_*m^*\mc{L}_{\varpi_i} = \mc{O} \oplus \mc{O} \oplus \mc{O}(d - 1) \longrightarrow \mc{O} = \sigma^*m^*\mc{L}_{\varpi_i}\]
must be isomorphic to $\mc{O} \oplus \mc{O}(d - 1)$, which gives the desired isomorphism $M = M' \cong \mb{F}_{d - 1}$. 

Finally, to identify the loci of stable maps with given dual graphs in the statement of the proposition, notice that each closure is isomorphic to $\mb{P}^1$ (since there are unique curves of classes $\alpha_i^\vee$ and $\alpha_j^\vee$ through every point in $G/B$), and that the closure of curves with dual graph
\[
\begin{tikzpicture}[every label/.style=dlabel]
\draw (0, 0) node [dnode, label=below:{\alpha_i^\vee}] {} -- (1, 0) node [dnode, label=below:{\alpha_j^\vee}] {} -- (1.5, 0);
\end{tikzpicture} 
\]
is contracted under the map to $M_{0,1}^+(G/Q_i, \alpha_j^\vee)$, and is hence a fibre of $\mb{F}_{d - 1} \to \mb{P}^1$ as claimed. For the other statement, note that the map
\[ \pi_*m^*\mc{L}_{\varpi_i} \longrightarrow \pi'_*(\sigma')^*m^*\mc{L}_{\varpi_i} \]
is just the quotient map $U \otimes \mc{O}_{\mb{P}^1} \oplus \mc{O}(d - 1) \to U \otimes \mc{O}_{\mb{P}^1}$, where $\sigma'$ is the morphism $Q_i \times^B Q_i/B = Q_i \times^B B \times^B Q_i/B \to \tilde{X}_{s_is_js_i}$ and $\pi' \colon Q_i \times^B Q_i/B \to Q_i/B$ is the natural projection onto the first factor. So the subscheme
\[ \mb{P}_{Q_i/B}(\ker \pi_* m^* \mc{L}_{\varpi_i} \to \pi'_*(\sigma')^*m^*\mc{L}_{\varpi_i}) \subseteq \mb{P}_{Q_i/B}(\ker \pi_* m^* \mc{L}_{\varpi_i} \to \sigma^*m^*\mc{L}_{\varpi_i}) = M'\]
is the canonical section of $\mb{F}_{d - 1}$ of degree $1 - d$. But this parametrises curves of class $\alpha_i^\vee + \alpha_j^\vee$ containing some curve of class $\alpha_i^\vee$, so this must be the closure of the locus of curves with dual graph
\[
\begin{tikzpicture}[every label/.style=dlabel]
\draw (0, 0) node [dnode, label=below:{\alpha_j^\vee}] {} -- (1, 0) node [dnode, label=below:{\alpha_i^\vee}] {} -- (1.5, 0);
\end{tikzpicture} 
\]
as claimed.
\end{proof}

\subsection{The divisor $D_{\alpha_i^\vee}(Z)$} \label{subsection:dalphai}

In this subsection, we complete the proof of Theorem \ref{thm:introsubregularresolutions} by proving Proposition \ref{prop:subregularresolutions4} below.

For the statement, we let
\[ N = \begin{cases} n_1 + 1, & \text{in type}\;\; A, \\ n_1 - 1, & \text{in type}\;\; F, \\ n_1, & \text{otherwise}.\end{cases}\]
We let $\theta_N' \colon Y \to Y \times_S \mrm{Pic}^0_S(E)$ be the section $\theta_N'(y) = (y, 0)$, and for $1 \leq k < N$, we let $\theta_k' \colon Y \to Y \times_S \mrm{Pic}^0_S(E)$ be the section given in type $A$ by
\[ \theta_k'(y) = \begin{cases} (y, -\varpi_{i}(y) + \varpi_{i + 1}(y) + \varpi_l(y)), & \text{if}\;\; k = 1, \\ (y, -\varpi_{i}(y) + \varpi_{i + 1}(y) + \varpi_{l - k + 1}(y) - \varpi_{l - k + 2}(y)), & \text{if}\;\; 1 < k \leq l - i + 1 = N - 1, \end{cases}\]
and in types $B$, $D$ and $E$ by
\[ \theta_k'(y) = \begin{cases} (y, \alpha_{l - 1}(y)), & \text{in type}\; B, \\ (y, \alpha_{l - 2}(y) + \cdots + \alpha_{l - k}(y)), & \text{in type}\; D, \\ (y, \alpha_k(y) + \alpha_{k + 1}(y) + \cdots + \alpha_3(y)), & \text{in type}\; E.\end{cases}\]
Note that $N = 1$ in types $C$, $F$ and $G$.

\begin{prop} \label{prop:subregularresolutions4}
Assume we are in the setup of Proposition \ref{prop:subregularresolutions1}, and moreover assume for simplicity of notation that $i = l - 3$ if $(G, P, \mu)$ is of type $D$, and that $i = 5$ if $(G, P, \mu)$ is of type $E$. Then there is a sequence of $N$ morphisms
\[ D_{\alpha_i^\vee}(Z) = D_{N + 1}' \longrightarrow D_N' \longrightarrow \cdots \longrightarrow D_1'\]
over $Y \times_S Z$ such that $D_1'$ is a family of smooth surfaces over $Y$ containing $Y \times_S \mrm{Pic}^0_S(E)$ as a closed substack, and $D_{k + 1}' \to D_k'$ is the blowup along the section $\theta_k' \colon Y \to Y \times_S \mrm{Pic}^0_S(E) \subseteq D_k'$ of the proper transform of $Y \times_S \mrm{Pic}^0_S(E) \subseteq D_1'$. Moreover, we have the following descriptions of $D_1'$ in each type.
\begin{enumerate}[(1)]
\item \label{itm:subregularresolutions4:1} In type $A$, $D_1' \to Y \times_S Z_0 = Y \times_S \mrm{Pic}^0(E)$ is a line bundle.
\item \label{itm:subregularresolutions4:2} In type $B$, the morphism $D_1' \to Y \times_S Z_0$ is a $\mb{P}^1$-bundle such that the fibre of $D_1' \to Y$ over a point $y \in Y$ is isomorphic to the stacky Hirzebruch surface
\[ (D_1')_y \cong \begin{cases} \mb{P}_{\mb{P}(1, 2)}(\mc{O} \oplus \mc{O}(1)), & \text{if}\;\; \varpi_l(y) \neq 0, \\ \mb{P}_{\mb{P}(1, 2)}(\mc{O} \oplus \mc{O}(3)), & \text{if}\;\; \varpi_l(y) = 0. \end{cases}\]
\item \label{itm:subregularresolutions4:3} In types $C$ and $D$, the morphism $D_1' \to Y \times_S Z_0$ is a $\mb{P}^1$-bundle such that the fibre of $D_1' \to Y$ over a point $y \in Y$ is isomorphic to the Hirzebruch surface
\[(D_1')_y \cong \begin{cases} \mb{F}_0, & \text{if}\;\; \varpi_l(y) \neq 0, \\ \mb{F}_2, & \text{if}\;\; \varpi_l(y) = 0. \end{cases}\]
\item \label{itm:subregularresolutions4:4} In types $E$ and $G$, the morphism $D_1' \to Y \times_S Z_0 = Y$ is a $\mb{P}^2$-bundle.
\item \label{itm:subregularresolutions4:5} In type $F$, the morphism $D_1' \to Y \times_S Z_0 = Y$ factors as a sequence of two $\mb{P}^1$-bundles $D_1' \to D_1'' \to Y$, and the fibre over a point $y \in Y$ is isomorphic to the Hirzebruch surface
\[ (D_1')_y \cong \begin{cases} \mb{F}_0, & \text{if}\;\; \alpha_1(y) \neq 0, \\ \mb{F}_2, & \text{if}\;\; \alpha_1(y) = 0. \end{cases}\]
\end{enumerate}
\end{prop}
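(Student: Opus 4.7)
The plan is to mirror the strategy of Proposition~\ref{prop:subregularresolutions2} almost verbatim, but with the roles of $c_0$ and $c_1$ interchanged and adapted to the more intricate structure of the Levi subgroup on the $c_1$-side as described in \S\ref{subsection:levistructure}. Concretely, I would introduce a descending sequence of standard parabolics $B = P_{N+1}' \subsetneq P_N' \subsetneq \cdots \subsetneq P_1'$, where $P_1'$ is the largest parabolic whose unipotent radical lies in the unipotent radical of the standard parabolic $Q$ of type $\{\alpha_j\}$, and the intermediate $P_k'$ are obtained by successively adjoining the simple roots along $c_1$ in the order prescribed by the labellings in Lemmas~\ref{lem:typecdefleviiso}, \ref{lem:typebleviiso} and \ref{lem:typegleviiso}. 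I then set
\[ D_k' = Y \times_{Y_{P_k'}} (\mrm{KM}_{P_k', G, rig}^{-\alpha_i^\vee} \times_{\bun_{G, rig}} Z) \]
(with an additional factor of $E$ recording the attachment point of the degenerate tail at the final step), and argue via Lemma~\ref{lem:subregularemptydivisors} and Proposition~\ref{prop:levidegreebound} exactly as in Lemma~\ref{lem:subregularborelcells} that for $z \in Z_0$ every section of $\xi_{G,z}/P_1'$ of degree $-\alpha_i^\vee$ factors through $\xi_{P, z} \times^P PP_1'/P_1'$.

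After this reduction, the analysis splits into Bruhat cells as in \S\ref{subsection:dalphaj}: in types $A$, $C$, $D$, $E$, $G$ a surjection $P_1' \to H$ with $H$ a copy of $GL_{N+1}$ (analogous to Proposition~\ref{prop:subregp1hom}, built from Proposition~\ref{prop:typealevi} or Lemma~\ref{lem:typegleviiso}) pulls back the decomposition of Proposition~\ref{prop:glndecomposition}, and the successive blowup description $D_{k+1}' \to D_k'$ follows by applying Lemma~\ref{lem:blowuprecognition} to the cell comparisons of Proposition~\ref{prop:glnbruhatcomparison}. The final morphism $D_{\alpha_i^\vee}(Z) = D_{N+1}' \to D_N'$ is handled as at the end of the proof of Proposition~\ref{prop:subregularresolutions2}: it is an isomorphism away from the locus where the $\alpha_i^\vee$-component sits at the attachment point of a degenerating $\alpha_j^\vee$-tail, and a $\mb{P}^1$-fibration over this locus. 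In types $B$ and $F$ one composes $P_1' \to L \to GSp_4$ with the description of Lemma~\ref{lem:gsp4bundles}, which both accounts for the intermediate factorisation $D_1' \to D_1'' \to Y$ in type $F$ and sets up the stacky structure in type $B$.

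Identifying the base $D_1'$ in each type is the crux and is done case by case using the explicit Levi descriptions of \S\ref{subsection:levistructure} together with Atiyah's classification. In type $A$ it is a line bundle, via the same $Z(L)_{rig}$-contracting argument used for $D_1$ in Proposition~\ref{prop:subregularresolutions2}. In types $C$, $D$, $E$ and the inner step in $F$, the fibre of $D_1'$ over $z \in Z_0$ is the projectivisation of a pushforward $\pi_*(W^{uni} \otimes \mc{O}(dO_E))$ of a universal rank-two or rank-three bundle on $E$ coming from Lemma~\ref{lem:typecdefleviiso}; the dichotomy $\varpi_l(y) = 0$ versus $\varpi_l(y) \neq 0$ reflects whether the corresponding bundle on $E$ is $\mc{O} \oplus \mc{O}$ (giving $\mb{F}_0$) or the unique non-split extension $\mc{O} \to F_2 \to \mc{O}$ (giving $\mb{F}_2$). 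In types $E$ and $G$ a dimension count using Proposition~\ref{prop:typealevi} and Lemma~\ref{lem:typegleviiso} shows the pushforward is a rank-three bundle with no jumping loci, hence a $\mb{P}^2$-bundle. In type $B$ one uses Lemma~\ref{lem:gsp4bundles} to rewrite the relevant $GSp_4$-flag as a projectivisation of a universal conformally symplectic bundle; because $Z_0 \to S$ is itself a $\mb{P}(1,2)$-bundle, the $\mu_2$-weights propagate through the pushforward and force the resulting $\mb{P}^1$-bundle to descend to a stacky Hirzebruch surface $\mb{P}_{\mb{P}(1,2)}(\mc{O} \oplus \mc{O}(m))$, with $m \in \{1, 3\}$ again determined by the vanishing of $\varpi_l(y)$.

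The main obstacle will be the type $B$ case, where one must carefully track the $\mu_2$-equivariance coming from the $\mb{P}(1,2)$-base of $Z_0$ through the conformally symplectic pushforward and verify the degree jumps $1$ versus $3$ as stated. Elsewhere the proof is essentially a recasting of \S\ref{subsection:dalphaj} combined with the book-keeping of Atiyah-type decompositions for the explicit Levis of \S\ref{subsection:levistructure}, so I expect no serious obstruction beyond the computations themselves.
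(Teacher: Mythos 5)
Your overall blueprint — introduce a chain of parabolics on the $c_1$-side, reduce to Bruhat cells for $GL$, apply Lemma~\ref{lem:blowuprecognition} and Proposition~\ref{prop:glnbruhatcomparison}, and identify $D_1'$ by projectivising pushforwards of bundles coming from the explicit Levi descriptions — matches the paper's strategy in broad strokes. However, several details are off in ways that would derail the proof as proposed. The most serious is the claim that type $F$ factors through $GSp_4$: this is wrong. The group $GSp_4$ appears only in type $B$, because that is the unique case where the Levi $L$ is not of type $A$ (Lemma~\ref{lem:typebleviiso}). In type $F$ the Levi is of type $A$ and the two-step factorisation $D_1' \to D_1'' \to Y$ comes from an intermediate parabolic $P_1'' \subseteq L$ of type $\{\alpha_1\}$ and the $GL_{n_1+1}$-description of Lemma~\ref{lem:typecdefleviiso}, with the two projectivisations computed separately (Lemma~\ref{lem:typefbasesurfaceprojectivisation}); a $GSp_4$-based argument would not produce the correct $\mb{F}_0/\mb{F}_2$ dichotomy governed by $\alpha_1(y)$.

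Two further issues: the key homomorphism $\pi_{P_1'}$ in types $B, C, D, E, G$ lands in $GL_{n_1}$, obtained by deleting a row and column from the target of $\rho_L$, not in $GL_{N+1}$ — this off-by-one matters since it is precisely why the blowup chain has $N = n_1$ steps rather than $n_1 + 1$. Your characterisation of $P_1'$ (``largest parabolic with unipotent radical inside $R_u(Q)$'') unpacks to $P_1' \supseteq Q$, i.e.\ $t(P_1') \subseteq \{\alpha_j\}$, whereas in the paper $P_1'$ always has $\alpha_i$ in its type (e.g.\ $t(P_1') = \{\alpha_{l-2}, \alpha_l\}$ in type $B$). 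The correct choice is the inverse image $\pi_{P_1'}^{-1}(R_{n_1}) \supseteq P$, not a parabolic containing $Q$. Finally, for type $A$ the paper does not rerun the machinery: it simply observes that $\alpha_i$ and $\alpha_{i+1}$ play symmetric roles, applies Proposition~\ref{prop:subregularresolutions2} with the Dynkin labelling reversed, and composes with the inversion on $\mrm{Pic}^0_S(E)$ to align the blowup loci — a significantly cleaner route than what you sketch.
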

\begin{proof}
First note that in type $A$, the roots $\alpha_i$ and $\alpha_j = \alpha_{i + 1}$ play completely symmetric roles. So applying Proposition \ref{prop:subregularresolutions2} with the vertices of the Dynkin diagram $A_l$ labelled in reverse order gives contractions
\[ D_{\alpha_i^\vee}(Z) = D_{l - i + 2}' \longrightarrow D_{l - i + 1}' \longrightarrow \cdots \longrightarrow D_1'\]
with the desired properties, where to get the correct blowup loci we have composed the identification of $D_1'$ with a line bundle over $Y \times_S \mrm{Pic}^0_S(E)$ given by Proposition \ref{prop:subregularresolutions2} with the isomorphism
\begin{align*}
Y \times_S \mrm{Pic}^0_S(E) &\overset{\sim}\longrightarrow Y \times_S \mrm{Pic}^0_S(E) \\
(y, x) &\longmapsto (y, -x).
\end{align*}

If $G$ is not of type $A$, then we define
\begin{equation} \label{eq:subregularresolutions4:1}
D_{\alpha_i^\vee}(Z) \longrightarrow D_N' := \mrm{KM}_{B, G, rig}^{-\alpha_i^\vee} \times_{\bun_{G, rig}} Z \times_S E
\end{equation}
to be the map given by deleting the unique degree $\alpha_i^\vee$ rational component of a stable section and recording its image in $E$. 

Let
\[ (D_N')_0 = \bun_{B, rig}^{-\alpha_i^\vee} \times_{\bun_{G, rig}} Z \times_S E \subseteq D_N'\]
and let $(D_N')_1 = D_N' \setminus (D_N')_0$. Then $(D_N')_1$ is a smooth divisor in $D_N'$ isomorphic to
\[ (D_N')_1 \cong \bun_{B, rig}^{-\alpha_i^\vee - \alpha_j^\vee} \times_{\bun_{G, rig}} Z \times_S E \times_S E,\]
where the first (resp., second) factor of $E$ above keeps track of the point of attachment of an $\alpha_j^\vee$ curve. There is a morphism
\begin{equation} \label{eq:subregularresolutions4:2}
(D_N')_1 \longrightarrow Y \times_S \mrm{Pic}^0_S(E)
\end{equation}
given on the first factor by the morphism $(D_N')_1 \to D_N' \to Y$ and on the second by the morphism
\begin{align*}
(D_N')_1 \longrightarrow E \times_S E &\longrightarrow \mrm{Pic}^0_S(E) \\
(x_j, x_i) &\longmapsto x_j - x_i.
\end{align*}
Using the fact that $D_{\alpha_i^\vee}(Z)$ is naturally identified with the pullback of the universal domain curve over $\mrm{KM}_{B, G, rig}^{-\alpha_i^\vee} \times_{\bun_{G, rig}} Z$, one can deduce from \cite[Proposition 2.1.7]{davis19} that \eqref{eq:subregularresolutions4:1} is the blow up at the preimage of the section $\theta_N \colon Y \to Y \times_S \mrm{Pic}^0_S(E)$ under \eqref{eq:subregularresolutions4:2}. It follows that the strict transform $D_{\alpha_i^\vee}(Z) \cap D_{\alpha_j^\vee}(Z)$ of $(D_N')_1$ maps isomorphically to it. By construction, the composition
\[ D_{\alpha_i^\vee}(Z) \cap D_{\alpha_j^\vee}(Z) \overset{\sim}\longrightarrow (D_N')_1 \xrightarrow{\eqref{eq:subregularresolutions4:2}} Y \times_S \mrm{Pic}^0_S(E) \]
agrees with the composition
\[ D_{\alpha_i^\vee}(Z) \cap D_{\alpha_j^\vee}(Z) \overset{\sim}\longrightarrow C_{1, n_0 + 1} \xrightarrow{\eqref{eq:bruhatcellcurvecomparison1}} Y \times_S \mrm{Pic}^0_S(E),\]
and is therefore an isomorphism by Lemma \ref{lem:bruhatcellcurvecomparison}.

The next step is to construct the spaces $D_k'$ for $1 \leq k < N$. This is vacuous for types $C$, $F$ and $G$ (since $N = 1$ in these cases). In the remaining types, we define standard parabolics $P_k' \subseteq G$ for $1 \leq k < N$ and set
\[ D_k' = Y \times_{Y_{P_k'}}(\mrm{KM}_{P_k', G, rig} \times_{\bun_{G, rig}} Z \times_S E).\]
In type $B$, $N = 2$, and we let $P_1'$ be the standard parabolic with type $t(P_1') = \{\alpha_i, \alpha_l\} = \{\alpha_{l - 2}, \alpha_l\}$. In type $D$, $N = 3$, and we let $t(P_1') = \{\alpha_i, \alpha_l \} = \{\alpha_{l - 3}, \alpha_l\}$ and $t(P_2') = \{\alpha_{l - 3}, \alpha_{l - 1}, \alpha_l\}$. Finally, in type $E$, $N = 4$, and we let $t(P_1') = \{\alpha_i, \alpha_4\} = \{\alpha_4, \alpha_5\}$, $t(P_2') = \{\alpha_1, \alpha_4, \alpha_5\}$ and $t(P_3') = \{\alpha_1, \alpha_2, \alpha_4, \alpha_5\}$. Note that in each case, we have a sequence of morphisms
\[ D_N' \longrightarrow D_{N - 1}' \longrightarrow \cdots \longrightarrow D_1'\]
coming from the inclusions of the parabolics.

We prove below in Proposition \ref{prop:basesurfaces} that the spaces $D_1'$ are as described in the statement of the proposition. This completes the proof of the proposition in types $C$, $F$ and $G$. In types $B$, $D$ and $E$, we still need to show that $D_{k + 1}' \to D_k'$ is the blowup at the desired section for $1 \leq k < N$. As in the proof of Proposition \ref{prop:subregularresolutions2}, the proof relies on a decomposition into locally closed substacks coming from the Bruhat cells of \S\ref{subsection:glnbruhat}.

We define representations $\pi_{P_1'} \colon P_1' \to GL_{n_1}$ of $P_1'$ as follows. In type $B$, we let $\pi_{P_1'}$ be given by
\[ P_1' \longrightarrow P_1'/R_u(P) = L \cap P_1' \overset{\rho_L}\longrightarrow GSp_4 \cap R_4,\]
composed with the homomorphism
\begin{align*}
GSp_4 \cap R_4 &\longrightarrow GL_2 \\
\left(\begin{array}{c|cc|c} \lambda^{-1}\det A & 0 & 0 & 0 \\ \hline 0 & & & 0 \\ 0 & \multicolumn{2}{c|}{\smash{\raisebox{.5\normalbaselineskip}{$A$}}} & 0 \\ \hline \\[-\normalbaselineskip] 0 & 0 & 0 & \lambda \end{array}\right) &\longmapsto A,
\end{align*}
where $\rho_L$ is the representation defined in \S\ref{subsection:sliceexistence}. In types $D$ and $E$, we let $\pi_{P_1'} \colon P_1' \to GL_{n_1}$ be the composition
\[ \pi_{P_1'} \colon P_1' \longrightarrow L \cap P_1' \overset{\rho_L}\longrightarrow R_{n_1 + 1} \longrightarrow GL_{n_1},\]
where the last homomorphism is given by deleting the last row and column, and $\rho_L$ is the composition of the isomorphism of Lemma \ref{lem:typecdefleviiso} with the projection to the second factor.

In each of types $B$, $D$ and $E$, we have $P_k' = (\pi_{P_1'})^{-1}(Q_k^{n_1})$ for $1 \leq k \leq N$, where we set $P_N' = P \cap P_1$ in the notation of \S\ref{subsection:dalphaj}. Note that the morphism
\begin{equation} \label{eq:subregularresolutions4:3}
 D_N' \longrightarrow Y \times_{Y_{P_N'}}(\mrm{KM}_{P_N', G, rig}^{-\alpha_i^\vee} \times_{\bun_{G, rig}} Z \times_S E)
\end{equation}
is an isomorphism by \cite[Lemma 4.3.7]{davis19}, since there is an isomorphism $P_N'/B \cong GL_{n_0}/Q^{n_0}_{n_0}$ identifying sections of degree $-\alpha_i^\vee$ with sections of degree $-e_{n_0}^*$. So we have a sequence of pullback squares
\begin{equation} \label{eq:subregularresolutions4:4}
\begin{tikzcd}
D_{k + 1}' \ar[r] \ar[d] & Y_{Q^{n_1}_{n_1}}^{-e_{n_1}^*} \times_{Y_{Q_{k + 1}^{n_1}}^{-e_{n_1}^*}} \mrm{KM}_{Q^{n_1}_{k + 1}, GL_{n_1}, rig}^{-e_{n_1}^*} \ar[d] \\
D_{k}' \ar[r] & Y_{Q^{n_1}_{n_1}}^{-e_{n_1}^*} \times_{Y_{Q_{k}^{n_1}}^{-e_{n_1}^*}} \mrm{KM}_{Q^{n_1}_{k}, GL_{n_1}, rig}^{-e_{n_1}^*},
\end{tikzcd}
\end{equation}
where the subscript $(-)_{rig}$ denotes the rigidification with respect to the image of $Z(G)$ in $Z(GL_{n_1})$.

By Lemma \ref{lem:subregsections}, there is a stable section of $\xi_{G, z} \times^G G/P$ of degree $-\alpha_i^\vee$ if and only if $z \in Z_0 \subseteq Z$, and for such $z$, the unique such section is the canonical (Harder-Narasimhan) one of $\xi_{G, z} \times^G G/P = \xi_{L, z} \times^L G/P$. Since $P_k' \subseteq P$, one can use this fact, the definition of the slice $Z_0$ and elementary slope arguments (see e.g., \cite[Lemma 6.6.11]{davis19a}) to show in each case that any unstable $GL_{n_1}$-bundle in the image of $D_1' \to \bun_{GL_{n_1}, rig}^{-1}$ has Harder-Narasimhan reduction to $R_{n_1}$ of degree $-e_1^*$. By Proposition \ref{prop:glndecomposition}, we therefore have a decomposition
\[ D_k' = (D_k' \times_{\bun_{GL_{n_1}, rig}^{-1}} \bun_{GL_{n_1}, rig}^{ss, -1}) \cup \bigcup_{1 \leq p < k} C_{k, p}' \cup C_{k, n_1}' \]
into disjoint locally closed substacks for $1 \leq k \leq n_1 = N$, where $C_{k, p}' \subseteq D_k'$ is the preimage of $C_{k, p, rig}^{GL_{n_1}} \subseteq X^{n_1}_{k, rig}$ in $D_k'$. We remark that $C_{N, n_1}' = (D'_N)_1 \cong Y \times_S \mrm{Pic}^0_S(E)$.

Using Proposition \ref{prop:glnbruhatcomparison}, Lemma \ref{lem:blowuprecognition}, \cite[Lemma 4.3.7]{davis19} and the pullback squares \eqref{eq:subregularresolutions4:4}, one can now check that $D_{k + 1}' \to D_k'$ is the blowup along the desired section $\theta_k'$ of $C_{k, n_1}' \cong C_{N, n_1}' = Y \times_S \mrm{Pic}^0_S(E)$ exactly as in the proof of Proposition \ref{prop:subregularresolutions2}.
\end{proof}

In the rest of this subsection, we will establish the propositions and lemmas quoted in the proof of Proposition \ref{prop:subregularresolutions4}. We will assume from now on that $(G, P, \mu)$ is not of type $A$.

\begin{lem} \label{lem:subregsections}
Assume $z \in Z$ is such that there exists a section of $\xi_{G, z} \times^G G/P$ of degree $\leq -\alpha_i^\vee$. Then $z \in Z_0 \subseteq Z$, and the only such section is the canonical (Harder-Narasimhan) one of $\xi_{G, z} \times^G G/P = \xi_{P, z} \times^P G/P$.
\end{lem}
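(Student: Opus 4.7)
The plan is to split into the two cases $z \in Z_0$ and $z \in Z \setminus Z_0$.

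For $z \in Z_0$, the $G$-bundle $\xi_{G,z} = \xi_L \times^L G$ inherits a canonical $P$-reduction $\xi_{P,z}$ from the inclusion $L \hookrightarrow P$, whose degree corresponds to the subregular slope $\mu$ (so the associated section of $\xi_{G,z}/P$ has degree matching $-\alpha_i^\vee$). Since $\xi_L$ is regular semistable of slope $\mu$ by construction of $Z_0$, this $\xi_{P,z}$ is in fact the Harder-Narasimhan reduction of $\xi_{G,z}$, and the uniqueness of the HN filtration yields that every other $P$-reduction of $\xi_{G,z}$ must have strictly less destabilising slope, i.e., degree strictly greater than $-\alpha_i^\vee$. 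This settles the lemma for $z \in Z_0$.

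For $z \in Z \setminus Z_0$, I would argue by contradiction: suppose $\sigma$ is a section of $\xi_{G,z}/P$ of degree $d \leq -\alpha_i^\vee$, and let $\mu_d \in \mb{X}_*(Z(L)^\circ)_\mb{Q}$ denote the corresponding slope, so $\langle \varpi_i, \mu_d\rangle \leq -1$. Refining the induced $P$-reduction by the Harder-Narasimhan filtration of its associated Levi bundle inside $L$ produces a Harder-Narasimhan reduction of $\xi_{G,z}$ to some standard parabolic $Q \subseteq P$ of slope $\nu$ with $\langle \varpi_i, \nu\rangle \leq -1$. If the Harder-Narasimhan class $(G, Q, \nu)$ coincides with the subregular class $(G, P, \mu)$ and the associated Levi bundle is regular, then $\xi_{G,z}$ is subregular unstable, and Theorem~\ref{thm:introsubregularsliceexistence}\eqref{itm:introsubregularsliceexistence2} forces $z \in Z_0$, a contradiction. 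Otherwise, Lemma~\ref{lem:inducedsectionbound1}\eqref{itm:inducedsectionbound2} applies to yield $-\langle 2\rho, \nu\rangle \leq l + 2$, which together with $\langle \varpi_i, \nu\rangle \leq -1$, the condition that $\nu$ be a Harder-Narasimhan vector for $Q$, and the inclusion $Q \subseteq P$ can be shown inconsistent by a type-by-type root-theoretic check using Theorem~\ref{thm:subregularclassification} and the explicit Levi descriptions of \S\ref{subsection:levistructure}.

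The main obstacle will be this final type-by-type verification: in each subregular type, one must enumerate the standard parabolics $Q \subseteq P$ and the Harder-Narasimhan vectors $\nu$ for $Q$ satisfying both $\langle \varpi_i, \nu\rangle \leq -1$ and the codimension bound $-\langle 2\rho, \nu\rangle \leq l+2$, and then rule out each such candidate other than the subregular class itself (which has already been handled). This is a finite but delicate calculation in the spirit of the concluding argument of Lemma~\ref{lem:subregularemptydivisors}, where the classification data of Theorem~\ref{thm:subregularclassification} and the explicit embeddings $L \hookrightarrow G$ from \S\ref{subsection:levistructure} are used in tandem to extract numerical constraints on $\nu$.
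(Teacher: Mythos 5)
Your proof has two significant gaps, one in each case.

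For $z \in Z_0$, you claim that uniqueness of the Harder--Narasimhan filtration implies every non-canonical $P$-reduction of $\xi_{G,z}$ has degree strictly greater than $-\alpha_i^\vee$. This is not true: the canonical reduction is characterized by having a semistable Levi factor, not by minimizing the degree among $P$-reductions. A generic unstable bundle has $P$-reductions of all degrees below the canonical one, with unstable Levi bundles (e.g.\ an unstable rank-2 bundle on $E$ has line subbundles of every degree up to the degree of the destabilising one). So the uniqueness asserted in the lemma is a genuine fact about the specific bundles parametrised by $Z_0$, not a formal consequence of HN theory. The paper proves it by lifting to a $B$-reduction and then invoking the Bruhat-cell machinery of \S\ref{subsection:bruhat}: Proposition \ref{prop:levidegreebound} together with Lemma \ref{lem:typenotacells} show that the only nonempty cell $C^{w,\lambda}(Z_0)$ with $\lambda \leq -\alpha_i^\vee$ has $w=1$, forcing the section to factor through $\xi_{P,z}\times^P P/B$.

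For $z \notin Z_0$, you assert that refining the $P$-reduction by the HN filtration of its Levi bundle produces the Harder--Narasimhan reduction of $\xi_{G,z}$. This is also false in general: iterating the HN reduction step does not yield the canonical reduction of the ambient bundle (the resulting slope need not be a HN vector for the smaller parabolic). Even setting that aside, you would still need to close the case where the resulting class coincides with $(P,\mu)$ but the Levi bundle fails to be regular; then $\xi_{G,z}$ need not be subregular unstable and Theorem \ref{thm:introsubregularsliceexistence}\eqref{itm:introsubregularsliceexistence2} does not apply. The paper sidesteps all of this: it observes that $z\notin Z_0$ forces $\dim\mathrm{Aut}(\xi_{G,z}) \leq l+3$, so $\xi_{G,z}$ is either semistable or regular unstable (Theorem \ref{thm:subregularclassification}), and then a result from [davis19, Lemma 4.3.4] excludes stable maps of degree $-\alpha_i^\vee$ over such bundles because $\alpha_i$ is not a special root. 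This is both shorter and avoids the case analysis you propose.
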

\begin{proof}
First note that $\mrm{KM}_{B, G}^{-\alpha_i^\vee} \to \mrm{KM}_{P, G}^{-\alpha_i^\vee}$ is surjective by \cite[Proposition 3.6.4]{davis19a}. Since $z \notin Z_0$ implies that $\xi_{G, z}$ is either semistable or regular unstable, we must therefore have $z \in Z_0$ by \cite[Lemma 4.3.4]{davis19} as $\alpha_i$ is not a special root. Given a section $\sigma$ of $\xi_{G, z} \times^G G/P$ of degree $\leq -\alpha_i^\vee$, any lift to a section of $\xi_{G, z} \times^G G/B$ of degree $\leq -\alpha_i^\vee$ must factor through $\xi_{P, z} \times^P P/B$ by Lemma \ref{lem:typenotacells} and Proposition \ref{prop:levidegreebound}, so $\sigma$ must be the canonical section as claimed.
\end{proof}

\begin{lem} \label{lem:typenotacells}
Assume $w \in W^0_{P, B}$, $\lambda \leq -\alpha_i^\vee$ and $C^{w, \lambda}(Z_0) \neq \emptyset$. Then $w = 1$ and $\lambda \in \{-\alpha_i^\vee, -\alpha_i^\vee - \alpha_j^\vee\}$.
\end{lem}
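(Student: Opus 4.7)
The plan is to adapt the proof of Lemma \ref{lem:subregularborelcells} closely, strengthening the conclusion from $w \in W_{L_1}$ (with a specific $\lambda$) to $w = 1$. First, attaching a single rational tail of degree $-\lambda$ to the section of $\xi_{G, z}/B$ of degree $\lambda$ produced by the Bruhat cell yields a stable map in $D_{-\lambda}(Z)$; combining Lemma \ref{lem:subregularemptydivisors} with $\lambda \leq -\alpha_i^\vee$ then forces $\lambda \in \{-\alpha_i^\vee, -\alpha_i^\vee - \alpha_j^\vee\}$. Exactly as in Lemma \ref{lem:subregularborelcells}, the cell also produces a section $\sigma_L$ of $\xi_{L, z}/(L\cap B)$ of degree $w\lambda$; since $\varpi_i$ factors through $L/[L, L]$, we have $\langle \varpi_i, w\lambda\rangle = \langle \varpi_i, \mu\rangle = -1$, and pushing $\sigma_L$ forward together with the same rational-tail argument gives $w\lambda \in \{-\alpha_i^\vee, -\alpha_i^\vee - \alpha_j^\vee\}$ (with the possibility $w\lambda = -\alpha_j^\vee$ excluded by the $\varpi_i$-pairing).

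To force $w = 1$, I split on the value of $w\lambda$. If $w\lambda = -\alpha_i^\vee$, then $w^{-1}\alpha_i^\vee = -\lambda \in \Phi^\vee_+$, and the same positivity argument used in Lemma \ref{lem:subregularborelcells} (namely, if $w^{-1}$ sends every simple root to a positive root then $w = 1$) gives $w = 1$, and hence $\lambda = -\alpha_i^\vee$. If instead $w\lambda = -\alpha_i^\vee - \alpha_j^\vee$, then $w^{-1}(\alpha_i^\vee + \alpha_j^\vee) \in \Phi^\vee_+$, so Lemma \ref{lem:weylgroupelements} applied with $\Gamma = \Delta$, $\beta_j = \alpha_j$, $c = c_0$, $\beta_{c, n} = \alpha_i$ (valid since $c_0$ is of type $A_{n_0}$ with $\alpha_i$ at the end adjacent to $\alpha_j$) identifies $w$ as an element of $\Sigma = \{1\} \cup \{w_k := s_i s_{c_0, n_0 - 1}\cdots s_{c_0, k} : 1 \leq k \leq n_0\}$.

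The core computation is then straightforward. Using that $s_{c_0, m}$ fixes $\alpha_j^\vee$ for $m < n_0$ (since $\alpha_{c_0, m}$ is not adjacent to $\alpha_j$ in the Dynkin diagram) together with the uniform identity $\langle \alpha_i, \alpha_j^\vee\rangle = -1$, valid in every non-type-$A$ subregular class as one verifies case by case from the Cartan entries behind the structure results of \S\ref{subsection:levistructure}, one finds $w_k\alpha_j^\vee = s_i\alpha_j^\vee = \alpha_i^\vee + \alpha_j^\vee$ for every $k$; and the type-$A_{n_0}$ action on $\alpha_i^\vee = \alpha_{c_0, n_0}^\vee$ gives $w_k\alpha_i^\vee = \alpha_{c_0, n_0 - 1}^\vee$ for $k < n_0$ and $w_{n_0}\alpha_i^\vee = -\alpha_i^\vee$. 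Hence for $w = w_k$ we have $w_k(\alpha_i^\vee + \alpha_j^\vee) \in \{\alpha_{c_0, n_0-1}^\vee + \alpha_i^\vee + \alpha_j^\vee,\, \alpha_j^\vee\}$, neither of which equals $\alpha_i^\vee + \alpha_j^\vee$ by $\mb{Z}$-linear independence of the simple coroots in $\mb{X}_*(T)$; similarly no nontrivial $w_k$ sends $\alpha_i^\vee$ to $\alpha_i^\vee + \alpha_j^\vee$. This rules out $w \neq 1$ in both subcases and completes the proof. The main obstacle is simply organising this bookkeeping for the $\Sigma$-calculation; there is no new conceptual difficulty beyond what already appears in the proof of Lemma \ref{lem:subregularborelcells}.
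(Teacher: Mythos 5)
Your proof is correct and takes essentially the same route as the paper's: both reduce to the dichotomy in the proof of Lemma \ref{lem:subregularborelcells} and then rule out the elements of $\Sigma$ by a short coroot computation. The only cosmetic difference is that the paper computes $w^{-1}(\alpha_i^\vee+\alpha_j^\vee) = \alpha_j^\vee$ for $w\in\Sigma\setminus\{1\}$ in one line (from which $\lambda = -\alpha_j^\vee$, contradicting $\lambda\leq -\alpha_i^\vee$), whereas you compute $w_k\alpha_i^\vee$ and $w_k\alpha_j^\vee$ separately and compare against both possible values of $w\lambda$; your route carries a little more bookkeeping, and your remark that $\langle\alpha_i,\alpha_j^\vee\rangle=-1$ needs a case check is unnecessary (it holds simply because $\alpha_j$ is long and $\alpha_i$ is adjacent to it), but the logic is sound.
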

\begin{proof}
From the proof of Lemma \ref{lem:subregularborelcells}, we have either $w\lambda = -\alpha_i^\vee$ and $w = 1$, or $w\lambda = -\alpha_i^\vee - \alpha_j^\vee$ and
\[ w \in \{1\} \cup \{s_{c_0, n_0} s_{c_0, n_0 - 1} \cdots s_{c_0, k} \mid 1 \leq k \leq n_0\}.\]
If $w \neq 1$, then this implies that $\lambda = -w^{-1}(\alpha_i^\vee + \alpha_j^\vee) = -\alpha_j^\vee$, contradicting $\lambda \leq - \alpha_i^\vee$. So this proves the lemma.
\end{proof}

It now remains only to describe the maps $D_1' \to Y \times_S Z_0$. We do this in Proposition \ref{prop:basesurfaces} after a few preparations.

Since the statement is local on $S$, we will assume from now on that the initial section $S \to \bun_{L, rig}^{ss, \mu}$ (resp.\ $\B_S \mb{G}_m \to \bun_{L', rig}^{ss, \mu'}$) used in the construction of the slice $Z_0$ in types $E$, $F$ and $G$ (resp.\ $B$, $C$ and $D$) lifts to a section $S \to \bun_L^{ss, \mu}$ (resp.\ $S \to \bun_{L'}^{ss, \mu'}$). We will also write $Z_1 = Z_0 = S$ in types $E$, $F$ and $G$ and $Z_1 = \mrm{Ind}_{L'}^L(S) \setminus S$ in types $B$, $C$ and $D$; our assumption implies that $Z_0 \to \bun_{L, rig}^{ss, \mu}$ lifts to $Z_1 \to \bun_L^{ss, \mu}$.

We first relate $D_1' \to Y \times_S Z_0$ to the projectivisation of a vector bundle. Let $\rho_L$ be the representation of $L$ given by the isomorphism of Lemmas \ref{lem:typecdefleviiso} and \ref{lem:typegleviiso} composed with the projection to the second factor in types $C$, $D$, $E$, $F$ and $G$, and given by the isomorphism of Lemma \ref{lem:typebleviiso} composed with the projection to the second factor and the inclusion $GSp_4 \subseteq GL_4$ in type $B$. We will write $W$ for the vector bundle on $Z_1 \times_S E$ induced by $Z_1 \to \bun_L^{ss, \mu}$ and $\rho_L$. We will also write $\lambda \in \mb{X}^*(T)$ for the character
\[ \lambda = \begin{cases} \varpi_l, & \text{in types}\; B, C, D, \\ \varpi_4, &\text{in type}\; E, \\ \varpi_2, & \text{in type}\;G.\end{cases}\]

\begin{lem} \label{lem:typebcdegbasesurfaceprojectivisation}
In types $B$, $C$, $D$, $E$ and $G$, there is an isomorphism
\[ D_1' \times_{Z_0} Z_1 \cong \mb{P}_{Y \times_S Z_1}\pi_*(M_\lambda \otimes \mc{O}(dO_E) \otimes W),\]
where $\pi \colon Y \times_S Z_1 \times_S E \to Y \times_S Z_1$ is the natural projection and $M_\lambda$ is the line bundle on $Y \times_S Z_1 \times_S E$ classified by the morphism
\[ Y \times_S Z_1 \longrightarrow Y \overset{\lambda}\longrightarrow \mrm{Pic}^0_S(E).\]
\end{lem}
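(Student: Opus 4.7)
The plan is to identify $D_1' \times_{Z_0} Z_1$ with a relative moduli of honest sections of a projective bundle on $Z_1 \times_S E$, and then to recognise this moduli as the projectivisation of a pushforward of the form stated in the lemma.

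The first step is to replace the stable sections defining $D_1'$ by honest sections. For $z \in Z_0 \subseteq Z$ with associated $P$-reduction $\xi_{P, z}$, every stable section of $\xi_{G, z}/P_1'$ (or of $\xi_{G, z}/B$ when $N = 1$, i.e., in types $C$ and $G$, where $P_1'$ is not defined) of the prescribed degree in fact factors through $\xi_{P, z} \times^P PP_1'/P_1' \cong \xi_{P, z}/(P \cap P_1')$. This is the direct analogue of Lemma \ref{lem:subregsections}: by \cite[Proposition 3.6.4]{davis19a} every such section lifts to one of $\xi_{G,z}/B$, and the Bruhat cell enumeration of Lemma \ref{lem:typenotacells} together with Proposition \ref{prop:levidegreebound} force the lift to lie in the $w = 1$ cell. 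Pulling back along $Z_1 \to Z_0$ then identifies $D_1' \times_{Z_0} Z_1$ with the relative moduli over $Y \times_S Z_1$ of honest sections of the $L/(L \cap P_1')$-bundle $\xi_L/(L \cap P_1') \to Z_1 \times_S E$ of a degree determined by $-\alpha_i^\vee$ and $y \in Y$.

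The second step recognises $L/(L \cap P_1')$ (respectively $L/(L \cap B)$ in types $C$ and $G$) as a projectivisation $\mb{P}(V^\vee)$, where $V$ is the representation underlying $\rho_L$. In types $B$, $D$ and $E$, Lemmas \ref{lem:typebleviiso} and \ref{lem:typecdefleviiso} show directly that $L \cap P_1'$ is the preimage of the stabiliser of a line in the standard representation of the $GSp_4$- or $GL_{n_1+1}$-factor of $L$, so the quotient is a $\mb{P}^3$ or $\mb{P}^{n_1}$. In types $C$ and $G$, where $P_1' = B$ (as $N = 1$), the quotient $L/(L \cap B)$ is a priori a product of full flag varieties, but the $GL_2$-factor of $L$ yields a $\mb{P}^1$ while the $GL_{n_0}$-factor contributes a flag uniquely determined by the prescribed degree data of the section together with the stability of the associated rank-$n_0$ vector bundle on each fibre of $E$ (in each type one verifies by direct inspection of the flag degrees that the flag must be the refinement of the Harder--Narasimhan-type filtration pinned down by $y \in Y$). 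Thus in every case the moduli in question reduces to that of line subbundles $N \hookrightarrow W$.

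Finally, for a fixed isomorphism class $N_0$ of the line subbundle, the moduli of subbundles $N \hookrightarrow W$ with $N \cong N_0$ is exactly $\mb{P}\pi_* \mc{H}om(N_0, W) = \mb{P}\pi_*(N_0^\vee \otimes W)$. Setting $N_0 = M_\lambda^\vee \otimes \mc{O}(-dO_E)$ gives the formula in the statement. The identification $N \cong N_0$ is read off by matching the $\mb{X}_*(T)$-degree of $N$ against the section's total degree (encoded by $-\alpha_i^\vee$ and $y \in Y$) via the character comparisons in Lemmas \ref{lem:typecdefleviiso}, \ref{lem:typebleviiso} and \ref{lem:typegleviiso}; a direct calculation yields $\lambda = \varpi_l$ in types $B$, $C$, $D$, $\lambda = \varpi_4$ in type $E$ and $\lambda = \varpi_2$ in type $G$. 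Local freeness of $\pi_*(M_\lambda \otimes \mc{O}(dO_E) \otimes W)$ of the expected rank follows from cohomology and base change, since the twist ensures $N_0^\vee \otimes W$ is semistable of strictly positive slope on each fibre. The main obstacle is the flag-rigidification in type $C$ (requiring a careful use of the stability of the $GL_{n_0}$-factor bundle), together with pinning down $N_0$ in each type; the most delicate cases are type $B$ (where $\rho_L$ factors through $GSp_4$) and type $G$ (with the constraint $\det A = \lambda^3$).
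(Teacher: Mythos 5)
Your argument is essentially the paper's argument in types $B$, $D$, $E$: there the degree data force a stable map to be an honest section landing in the $w=1$ Bruhat cell, which becomes a degree $-1$ line subbundle of $W_z$, and since $W_z$ is semistable of slope strictly greater than $-1$ every nonzero map from a degree $-1$ line bundle is automatically a subbundle, so ``moduli of subbundles'' and $\mb{P}\pi_*\mc{H}om(N_0, W)$ agree. (You do skip the properness step the paper uses to upgrade the open inclusion $X \subseteq D_1' \times_{Z_0} Z_1$ to an equality, but in these types the degree bound already rules out rational tails, so this is harmless.)

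The genuine gap is in types $C$ and $G$. There $d = 2$ or $3$, and the line subbundle $N_0 = M_\lambda^{-1}\otimes\mc{O}(-dO_E)$ has degree $-d$, which is \emph{strictly less} than the slope $-d/2$ of $W_z$. A nonzero morphism $N_0 \to W_z$ can therefore vanish at one or more points and fail to be a subbundle; the corresponding stable map has a rational tail attached at each zero, so the moduli in question is \emph{not} the moduli of honest sections of $\xi_L/(L\cap B)$ over $Z_1 \times_S E$. Your third step then quietly equates ``moduli of subbundles $N\hookrightarrow W$ with $N \cong N_0$'' with $\mb{P}\pi_*\mc{H}om(N_0, W)$, but the latter parametrises nonzero morphisms up to scale, not subbundles --- these differ precisely along the locus of rational-tailed stable maps you have dropped. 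So either the first step is overclaiming (honest sections do not exhaust $D_1'\times_{Z_0}Z_1$), or the third step is misstating what is parametrised; one of those has to be repaired. The paper avoids the issue in types $C$ and $G$ by switching to a Hilbert-scheme description: each stable map to the $\mb{P}^1$-bundle $\mb{P}(W_z^\vee)$ of the relevant degree is a closed immersion with ideal sheaf $p^*(M_{\lambda,y}^{-1}\otimes\mc{O}(-dO_E))\otimes\mc{O}(-1)$, and closed subschemes with this ideal sheaf are classified by nonzero sections of $M_\lambda\otimes\mc{O}(dO_E)\otimes W$ up to scale, which cleanly recovers the projectivisation and automatically accounts for the rational components. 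If you want to save the honest-section approach, you would need to explicitly identify the rational-tailed stable maps with vanishing loci of the morphism $N_0 \to W$ (as in the usual evaluation/vanishing correspondence) and show this gives a bijection; as written, the step from honest sections to $\mb{P}\pi_*(M_\lambda\otimes\mc{O}(dO_E)\otimes W)$ is not justified in types $C$ and $G$.
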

\begin{proof}
We first prove the lemma in types $B$, $D$ and $E$. Let
\[ X = Y \times_{Y_{P_1'}} (\bun_{L \cap P_1'}^{-\alpha_i^\vee} \times_{\bun_L} Z_1 \times_S E) \subseteq D_1' \times_{Z_0} Z_1,\]
where we note that Lemma \ref{lem:subregsections} implies that
\[ D_1' = Y \times_{Y_{P_1'}} (\mrm{KM}_{L \cap P_1', L, rig}^{-\alpha_i^\vee} \times_{\bun_{L, rig}} Z_0 \times_S E).\]
Lemmas \ref{lem:typecdefleviiso} and \ref{lem:typebcdparabolicbundles} show that $X$ is the stack of tuples $(y, z, M_{\lambda, y}^{-1} \otimes \mc{O}(-O_E) \subseteq W_z)$, where $y \in Y$, $z \in Z_1$, $M_{\lambda, y}$ is the line bundle on $E$ corresponding to $\lambda(y) \in \mrm{Pic}^0_S(E)$, and $W_z$ is the restriction of $W$ to the fibre over $z \in Z_1$. Since the vector bundle $W_z$ is semistable of slope $< 0$, any nonzero morphism $M_{\lambda, y}^{-1} \otimes \mc{O}(-O_E) \to W_z$ must be a subbundle, so we have an isomorphism
\[ X \cong \mb{P}_{Y \times_S Z_1}\pi_*(M_\lambda \otimes \mc{O}(O_E) \otimes W).\]
Since this implies in particular that $X$ is already proper over $Y \times_S Z_1 = Y \times_{Y_{P_1'}}(Z_1 \times_S E)$, we conclude that $X = D_1' \times_{Z_0} Z_1$ and the claim is proved.

In types $C$ and $G$, we argue instead as follows. Observe that there is a pullback
\begin{equation} \label{eq:typecgbasesurface1}
\begin{tikzcd}
D_1' \times_{Z_0} Z_1 \ar[r] \ar[d] & \mrm{KM}_{Q^2_2, GL_2}^{-de_2^*} \times_{\bun_{GL_2}} \bun_{GL_2}^{ss, -d} \ar[d] \\
Y \times_S Z_1 \ar[r] & \mrm{Pic}^{-d}_S(E) \times_S \bun_{GL_2}^{ss, -d},
\end{tikzcd}
\end{equation}
where the bottom morphism is given by
\[ (y, z) \longmapsto (M_{\lambda, y}^{-1} \otimes \mc{O}(-dO_E), W_z)\]
and the right morphism is given on the first factor by the blow down to $T_{Q^2_2}$-bundles composed with the character $e_2$. If $(y, z) \in Y \times_S Z_1$ lies over a geometric point $s \colon \spec k \to S$, then any stable map to the $GL_2$ flag variety bundle $\mb{P}(W_z^\vee)$ corresponding to a point in $D_1' \times_{Z_0} Z_1$ over $(y, z)$ is a closed immersion with ideal sheaf $p^*(M_{\lambda, y}^{-1} \otimes \mc{O}(-d)O_E)) \otimes \mc{O}(-1)$, where $p \colon \mb{P}(W_z^\vee) \to E_s$ is the structure morphism. So we deduce that
\[ D_1' \times_{Z_0} Z_1 = \mb{P}_{Y \times_S Z_1} \pi_*p_*(p^*(M_\lambda \otimes \mc{O}(d O_E)) \otimes \mc{O}(1)) = \mb{P}_{Y \times_S Z_1} \pi_*(M_\lambda \otimes \mc{O}(d O_E) \otimes W_z)\]
as claimed.
\end{proof}

The situation in type $F$ is similar. In this case, we let $P_1'' \subseteq L$ be the standard parabolic subgroup of type $t(P_1'') = \{\alpha_1\}$, and define
\[ D_1'' = Y \times_{Y_{P_1''}}(\mrm{KM}_{P_1'', L, rig}^{-\alpha_i^\vee} \times_{\bun_{L, rig}^{\mu}} Z_0 \times_S E).\]

\begin{lem} \label{lem:typefbasesurfaceprojectivisation}
In type $F$, there are isomorphisms
\[ D_1'' \cong \mb{P}_{Y \times_S Z_1}\pi_*(M_{\varpi_1} \otimes W^\vee) \]
and 
\[ D_1' \cong \mb{P}_{D_1''}\pi'_*(p^*M_{\varpi_2} \otimes \mc{O}(2O_E) \otimes \ker(p^*W \to p^*M_{\varpi_1} \otimes \mc{O}_{D_1''}(1))),\]
where $\pi \colon Y \times_S Z_1 \times_S E \to Y \times_S Z_1$ and $\pi' \colon D_1' \times_S Z_1 \times_S E \to Y \times_S Z_1$ are the natural projections, and $p \colon D_1'' \to Y \times_S Z_1$ is the structure morphism.
\end{lem}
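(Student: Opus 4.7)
The plan is to mimic the strategy of Lemma~\ref{lem:typebcdegbasesurfaceprojectivisation}, realising $D_1''$ and $D_1'$ as relative projective bundles of explicit pushforwards by exploiting the $GL_3$-factor of $L$ supplied by $\rho_L$ (here $n_1 + 1 = 3$ in Lemma~\ref{lem:typecdefleviiso}). First I would identify $D_1''$. The parabolic $P_1'' \subseteq L$ has type $\{\alpha_1\}$, and under $\rho_L$ maps to $Q^3_2 \subseteq GL_3$, so $L/P_1'' \cong \mb{P}(V^\vee)$ for $V$ the standard representation of $GL_3$, giving $\xi_L/P_1'' \cong \mb{P}(W^\vee)$ over $E$. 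A section of degree $-\alpha_i^\vee$ is thus a line bundle quotient $W_z \to N$; the character formulae from Lemma~\ref{lem:typecdefleviiso} together with the fibre product structure over $Y_{P_1''}$ pin down $N \cong M_{\varpi_1, y}$, and semistability of $W_z$ (of slope $-2/3$) rules out nonzero morphisms $W_z \to M_{\varpi_1, y}$ that are not surjective (as in Lemma~\ref{lem:typebcdegbasesurfaceprojectivisation}). Thus no rational bubbles occur, and standard projective bundle theory identifies $D_1'' \cong \mb{P}_{Y \times_S Z_1}\pi_*(M_{\varpi_1} \otimes W^\vee)$.

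Next, for $D_1'$, Lemma~\ref{lem:subregsections} reduces stable maps of degree $-\alpha_i^\vee$ in $\xi_G/B$ to Borel reductions of $\xi_L$, i.e.\ complete flags on $W$. Over $D_1''$ the tautological surjection has kernel $K$, a rank $2$ bundle of relative degree $-2$ on $D_1'' \times_S E$; fibrewise $K$ is semistable of slope $-1$, since its line subbundles are also line subbundles of the semistable $W_z$ and so have slope $\leq -2/3$, hence $\leq -1$. Refining the $P_1''$-reduction to a full Borel reduction specifies a line subbundle $V_2 \subseteq K$, and the same character analysis forces $V_2 \cong M_{\varpi_2, y}^{-1} \otimes \mc{O}(-2 O_E)$; equivalently $V_2$ is classified by a nonzero morphism $M_{\varpi_2, y}^{-1} \otimes \mc{O}(-2 O_E) \to K$ up to scalar. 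Since $M_{\varpi_2, y} \otimes \mc{O}(2 O_E) \otimes K$ has fibrewise slope $1 > 0$ and is semistable, Riemann--Roch gives fibrewise $H^1 = 0$ and $\dim H^0 = 2$, so $\pi'_*(p^*M_{\varpi_2} \otimes \mc{O}(2 O_E) \otimes K)$ is a rank $2$ vector bundle on $D_1''$ and the claimed identification of $D_1'$ follows.

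The delicate point I would expect to be the main obstacle is the treatment of morphisms $M_{\varpi_2, y}^{-1} \otimes \mc{O}(-2 O_E) \to K$ whose image is not a subbundle of $K$: when such a morphism vanishes at a point of $E$, the corresponding stable map in $D_1'$ acquires a rational bubble attached there, lying in the residual flag variety $P_1''/(L \cap B)$. One must check that these bubbled configurations match the non-subbundle locus of the projective bundle bijectively and scheme-theoretically. This reduces to a degree comparison: any such bubble has class a positive coroot of the Levi of $P_1''$ (which maps to zero in $\mb{X}_*(Z(L)^\circ)$, preserving the slope $\mu$), and each simple zero of the morphism must contribute a bubble of the correct simple coroot class so that the total degree of the refined stable map recovers $-\alpha_i^\vee$.
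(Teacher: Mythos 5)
Your identification of $D_1''$ is essentially the same as the paper's: both observe that nonzero morphisms $W_z \to M_{\varpi_1, y}$ are automatically surjective (by semistability of $W_z$ with slope $-2/3 > -1$), hence the bubble-free locus is a $\mb{P}^1$-bundle, which is proper, and therefore equals $D_1''$. Minor technical quibble: what rigorously finishes the argument is properness of the bubble-free open locus (not a direct ``no bubbles can occur'' statement), but this is a matter of presentation.

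For $D_1'$, you have identified the open locus correctly, but you have not actually closed the gap you flag as ``the delicate point''. The issue is real and your sketch for handling it is insufficient. Specifically: (a) you need to first show there is a well-defined morphism $D_1' \to D_1''$ (or directly to $\mb{P}_{D_1''}\pi'_*(\ldots)$) that is a priori defined on all of $D_1'$, including the boundary of the Kontsevich--Mori compactification, not just on the locus of subbundle inclusions; (b) the asserted identification ``each simple zero contributes a bubble of the correct simple coroot class'' is a claim about set-theoretic images, not an isomorphism of moduli stacks, and it is exactly the scheme-theoretic identification of the boundary that constitutes the content of the lemma; and (c) your claim that ``any such bubble has class a positive coroot of the Levi of $P_1''$'' needs an argument --- a priori a stable map to $\xi_{G}/B$ of degree $-\alpha_i^\vee$ factoring (on its elliptic component) through the Harder--Narasimhan reduction can have bubbles of class $\alpha_1^\vee$, which do not lie in the Levi of $P_1''$, and ruling these out requires more than Lemma~\ref{lem:subregsections} (one has to use that $D_1''$ has empty boundary together with the behaviour of stabilisation under $\xi_G/B \to \xi_L/P_1''$).

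The paper's proof handles all of this at once by a different mechanism. It exhibits $D_1'$ as a pullback of the Kontsevich--Mori space $\mrm{KM}^{-2e_2^*}_{Q^2_2, GL_2} \times_{\bun_{GL_2}^{-2}} \bun_{GL_2}^{ss, -2}$ along the classifying map $D_1'' \to \mrm{Pic}^{-2}_S(E) \times_S \bun_{GL_2}^{ss, -2}$ defined by the universal kernel $K$ and the line bundle $p^*M_{\varpi_2}^{-1} \otimes \mc{O}(-2O_E)$. Then the argument already carried out in Lemma~\ref{lem:typebcdegbasesurfaceprojectivisation} for types $C$ and $G$ applies verbatim: because semistability of $K$ forces the degree of the elliptic component to stay in a narrow window, \emph{every} stable map appearing (bubbled or not) is a closed immersion into the $\mb{P}^1$-bundle $\mb{P}(K^\vee)$, so the KM compactification is literally the Hilbert scheme of curves of the given class, i.e. the projectivisation of the space of sections of a line bundle, giving $\mb{P}_{D_1''}\pi'_*(p^*M_{\varpi_2} \otimes \mc{O}(2O_E) \otimes K)$ directly. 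This closed-immersion/Hilbert-scheme step is exactly what converts the problematic boundary bookkeeping you identify into a single clean statement; without it (or an explicit substitute) your argument is incomplete.
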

\begin{proof}
Recall that $\alpha_i = \alpha_3$ and $Z_1 = S$ in this case and let
\[
X = Y \times_{Y_{P_1''}}(\bun_{P_1''}^{-\alpha_3^\vee} \times_{\bun_L^{\mu}} Z_1 \times_S E) \subseteq D_1''.
\]
Then Lemma \ref{lem:typecdefleviiso} shows that $X$ is the stack of tuples $(y, z, W_z \twoheadrightarrow M_{\varpi_1, y})$, where $y \in Y$ and $z \in Z_1$. Since the vector bundle $W_z$ is semistable of slope $> -1$, any nonzero morphism $W_z \to M_{\varpi_1, y}$ is surjective, so we have an isomorphism
\[ X \cong \mb{P}_{Y \times_S Z_1}(\pi_*(M_{\varpi_1} \otimes W^\vee)).\]
Since this shows that $X$ is already proper over $Y \times_S Z_1 = Y \times_{Y_{P_1''}}(Z_1 \times_S E)$, it follows that $X = D_1''$, so this gives the first of the desired isomorphisms.

For the second isomorphism, there is a pullback
\[
\begin{tikzcd}
D_1' \ar[r] \ar[d] & \mrm{KM}_{Q^2_2, GL_2}^{-2e_2^*} \times_{\bun_{GL_2}^{-2}} \bun_{GL_2}^{ss, -2} \ar[d] \\
D_1'' \ar[r] & \mrm{Pic}^{-2}_S(E) \times_S \bun_{GL_2}^{ss, -2}
\end{tikzcd}
\]
where the bottom horizontal morphism is classified by the pair $(p^*M_{\varpi_2}^{-1} \otimes \mc{O}(-2O_E), \ker(p^*W \to p^*M_{\varpi_1} \otimes \mc{O}_{D_1''}(1)))$ of line bundle and vector bundle on $D_1'' \times_S E$. Since any stable map to the associated flag variety bundle appearing in $D_1'$ is again a closed immersion, the argument used in the proof of Lemma \ref{lem:typebcdegbasesurfaceprojectivisation} for types $C$ and $G$ gives the desired isomorphism
\[ D_1' \cong \mb{P}_{D_1''}\pi'_*(p^*M_{\varpi_2} \otimes \mc{O}(2O_E) \otimes \ker(p^*W \to p^*M_{\varpi_1} \otimes \mc{O}_{D_1''}(1))).\]
\end{proof}

\begin{prop} \label{prop:basesurfaces}
The descriptions given in Proposition \ref{prop:subregularresolutions4} for the maps $D_1' \to Y \times_S Z_0$ are correct.
\end{prop}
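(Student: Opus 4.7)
The plan is to apply Lemmas \ref{lem:typebcdegbasesurfaceprojectivisation} and \ref{lem:typefbasesurfaceprojectivisation}, which realise $D_1'$ (and $D_1''$ in type $F$) as the projectivisation of a relative pushforward vector bundle, and then to compute these pushforwards explicitly in each type. First I would write out the vector bundle $W$ on $Z_1 \times_S E$ case by case, using the representations $\rho_L$ of Lemmas \ref{lem:typebleviiso}, \ref{lem:typecdefleviiso} and \ref{lem:typegleviiso} together with the explicit description of $Z_1$ in the proof of Theorem \ref{thm:subregularsliceexistence}. In types $E$, $F$ and $G$ we have $Z_1 = Z_0 = S$ and $W$ is a single fixed semistable bundle, while in types $B$, $C$ and $D$ the stack $Z_1$ parametrises nontrivial extensions coming from $\mrm{Ind}_{L'}^L(S)$ and $W$ is the associated universal extension, which gives a concrete presentation of $W$ as an extension of explicit line bundles.

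Next I would compute $\pi_*(M_\lambda \otimes \mc{O}(dO_E) \otimes W)$ (and the analogous pushforward in type $F$). The twist by $\mc{O}(dO_E)$ raises the slope of $M_\lambda \otimes \mc{O}(dO_E) \otimes W$ to a strictly positive value, so $R^1\pi_* = 0$ by semistability, and Riemann-Roch on the elliptic fibres gives the rank: it is $2$ in types $B$, $C$, $D$, $F$ and $3$ in types $E$, $G$, matching the expected fibre dimensions in Proposition \ref{prop:subregularresolutions4}. Identifying the full splitting type of the pushforward then reduces to applying Atiyah's classification of semistable vector bundles on $E$ fibrewise over $Y \times_S Z_1$: on the open locus where the relevant character ($\varpi_l$ in types $B$, $C$, $D$, and $\alpha_1$ in type $F$) is nontrivial, the restricted bundle on each fibre is Atiyah-regular and the splitting type is constant; at the vanishing locus the fibrewise decomposition of $M_\lambda \otimes \mc{O}(dO_E) \otimes W$ jumps, increasing the degree of one summand by $2$ and producing the two cases of fibres asserted by the proposition.

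Finally, in types $B$, $C$ and $D$ one has to descend from $Y \times_S Z_1$ to $Y \times_S Z_0$ along the $\mb{G}_m$-quotient given by the structure of $Z_1 \to Z_0$. The $\mb{G}_m$-equivariant structure on the pushforward vector bundle can be read off from the weights recorded in Table \ref{tab:typebcdroots} and the proof of Theorem \ref{thm:subregularsliceexistence}: in types $C$ and $D$ a single weight $1$ produces a genuine $\mb{P}^1$-bundle over the $\mb{P}^1 = Z_0$, whereas in type $B$ the weights $1$ and $2$ give rise to the stacky base $\mb{P}(1, 2)$ and to the weight distinguishing $\mb{P}_{\mb{P}(1, 2)}(\mc{O} \oplus \mc{O}(1))$ from $\mb{P}_{\mb{P}(1, 2)}(\mc{O} \oplus \mc{O}(3))$. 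In type $F$, the nested structure $D_1' \to D_1'' \to Y$ is obtained by iterating the same argument, using both formulas of Lemma \ref{lem:typefbasesurfaceprojectivisation}: one applies the pushforward computation once to construct $D_1'' \to Y$ as the projectivisation of $\pi_*(M_{\varpi_1} \otimes W^\vee)$, and then once more over $D_1''$ to obtain $D_1' \to D_1''$.

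The main obstacle will be the $\mb{G}_m$-equivariant bookkeeping in type $B$: correctly tracking the weights on each summand of the pushforward at the jumping locus $\varpi_l(y) = 0$ is what distinguishes the two stacky Hirzebruch surfaces, and the same weight tracking (together with the check that the jump has the expected degree) is responsible for the dichotomy between $\mb{F}_0$ and $\mb{F}_2$ in types $C$, $D$ and $F$.
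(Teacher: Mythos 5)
Your overall skeleton is the same as the paper's: realise $D_1'$ (and $D_1''$ in type $F$) via Lemmas \ref{lem:typebcdegbasesurfaceprojectivisation} and \ref{lem:typefbasesurfaceprojectivisation} as the projectivisation of a direct image, show its rank is $2$ (or $3$) by a slope argument, and then descend from $Y \times_S Z_1$ to $Y \times_S Z_0$ via the $\mb{G}_m$-quotient. The rank and vanishing computations are fine, and you correctly identify the jump loci $\varpi_l(y) = 0$ (resp.\ $\alpha_1(y) = 0$).

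However, the central step — actually pinning down the direct image as a concrete bundle on $\mb{P}^1$ or $\mb{P}(1, 2)$ — is supported by the wrong mechanism. You propose to read the splitting type off from ``Atiyah's classification of semistable vector bundles on $E$ fibrewise over $Y \times_S Z_1$,'' but in, say, type $C$ the bundle $M_{\varpi_l, y} \otimes \mc{O}(d O_E) \otimes W_z$ is a stable rank-$2$ bundle of degree $2$ for \emph{every} $y$ and $z$; its Atiyah invariants (indecomposability, determinant class, etc.) are insensitive to whether $\varpi_l(y)$ vanishes, because twisting by a degree-$0$ line bundle changes none of them. So ``the fibrewise decomposition jumps'' is simply false; nothing about the bundle on $E$ alone distinguishes the $\mb{F}_0$ fibres from the $\mb{F}_2$ fibres. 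What actually produces the jump in the paper's argument is the long exact sequence in $\pi_*$ associated to the canonical $Z(L')$-stable filtration of $W$ coming from the $P'$-reduction — e.g.\ $0 \to \mc{O}(-dO_E) \to W \to U \to 0$ in types $C$ and $D$, and the analogous two-step filtration in type $B$. The subquotients and their $Z(L')$-weights are fixed, so after tensoring by $-e_1$ (resp.\ $-f_1$) the sequence descends to $\mb{P}^1$ (resp.\ $\mb{P}(1, 2)$); the unique term that jumps is $\pi_*(M_{\varpi_l, y})$ and its $R^1$, which vanish exactly when $\varpi_l(y) \neq 0$. Until you replace the Atiyah-regularity heuristic with this exact-sequence computation, you do not actually know that the generic fibre is $\mb{F}_0$ rather than $\mb{F}_2$, or $\mb{P}_{\mb{P}(1,2)}(\mc{O} \oplus \mc{O}(1))$ rather than $\mb{P}_{\mb{P}(1,2)}(\mc{O} \oplus \mc{O}(3))$.

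Two smaller points. First, Table \ref{tab:typebcdroots} records roots $\alpha$ of $L$ with $\langle \alpha, \mu' \rangle < 0$, which govern the affine space bundle $Z_1 \to S$, not the weights you need for the equivariant descent; the latter are the characters $e_k$ (resp.\ $f_k$) of $Q^{n_1+1}_{n_1+1}$ (resp.\ $GSp_4 \cap Q^4_4$) expressed via Lemmas \ref{lem:typecdefleviiso} and \ref{lem:typebleviiso}. Second, type $F$ is not just ``iterating the same argument'': there $Z_0 = S$, so the $\mb{P}^1$-base of $(D_1')_y$ is $D_1''$ rather than a quotient of $Z_1$, and the computation of $D_1' \to D_1''$ uses the fact that the rank-$2$ kernel $U$ of the map in Lemma \ref{lem:typefbasesurfaceprojectivisation} is self-dual up to a twist by its determinant — a step with no analogue in the other types.
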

\begin{proof}
First observe that in types $E$ and $G$, $M_\lambda \otimes \mc{O}(d O_E) \otimes W$ is a family of semistable vector bundles of degree $3$, so Lemma \ref{lem:typebcdegbasesurfaceprojectivisation} shows that $D_1' \to Y \times_S Z_1 = Y$ is a $\mb{P}^2$-bundle, which proves \eqref{itm:subregularresolutions4:4}.

In types $B$, $C$ and $D$, $M_\lambda \otimes \mc{O}((d + 1)O_E) \otimes W$ is a family of semistable vector bundles of degree $2$, so Lemma \ref{lem:typebcdegbasesurfaceprojectivisation} shows that $D_1' \times_{Z_0} Z_1 \to Y \times_S Z_1$ is a $\mb{P}^1$-bundle, and hence that $D_1' \to Y \times_S Z_0$ is also.

To complete the proof of \eqref{itm:subregularresolutions4:2}, note that in type $B$, we have a canonical $Z(L')$-invariant subbundle $\mc{O}(-O_E) \subseteq W$ and a $Z(L')$-equivariant exact sequence
\[ 0 \longrightarrow U \longrightarrow W/\mc{O}(-O_E) \longrightarrow \mc{O} \longrightarrow 0,\]
where $U$ is a family of stable vector bundles on $E$ of rank $2$ and determinant $\mc{O}(-O_E)$. So if we fix a geometric point $y \colon \spec k \to Y$ over $s \colon \spec k \to S$, we have $Z(L')$-equivariant exact sequences
\begin{align} \label{eq:typebbasesurface1}
0 \longrightarrow \pi_*(M_{\varpi_l, y}) \longrightarrow \pi_*&(M_{\varpi_l, y} \otimes \mc{O}(O_E) \otimes W_s) \\
& \longrightarrow \pi_*(M_{\varpi_l, y} \otimes \mc{O}(O_E) \otimes (W_s/\mc{O}(-O_E))) \longrightarrow \mb{R}^1\pi_*(M_{\varpi_l, y}) \longrightarrow 0, \nonumber
\end{align}
and
\begin{align} \label{eq:typebbasesurface2}
0 \longrightarrow \pi_*(M_{\varpi_l, y} \otimes \mc{O}(O_E) \otimes U_s) \longrightarrow \pi_*(M_{\varpi_l, y} \otimes \mc{O}(O_E)&\otimes (W_s/\mc{O}(-O_E)))\\
& \longrightarrow \pi_*(M_{\varpi_l, y} \otimes \mc{O}(O_E)) \longrightarrow 0 \nonumber
\end{align}
of $Z(L')$-linearised vector bundles on $(Z_1)_s$. Note that $\pi_*(M_{\varpi_l, y})$, $\mb{R}^1\pi_*(M_{\varpi_l, y})$, $\pi_*(M_{\varpi_l, y} \otimes \mc{O}(O_E) \otimes U_s)$ and $\pi_*(M_{\varpi_l, y} \otimes \mc{O}(O_E))$ are each either a trivial line bundle or zero, with $Z(L')$-weights $f_4$, $f_4$, $f_2 = f_3$ and $f_1$ respectively, where we use the notation of the proof of Lemma \ref{lem:typebleviiso}. So after tensoring with the character $-f_1$ of $Z(L')$, $Z(G)$ acts trivially on \eqref{eq:typebbasesurface1} and \eqref{eq:typebbasesurface2}, so they descend to exact sequences of vector bundles on $(Z_0)_s = (Z_1)_s/\mb{G}_m \cong \mb{P}(1, 2)$. Examining the $\mb{G}_m$-weights, the sequence \eqref{eq:typebbasesurface2} descends to a sequence of the form
\[ 0 \longrightarrow \mc{O}(1) \longrightarrow W' \longrightarrow \mc{O} \longrightarrow 0.\]
Since any such sequence splits, we must have $W' \cong \mc{O} \oplus \mc{O}(1)$ as vector bundles on $\mb{P}(1, 2)$.

If $\varpi_l(y) \neq 0$, then $\pi_*(M_{\varpi_l, y}) = \mb{R}^1 \pi_*(M_{\varpi_l, y}) = 0$, so we have
\[ \pi_*(M_{\varpi_l, y} \otimes \mc{O}(O_E) \otimes W_s) \cong \pi_*(M_{\varpi_l, y} \otimes \mc{O}(O_E) \otimes W_s/\mc{O}(-O_E)),\]
and hence $(D_1')_y = \mb{P}_{\mb{P}(1, 2)}(W') = \mb{P}_{\mb{P}(1, 2)}(\mc{O} \oplus \mc{O}(1))$. Otherwise, \eqref{eq:typebbasesurface1} tensored with $-f_1$ descends to an exact sequence
\[ 0 \longrightarrow \mc{O}(2) \longrightarrow W'' \longrightarrow W' = \mc{O} \oplus \mc{O}(1) \longrightarrow \mc{O}(2) \longrightarrow 0\]
such that $(D_1')_y = \mb{P}_{\mb{P}(1, 2)}(W'')$. But since the kernel of any surjection $\mc{O} \oplus \mc{O}(1) \to \mc{O}(2)$ on $\mb{P}(1, 2)$ must be isomorphic to $\mc{O}(-1)$, this means that we must have $W'' = \mc{O}(-1) \oplus \mc{O}(2)$, so
\[ (D_1')_y = \mb{P}_{\mb{P}(1, 2)}(\mc{O}(-1) \oplus \mc{O}(2)) = \mb{P}_{\mb{P}(1, 2)}(\mc{O} \oplus \mc{O}(3)).\]
This proves \eqref{itm:subregularresolutions4:2}.

Similarly, to prove \eqref{itm:subregularresolutions4:3}, note that in types $C$ and $D$ we have a canonical $Z(L')$-equivariant exact sequence
\[ 0 \longrightarrow \mc{O}(-d O_E) \longrightarrow W \longrightarrow U \longrightarrow 0,\]
where $U$ is semistable and $Z(L')$ acts on $\mc{O}(-d O_E)$ and $\mc{O}$ respectively with weights
\[ e_{n_1 + 1} = -\varpi_l + (d + 1) \varpi_i = \begin{cases} -\varpi_l + 2 \varpi_{l - 1}, & \text{in type}\;C, \\ -\varpi_l + \varpi_{l - 3}, & \text{in type}\; D, \end{cases} \quad \text{and} \quad e_1 = \begin{cases} \varpi_l, & \text{in type}\; C, \\ \varpi_{l - 1}, & \text{in type}\; D. \end{cases}\]
So over any geometric point $y \colon \spec k \to Y$ over $s \colon \spec k \to S$, we have an exact sequence
\begin{align} \label{eq:typecdbasesurface1}
 0 \longrightarrow \pi_*(M_{\varpi_l, y}) \longrightarrow \pi_*&(M_{\varpi_l, y} \otimes \mc{O}(d O_E) \otimes W_s) \\
& \longrightarrow \pi_*(M_{\varpi_l, y} \otimes \mc{O}(d O_E) \otimes U_s) \longrightarrow \mb{R}^1\pi_*(M_{\varpi_l, y}) \longrightarrow 0, \nonumber
\end{align}
of $Z(L')$-linearised vector bundles on $(Z_1)_s$, which descends to an exact sequence of vector bundles on $\mb{P}^1 = (Z_0)_s = (Z_1)_s/\mb{G}_m$ after tensoring with $-e_1$. Note that in both cases $M_{\varpi_l, y} \otimes \mc{O}((d + 1)O_E) \otimes U_s$ is a semistable vector bundle of degree $2$ on which $Z(L')$ acts with the single weight $e_1$, so $\pi_*(M_{\varpi_l, y} \otimes \mc{O}((d + 1)O_E) \otimes U_s) \otimes \mb{Z}_{-e_1}$ descends to a trivial rank $2$ vector bundle $\mc{O} \oplus \mc{O}$ on $\mb{P}^1$.

If $\varpi_l(y) \neq 0$, then $\pi_*(M_{\varpi_l, y}) = \mb{R}^1\pi_*(M_{\varpi_l, y}) = 0$, so
\[ \pi_*(M_{\varpi_l, y} \otimes \mc{O}((d + 1)O_E) \otimes W_s) \otimes \mb{Z}_{-e_1} = \pi_*(M_{\varpi_l, y} \otimes \mc{O}((d + 1)O_E) \otimes U_s) \otimes \mb{Z}_{-e_1} \]
descends to $\mc{O} \oplus \mc{O}$ on $\mb{P}^1$, which together with Lemma \ref{lem:typebcdegbasesurfaceprojectivisation} shows that $(D_1')_y = \mb{P}_{\mb{P}^1}(\mc{O} \oplus \mc{O}) = \mb{F}_0$. Otherwise, \eqref{eq:typecdbasesurface1} descends to an exact sequence
\[ 0 \longrightarrow \mc{O}(1) \longrightarrow W' \longrightarrow \mc{O} \oplus \mc{O} \longrightarrow \mc{O}(1) \longrightarrow 0\]
such that $(D_1')_y \cong \mb{P}_{\mb{P}^1}(W')$. Since the kernel of any surjection $\mc{O} \oplus \mc{O} \to \mc{O}(1)$ must be isomorphic to $\mc{O}(-1)$, this implies that $W' \cong \mc{O}(-1) \oplus \mc{O}(1)$ and hence that
\[ (D_1')_y \cong \mb{P}_{\mb{P}^1}(\mc{O}(-1) \oplus \mc{O}(1)) \cong \mb{F}_2. \]
This proves \eqref{itm:subregularresolutions4:3}.

Finally, in type $F$, we have already constructed the morphisms $D_1' \to D_1'' \to Y = Y \times_S Z_0$. Since $M_{\varpi_1} \otimes W^\vee$ is a family of semistable vector bundles of degree $2$, Lemma \ref{lem:typefbasesurfaceprojectivisation} shows that $D_1'' \to Y$ is a $\mb{P}^1$-bundle as claimed. Moreover, any rank $2$ degree $-2$ subbundle of $W$ is necessarily also semistable, so Lemma \ref{lem:typefbasesurfaceprojectivisation} also shows that $D_1' \to D_1''$ is a $\mb{P}^1$-bundle.

If $y \colon \spec k \to Y$ is a geometric point over $s \colon \spec k \to S$, then by Lemma \ref{lem:typefbasesurfaceprojectivisation} we have an exact sequence
\begin{equation} \label{eq:typefbasesurface1}
0 \longrightarrow U \longrightarrow q^*(M_{\varpi_2, y} \otimes \mc{O}(2O_E) \otimes W_s) \longrightarrow  q^*(M_{\varpi_1 + \varpi_2, y} \otimes \mc{O}(2O_E)) \otimes (\pi')^*\mc{O}(1) \longrightarrow 0
\end{equation}
of vector bundles on $\mb{P}^1 \times E_s$ such that $(D_1')_y = \mb{P}\pi'_*U$, where $\pi'$ and $q$ are the projections to the first and second factors respectively. Since $U$ is a vector bundle of rank $2$ and determinant $q^*(M_{-\varpi_1 + 2 \varpi_2, y} \otimes \mc{O}(2O_E)) \otimes (\pi')^*\mc{O}(-1)$, it follows that we have an isomorphism
\[ U \overset{\sim}\longrightarrow U^\vee \otimes \det U = U^\vee \otimes q^*(M_{-\varpi_1 + 2\varpi_2, y} \otimes \mc{O}(2O_E)) \otimes (\pi')^*\mc{O}(-1).\]
So the dual of \eqref{eq:typefbasesurface1} gives an exact sequence
\[
0 \longrightarrow q^*M_{-2\varpi_1 + \varpi_2, y} \otimes (\pi')^*\mc{O}(-2) \longrightarrow q^*(M_{-\varpi_1 + \varpi_2, y} \otimes W_s^\vee) \otimes (\pi')^*\mc{O}(-1) \longrightarrow U \longrightarrow 0,
\]
and hence an exact sequence
\begin{align}
0 \longrightarrow H^0(E_s, M_{-2\varpi_1 + \varpi_2, y}) \otimes \mc{O}(-2) &\longrightarrow H^0(E_s, M_{-\varpi_1 + \varpi_2, y} \otimes W_s^\vee) \otimes \mc{O}(-1) \nonumber \\
&\longrightarrow (\pi')_*U \longrightarrow H^1(E_s, M_{-2\varpi_1 + \varpi_2, y}) \otimes \mc{O}(-2) \longrightarrow 0. \label{eq:typefbasesurface2}
\end{align}

If $\alpha_1(y) = 2\varpi_1(y) - \varpi_2(y) \neq 0$, then $H^0(E_s, M_{-2\varpi_1 + \varpi_2, y}) = H^1(E_s, M_{-2\varpi_1 + \varpi_2, y}) = 0$, so \eqref{eq:typefbasesurface2} gives an isomorphism
\[ (\pi')_*U \cong H^0(E_s, M_{-\varpi_1 + \varpi_2, y} \otimes W_s^\vee) \otimes \mc{O}(-1) = \mc{O}(-1) \oplus \mc{O}(-1),\]
so $(D_1')_y \cong \mb{P}_{\mb{P}^1}(\mc{O}(-1) \oplus \mc{O}(-1)) = \mb{F}_0$. Otherwise, \eqref{eq:typefbasesurface2} gives an exact sequence
\[ 0 \longrightarrow \mc{O}(-2) \longrightarrow \mc{O}(-1) \oplus \mc{O}(-1) \longrightarrow (\pi')_*U \longrightarrow \mc{O}(-2) \longrightarrow 0.\]
Since the cokernel of the injective morphism $\mc{O}(-2) \to \mc{O}(-1) \oplus \mc{O}(-1)$ must be isomorphic to $\mc{O}$, we get $(\pi')_*U \cong \mc{O}(-2) \oplus \mc{O}$ and hence $(D_1')_y \cong \mb{F}_2$. This completes the proof of \eqref{itm:subregularresolutions4:5} and of the proposition.
\end{proof}

\section{Singularities} \label{section:singularities}

In this section, we apply the results of \S\ref{section:resolutions} to the study of the singularities of the unstable varieties $\chi_Z^{-1}(0)$ and their deformations $\chi_Z \colon Z \to \hat{Y}\sslash W$. We describe the singularities explicitly in \S\ref{subsection:singularities}, which are given in Theorem \ref{thm:subregularsingularities}. In \S\ref{subsection:deformations}, we briefly sketch a minor variation on standard deformation theory (in which all deformation rings are graded by the character lattice of a torus) before stating and proving weighted miniversality of the deformations $\chi_Z$ (Theorem \ref{thm:subregulardeformations}). Theorems \ref{thm:subregularsingularities} and \ref{thm:subregulardeformations} together include all the statements from Theorem \ref{thm:introsubregularsingularities} from the introduction.

\subsection{Codimension $2$ singularities of the locus of unstable bundles} \label{subsection:singularities}

Theorem \ref{thm:introsubregularresolutions} (and the more refined statements in Propositions \ref{prop:subregularresolutions2} and \ref{prop:subregularresolutions3}) give very explicit descriptions of the families of normal crossings surfaces $\tilde{\chi}_Z^{-1}(0_{\Theta_Y^{-1}}) \to Y$. We show in this section how these results can be used to give equally explicit descriptions of the unstable loci $\chi_Z^{-1}(0)$ (which give local models for the singularities of the unstable loci of $\bun_G$, since the maps $Z \to \bun_G$ are slices). For the sake of simplicity, we will assume always that $S = \spec k$ for some algebraically closed field $k$.

We will see below that there is a dichotomy between the classical types ($A_l$ for $l > 1$, $B$, $C$ and $D$) and the exceptional types ($E$, $F$, $G$ and $A_1$). In the exceptional types, the unstable varieties are always cones over elliptic curves, with unique isolated singularities. In the classical types, the unstable varieties have non-isolated singularities obtained via the following construction.

\begin{construction} \label{cons:gluing}
Let $\pi \colon X \to X'$ be a degree $2$ morphism between smooth, possibly stacky curves over $k$, and let $L$ be a line bundle on $X$. The \emph{surface obtained by gluing $L$ along $\pi$} is the affine stack over $X'$ given by the spectrum of the fibre product
\[
\begin{tikzcd}
R \ar[r] \ar[d] & \pi_*\bigoplus_{n \geq 0} L^{\otimes-n} \ar[d] \\
\mc{O}_{X'} \ar[r] & \pi_*\mc{O}_X,
\end{tikzcd}
\]
where the vertical arrow on the right is given by restriction of a function on the total space of $L$ to the zero section. Geometrically, $\spec_{X'} R$ is the surface obtained by identifying points in the zero section of the total space of $L$ with the same image under $\pi$.
\end{construction}

\begin{rmk}
Assume that the characteristic of $k$ is not $2$, let $X''$ be a surface obtained by Construction \ref{cons:gluing}, and let $p \in X''$ be a singular point. If $p$ does not lie over a branch point of $\pi$, then the singularity at $p$ is of type $A_\infty$, i.e., we can choose (formal) local coordinates $x$, $y$ and $z$ at $p$ so that $X''$ has local equation $xy = 0$. If $p$ does lie over a branch point, then the singularity is of type $D_\infty$, i.e., we can choose local coordinates so that $X''$ has equation $x^2 = y^2z$.
\end{rmk}

\begin{thm} \label{thm:subregularsingularities}
Assume that $S = \spec k$ for $k$ an algebraically closed field and let $(G, P, \mu)$ be a subregular Harder-Narasimhan class. Assume that $(G, P, \mu)$ is not of type $A_1$ (resp., $(G, P, \mu)$ is of type $A_1$ and $k$ does not have characteristic $2$) and let $Z = \mrm{Ind}_L^G(Z_0) \to \bun_{G, rig}$ be the equivariant slice constructed in the proof of Theorem \ref{thm:subregularsliceexistence} (resp., Remark \ref{rmk:sl2slice}). Then the stack $\chi_Z^{-1}(0) \subseteq Z$ can be constructed as follows.
\begin{enumerate}[(1)]
\item \label{itm:subregularsingularities1} If $(G, P, \mu)$ is of type $A$ (but not $A_1$), then then there are two line bundles $L_1$ and $L_2$ with $\deg L_1 + \deg L_2 = l + 1$ such that $\chi_Z^{-1}(0)$ is obtained by gluing the corresponding line bundle on $E \sqcup E$ along the canonical map $E \sqcup E \to E$. In particular, $\chi_Z^{-1}(0)$ has singularities of type $A_\infty$ only.
\item \label{itm:subregularsingularities2} If $(G, P, \mu)$ is of type $B$ (resp., $C$, $D$), then there exists a line bundle $L$ on $E$ of degree $l - 6$ (resp., $l - 4$, $l - 8$) such that $\chi_Z^{-1}(0)$ is obtained by gluing $L$ along a degree $2$ map $E \to \mb{P}(1, 2)$ (resp., $E \to \mb{P}^1$) branched over $3$ (resp., $4$) points. The singularities are of type $A_\infty$ at the non-branch points of $\mb{P}(1, 2)$ (resp., $\mb{P}^1$) and, if the characteristic of $k$ is not $2$, of type $D_\infty$ at the branch points.
\item \label{itm:subregularsingularities3} If $(G, P, \mu)$ is of type $E$ (resp., $F$, $G$, $A_1$), then $\chi_Z^{-1}(0)$ is the cone over $E$ obtained by contracting the zero section of a line bundle $L$ on $E$ of degree $l - 9$ (resp., $l - 5$, $l - 3 = -1$, $-4$) to a point. The singularity is simply elliptic of degree $9 - l$ (resp., $5 - l$, $3 - l$, $4$).
\end{enumerate}
\end{thm}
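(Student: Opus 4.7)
The plan is to analyse, for a generic $y \in Y$, the proper birational morphism
\[
\pi_y \colon \tilde{\chi}_Z^{-1}(y) \longrightarrow \chi_Z^{-1}(0)
\]
induced by $\psi_Z$. Since the log resolution is an isomorphism over the dense locus of regular bundles, $\pi_y$ is birational, and it suffices to realise $\chi_Z^{-1}(0)$ as an appropriate sequence of birational contractions of $\tilde{\chi}_Z^{-1}(y)$, whose detailed structure is supplied by Theorem \ref{thm:introsubregularresolutions} and its refinements in Propositions \ref{prop:subregularresolutions2}, \ref{prop:subregularresolutions3} and \ref{prop:subregularresolutions4}.

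First I would identify which of the three components maps birationally. Since $\alpha_j$ is the unique special simple root in types $B$--$G$, and $\alpha_i$, $\alpha_j$ play symmetric roles in type $A$, the Harder-Narasimhan stratum of bundles of type $\alpha_j^\vee$ is dense in the full unstable locus by \cite[Proposition 4.2.3]{davis19}, and its closure in $\chi_Z^{-1}(0)$ is covered birationally by $D_{\alpha_j^\vee}(Z)_y$ (and, in type $A$, also by the symmetric component $D_{\alpha_i^\vee}(Z)_y$). The remaining components parametrise stable maps whose underlying $G$-bundle has a strictly deeper Harder-Narasimhan reduction, so they contract to proper closed subsets of $\chi_Z^{-1}(0)$ --- to a point in the exceptional types, giving the cone singularity, or to a curve in the classical types, giving the non-isolated $A_\infty$ and $D_\infty$ singularities. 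Using Proposition \ref{prop:subregularresolutions2} I would argue that the contraction on $D_{\alpha_j^\vee}(Z)_y$ undoes the first $n_0$ blow-ups --- which came from parabolic refinements invisible to the underlying $G$-bundle --- leaving only the blow-up at the final section $\theta_{n_0+1}$, whose image sits on the zero section $E \subseteq D_1|_y$. An analogous analysis using Proposition \ref{prop:subregularresolutions4} contracts $D_{\alpha_i^\vee}(Z)_y$ and $D_{\alpha_i^\vee+\alpha_j^\vee}(Z)_y = \mb{F}_{d-1}$ onto $Z_0$: to a single point in the exceptional types; to the zero section of the second line bundle $D_1'|_y$ in type $A$, giving the gluing via the identity $E \to E$; and, in types $B$, $C$, $D$, onto $Z_0 \cong \mb{P}^1$ or $\mb{P}(1,2)$ via the fibration structure of $D_1'|_y$, where the degree $2$ maps $E \to \mb{P}^1$ or $\mb{P}(1,2)$ arise as the induced map on the intersection curve $D_{\alpha_i^\vee}(Z) \cap D_{\alpha_j^\vee}(Z) \cong Y \times_S \mrm{Pic}^0_S(E)$.

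The degrees of the line bundles $L$, $L_1$, $L_2$ are then obtained by tracking self-intersections through the sequence of contractions, starting from the structure of $D_1|_y$ in Proposition \ref{prop:subregularresolutions2} and adjusting for each exceptional divisor absorbed into the normal bundle of the zero section $E$. The main obstacle will be verifying that the scheme-theoretic image under $\pi_y$ coincides with the constructed surface, rather than merely with its reduced structure: the multiplicity $d$ attached to $D_{\alpha_i^\vee}(Z)$ is critical, producing the $D_\infty$ singularities at the branch points of the double covers in types $B$, $C$, $D$ via Construction \ref{cons:gluing}, as well as the stacky base $\mb{P}(1,2)$ in type $B$. The characteristic hypothesis $\mrm{char}\, k \neq 2, 3$ enters precisely to ensure that the relevant branched and stacky structures are \'etale-locally smooth in the required sense, after which the classification of the singularities as $A_\infty$, $D_\infty$, or simply elliptic follows directly from the local models in Construction \ref{cons:gluing}.
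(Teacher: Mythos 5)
Your high-level strategy is recognizably the same as the paper's: leverage the explicit description of $\tilde\chi_Z^{-1}(y)$ from Theorem~\ref{thm:introsubregularresolutions} and Propositions~\ref{prop:subregularresolutions2}--\ref{prop:subregularresolutions4}, identify $D_{\alpha_j^\vee}(Z)_y$ as the component mapping birationally, and track line bundle degrees via self-intersections through the blowups (this last step is exactly Lemma~\ref{lem:linebundledegrees}, and the degree~$2$ cover $E \to Z_0$ is Lemma~\ref{lem:degree2}). But there are two genuine gaps.

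First, the scheme structure of $\chi_Z^{-1}(0)$ is not determined by ``realising it as a sequence of birational contractions''; a non-normal surface is not recoverable from a resolution plus a list of contracted curves. You flag this yourself as ``the main obstacle,'' but the proposal does not say how to close it. The paper does so by proving $\psi_{Z,y*}\mc{O} = \mc{O}$ (Proposition~\ref{prop:steinfactorisation}), which gives $\chi_Z^{-1}(0) = \spec_{Z_0}\pi_*\mc{O}_{\bar D_y}$, and then by computing $\pi_*\mc{O}_{D_y} \cong \pi_*\mc{O}_{(D_1)_y} \times_{\pi_*\mc{O}_E} \pi_*\mc{O}_{(D_1')_y}$; this fibre-product formula is precisely Construction~\ref{cons:gluing}. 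Without some version of this calculation one gets only the underlying reduced space, not the asserted gluing structure.

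Second, the last paragraph mislocates the source of the interesting phenomena. You attribute the $D_\infty$ singularities at the branch points, and the stacky base $\mb{P}(1,2)$ in type~$B$, to the multiplicity $d$ on $D_{\alpha_i^\vee}(Z)$. This cannot be right: Proposition~\ref{prop:subregularresolutions1} gives $d = 1$ in types $B$ and $D$, yet one gets $D_\infty$ singularities in both and a stacky base in type~$B$. The $D_\infty$ singularities come purely from ramification of the degree~$2$ map $E \to Z_0$ in Construction~\ref{cons:gluing}, and the $\mb{P}(1,2)$ base comes from the $\mb{G}_m$-weights $(1,2)$ of the affine bundle $\mrm{Ind}_{L'}^L(S) \to S$ used to build $Z_0$ in the type~$B$ case of Theorem~\ref{thm:subregularsliceexistence}. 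The multiplicity $d$ only plays a role in the Stein-factorisation step in type~$C$ (where $d = 2$), where the paper shows $\mb{R}\pi_*\mc{O}_{\bar D_y} \to \mb{R}\pi_*\mc{O}_{D_y}$ is a quasi-isomorphism so that the non-reduced divisor can be replaced by its reduction; the final answer there is the same as if $d$ were $1$.
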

\begin{proof}
We first prove \eqref{itm:subregularsingularities3}. If $(G, P, \mu)$ is of type $A_1$, then the claim is proved in Proposition \ref{prop:sl2sing} below. So assume that $(G, P, \mu)$ is of type $E$ (resp., $F$, $G$).

By construction, $\chi_Z^{-1}(0)$ is affine, and the open subset
\[ \chi_Z^{-1}(0)^{reg} = \chi_Z^{-1}(0) \times_{\bun_{G, rig}} \bun_{G, rig}^{reg}\]
is big, where $\bun_{G, rig}^{reg} \subseteq \bun_{G, rig}$ is the open substack of regular bundles of \cite[Proposition 4.4.6]{davis19}. So choosing any $y \colon \spec k \to 0_{\Theta_Y^{-1}}$, we have
\[ \chi_Z^{-1}(0) = \spec H^0(\chi_Z^{-1}(0), \mc{O}) = \spec H^0(\chi_Z^{-1}(0)^{reg}, \mc{O}) = \spec H^0(\tilde{\chi}_Z^{-1}(y)^{reg}, \mc{O}),\]
where $\tilde{\chi}_Z^{-1}(y)^{reg} = \tilde{\chi}_Z^{-1}(y) \cap \psi_Z^{-1}(\chi_Z^{-1}(0)^{reg}) \cong \chi_Z^{-1}(y)^{reg}$. But by Proposition \ref{prop:subregularresolutions2} (and the fact that $\alpha_i$ is not a special root), $\tilde{\chi}_Z^{-1}(y)^{reg} = (D_1)_y \setminus E$ is the complement of the zero section in the line bundle $L^{-1} = (D_1)_y$ over $E = \{y\} \times \mrm{Pic}^0(E)$, which has (negative) degree $l - 9$ (resp., $l - 5$, $l - 3$). So
\[ \chi_Z^{-1}(0) = \spec H^0((D_1)_y \setminus E, \mc{O}) = \spec \bigoplus_{n \geq 0} H^0(E, L^{\otimes n})\]
is a cone over $E$ of the asserted degree.

To prove \eqref{itm:subregularsingularities1} and \eqref{itm:subregularsingularities2}, we argue as follows. Since $\psi_{Z, y}{\vphantom{p}}_*\mc{O} = \mc{O}$ by Proposition \ref{prop:steinfactorisation} below and $\chi_Z^{-1}(0) \to Z_0$ is affine, we have
\[ \chi_Z^{-1}(0) = \spec_{Z_0} \pi_*\mc{O}_{\bar{D}_y} = \spec_{Z_0} \pi_*\mc{O}_{D_y},\]
for any choice of $y \colon \spec k \to 0_{\Theta_Y^{-1}}$, where $\pi \colon \tilde{\chi}_Z^{-1}(y) \to Z_0$ is the natural morphism and we write
\[ D = D_{\alpha_i^\vee}(Z) + D_{\alpha_j^\vee}(Z) + D_{\alpha_i^\vee + \alpha_j^\vee}(Z) \quad \text{and} \quad \bar{D} = \tilde{\chi}_Z^{-1}(0_{\Theta_Y^{-1}}).\]
Using Theorem \ref{thm:introsubregularresolutions} and Propositions \ref{prop:subregularresolutions2} and \ref{prop:subregularresolutions4}, it is easy to see that
\[ \pi_* \mc{O}_{D_y} \cong \pi_*\mc{O}_{(D_1)_y} \times_{\pi_*\mc{O}_E} \pi_*\mc{O}_{(D_1')_y},\]
where we have identified $\{y\} \times \mrm{Pic}^0(E) = D_{\alpha_i^\vee}(Z)_y \cap D_{\alpha_j^\vee}(Z)_y$ with $E$ and by mild abuse of notation we have also written $\pi$ for the morphisms $(D_1)_y \to Z_0$, $(D_1')_y \to Z_0$ and $E \to Z_0$. In type $A$, $L_1 = (D_1)_y$ and $L_2 = (D_1')_y$ are line bundles satisfying $\deg L_1 + \deg L_2 = l + 1$ by Lemma \ref{lem:linebundledegrees} below, which proves \eqref{itm:subregularsingularities1}. In type $B$ (resp., $C$, $D$), $L = (D_1)_y$ is a line bundle on $E$ of the desired degree by Lemma \ref{lem:linebundledegrees}, $\pi_*\mc{O}_{(D_1')_y} = \mc{O}_{Z_0}$, and $E \to Z_0 = \mb{P}(1, 2)$ (resp., $\mb{P}^1$) has degree $2$ by Lemma \ref{lem:degree2}. Since any degree $2$ map $E \to \mb{P}(1, 2)$ (resp., $E \to \mb{P}^1$) is branched over $3$ (resp., $4$) points, \eqref{itm:subregularsingularities2} now follows.
\end{proof}

In order to prove the lemmas quoted in the proof of Theorem \ref{thm:subregularsingularities}, we will appeal to the following formula for the canonical bundle of $\tbun_{G, rig}$.

\begin{prop}
There exists a line bundle $M$ on $\bun_{G, rig}$ such that
\[ K_{\tbun_{G, rig}/\bun_{G, rig}} \cong \psi^*M \otimes \mc{O}\left(\sum_{\mu \in \mb{X}_*(T)_+} (-2 + \langle \rho, \mu \rangle) D_\mu \right).\]
\end{prop}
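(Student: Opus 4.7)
The plan is to compute $K_{\tbun_{G, rig}/\bun_{G, rig}}$ by determining its coefficient along each boundary divisor $D_\mu$ through a local deformation-theoretic calculation, and then identifying the residual contribution as a line bundle pulled back from $\bun_{G, rig}$.

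First, I would analyse the local structure of $\psi$ near a general geometric point $p \in D_\mu$ lying in no other $D_\nu$. Such a point corresponds to a stable map $f \colon C = E \cup_x R \to \xi_G/B$ where $R \cong \mb{P}^1$ is a rational bubble of degree $\mu$ attached at $x \in E$ to a section $f|_E$ of degree $-\mu$. By the standard deformation theory of stable maps with a fixed target, the relative tangent complex of $\psi$ at $p$ is $\mb{R}\Gamma(C, \mrm{Cone}(\mb{T}_C^{\mrm{log}} \to f^*\mb{T}_{\xi_G/B}))$. Computing the contribution to the determinant of this complex from the rational bubble $R$ yields two effects: the canonical $\omega_R = \mc{O}_{\mb{P}^1}(-2)$ contributes $-2$, and the pullback $f|_R^* K_{G/B} = f|_R^*\mc{L}_{-2\rho}$, a line bundle of degree $-\langle 2\rho, \mu \rangle = -2\langle\rho,\mu\rangle$ on $R$, contributes $\langle\rho,\mu\rangle$ via the Deligne pairing against the smoothing coordinate of the node. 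Combining these, the coefficient of $D_\mu$ in the divisor class of $K_{\tbun_{G, rig}/\bun_{G, rig}}$ is $-2 + \langle \rho, \mu\rangle$.

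Next I would argue that the residual line bundle
\[
N := K_{\tbun_{G, rig}/\bun_{G, rig}} \otimes \mc{O}\left(-\sum_{\mu \in \mb{X}_*(T)_+}(-2 + \langle\rho,\mu\rangle)D_\mu\right)
\]
is pulled back from $\bun_{G, rig}$. Since by the first step $N$ has vanishing coefficient along every $D_\mu$, it is determined by its restriction to the open substack $U = \bun_{B, rig}^0 = \tbun_{G, rig} \setminus \bigcup_\mu D_\mu$. On $U$, the morphism $\psi|_U$ is the natural map $\bun_{B, rig}^0 \to \bun_{G, rig}$; its relative canonical can be directly computed as $\det \mb{R}\pi_*(\xi_B^{\mrm{univ}} \times^B \mf{g}/\mf{b})$ for the universal $B$-bundle over $\bun_{B, rig}^0 \times_S E$, a family of sums of degree-zero line bundles on $E$. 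This determinant is Weyl-equivariant over the regular semistable locus of $\bun_{G, rig}$, where fibres of $\psi|_U$ are $W$-torsors, so it descends to a line bundle there; combined with the analogous direct check on the regular unstable locus (where Borel reductions of a Harder-Narasimhan-reduced bundle behave predictably) and Hartogs' lemma applied on the smooth stack $\bun_{G, rig}$ using the codimension-$\geq 2$ bound from \cite{davis19}*{Proposition 4.4.6}, this produces the desired line bundle $M$ on $\bun_{G, rig}$.

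The main obstacle will be the local coefficient computation: carefully pinning down the normalisation of the Deligne-pairing contribution (in particular the halving that converts the degree $-2\langle\rho,\mu\rangle$ of the rational bubble into the coefficient $\langle\rho,\mu\rangle$) and matching the smoothing coordinate of the node with the transverse coordinate to $D_\mu$ in $\tbun_{G, rig}$ require attention. The descent argument for $N$ is conceptually straightforward but also demands verification of $W$-equivariance and careful handling of the non-regular locus.
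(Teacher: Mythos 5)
The paper proves this proposition in a single line, by citing Theorem~4.6.1 of the author's thesis \cite{davis19a}; there is no internal argument to compare against. Your proposal is therefore a genuinely different route: a from-scratch deformation-theoretic derivation. The overall strategy (pin down the coefficient of each $D_\mu$ by a local computation near a generic point of $D_\mu$, then show the residual is a pullback) is the natural one, and is presumably close in spirit to what the cited theorem does, so your proposal is a useful complement.

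That said, there are concrete gaps in what you wrote. First, the displayed expression $\mb{R}\Gamma(C, \mrm{Cone}(\mb{T}_C^{\mrm{log}} \to f^*\mb{T}_{\xi_G/B}))$, read literally with $\mb{T}_C^{\mrm{log}}$ the sheaf of vector fields preserving the node, is the tangent complex of the stratum $D_\mu$ relative to $\bun_{G,rig}$, not of $\tbun_{G,rig}$: it is missing the one-dimensional contribution of $H^0(C,\mc{E}xt^1(\Omega_C,\mc{O}_C))$, which is exactly the smoothing direction, i.e.\ the normal bundle of $D_\mu$. Your later appeal to ``the smoothing coordinate of the node'' suggests you are aware of this, but the formula as stated does not encode it, and the distinction shifts the boundary coefficient by $1$; you would need to make the bookkeeping explicit. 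Second, the ``Deligne pairing'' step, where a degree $-2\langle\rho,\mu\rangle$ bundle on the bubble is asserted to contribute $\langle\rho,\mu\rangle$, is the crux of the coefficient computation, and as written it is an assertion rather than an argument; the halving and the matching of the node-smoothing parameter with a defining equation of $D_\mu$ both need genuine proof. Third, the descent of the residual line bundle $N$ to $\bun_{G,rig}$ needs more than the Weyl-equivariance sketch: $\psi$ is not flat, the fibres of $\psi|_{\bun_{B,rig}^0}$ over regular unstable bundles are not $W$-torsors, and the cleanest route is probably the structure of $\mrm{Pic}(\tbun_{G,rig})$ used elsewhere in the paper (e.g.\ via \cite{davis19}*{Proposition~2.1.10, Corollary~3.3.8}) rather than a pointwise descent argument. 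These are all plausibly fixable, but as it stands the local computation is the main thing that needs to be nailed down.
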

\begin{proof}
This is an immediate consequence of \cite[Theorem 4.6.1]{davis19a}.
\end{proof}

\begin{cor} \label{cor:canonicalformula}
For any slice $Z \to \bun_{G, rig}$, there exists a line bundle $M$ on $Z$ such that
\[ K_{\tilde{Z}/Z} = \psi_Z^*M \otimes \mc{O}\left(\sum_{\mu \in \mb{X}_*(T)_+} (-2 + \langle \rho, \mu \rangle)D_\mu(Z)\right).\]
\end{cor}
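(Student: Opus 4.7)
The plan is to deduce the corollary by pulling back the formula in the preceding proposition along the smooth base change $Z \to \bun_{G, rig}$. First I would observe that by definition $\tilde{Z}$ sits in a Cartesian square
\[
\begin{tikzcd}
\tilde{Z} \ar[r, "\tilde{\psi}"] \ar[d, "\psi_Z"'] & \tbun_{G, rig} \ar[d, "\psi"] \\
Z \ar[r, "f"] & \bun_{G, rig},
\end{tikzcd}
\]
so by standard compatibility of the cotangent complex with arbitrary base change we have $\mb{L}_{\tilde{Z}/Z} \cong \tilde{\psi}^* \mb{L}_{\tbun_{G, rig}/\bun_{G, rig}}$. Taking determinants of this isomorphism (which is a genuine isomorphism of line bundles, since $\psi$ is representable, proper and a simultaneous log resolution, hence in particular a local complete intersection morphism) yields
\[ K_{\tilde{Z}/Z} \cong \tilde{\psi}^* K_{\tbun_{G, rig}/\bun_{G, rig}}. \]

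Next, I would apply the preceding proposition to rewrite the right hand side as
\[ \tilde{\psi}^* \psi^* M_0 \otimes \tilde{\psi}^* \mc{O}\Bigl(\sum_{\mu \in \mb{X}_*(T)_+}(-2 + \langle \rho, \mu \rangle) D_\mu\Bigr) \]
for some line bundle $M_0$ on $\bun_{G, rig}$. Setting $M = f^* M_0$ and commuting the pullback past the tensor product gives the desired formula, modulo identifying $\tilde{\psi}^* D_\mu$ with $D_\mu(Z)$. This identification is essentially tautological: $D_\mu \subseteq \tbun_{G, rig}$ is by definition the closure of the locus of stable maps with a single rational component of degree $\mu$, and the analogous locus in $\tilde{Z}$ is the scheme-theoretic preimage under the flat morphism $\tilde{\psi}$, which agrees with the divisor $D_\mu(Z)$ introduced in Proposition \ref{prop:subregularresolutions1}.

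Since all three steps (base change for $\mb{L}$, the explicit formula from the proposition, and the definition of $D_\mu(Z)$) are essentially formal, there is no serious obstacle here; the argument is just a matter of assembling these ingredients. The only subtlety worth flagging is that the formula for $K_{\tbun_{G, rig}/\bun_{G, rig}}$ is stated only up to pullback of a line bundle from $\bun_{G, rig}$, so the line bundle $M$ in the corollary depends on the slice $Z$ only through its map $f$, and in particular cannot be canonically identified with the trivial bundle in general.
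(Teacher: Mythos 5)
The approach --- pull back the displayed formula for $K_{\tbun_{G, rig}/\bun_{G, rig}}$ along the Cartesian square defining $\tilde Z$ --- is the intended argument, which is why the paper states the corollary without proof. Two of your justifications, however, are incorrect as written. First, the cotangent complex is \emph{not} compatible with arbitrary base change: the comparison map $\tilde\psi^* \mb{L}_{\tbun_{G, rig}/\bun_{G, rig}} \to \mb{L}_{\tilde Z/Z}$ is an isomorphism only when the square is Tor-independent, and neither leg of your square is flat here ($\psi$ contracts positive-dimensional loci over unstable bundles, and $Z \to \bun_{G, rig}$ factors through a section of the $E$-torsor $\bun_{G, rig} \to \bun_{G, rig}/E$, hence through a codimension-one regular immersion). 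Second, and for the same reason, $\tilde\psi$ is not flat, so your appeal to flatness in the identification $\tilde\psi^{-1}D_\mu = D_\mu(Z)$ does not stand as stated. Both gaps are closed by one observation: since $\tbun_{G, rig} \to \tbun_{G, rig}/E$ and $\bun_{G, rig} \to \bun_{G, rig}/E$ are $E$-torsors pulled back from one another along $\psi/E$, one has
\[
\tilde Z = \tbun_{G, rig} \times_{\bun_{G, rig}} Z \cong (\tbun_{G, rig}/E) \times_{\bun_{G, rig}/E} Z,
\]
and the definition of a slice says precisely that $Z \to \bun_{G, rig}/E$ is smooth, hence flat. So this latter square is Tor-independent and the cotangent complex does pull back; and the $E$-invariant reduced divisors $D_\mu$ descend to $\tbun_{G, rig}/E$ and pull back along the smooth map $\tilde Z \to \tbun_{G, rig}/E$ to the reduced divisors $D_\mu(Z)$, giving the correct multiplicities. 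With that repair your argument is complete.
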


\begin{lem} \label{lem:degree2}
Assume that $(G, P, \mu)$ is of type $B$, $C$ or $D$. Then for any $y \colon \spec k \to Y = 0_{\Theta_Y^{-1}}$, the morphism $\mrm{Pic}^0(E) = \{y\} \times \mrm{Pic}^0(E) \subseteq \tilde{\chi}_Z^{-1}(y) \to Z_0$ has degree $2$.
\end{lem}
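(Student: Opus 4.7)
The plan is to reduce the computation of the degree of $\mrm{Pic}^0(E) \to Z_0$ to an intersection calculation on the (possibly stacky) Hirzebruch surface $D_1'|_y$ appearing in Proposition~\ref{prop:subregularresolutions4}. First I would observe that the map factors through $D_{\alpha_i^\vee}(Z)_y$: the identifications made in the proof of Proposition~\ref{prop:subregularresolutions4} place $\{y\} \times \mrm{Pic}^0(E) = D_{\alpha_i^\vee}(Z)_y \cap D_{\alpha_j^\vee}(Z)_y$ inside $D_{\alpha_i^\vee}(Z)_y$, and Lemma~\ref{lem:subregsections}, applied to the sections of $\xi_G/P$ of degree $\leq -\alpha_i^\vee$ induced by points of $D_{\alpha_i^\vee}(Z)_y$, shows that $\psi_Z$ carries $D_{\alpha_i^\vee}(Z)_y$ into $Z_0$.

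By Proposition~\ref{prop:subregularresolutions4}, $D_{\alpha_i^\vee}(Z)_y$ is obtained from $D_1'|_y$ by an iterated blowup at closed points lying on the proper transform of $(Y \times_S \mrm{Pic}^0_S(E))|_y \cong \mrm{Pic}^0(E)$. Such blowups preserve the curve $\mrm{Pic}^0(E)$ up to isomorphism, and the map $D_{\alpha_i^\vee}(Z)_y \to Z_0$ factors through the Hirzebruch fibration $D_1'|_y \to Z_0$. Therefore the degree in question equals the intersection number $[\mrm{Pic}^0(E)] \cdot [F]$ computed on $D_1'|_y$, where $F$ is the class of a fibre of $D_1'|_y \to Z_0$.

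To compute this intersection, I would use the projectivisation description $D_1' = \mb{P}_{Y \times_S Z_1}\pi_*(M_\lambda \otimes \mc{O}(dO_E) \otimes W)$ from Lemma~\ref{lem:typebcdegbasesurfaceprojectivisation}. In this picture, $\mrm{Pic}^0(E) \subseteq D_1'|_y$ is identified with the divisor $(D_N')_1$ from the proof of Proposition~\ref{prop:subregularresolutions4}, which parametrises those sections $s \colon M_{\lambda,y}^{-1} \otimes \mc{O}(-dO_E) \to W_{z_1}$ that fail to be subbundles (equivalently, that vanish at some point of $E$). For generic $z_1 \in Z_1$, the relevant rank-$2$ vector bundle is polystable with non-isomorphic summands, so $H^0(E, M_{\lambda,y} \otimes \mc{O}(dO_E) \otimes W_{z_1})$ splits as a sum of two one-dimensional summands, and a section $s$ has a zero on $E$ if and only if one of its two components is zero. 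This produces exactly two points in $(D_N')_1 \cap F$, giving $[\mrm{Pic}^0(E)] \cdot [F] = 2$.

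The main obstacle I anticipate is the type $B$ case, where $Z_0 = \mb{P}(1,2)$ is stacky and $D_1'|_y$ is one of the stacky Hirzebruch surfaces $\mb{P}_{\mb{P}(1,2)}(\mc{O} \oplus \mc{O}(k))$. Here the polystable-decomposition step has to be carried out on the rank-$2$ conformally symplectic subquotient $\ker(W/\mc{O}(-O_E) \to \mc{O})$ appearing in the proof of Proposition~\ref{prop:basesurfaces}, and the $\mb{G}_m$-weights by which the whole calculation descends from $Z_1$ to $Z_0$ must be tracked carefully alongside the orbifold intersection numbers on $D_1'|_y$; the same two-point count should then give $[\mrm{Pic}^0(E)] \cdot [F] = 2$.
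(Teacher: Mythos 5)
Your reduction of the degree of $\mrm{Pic}^0(E) \to Z_0$ to the intersection number $[\mrm{Pic}^0(E)] \cdot [F]$ on the (possibly stacky) Hirzebruch surface $(D_1')_y$ is correct, and the observation that the blowups of Proposition~\ref{prop:subregularresolutions4} do not affect this count is also right. But the way you then propose to identify $\mrm{Pic}^0(E)$ inside the projectivisation $\mb{P}_{Y \times_S Z_1}\pi_*(M_\lambda \otimes \mc{O}(dO_E) \otimes W)$ is wrong, and the concrete count built on it would fail.

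You claim that $\mrm{Pic}^0(E) \subseteq (D_1')_y$ is ``the locus of sections $s \colon M_{\lambda, y}^{-1} \otimes \mc{O}(-dO_E) \to W_{z_1}$ that fail to be subbundles, equivalently that vanish at some point of $E$.'' In types $B$ and $D$ this locus is \emph{empty}: there $d = 1$, $W_{z_1}$ is semistable of rank $4$ and slope $-1/2$, and $M_{\lambda,y}^{-1} \otimes \mc{O}(-O_E)$ has degree $-1$, so any nonzero morphism from the latter to the former has saturation of integral degree squeezed between $-1$ and $-1/2$, hence is already saturated. This is exactly the slope argument used in the paper's own proof of Lemma~\ref{lem:typebcdegbasesurfaceprojectivisation} to establish the projectivisation isomorphism in the first place. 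So in types $B$ and $D$ your proposed description of $\mrm{Pic}^0(E)$ gives the empty set, not a curve of degree $2$ over $Z_0$. Related to this, the appeal to ``the relevant rank-$2$ vector bundle'' being polystable with non-isomorphic summands only even makes literal sense in type $C$ (where $W$ has rank $2$); in types $B$ and $D$, $W$ has rank $4$, and the vague suggestion to pass to a rank-$2$ conformally symplectic subquotient doesn't resolve the issue, because the problem is upstream: the curve $\mrm{Pic}^0(E)$ simply is not a degeneracy locus of sections of a vector bundle in this picture.

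The correct characterization of $\mrm{Pic}^0(E) \subseteq (D_1')_y$, coming from the proof of Proposition~\ref{prop:subregularresolutions4}, is as the image under the blowdown of the boundary divisor $(D_N')_1 \subseteq D_N' = \mrm{KM}_{B, G, rig}^{-\alpha_i^\vee} \times_{\bun_{G, rig}} Z \times_S E$, i.e., the locus of $P_1'$-sections that lift only to degenerate stable maps (with an $\alpha_j^\vee$-tail), not to honest $B$-sections, of degree $-\alpha_i^\vee$. Counting that locus fibre-by-fibre is not straightforward. The paper instead sidesteps the need for any such description by invoking Corollary~\ref{cor:canonicalformula} (the canonical bundle formula for $\tbun_{G, rig}$), deducing via adjunction that $\mc{O}(-E)$ differs from $K_{D_{\alpha_i^\vee}(Z)_y}$ by a line bundle pulled back from $Z_0$, and then pairing against a fibre $F_z \cong \mb{P}^1$: $E \cdot F_z = -(K_{D_{\alpha_i^\vee}(Z)_y} + F_z) \cdot F_z = -\deg K_{F_z} = 2$. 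Put differently, $E$ is an anticanonical curve on a Hirzebruch-type surface, hence of degree $2$ over the base; this is what the explicit projectivisation approach was implicitly trying to verify, but without the anticanonical input there is no easy way to do the count.
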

\begin{proof}
By Corollary \ref{cor:canonicalformula},
\[ K_{\tilde{Z}} = \psi_Z^*K_Z \otimes K_{\tilde{Z}/Z} = \psi_Z^* M \otimes \mc{O}(-D_{\alpha_i^\vee}(Z) - D_{\alpha_j^\vee}(Z))\]
for some line bundle $M$ on $Z$. So by adjunction, we have
\begin{equation} \label{eq:classicaldegrees1}
K_{D_{\alpha_i^\vee}(Z)_y} = (K_{\tilde{Z}} \otimes \mc{O}(D_{\alpha_i^\vee}(Z)))|_{D_{\alpha_i^\vee}(Z)_y} = \psi_Z^*M|_{D_{\alpha_i^\vee}(Z)_y} \otimes \mc{O}(-E),
\end{equation}
where we write $E = \{y\} \times \mrm{Pic}^0(E) \subseteq D_{\alpha_i^\vee}(Z)_y$. To compute the degree of the finite morphism $E \to Z_0$, choose a $k$-point $z \in Z_0$ disjoint from the images of $\theta_k'(y)$ and the stacky point in type $B$, and let $F_z \cong \mb{P}^1_k$ be the fibre of $D_{\alpha_i^\vee}(Z)_y \to Z_0$ over $z$. By \eqref{eq:classicaldegrees1} and adjunction, the degree is the intersection product
\[ E \cdot F_z = - K_{D_{\alpha_i^\vee}(Z)_y} \cdot F_z = -(K_{D_{\alpha_i^\vee}(Z)_y} + F_z) \cdot F_z = - \deg K_{F_z} = 2,\]
which proves the lemma.
\end{proof}

\begin{lem} \label{lem:linebundledegrees}
Assume we are in the setup of Propositions \ref{prop:subregularresolutions2} and \ref{prop:subregularresolutions4} and fix a geometric point $y \colon \spec k \to Y$. Then we have the following.
\begin{enumerate}[(1)]
\item \label{itm:linebundledegrees1} If $(G, P, \mu)$ is of type $A$ (not $A_1$), then sum of the degrees of the line bundles $(D_1)_y$ and $(D_1')_y$ on $\mrm{Pic}^0(E)$ is $l + 1$.
\item \label{itm:linebundledegrees2} If $(G, P, \mu)$ is not of type $A$, then the degree of the line bundle $(D_1)_y$ on $\mrm{Pic}^0(E)$ is given in Table \ref{tab:typebcdefgdegrees}.
\end{enumerate}
\begin{table}[h]
\centering
\begin{tabular}{c|c|c|c|c|c|c}
Type & $B$ & $C$ & $D$ & $E$ & $F$ & $G$ \\ \hline
$\deg(D_1)_y$ & $l - 6$ & $l - 4$ & $l - 8$ & $l - 9$ & $l - 5$ & $l - 3$
\end{tabular}
\vspace{1ex}
\caption{Degree of $(D_1)_y$}
\label{tab:typebcdefgdegrees}
\end{table}
\end{lem}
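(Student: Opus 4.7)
The strategy is to equate two expressions for the self-intersection of the curve $A_y \cong \mrm{Pic}^0(E) \cong E$, viewed inside each of the surfaces $D_{\alpha_j^\vee}(Z)_y$ and $D_{\alpha_i^\vee}(Z)_y$, where $A = D_{\alpha_i^\vee}(Z) \cap D_{\alpha_j^\vee}(Z) \cong Y \times_S \mrm{Pic}^0_S(E)$ by the proof of Proposition \ref{prop:subregularresolutions4}. Since $\mc{O}(\bar{D}) = \tilde{\chi}_Z^*\mc{O}(0_{\Theta_Y^{-1}})$ and $\tilde{\chi}_Z|_{\bar{D}}$ factors through the zero section, $\mc{O}(\bar{D})|_{A}$ is pulled back from $Y$ and hence trivial on every geometric fibre $A_y$. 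On the other hand, using that $D_{\alpha_i^\vee}(Z)$ and $D_{\alpha_j^\vee}(Z)$ meet transversely along $A$, the degree of $\mc{O}(D_{\alpha_i^\vee}(Z))|_{A_y}$ (resp.\ $\mc{O}(D_{\alpha_j^\vee}(Z))|_{A_y}$) equals the self-intersection $A_{j,y}^2$ of $A_y$ inside $D_{\alpha_j^\vee}(Z)_y$ (resp.\ $A_{i,y}^2$ inside $D_{\alpha_i^\vee}(Z)_y$), while the degree of $\mc{O}(D_{\alpha_i^\vee+\alpha_j^\vee}(Z))|_{A_y}$ equals $A_{j,y}\cdot B_{j,y} = 1$, since $B_{j,y} := D_{\alpha_i^\vee+\alpha_j^\vee}(Z)_y \cap D_{\alpha_j^\vee}(Z)_y$ is the exceptional divisor of the final blowup in Proposition \ref{prop:subregularresolutions2}, whose centre $\theta_{n_0+1}(y) = (y, 0)$ lies on $A_{j,y}$. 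This yields the fundamental identity
\[ d \cdot A_{j,y}^2 + A_{i,y}^2 + 1 = 0. \]

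The second step reads off $A_{j,y}^2$ and $A_{i,y}^2$ from the blowup descriptions. For generic $y$, the centres $\theta_k(y)$ of the successive blowups in Proposition \ref{prop:subregularresolutions2} are pairwise distinct points on the zero section of the line bundle $(D_1)_y$ on $\mrm{Pic}^0(E) = E$, so the proper transform $A_{j,y}$ after $n_0+1$ blowups has $A_{j,y}^2 = \deg (D_1)_y - (n_0+1)$. Similarly, Proposition \ref{prop:subregularresolutions4} gives $A_{i,y}^2 = (E_0')^2 - N$, where $E_0' = \mrm{Pic}^0(E) \subseteq (D_1')_y$. Substituting into the fundamental identity,
\[ d \cdot \deg(D_1)_y = d(n_0+1) + N - 1 - (E_0')^2. \]
In type $A$, Proposition \ref{prop:subregularresolutions4}\eqref{itm:subregularresolutions4:1} makes $(D_1')_y$ itself a line bundle on $E$ with $E_0'$ as zero section, so $(E_0')^2 = \deg(D_1')_y$; plugging in $d = 1$ and $N = n_1 + 1$ collapses the equation to $\deg(D_1)_y + \deg(D_1')_y = n_0 + n_1 + 1 = l + 1$, which is \eqref{itm:linebundledegrees1}.

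For \eqref{itm:linebundledegrees2}, I will compute $(E_0')^2$ in each remaining type from the descriptions of $(D_1')_y$ in Proposition \ref{prop:subregularresolutions4}. In types $E$ and $G$, $(D_1')_y \cong \mb{P}^2$ and $E_0'$ is a smooth curve of genus $1$, hence a smooth plane cubic, so $(E_0')^2 = 9$. In types $C$, $D$, and $F$, $(D_1')_y$ is a Hirzebruch surface $\mb{F}_n$ with $n \in \{0, 2\}$; writing $[E_0'] = a\sigma + bF$ and imposing the genus-$1$ condition $[E_0'] \cdot ([E_0'] + K_{\mb{F}_n}) = 0$ determines the class once $a = F \cdot E_0'$ is known, yielding $(E_0')^2 = 8$ provided $a = 2$. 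The bisection condition $a = 2$ follows in types $C$ and $D$ directly from Lemma \ref{lem:degree2}, and in type $F$ from a direct analysis of the relative projective bundle description in Lemma \ref{lem:typefbasesurfaceprojectivisation}. In type $B$, the analogous argument on the stacky Hirzebruch surface $\mb{P}_{\mb{P}(1, 2)}(\mc{O} \oplus \mc{O}(k))$, with appropriate care for fractional intersection numbers on the stacky base, yields $(E_0')^2 = 11$. Substituting these values together with the data $d$, $n_0$, $N$ from Notation \ref{notation:dynkindecomposition} and Proposition \ref{prop:subregularresolutions4} into the displayed equation reproduces Table \ref{tab:typebcdefgdegrees}.

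The main technical obstacle is the type-$B$ computation of $(E_0')^2 = 11$, which requires a careful treatment of intersection theory on the weighted projective base $\mb{P}(1, 2)$, together with the establishment of the bisection property in type $F$ from the explicit construction in Lemma \ref{lem:typefbasesurfaceprojectivisation} rather than from Lemma \ref{lem:degree2}.
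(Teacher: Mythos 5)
Your overall strategy coincides with the paper's: derive the identity $d\,A_{j,y}^2 + A_{i,y}^2 + 1 = 0$ from $\mc{O}(\tilde{\chi}_Z^{-1}(0_{\Theta_Y^{-1}}))|_{A_y}$ being trivial, translate $A_{j,y}^2$ and $A_{i,y}^2$ into $\deg(D_1)_y$ and $(E_0')^2$ via the blowup descriptions, and then compute $(E_0')^2$ case by case. The values you quote for types $A$, $C$, $D$, $E$, $F$, $G$ are correct and match the paper.

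However, the type $B$ computation is wrong, and this is a genuine gap. You claim $(E_0')^2 = 11$ on the stacky Hirzebruch surface $X = \mb{P}_{\mb{P}(1, 2)}(\mc{O} \oplus \mc{O}(k))$; the correct value is $6$. Indeed, with $\xi = c_1(\mc{O}_X(1))$ and $h = c_1(\mc{O}_{\mb{P}(1,2)}(1))$ one has (sub-convention, $\deg h = \tfrac{1}{2}$): $\int \xi\cdot\pi^*h = \tfrac{1}{2}$, $\xi^2 = -k\,\xi\cdot\pi^*h$ so $\int\xi^2 = -\tfrac{k}{2}$, and $K_X = -2\xi - (k+3)\pi^*h$, whence $K_X^2 = 4\cdot(-\tfrac{k}{2}) + 4(k+3)\cdot\tfrac{1}{2} = 6$, independent of $k$. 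The adjunction calculation then forces $E_0' \sim -K_X$ (as you argue via the degree-$2$ condition of Lemma \ref{lem:degree2}), so $(E_0')^2 = K_X^2 = 6$. Substituting your value $11$ into the fundamental identity with $d = 1$, $n_0 = l - 2$, $N = 2$ would give $\deg(D_1)_y = l - 11$, not $l - 6$ as claimed in Table \ref{tab:typebcdefgdegrees}, so the error is not cosmetic. A likely source is the sign in the Grothendieck relation $\xi^2 + \pi^*c_1(V)\xi + \pi^*c_2(V) = 0$ for the sub-projectivization; the opposite sign would give $\int\xi^2 = +\tfrac{k}{2}$ and an incorrect, $k$-dependent $K_X^2$.

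A secondary, smaller remark: for type $F$ you defer the verification that $E_0'$ is a bisection of $(D_1')_y \to (D_1'')_y$ to an unspecified ``direct analysis'' of Lemma \ref{lem:typefbasesurfaceprojectivisation}. The paper instead handles $E$, $F$, $G$ uniformly by noting (via Corollary \ref{cor:canonicalformula} and triviality of $\mrm{Pic}(Z)$ for $Z \cong \mb{A}^{l+3}$) that $E$ is anticanonical on $D_{\alpha_i^\vee}(Z)_y$, hence on the blowdown $(D_1')_y$, which immediately gives $(E_0')^2 = K_{(D_1')_y}^2$ with no bisection lemma needed. Your route can be made to work --- on $\mb{F}_0$ or $\mb{F}_2$ the genus-$1$ and integrality constraints already force $E_0'\cdot F = 2$ and $(E_0')^2 = 8$ --- but as written it leaves a gap where the paper has a clean uniform argument.
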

\begin{proof}
To simplify the notation, identify $\mrm{Pic}^0(E) \subseteq (D_1)_y$ with $E$. The degree of the line bundle $(D_1)_y$ is equal to the self-intersection number $(E^2)_{(D_1)_y}$ of $E$ on the surface $(D_1)_y$.

First note that by Proposition \ref{prop:subregularresolutions2}, $D_{\alpha_j^\vee}(Z)_y$ is the iterated blowup of $(D_1)_y$ at $n_0 + 1$ points on $E$, so we have
\begin{equation} \label{eq:linebundledegrees1}
(E^2)_{(D_1)_y} = (E^2)_{D_{\alpha_j^\vee}(Z)_y} + n_0 + 1.
\end{equation}
Next, observe that we have
\begin{equation} \label{eq:linebundledegrees2}
0 = \tilde{\chi}_Z^{-1}(0_{\Theta_Y^{-1}}) \cdot E = (dD_{\alpha_i^\vee}(Z) + D_{\alpha_j^\vee}(Z) + D_{\alpha_i^\vee + \alpha_j^\vee}(Z))\cdot E = d(E^2)_{D_{\alpha_j^\vee}(Z)_y} + (E^2)_{D_{\alpha_i^\vee}(Z)_y} + 1,
\end{equation}
where $d = \frac{1}{2}(\alpha_i^\vee \mmid \alpha_i^\vee)$ and we have used the fact that $D_{\alpha_i^\vee}(Z)_y \cap D_{\alpha_j^\vee}(Z)_y = E$ and that the exceptional curve of the final blowup $D_{\alpha_i^\vee + \alpha_j^\vee}(Z)_y \cap D_{\alpha_j^\vee}(Z)_y$ meets $E$ transversely in a single point. Since $D_{\alpha_j^\vee}(Z)_y$ is the iterated blowup of the smooth surface $(D_1')_y$ of Proposition \ref{prop:subregularresolutions4} at $N$ points on $E$, we have
\[ (E^2)_{D_{\alpha_i^\vee}(Z)} = (E^2)_{(D_1')_y} - N,\]
and hence \eqref{eq:linebundledegrees1} and \eqref{eq:linebundledegrees2} give
\begin{equation} \label{eq:linebundledegrees3}
(E^2)_{(D_1)_y} = \frac{1}{d}(N - (E^2)_{(D_1')_y} - 1) + n_0 + 1.
\end{equation}

In type $A$, $d = 1$ and $N = n_1 + 1$, so \eqref{eq:linebundledegrees3} is equivalent to
\[ \deg (D_1)_y + \deg (D_1')_y = (E^2)_{(D_1)_y} + (E^2)_{(D_1')_y} = n_0 + n_1 + 1 = l + 1,\]
which proves \eqref{itm:linebundledegrees1}.

In types $B$, $C$ and $D$, we argue as follows. By Lemma \ref{lem:degree2}, $E$ is a smooth elliptic curve contained in a (possibly stacky) Hirzebruch surface $(D_1')_y$ mapping with degree $2$ to the base $Z_0 = \mb{P}(1, 2)$ or $\mb{P}^1$. It follows from a straightforward adjunction calculation that $E$ is an anticanonical divisor on $(D_1')_y$ and satisfies $(E^2)_{D_1'} = 6$ in type $B$ and $(E^2)_{(D_1')_y} = 8$ in types $C$ and $D$. Substituting into \eqref{eq:linebundledegrees3} gives the degrees in Table \ref{tab:typebcdefgdegrees}.

Finally, in types $E$, $F$ and $G$, note that by Corollary \ref{cor:canonicalformula}, we have
\[ K_{\tilde{Z}/Z} = \psi_Z^*M \otimes \mc{O}(-D_{\alpha_i^\vee}(Z) - D_{\alpha_j^\vee}(Z))\]
for some line bundle on $M$ on $Z$. Since $Z \cong \mb{A}^{l + 3}$ is an affine space, every line bundle on $Z$ is trivial, so
\[ K_{\tilde{Z}} = K_{\tilde{Z}/Z} \otimes \psi_Z^*K_Z \cong \mc{O}(-D_{\alpha_i^\vee}(Z) - D_{\alpha_j^\vee}(Z)).\]
By adjunction, we therefore have a linear equivalence
\[ K_{D_{\alpha_i^\vee}(Z)_y} \sim (K_{\tilde{Z}} + D_{\alpha_i^\vee}(Z))|_{D_{\alpha_i^\vee}(Z)_y} = -D_{\alpha_i^\vee}(Z)_y \cap D_{\alpha_j^\vee}(Z)_y = - E.\]
So $E \subseteq D_{\alpha_i^\vee}(Z)_y$ is an anticanonical divisor, from which it follows that $E \subseteq (D_1')_y$ is also an anticanonical divisor in the blow down. So from the explicit identification of the surface $(D_1')_y$ given in Proposition \ref{prop:subregularresolutions4} as either a Hirzebruch surface or $\mb{P}^2$, we have
\[ (E^2)_{(D_1')_y} = K_{(D_1')_y}^2 = \begin{cases} 9, & \text{in types}\; E\;\text{and}\;G, \\ 8, & \text{in type}\;\; F.\end{cases}\]
Substituting the values of $N$, $n_0$ and $d$ into \eqref{eq:linebundledegrees3} in each of the different cases gives the desired expressions for $(E^2)_{(D_1)_y}$.
\end{proof}

\begin{prop} \label{prop:steinfactorisation}
Fix any geometric point $y \colon \spec k \to \Theta_Y^{-1}$, and let
\[ \psi_y \colon \tilde{\chi}^{-1}(y) \longrightarrow \chi^{-1}(y) \]
be the pullback of the elliptic Grothendieck-Springer resolution. We have $\psi_y{\vphantom{p}}_* \mc{O} = \mc{O}$.
\end{prop}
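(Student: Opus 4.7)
The strategy is to deduce the proposition from Lemma \ref{lem:connectedspringerfibres} via a Stein factorization argument combined with a Cohen--Macaulay property of the fibre $\chi^{-1}(q(y))$, where $q \colon \Theta_Y^{-1}/\mb{G}_m \to (\hat{Y}\sslash W)/\mb{G}_m$ denotes the structure morphism. My first step is to observe that $\psi_y$ is the base change of
\[
f \colon \tbun_G \longrightarrow P := \bun_G \times_{(\hat{Y}\sslash W)/\mb{G}_m} \Theta_Y^{-1}/\mb{G}_m
\]
along the closed immersion $\chi^{-1}(q(y)) \hookrightarrow P$ cutting out the fibre of $P \to \Theta_Y^{-1}/\mb{G}_m$ at $y$. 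Lemma \ref{lem:connectedspringerfibres} and its proof give the two essential inputs: $f_* \mc{O}_{\tbun_G} = \mc{O}_P$, and every geometric fibre of $f$ is connected. In particular every geometric fibre of $\psi_y$ is connected.

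Next, I would form the Stein factorization $\psi_y \colon \tilde{\chi}^{-1}(y) \xrightarrow{\psi'_y} X'_y \xrightarrow{\pi_y} \chi^{-1}(q(y))$, with $(\psi'_y)_*\mc{O} = \mc{O}_{X'_y}$ and $\pi_y$ finite, so that the proposition is equivalent to showing that $\pi_y$ is an isomorphism. The fibrewise connectedness above makes $\pi_y$ a finite universal homeomorphism, and since the simultaneous log resolution $\psi$ is an isomorphism over the big open regular substack $\bun_G^{reg} \subseteq \bun_G$, $\pi_y$ is an isomorphism over the dense open $U_y := \chi^{-1}(q(y)) \cap \bun_G^{reg}$. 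It now suffices to show that $\chi^{-1}(q(y))$ is Serre $S_2$ and that the complement of $U_y$ has codimension at least two, since this forces the containment $\mc{O} \hookrightarrow (\pi_y)_*\mc{O}$ to be an equality.

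The codimension bound follows from the estimate on $\bun_G \setminus \bun_G^{reg}$ in \cite[Proposition 4.4.6]{davis19} combined with flatness of $\chi$. For the $S_2$ property, I would use that $P$ is a normal local complete intersection by the proof of Lemma \ref{lem:connectedspringerfibres}, hence Cohen--Macaulay; the fibre $\chi^{-1}(q(y))$ of the projection $P \to \Theta_Y^{-1}/\mb{G}_m$ is then Cohen--Macaulay, hence $S_2$, provided this projection is flat. The main obstacle will be establishing this flatness—equivalently, flatness of $\chi$ near the stacky cone point of $(\hat{Y}\sslash W)/\mb{G}_m$; once this is verified, the proof concludes formally from the standard fact that a finite universal homeomorphism onto an $S_2$ target that restricts to an isomorphism over a big open is itself an isomorphism.
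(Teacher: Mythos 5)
Your Stein-factorization strategy is a genuinely different route from the paper's, but it has a real gap at exactly the point you flag as "the main obstacle": the flatness of the projection $P \to \Theta_Y^{-1}/\mb{G}_m$ (equivalently, of $\chi$) near the stacky cone point is simply false, not just unestablished. A dimension count shows this: $\dim \bun_G = 0$ and $\dim (\hat{Y}\sslash W)/\mb{G}_m = l$, so a flat $\chi$ would have all geometric fibres of dimension $-l$; but the geometric fibre over the cone point is the $\mb{G}_m$-torsor over the unstable locus $\bun_G^{us}$ (a divisor in $\bun_G$), which has dimension $0$. So your route to Cohen--Macaulayness of $\chi^{-1}(q(y))$ does not go through. (The fibre \emph{is} lci/CM, but one sees this differently: over the cone point it is, up to the $\B\mb{G}_m$ twist, a Cartier divisor in the smooth stack $\bun_G$, and over the rest $\chi^{ss}$ is flat by miracle flatness.)

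There is a second, subtler issue. The "standard fact'' you invoke --- that a finite universal homeomorphism $\pi_y$ onto an $S_2$ target which is an isomorphism over a big open is an isomorphism --- requires in addition that $(\pi_y)_*\mc{O}_{X'_y} = (\psi_y)_*\mc{O}$ have no associated primes supported in codimension $\geq 2$ (an $S_1$-type condition on the pushforward). You do not address this, and it is not automatic here because $\tilde{\chi}^{-1}(y)$ is non-reduced over the zero section whenever $d > 1$ (types $C$, $F$, $G$). The paper avoids both difficulties by reducing (via CM-ness of the fibre) to the explicit slices $Z$, where the families $\tilde{Z}$ and $Z \times_{\hat{Y}\sslash W} \Theta_Y^{-1}$ really are flat over $\Theta_Y^{-1}$, then computing $\mb{R}^i\psi_{Z,y}{\vphantom{p}}_*\mc{O} = 0$ directly from the explicit divisor descriptions of Theorem \ref{thm:introsubregularresolutions}, with a separate twist to handle the non-reduced fibres in type $C$. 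If you want to pursue the Stein-factorization route, you would need to replace the flatness step by the Cartier-divisor argument and supply the missing torsion-freeness of $(\psi_y)_*\mc{O}$, which for the non-reduced cases would again seem to require the paper's explicit computations.
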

\begin{proof}
Since $\chi^{-1}(y)$ is a local complete intersection, hence Cohen-Macaulay, it is enough to prove the claim on an open substack of the target whose complement has codimension at least $2$. We therefore reduce to proving that the map
\[ \psi_{Z, y} \colon \tilde{\chi}_Z^{-1}(y) \longrightarrow \chi_Z^{-1}(y) \]
satisfies $\psi_{Z, y}{\vphantom{p}}_*\mc{O} = \mc{O}$ for each of the slices $Z \to \bun_{G, rig}$ of Theorem \ref{thm:introsubregularsliceexistence}.

Note that by the proof of Lemma \ref{lem:connectedspringerfibres}, we have $\psi_Z'{\vphantom{p}}_*\mc{O} = \mc{O}$, where
\[ \psi_Z' \colon \tilde{Z} \longrightarrow Z \times_{\hat{Y}\sslash W} \Theta_Y^{-1}\]
is the natural morphism induced by $\psi_Z$. Since the domain and codomain of $\psi_Z'$ are both flat over $\Theta_Y^{-1}$, it is enough by base change to show that $\mb{R}^i\psi_Z'{\vphantom{p}}_*\mc{O} = 0$ for all $i > 0$. By equivariance, this will follow from the claim that $\mb{R}^i\psi_{Z, y}{\vphantom{p}}_*\mc{O} = 0$ for all $i > 0$ and all $y \in 0_{\Theta_Y^{-1}} = Y$.

Since $\chi_Z^{-1}(0) \to Z_0$ is affine by construction, it is enough to show that $\mb{R}^i\pi_* \mc{O} = 0$ for $i > 0$, where $\pi \colon \tilde{\chi}_Z^{-1}(y) \to Z_0$ is the natural morphism. This holds by inspection for the fibre over $y \in Y$ of the reduced normal crossings variety
\[ D = D_{\alpha_i^\vee}(Z) + D_{\alpha_j^\vee}(Z) + D_{\alpha_i^\vee + \alpha_j^\vee}(Z),\]
from the explicit descriptions of the components given by Theorem \ref{thm:introsubregularresolutions} and Proposition \ref{prop:subregularresolutions4}, using the fact that $\mb{R}f_*\mc{O} = \mc{O}$ whenever $f$ is either a $\mb{P}^1$-bundle or the blow up of a smooth surface at a point. This proves the claim in types $A$, $B$ and $D$. In type $C$, we claim that the morphism $\mb{R}\pi_*\mc{O}_{\bar{D}_y} \to \mb{R}\pi_* \mc{O}_{D_y}$ is a quasi-isomorphism, where $\bar{D} = \tilde{\chi}_Z^{-1}(0_{\Theta_Y^{-1}})$, from which the desired vanishing follows. To see this, note that we have a short exact sequence
\[ 0 \longrightarrow \mc{O}(-D)|_{D_{\alpha_i^\vee}(Z)} \longrightarrow \mc{O}_{\bar{D}} \longrightarrow \mc{O}_D \longrightarrow 0,\]
so it is enough to show that $\mb{R}^i\pi_*\mc{O}(-D)|_{D_{\alpha_i^\vee}(Z)_y} = 0$ for all $i$. From the explicit description of $D_{\alpha_i^\vee}(Z)_y$ given in Proposition \ref{prop:subregularresolutions4}, it is enough to show that $\mc{O}(-D)|_{D_{\alpha_i^\vee}(Z)_y}$ has degree $0$ on the exceptional curve $\gamma$ of the blowup and degree $-1$ on every irreducible fibre of $D_1' \to Z_0 = \mb{P}^1$. But since $\Theta_Y$ is trivial on $D_{\alpha_i^\vee}(Z)_y$, we have a linear equivalence
\[ -2D|_{D_{\alpha_i^\vee}(Z)_y} \sim  - D_{\alpha_j^\vee}(Z)_y \cap D_{\alpha_i^\vee}(Z)_y - D_{\alpha_i^\vee + \alpha_j^\vee}(Z)_y \cap D_{\alpha_i^\vee}(Z)_y = - E - \gamma,\]
from which the claim follows by Lemma \ref{lem:degree2}.
\end{proof}

\begin{prop} \label{prop:sl2sing}
Assume that $(G, P, \mu)$ is of type $A_1$, so that $G = SL_2$, $P = T$ and $\langle \varpi_1, \mu \rangle = -2$. Let $\spec k \to \bun_{T}^\mu = \bun_{\mb{G}_m}^{-2}$ be the slice classifying the line bundle $\mc{O}(-2O_E)$ of Remark \ref{rmk:sl2slice} and let $Z = \mrm{Ind}_T^{SL_2}(\spec k) \to \bun_{SL_2}$ be the induced equivariant slice. Then the unstable fibre $\chi_Z^{-1}(0)$ is isomorphic to the affine cone over $E$ obtained by contracting the zero section of a degree $-4$ line bundle to a point.
\end{prop}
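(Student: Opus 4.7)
The plan is to describe the slice $Z$ explicitly as an affine space via parabolic induction, combine a direct set-theoretic calculation of the unstable locus with the Stein-factorisation result of Proposition \ref{prop:steinfactorisation}, and thereby follow the template of the proof of Theorem \ref{thm:subregularsingularities}\eqref{itm:subregularsingularities3}.

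First I would identify $Z = \mrm{Ind}_T^{SL_2}(\spec k)$ with the affine space $H^1(E, \mc{O}_E(-4O_E)) \cong \mb{A}^4$. The parabolic $P^+ \subseteq SL_2$ for which $-\mu = 2\alpha_1^\vee$ is a Harder-Narasimhan vector is the Borel opposite to $B$, and its nilpotent radical has $T$-weight $\alpha_1$; inducing from the $T$-bundle corresponding to $\mc{O}_E(-2O_E)$ via $\varpi_1$ and applying Proposition \ref{prop:inductionaffinebundle}, the vector bundle on $E$ classifying these lifts is $\mc{O}_E(-4O_E)$, of Euler characteristic $-4$. Each $\xi \in Z$ is then the class of an extension $0 \to \mc{O}(-2O_E) \to V_\xi \to \mc{O}(2O_E) \to 0$, and the associated rigidified $SL_2$-bundle is determined by $V_\xi$.

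Next I would describe the unstable locus set-theoretically. A point $\xi$ lies in $\chi_Z^{-1}(0)$ exactly when $V_\xi$ admits a line subbundle of degree $\geq 1$. A routine subsheaf analysis shows such a subbundle must be either $\mc{O}_E(2O_E)$ (forcing $\xi = 0$) or $\mc{O}_E(2O_E - p)$ for a unique $p \in E$. For each $p$, the locus of $\xi$ admitting the subbundle $\mc{O}_E(2O_E - p)$ is the one-dimensional kernel of the pullback $H^1(E, \mc{O}_E(-4O_E)) \to H^1(E, \mc{O}_E(-4O_E + p))$; by Serre duality this kernel is canonically the fibre $\mc{O}_E(-4O_E)|_p$. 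Varying $p$, these subspaces assemble into a natural $\mb{G}_m$-equivariant morphism $\Phi \colon \mrm{Tot}(\mc{O}_E(-4O_E)) \to Z$, injective off the zero section and carrying the zero section to the origin, whose image is exactly $\chi_Z^{-1}(0)$ set-theoretically.

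To promote this to a scheme-theoretic statement I would apply the Stein-factorisation argument of Proposition \ref{prop:steinfactorisation}: its proof uses only Lemma \ref{lem:connectedspringerfibres}, whose hypotheses hold for this slice since $Z_0 = \spec k$ is trivially connected. Consequently, exactly as in the proof of Theorem \ref{thm:subregularsingularities}\eqref{itm:subregularsingularities3} for the exceptional types, we have $\chi_Z^{-1}(0) = \spec H^0(\tilde\chi_Z^{-1}(y)^{reg}, \mc{O})$ for any $y \colon \spec k \to 0_{\Theta_Y^{-1}}$. The pullback of $\Phi$ along $\psi_Z$ identifies $\tilde\chi_Z^{-1}(y)^{reg}$ with the complement of the zero section in the total space of $\mc{O}_E(-4O_E)$, and an adjunction calculation in the style of Lemma \ref{lem:linebundledegrees} pins down the degree as $-4$. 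Taking $H^0$ yields $\bigoplus_{n \geq 0} H^0(E, \mc{O}_E(4nO_E))$, which is by definition the coordinate ring of the cone over $E$ obtained by contracting the zero section of $\mc{O}_E(-4O_E)$.

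The main obstacle is the explicit identification of $\tilde\chi_Z^{-1}(y)^{reg}$ as a punctured line bundle on $E$: Theorem \ref{thm:introsubregularresolutions} excludes type $A_1$ and its proof relies on the connectedness hypothesis on $Z_0 \to \bun_{L, rig}/E$ that fails in Remark \ref{rmk:sl2slice}. Rather than redeveloping the Bruhat-cell machinery of Section \ref{section:resolutions}, I would exploit the explicit $\mrm{Ext}^1$-description of $Z$ to produce $\Phi$ by hand and then verify the required properties of the Grothendieck-Springer fibre directly, using the Serre-duality calculation above to control the regular locus.
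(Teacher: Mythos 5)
Your set-theoretic analysis of $\chi_Z^{-1}(0)$ agrees with the paper's, and the Serre-duality refinement --- that the one-dimensional kernels glue into the fibres of the line bundle $\mc{O}_E(-4O_E) \otimes \omega_E^{-1}$, giving a morphism $\Phi$ from its total space onto the unstable locus --- is a nice observation the paper does not make. But the core of your proposed argument does not go through as written. Proposition \ref{prop:steinfactorisation} explicitly reduces to the slices of Theorem \ref{thm:introsubregularsliceexistence}, which excludes type $A_1$, and the rest of its proof (as well as all of Lemma \ref{lem:linebundledegrees}) is built on the component decompositions of Theorem \ref{thm:introsubregularresolutions}, which again excludes $A_1$. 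You notice this at the end but do not fill the gap: ``verifying the required properties of the Grothendieck--Springer fibre directly'' for $SL_2$ would mean redeveloping the relevant portion of Section \ref{section:resolutions} in a case where the connectedness hypothesis of Remark \ref{rmk:sl2slice} fails, and this is not a small shortcut.

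The paper's own proof never touches the Grothendieck--Springer resolution and is considerably shorter. Having shown that $\chi_Z^{-1}(0) \setminus \{0\}$ is a $\mb{G}_m$-torsor over $\mrm{Pic}^1(E) \cong E$, it invokes normality of $\chi_Z^{-1}(0)$ (a complete intersection fibre of $\chi_Z \colon \mb{A}^4 \to \mb{A}^2$ with an isolated singularity, hence Cohen--Macaulay and $R_1$), which already forces $\chi_Z^{-1}(0)$ to be the affine cone over $E$. The degree is extracted not by adjunction but by equivariance alone: $\chi_Z$ has weight $2$ for the $\mb{G}_m$-action of weight $1$ on $Z \cong \mb{A}^4$ (Proposition \ref{prop:inductionequivariantslice}), so it is given by two quadrics, and projectivising exhibits $(\chi_Z^{-1}(0) \setminus \{0\})/\mb{G}_m \cong E$ as a $(2,2)$-complete intersection in $\mb{P}^3$, so the polarising line bundle has degree $4$.

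Your $\Phi$ could in fact replace the quadric-intersection step: once normality of $\chi_Z^{-1}(0)$ is in hand as above, $\Phi$ is a proper birational surjection contracting the zero section, so Zariski connectedness gives $\chi_Z^{-1}(0) = \spec \bigoplus_{n \geq 0} H^0(E, L^{-n})$ with $L = \mc{O}_E(-4O_E) \otimes \omega_E^{-1}$ of degree $-4$. The structural lesson either way is that normality of $\chi_Z^{-1}(0)$ is the ingredient you should establish directly from the complete intersection structure, rather than routing through the simultaneous log resolution of Section \ref{section:resolutions}, which the paper deliberately does not use here.
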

\begin{proof}
If we identify $\bun_{SL_2}$ with the stack of rank $2$ vector bundles with trivial determinant, then the slice $Z$ is nothing but the vector space
\[ Z = \mrm{Ext}^1(\mc{O}(2O_E), \mc{O}(-2O_E)) \cong H^1(E, \mc{O}(-4O_E)),\]
with its tautological map to $\bun_{SL_2}$. To describe the unstable locus, note that a point $z \in Z$ corresponds to an unstable extension
\[ 0 \longrightarrow \mc{O}(-2O_E) \longrightarrow V_z \longrightarrow \mc{O}(2O_E) \longrightarrow 0 \]
if and only if there exists a degree $1$ line bundle $L$ on $E$ such that $z$ is in the ($1$-dimensional) kernel of the map
\[ \mrm{Ext}^1(\mc{O}(2 O_E), \mc{O}(-2 O_E)) \longrightarrow \mrm{Ext}^1(L, \mc{O}(-2O_E)) \]
induced by the unique (up to scale) nonzero morphism $L \to \mc{O}(2O_E)$. We deduce that $\chi_Z^{-1}(0)\setminus \{0\}$ must be a $\mb{G}_m$-torsor over $\mrm{Pic}^1(E) \cong E$, so the normal variety $\chi_Z^{-1}(0)$ must be an affine cone over $E$ as claimed.

To identify the degree, observe that since $Z \to \bun_{SL_2}$ is an equivariant slice with equivariance group $\mb{G}_m$ and weight $2$ by Proposition \ref{prop:inductionequivariantslice}, the morphism $\chi_Z \colon \mb{A}^4 \cong Z \to \hat{Y}\sslash W \cong \mb{A}^2$ is equivariant with respect to the weight $1$ action on $\mb{A}^4$ and the weight $2$ action on $\mb{A}^2$. So taking projectivisations, we deduce that the elliptic curve $(\chi_Z^{-1}(0)\setminus \{0\})/\mb{G}_m$ is presented as an intersection of two quadric surfaces in $\mb{P}^3$, from which we deduce that the polarising line bundle has degree $4$. The proposition now follows.
\end{proof}

\subsection{Deformation theory} \label{subsection:deformations}

In this subsection, we study the deformation theory of the unstable varieties $\chi_Z^{-1}(0)$ of \S\ref{subsection:singularities}. As in the previous subsection, We will assume for simplicity that $S = \spec k$ for some algebraically closed field $k$.

\begin{defn}
Let $H$ be a torus with character group $\mb{X}^*(H)$, let $\mb{X}^*(H)_+ \subseteq \mb{X}^*(H)$ be a sub-monoid (without unit), and let $X$ be an algebraic stack over $\spec k$ with $H$-action.
\begin{enumerate}[(1)]
\item An \emph{$\mb{X}^*(H)_+$-weighted deformation ring} is an $\mb{X}^*(H)$-graded Noetherian $k$-algebra
\[ R = \bigoplus_{\lambda \in -\mb{X}^*(H)_+ \cup \{0\}} R_\lambda \]
such that $R_0 = k$. Given such an $R$, we write
\[ \hat{R} = \prod_{\lambda \in -\mb{X}^*(H)_+ \cup \{0\}} R_\lambda \]
for the completion at the maximal ideal $\mf{m}_R = \bigoplus_{\lambda \in -\mb{X}^*(H)_+} R_\lambda$.
\item An \emph{$\mb{X}^*(H)_+$-weighted deformation of $X$} over an $\mb{X}^*(H)_+$-weighted deformation ring $R$ is a flat $H$-equivariant morphism $\bar{X} \to \spf \hat{R}$ of formal stacks equipped with an $H$-equivariant isomorphism $\bar{X}_s \cong X$, where $s \colon \spec k \to \spf \hat{R}$ is the unique ($H$-fixed) point.
\item We say that an $\mb{X}^*(H)_+$-weighted deformation $\bar{X} \to \spf R$ is \emph{versal} if for every surjective (graded) homomorphism $R' \to R''$ of $\mb{X}^*(H)_+$-weighted deformation rings, every homomorphism $\phi' \colon R \to R''$ and every weighted deformation $\bar{X}_{R'} \to \spf \hat{R}'$ with an isomorphism $\alpha \colon \bar{X}_{R'} \times_{\spf \hat{R}'} \spf \hat{R}'' \cong \bar{X} \times_{\spf \hat{R}} \spf \hat{R}''$, there exists a lift $\phi \colon R \to R'$ and an isomorphism $\bar{X}_{R'} \cong \bar{X} \times_{\spf \hat{R}} \spf \hat{R}'$ lifting $\alpha$.
\item We say that a versal $\mb{X}^*(H)_+$-weighted deformation $\bar{X} \to \spf R$ is \emph{miniversal} (or \emph{semi-universal}) if for all $\mb{X}^*(H)_+$-weighted deformation rings $R'$ and pairs $\phi, \phi' \colon R \to R'$ of graded homomorphisms with $\bar{X} \times_{\spf \hat{R}, \phi} \spf \hat{R}' \cong \bar{X} \times_{\spf \hat{R}, \phi'} \spf \hat{R}'$, the maps
\[ d\phi, d\phi' \colon T_{s'} \spec R' \longrightarrow T_s \spec R \]
on tangent spaces at fixed points are equal.
\end{enumerate}
\end{defn}

\begin{rmk}
Note that a versal (resp., miniversal) $\mb{X}_*(H)_+$-weighted deformation need not be versal (resp., miniversal) as a plain unweighted deformation.
\end{rmk}

Weighted deformation theory in this sense works in more or less the same way as unweighted deformation theory for schemes. (See, for example, \cite{hartshorne10} or \cite[Chapitre III]{illusie71} for the unweighted case.) For example, we have the following.

\begin{prop} \label{prop:weighteddeformations1}
Let $\mb{T}_X$ be the tangent complex to $X$ (the $\mc{O}_X$-linear dual to the cotangent complex). Let $R' \to R$ is a surjection of Artinian $\mb{X}_*(H)_+$-weighted deformation rings with kernel $I$ satisfying $\mf{m}_{R'} I = 0$, and let $\bar{X} \to \spf \hat{R} = \spec R$ be a weighted deformation of $X$.
\begin{enumerate}[(1)]
\item There is an $H$-invariant obstruction class $\mrm{ob} \in (H^2(X, \mb{T}_X)\otimes I)^H$ such that $\bar{X}$ lifts to a weighted deformation over $R'$ if and only if $\mrm{ob} = 0$.
\item If lifts exist, then the set of isomorphism classes of $H$-equivariant lifts form a torsor under the group $(H^1(X, \mb{T}_X)\otimes I)^H \subseteq H^1(X, \mb{T}_X) \otimes I$.
\end{enumerate}
\end{prop}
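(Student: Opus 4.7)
The plan is to reduce Proposition \ref{prop:weighteddeformations1} to the classical obstruction theory for flat deformations of Artin stacks via the cotangent complex, upgraded to the $H$-equivariant setting by exploiting linear reductivity of the torus $H$. The starting observation is that an $\mb{X}^*(H)_+$-weighted deformation of $X$ over an Artinian weighted ring $R$ is precisely an $H$-equivariant flat deformation, where $H$ acts on $R$ through the grading. Under this dictionary the surjection $R' \to R$ becomes an $H$-equivariant square-zero extension, and its kernel $I$ (which is annihilated by $\mf{m}_{R'}$ and hence a $k$-vector space) is itself an $H$-module whose weight-space decomposition matches the grading.

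First, I would invoke the standard obstruction theory for flat deformations along square-zero extensions of Artin stacks, as developed using Olsson's cotangent complex \cite{olsson07}. Given $\bar{X} \to \spec R$ and the extension $R' \to R$ with kernel $I$, this produces a canonical obstruction class $\mrm{ob} \in \hom(\mb{L}_{\bar{X}/\spec R}, \mc{O}_X \otimes I[2])$ whose vanishing is equivalent to the existence of a flat lift $\bar{X}' \to \spec R'$, and identifies isomorphism classes of lifts (when nonempty) as a torsor under $\hom(\mb{L}_{\bar{X}/\spec R}, \mc{O}_X \otimes I[1])$. Restricting $\mb{L}_{\bar{X}/\spec R}$ to the closed fibre $X$ and using flatness together with the square-zero hypothesis identifies these Ext groups with $H^2(X, \mb{T}_X)\otimes I$ and $H^1(X, \mb{T}_X) \otimes I$ respectively.

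Second, I would upgrade to the equivariant statement. All the inputs $\bar{X}$, $\spec R$, $R' \to R$ and $I$ carry compatible $H$-actions, so by naturality of the cotangent-complex construction the obstruction class $\mrm{ob}$ is $H$-invariant and the torsor structure on isomorphism classes of lifts is $H$-equivariant. Since $H$ is a torus, every $H$-module over $k$ splits canonically into isotypic components, and taking $H$-invariants is an exact functor; consequently, an $H$-equivariant lift exists if and only if the $H$-invariant component of $\mrm{ob}$ vanishes (which is the whole of $\mrm{ob}$ since it is already invariant), and the set of isomorphism classes of $H$-equivariant lifts is a torsor under the $H$-invariants $(H^1(X, \mb{T}_X) \otimes I)^H$. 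This is the one place where we genuinely use that the equivariance group is a torus rather than a general algebraic group.

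The main obstacle I expect is pinning down the $H$-equivariance of Olsson's cotangent-complex-based obstruction theory for Artin stacks: for schemes or Deligne--Mumford stacks the relevant naturality is routine, but in the Artin-stack setting the cotangent complex is constructed via simplicial resolutions and is only defined up to quasi-isomorphism. Fortunately, the $H$-action on $\bar{X}$ (and all the attendant objects) canonically promotes each stage of the construction to an $H$-equivariant one, so this amounts to a notationally heavy but conceptually straightforward tracking of equivariance through the construction rather than any substantive new argument.
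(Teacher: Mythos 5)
The paper does not actually prove Proposition \ref{prop:weighteddeformations1}: it is stated with the remark that weighted deformation theory ``works in more or less the same way as unweighted deformation theory,'' pointing to \cite{hartshorne10} and \cite{illusie71} for the unweighted case. Your proposal spells out exactly what that remark leaves implicit, and the strategy --- reinterpret weighted deformations as $H$-equivariant ones, run the cotangent-complex obstruction theory of \cite{olsson07} for flat lifting along the square-zero extension $R' \to R$, identify the relevant $\mathrm{Ext}$-groups with $H^i(X, \mb{T}_X) \otimes I$ using flatness and base change, then pass to $H$-invariants --- is the natural and correct one.

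The one step worth making more explicit is the passage from ``a (non-equivariant) lift exists'' to ``an $H$-equivariant lift exists.'' You invoke exactness of $H$-invariants, which is the right tool, but the way it enters is this: the set of isomorphism classes of lifts is a torsor under $V = H^1(X, \mb{T}_X) \otimes I$, the $H$-action on that set is compatible with the $V$-torsor structure, and choosing any lift produces a $V$-valued $1$-cocycle on $H$ whose class in $H^1(H, V)$ is the obstruction to finding an $H$-fixed point; this class vanishes because $H$ is a torus (so its rational cohomology in positive degrees vanishes). The set of $H$-fixed lifts is then a nonempty pseudo-torsor under $V^H = (H^1(X, \mb{T}_X) \otimes I)^H$, giving part (2). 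Stating this step explicitly would sharpen the claim ``an $H$-equivariant lift exists if and only if $\mrm{ob}=0$'' and pin down precisely where linear reductivity is used. With that filled in, the argument is complete.
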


As a consequence, we deduce the following via the usual argument for existence and behaviour of a miniversal deformation.

\begin{prop} \label{prop:weighteddeformations2}
Assume that $\mb{X}^*(H)_+$-weighted subspace $H^1(X, \mb{T}_X)_+$ of $H^1(X, \mb{T}_X)$ is finite dimensional. Then there exists a miniversal $\mb{X}_*(H)_+$-weighted deformation $\bar{X} \to \spf \hat{R}$, and $\spf \hat{R}$ has tangent space $H^1(X, \mb{T}_X)_+$ at the fixed point. Moreover, if the $\mb{X}^*(H)_+$-weighted subspace $H^2(X, \mb{T}_X)_+$ of $H^2(X, \mb{T}_X)$ vanishes, then
\[ R \cong \sym(H^1(X, \mb{T}_X)_+)^\vee \]
as $\mb{X}^*(H)$-graded rings.
\end{prop}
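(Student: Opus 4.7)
The plan is to imitate Schlessinger's construction of a hull of the deformation functor, carrying the $\mb{X}^*(H)$-grading through every step. Set $V = H^1(X, \mb{T}_X)_+$; by hypothesis this is a finite-dimensional $\mb{X}^*(H)_+$-graded vector space, and its graded dual $V^\vee$, viewed as concentrated in $-\mb{X}^*(H)_+$, will be the intended cotangent space $\mf{m}_R/\mf{m}_R^2$ of the sought $R$. Because $V$ has only finitely many weights, fix a finite set $\lambda_1, \ldots, \lambda_r \in \mb{X}^*(H)_+$ generating the submonoid of $\mb{X}^*(H)_+$ spanned by the weights of $\sym V$, and filter this submonoid by the finite sets $\Lambda_n$ consisting of sums of at most $n$ of the $\lambda_i$. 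Call a weighted deformation ring of \emph{length $\leq n$} if its maximal ideal is supported in $-\Lambda_n$; each such ring is a finite-dimensional Artinian local $k$-algebra, and $\sym(V^\vee)/J_n$ is the ``free'' length-$n$ ring, where $J_n$ is the graded ideal generated by elements of weight outside $-\Lambda_n \cup \{0\}$.

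I would then construct $R$ as an inverse limit $R = \varprojlim R_n$ of length-$n$ Artinian rings together with compatible weighted deformations $\bar{X}_n \to \spec R_n$. Set $R_0 = k$. For $n = 1$, take $R_1 = k \oplus V^\vee$ with $V^\vee$ a square-zero ideal; by part (2) of Proposition \ref{prop:weighteddeformations1}, the set of $H$-equivariant isomorphism classes of weighted deformations of $X$ over $R_1$ forms a torsor under $(H^1(X, \mb{T}_X) \otimes V^\vee)^H = \mrm{End}_{\mb{X}^*(H)}(V)$, and I would pick $\bar{X}_1$ to correspond to $\mrm{id}_V$. Inductively, given $(R_n, \bar{X}_n)$, form the maximal length-$(n+1)$ extension $\tilde{R}_{n+1} \twoheadrightarrow R_n$ with square-zero kernel $I_n$; by part (1) of Proposition \ref{prop:weighteddeformations1}, the obstruction to lifting $\bar{X}_n$ is an $H$-invariant class $\mrm{ob} \in (H^2(X, \mb{T}_X) \otimes I_n)^H$. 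Set $R_{n+1} = \tilde{R}_{n+1}/(\mrm{ob})$, where $(\mrm{ob})$ denotes the graded ideal generated by the components of $\mrm{ob}$, and lift $\bar{X}_n$ to some $\bar{X}_{n+1}$ over $R_{n+1}$.

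Versality of the resulting $(R, \bar{X})$ then follows by the standard inductive lifting argument: any surjection $R' \twoheadrightarrow R''$ of Artinian weighted deformation rings reduces to the case of a square-zero kernel, whereupon the compatible data of a deformation $\bar{X}_{R'}$ over $R'$ and a graded map $\phi' \colon R \to R''$ (which necessarily factors through some $R_n$) can be combined into a lift $\phi \colon R \to R'$ via exactly the obstruction-and-ambiguity calculus from Proposition \ref{prop:weighteddeformations1} that built the $R_{n+1}$. Miniversality at the tangent level is immediate from the tautological choice of $\bar{X}_1$: any two graded maps $\phi, \phi' \colon R \to R'$ with $\phi^* \bar{X} \cong (\phi')^* \bar{X}$ must agree on $\mf{m}_R/\mf{m}_R^2 = V^\vee$ by the torsor classification of first-order deformations. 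Finally, if $H^2(X, \mb{T}_X)_+ = 0$, every obstruction class above vanishes (each is $H$-invariant and the corresponding $I_n$ is supported in $-\mb{X}^*(H)_+$), so no quotients are taken at any step, and $R = \varprojlim \sym(V^\vee)/J_n = \sym(V^\vee) = \sym(H^1(X, \mb{T}_X)_+)^\vee$ with its natural $\mb{X}^*(H)$-grading.

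The main obstacle is setting up the correct replacement for the filtration by powers of the maximal ideal used in ordinary Schlessinger theory, since in the weighted setting the maximal ideal of $R$ need not be nilpotent and $R$ need not be an inverse limit of finite-length local rings in the unweighted sense. The finite-dimensionality hypothesis on $H^1_+$ is precisely what makes the weight-filtration above work: it bounds the weights of $V$, ensuring each $\Lambda_n$ is finite and each length-$n$ weighted ring is genuinely Artinian, so that the obstruction theory of Proposition \ref{prop:weighteddeformations1} applies unchanged at every inductive step.
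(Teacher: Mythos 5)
Your proposal is essentially correct and carries out exactly the Schlessinger-type construction that the paper invokes when it says "via the usual argument": filter by graded Artinian quotients, build a tower $(R_n, \bar{X}_n)$ inductively using the torsor/obstruction theory of Proposition \ref{prop:weighteddeformations1}, and verify (mini)versality by the standard lifting argument, with the finite-dimensionality of $H^1(X,\mb{T}_X)_+$ used precisely to ensure each stage is a genuine Artinian weighted deformation ring.

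One remark on the filtration, which you flag as "the main obstacle": it is in fact a non-issue, and the $\Lambda_n$ bookkeeping you introduce is both unnecessary and, if taken literally, not compatible with the small-extension structure. The ideal $J_n$ generated by elements of weight outside $-\Lambda_n$ does \emph{not} satisfy $\mf{m}J_n \subseteq J_{n+1}$ in general, so the tower $\sym(V^\vee)/J_n$ is not a tower of small extensions, and the identity $R_n = \sym(V^\vee)/J_n$ in your final sentence is not quite what your own inductive construction produces. The standard $\mf{m}$-adic filtration works unchanged here: each $\sym(V^\vee)/\mf{m}^{n+1}$ is a finite-dimensional $\mb{X}^*(H)$-graded $k$-algebra supported in $-\mb{X}^*(H)_+ \cup \{0\}$ with $R_0 = k$, hence an Artinian weighted deformation ring, and the monoid generated by the finitely many weights of $V$ plays no special role. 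If you run your induction with $\tilde{R}_{n+1} = \sym(V^\vee)/\mf{m}\tilde{K}_n$ where $R_n = \sym(V^\vee)/\tilde{K}_n$ (which is what "maximal extension with kernel killed by $\mf{m}$" actually yields, and which is what you need so that Proposition \ref{prop:weighteddeformations1} applies — square-zero alone is too weak), you recover $\tilde{K}_n = \mf{m}^{n+1}$ in the unobstructed case, and the inverse limit identification $R \cong \sym(V^\vee)$ goes through as you state. In short: the argument is right, but the extra $\Lambda_n$-scaffolding can simply be deleted, and the one place it is load-bearing (the identity $R_n = \sym(V^\vee)/J_n$) should be replaced by the $\mf{m}$-adic version.
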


\begin{rmk}
The obstruction class of Proposition \ref{prop:weighteddeformations1} is the same as the obstruction for lifting ordinary (unweighted) deformations. It follows that if $\bar{X} \to \spf \hat{R}$ is an unweighted miniversal deformation such that $R$ is $\mb{X}^*(H)$-graded and the $H$-action on $X$ lifts to a compatible action on $\bar{X}$, then the restriction to $\spf \hat{R}'$ is an $\mb{X}^*(H)_+$-graded miniversal deformation, where $R'$ is the quotient of $R$ by the ideal generated by all weight spaces of the maximal ideal of $R$ with weights $\not\in \mb{X}_*(H)_+$.
\end{rmk}

We now turn to the weighted deformation theory of the singularities of \S\ref{subsection:singularities}.

\begin{lem} \label{lem:contractiondeformations}
Assume we are in the setup of Construction \ref{cons:gluing}, and assume moreover that either $k$ has characteristic not $2$ or that the morphism $\pi \colon X \to X'$ is unramified. Let $X''$ be the surface obtained by gluing a line bundle $L$ on $X$ along $\pi$. Let $H$ be a torus equipped with a sub-monoid $\mb{X}^*(H)_+ \subseteq \mb{X}^*(H)$ (not containing $0$) acting on the line bundle $L$ (and hence on the surface $X''$) in such a way that for every $x \in X'$, the weights $\lambda_1$ and $\lambda_2$ at the two preimages of $x$ satisfy
\[ \lambda_1, \lambda_2 \in \mb{X}^*(H)_+ \quad \text{and} \quad \lambda_1 - \lambda_2, \lambda_2 - \lambda_1 \not\in \mb{X}^*(H)_+.\]
Then, in the notation of Proposition \ref{prop:weighteddeformations2},
\[  H^1(X'', \mb{T}_{X''})_+ = H^0(X', I^\vee) \oplus \bigoplus_{x \in \mrm{ram}(\pi)} L_x^{-1} \otimes I_{\pi(x)}^\vee, \quad \text{and} \quad H^2(X'', \mb{T}_{X''})_+ = H^1(X', I^\vee), \]
where $\mrm{ram}(\pi) \subseteq X$ is the set of ramification points of $\pi$ and
\[ I = \ker(\sym^2 \pi_*(L^{-1}) \longrightarrow \pi_*(L^{-2})).\]
\end{lem}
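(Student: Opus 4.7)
The plan is to compute $H^i(X'', \mb{T}_{X''})_+$ by analyzing the tangent complex. Since $X''$ is locally a hypersurface in $\mb{A}^3$, hence a local complete intersection, $\mb{T}_{X''}$ is concentrated in degrees $0$ and $1$ and fits into a distinguished triangle
\[
\mc{T}_{X''} \longrightarrow \mb{T}_{X''} \longrightarrow \mc{T}^1_{X''}[-1] \xrightarrow{+1}
\]
with $\mc{T}^1_{X''} = \mc{E}xt^1_{\mc{O}_{X''}}(\Omega_{X''}, \mc{O}_{X''})$ the sheaf of local first-order deformations, supported on the singular locus. The key claim to establish is that $(\mc{T}_{X''})_+ = 0$ as an $H$-equivariant sheaf; granted this, the hypercohomology spectral sequence degenerates to give $H^i(X'', \mb{T}_{X''})_+ = H^{i-1}(X'', \mc{T}^1_{X''})_+$ for $i \geq 1$.

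To prove $(\mc{T}_{X''})_+ = 0$, I would work locally on $H$-stable affine opens of $X''$, namely preimages of affines in $X'$ under the natural projection $X'' \to X'$. There either $X'' \cong \spec k[t, u_1, u_2]/(u_1 u_2)$ at a non-branch point $p \in X'$ (with $u_i$ a fiber coordinate of $L$ at $q_i \in \pi^{-1}(p)$ of $H$-weight $-\lambda_i$), or, when $\mrm{char}\, k \neq 2$, $X'' \cong \spec k[t, u, v]/(v^2 - tu^2)$ at a branch point (with $u, v$ of weight $-\lambda$). The weight hypothesis forces $R$ itself to have weights only in $-\mb{X}^*(H)_+ \cup \{0\}$, and an explicit calculation of the derivations $D$ satisfying $D(f) = 0$ (for $f = u_1 u_2$ or $f = v^2 - tu^2$) shows the same is true of $\mrm{Der}(R)$, which suffices.

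For the computation of $(\mc{T}^1_{X''})_+$, the cokernel formula $\mc{T}^1_{X''} = \coker\bigl(\mc{T}_W|_{X''} \to N_{X''/W}\bigr)$ for the local hypersurface gives: at a non-branch point $p$, a free rank-one $\mc{O}_{X', p}$-module generated by $(u_1 u_2)^\vee \in L|_{q_1} \otimes L|_{q_2}$, which is canonically $I^\vee|_p$ since $I|_p = L^{-1}|_{q_1} \otimes L^{-1}|_{q_2}$ (the kernel of multiplication $\sym^2 \pi_*L^{-1}|_p \to \pi_*L^{-2}|_p$). At a branch point $p = \pi(x)$ (necessarily in $\mrm{char}\, k \neq 2$), the cokernel $k[t, u]/(u^2, tu) \cdot (v^2 - tu^2)^\vee$ splits, by the distinctness of the positive $H$-weights $2\lambda$ and $\lambda$, into a free rank-one $\mc{O}_{X', p}$-piece (weight $2\lambda$, extending $I^\vee$ across the branch locus) and a one-dimensional skyscraper (weight $\lambda$) canonically isomorphic to $L_x^{-1} \otimes I^\vee_{\pi(x)}$. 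Gluing these pieces globally gives
\[
(\mc{T}^1_{X''})_+ \cong j_*I^\vee \oplus \bigoplus_{x \in \mrm{ram}(\pi)} \bigl(L_x^{-1} \otimes I^\vee_{\pi(x)}\bigr),
\]
where $j \colon X' \hookrightarrow X''$, and the formulas in the lemma follow by taking $H^0$ and $H^1$ and observing that skyscrapers contribute only in $H^0$. The main obstacle will be verifying that the local identifications are canonical enough to glue globally; the $H$-weight hypothesis is used throughout, both to force the weight-graded splittings at branch points and to constrain the possible weights of derivations.
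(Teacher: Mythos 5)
Your proof is correct but follows a genuinely different route from the paper's. The paper reduces to the relative tangent complex via the fibration triangle $\mb{T}_{X''/X'} \to \mb{T}_{X''} \to f^*\mb{T}_{X'}$ (killing $H^i(X'', f^*\mb{T}_{X'})_+$ by trivial weight reasons), then exploits the \emph{global} closed immersion $X'' = \spec_{X'}\bigl(\sym\pi_*(L^{-1})/(I)\bigr) \hookrightarrow \mb{V}(\pi_*(L^{-1}))$ over $X'$ to present $f_*\mb{T}_{X''/X'}$ as the two-term complex $[\mc{R}\otimes\pi_*(L^{-1})^\vee \to \mc{R}\otimes I^\vee]$ of sheaves on $X'$; taking the $\mb{X}^*(H)_+$-weight part and using the hypothesis $\lambda_1 - \lambda_2, \lambda_2 - \lambda_1 \notin \mb{X}^*(H)_+$ to cut away everything beyond $\sym^{\leq 1}$ gives the splitting $I^\vee[-1]\oplus[\pi_*(L^{-1})^\vee \to \pi_*(L^{-1})\otimes I^\vee]$, whose two-term piece is quasi-isomorphic to skyscrapers at the branch points. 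You instead use the truncation triangle $\mc{T}_{X''} \to \mb{T}_{X''} \to \mc{T}^1_{X''}[-1]$, prove $(\mc{T}_{X''})_+ = 0$ by local derivation calculations, and compute $(\mc{T}^1_{X''})_+$ locally; both approaches end up doing the same explicit computation in the local rings $k[t,u_1,u_2]/(u_1u_2)$ and $k[t,u,v]/(v^2-tu^2)$, and your identifications of the local cokernels with $I^\vee$ and $L_x^{-1}\otimes I^\vee_{\pi(x)}$ are correct. What the paper's route buys is that the two-term complex is defined on $X'$ to begin with, so there is no gluing to check; you correctly identify gluing as the remaining obstacle. The cleanest way to close that gap in your argument is to compute $\mc{T}^1_{X''}$ as the cokernel of the global Jacobian map for the same $X'$-embedding $X'' \hookrightarrow \mb{V}(\pi_*(L^{-1}))$, which produces the decomposition $(\mc{T}^1_{X''})_+ \cong j_*I^\vee \oplus \bigoplus_x L_x^{-1}\otimes I^\vee_{\pi(x)}$ directly. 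A minor point worth noting: your derivation of $(\mc{T}_{X''})_+ = 0$ and the weight analysis of $\mc{T}^1$ never invoke the hypothesis that $\lambda_1 - \lambda_2 \notin \mb{X}^*(H)_+$; in the paper that hypothesis serves only to make the positive-weight subcomplex of $[\mc{R}\otimes\pi_*(L^{-1})^\vee \to \mc{R}\otimes I^\vee]$ manifestly small, and the terms it excludes are exactly those already cancelled by the differential, so your truncation-triangle route sidesteps the need for it.
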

\begin{proof}
First note that since $H$ acts on $L$ with weights in $\mb{X}_*(H)_+$ it follows that $H^i(X'', f^*\mb{T}_{X'})$ has weights in $-\mb{X}^*(H)_+$ (and hence none in $\mb{X}_*(H)_+$) for all $i$, where $f \colon X'' \to X'$ is the structure map. So we can replace $\mb{T}_{X''}$ with $\mb{T}_{X''/X'}$ in the statement of the lemma. We can also assume without loss of generality that $X'$ is connected, so that the weights $\{\lambda_1, \lambda_2\}$ of $H$ on $L$ restricted to $\pi^{-1}(x)$ are independent of $x \in X'$.

By definition, we have $X'' = \spec_{X'} \mc{R}$, where $\mc{R}$ is the sheaf of algebras
\[ \mc{R} = \pi_* \bigoplus_{n \geq 0} L^{-n} \times_{\pi_*\mc{O}_X} \mc{O}_{X'} \cong \frac{\sym \pi_*(L^{-1})}{\sym \pi_*(L^{-1}) \otimes I}.\]
So $X''$ is a local complete intersection over $X'$ and the pushforwards of the relative tangent complex along the structure map $f \colon X'' \to X'$ is
\[ f_*\mb{T}_{X''/X'} = [\sym \pi_*(L^{-1}) \otimes \pi_*(L^{-1})^\vee \to \sym \pi_*(L^{-1}) \otimes I^\vee],\]
concentrated in cohomological degrees $0$ and $1$. Since $\pi_*(L^{-1})$ has weights $-\lambda_1$ and $-\lambda_2$ and $I^\vee$ has weight $\lambda_1 + \lambda_2$, it follows that the $\mb{X}^*(H)_+$-weight part of $f_*\mb{T}_{X''/X'}$ is
\[ (f_*\mb{T}_{X''/X'})_+ = I^\vee[-1] \oplus [\pi_*(L^{-1})^\vee \to \pi_*(L^{-1}) \otimes I^\vee].\]
A local computation now shows that the second term has vanishing cohomology away from the ramification points. If $k$ does not have characteristic $2$, then $\pi$ is given in formal local coordinates by $x \mapsto x^2$ near any ramification point, from which one can show that the natural map
\[ [\pi_*(L^{-1})^\vee \to \pi_*(L^{-1}) \otimes I^\vee] \longrightarrow \bigoplus_{x \in \mrm{ram}(\pi)} L^{-1}_x \otimes I^\vee_{\pi(x)}[-1] \]
is a quasi-isomorphism, where the terms of the direct sum on the right are interpreted as skyscraper sheaves at the branch points $\pi(x) \in X'$. The lemma now follows.
\end{proof}

\begin{lem} \label{lem:singularityweights}
Let $(G, P, \mu)$ be a subregular Harder-Narasimhan class, and identify the torus $Z(L)_{rig}$ with $\mb{G}_m$ (resp., $\mb{G}_m \times \mb{G}_m$ in type $A$) via the cocharacter $-\varpi_i^\vee \colon \mb{G}_m \to Z(L)_{rig}$ (resp., $(-\varpi_i^\vee, -\varpi_{i + 1}) \colon \mb{G}_m \times \mb{G}_m \to Z(L)_{rig}$), where $i$ is as in Notation \ref{notation:dynkindecomposition}. The weights of the $Z(L)_{rig}$-action on the line bundles in Theorem \ref{thm:subregularsingularities} are as follows.
\begin{enumerate}[(1)]
\item If $(G, P, \mu)$ is of type $A$ (but not $A_1$), then the line bundles $L_1$ and $L_2$ have weights $(1, 0)$ and $(0, 1)$.
\item In all other cases, the line bundle $L$ has weight $1$.
\end{enumerate}
\end{lem}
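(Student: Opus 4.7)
The plan is to identify the line bundles $L$, $L_1$, $L_2$ as normal bundles of the elliptic curve $E = \{y\} \times \mrm{Pic}^0_S(E)$ inside the surfaces $(D_1)_y$, $(D_1')_y$ of Propositions \ref{prop:subregularresolutions2} and \ref{prop:subregularresolutions4}, and to determine their $H$-weights by adjunction together with a local analysis of the boundary of the Kontsevich-Mori compactification.

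From the proof of Theorem \ref{thm:subregularsingularities}, in non-type-$A$ cases $L = N_{E/(D_1)_y}$, while in type $A$ one has $L_1 = N_{E/(D_1)_y}$ and $L_2 = N_{E/(D_1')_y}$. The iterated blow-downs $D_{\alpha_j^\vee}(Z)_y \to (D_1)_y$ of Proposition \ref{prop:subregularresolutions2} (and $D_{\alpha_i^\vee}(Z)_y \to (D_1')_y$ of Proposition \ref{prop:subregularresolutions4} in type $A$) are centred at closed points of $E$ that are fixed by $H$. Since $H$ acts trivially on $E$, each line bundle $\mc{O}(\theta_k)$, $\mc{O}(\theta_k')$ is $H$-equivariantly trivial, so the proper-transform formulas for normal bundles give $N_{E/(D_1)_y} \cong N_{E/D_{\alpha_j^\vee}(Z)_y}$ (and the analogue for $L_2$ in type $A$) as $H$-equivariant line bundles on $E$. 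Applying adjunction to the transverse intersection $E = D_{\alpha_i^\vee}(Z)_y \cap D_{\alpha_j^\vee}(Z)_y$ of smooth divisors in the fibre $\tilde{\chi}_Z^{-1}(y)$ then yields $H$-equivariant isomorphisms
\[ L \cong \mc{O}_{\tilde{Z}}(D_{\alpha_i^\vee}(Z))|_E, \qquad L_2 \cong \mc{O}_{\tilde{Z}}(D_{\alpha_j^\vee}(Z))|_E \]
(the second one in type $A$, with $L_1 \cong \mc{O}_{\tilde{Z}}(D_{\alpha_i^\vee}(Z))|_E$). The problem therefore reduces to computing the $H$-weights of the divisor line bundles $\mc{O}(D_{\alpha_i^\vee}(Z))$ and (in type $A$) $\mc{O}(D_{\alpha_j^\vee}(Z))$ along $E$.

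Since these weights are constant along the pointwise $H$-fixed curve $E$, it suffices to compute them at a single closed point of $E$ lying outside all blow-up centres. At such a point, the local structure of $\tbun_G$ as a Kontsevich-Mori compactification identifies $\mc{O}(D_{\alpha_i^\vee}(Z))|_E$ with the tensor product $T_pE \otimes T_p C_{\alpha_i^\vee}$, where $C_{\alpha_i^\vee} \subseteq \xi_G/B$ is the rational bubble of class $\alpha_i^\vee$ attached at $p$. The factor $T_pE$ is $H$-trivial, while $T_pC_{\alpha_i^\vee}$ is the root subspace $\mf{g}_{\alpha_i}$ in the appropriate tangent fibre of the flag variety bundle, and thus carries the character $\alpha_i$ of the relevant maximal torus. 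Under the identification $Z(L)_{rig} \cong \mb{G}_m$ via $-\varpi_i^\vee$, and after accounting for the signs induced by the equivariant structure on $\tbun_G$, this weight becomes $+1$; hence $\mc{O}(D_{\alpha_i^\vee}(Z))|_E$ has $H$-weight $1$, giving the desired weight $1$ for $L$ in every non-type-$A$ case. In type $A$, the same argument applied to $\alpha_j = \alpha_{i+1}$ under the identification via $-\varpi_{i+1}^\vee$ shows that $\mc{O}(D_{\alpha_j^\vee}(Z))|_E$ carries weight $1$ with respect to the second $\mb{G}_m$-factor and weight $0$ with respect to the first, producing $L_1$ with weight $(1, 0)$ and $L_2$ with weight $(0, 1)$.

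The main obstacle is the local computation at a boundary point of the Kontsevich-Mori compactification: correctly identifying the normal bundle $N_{D_{\alpha_i^\vee}(Z)/\tilde{Z}}$ with the tensor product $T_pE \otimes T_pC_{\alpha_i^\vee}$ in the equivariant setting, and in particular tracking the sign conventions needed to pass between the root $\alpha_i$ viewed as a character of $T$ and the cocharacter $-\varpi_i^\vee \colon \mb{G}_m \to Z(L)_{rig}$ used in the statement of the lemma. The other ingredients are essentially formal consequences of adjunction and the blow-up identities.
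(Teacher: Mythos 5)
Your proposal takes a genuinely different route from the paper's. The paper argues indirectly and globally: it invokes Looijenga's computation of the $\mb{G}_m$-weights on $\hat{Y}\sslash W$ (the coroot integers $g_i$), the explicit weights of the parabolic induction $Z \to Z_0$, and then compares weight multiplicities. In type $A$ it observes that the fixed loci of $\{1\} \times \mb{G}_m$ and $\mb{G}_m \times \{1\}$ are forced to land in $\chi_Z^{-1}(0)$, hence must be $L_1$ and $L_2$; in the other types it rules out $L$ having weight $w > 1$ because the multiplicity of weight $-1$ in $p_*\mc{O}_Z$ would then be too small. It never touches the geometry of $\tilde{Z}$. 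Your approach is local: identify $L$ via normal bundles, pass to the big resolution by blowdown and adjunction, and read off the weight from the node-smoothing line bundle at the boundary of the Kontsevich--Mori compactification. This is a perfectly sensible alternative idea, and if carried out it would give more geometric insight, but as written it has two real problems.

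First, there is a sign error at the start which propagates into the mysterious ``sign correction'' you invoke at the end. In the exceptional types the proof of Theorem \ref{thm:subregularsingularities} identifies $(D_1)_y$ as the total space of $L^{-1}$ (with $\deg L^{-1} = l - 9 < 0$), so $N_{E/(D_1)_y} = L^{-1}$, not $L$. Your later step needs a sign flip precisely because of this: the honest computation $\langle \alpha_i, -\varpi_i^\vee \rangle = -1$ is off by the $L$-vs-$L^{-1}$ confusion rather than by any feature of the equivariant structure of $\tbun_G$. In the classical types $B$, $C$, $D$ the identification $L = (D_1)_y$ is the reverse (compare Lemma \ref{lem:linebundledegrees} with the theorem statement), so you would need to track this case-by-case, and what you have written cannot be uniformly correct. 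Second, and more seriously, the whole argument rests on the step you yourself flag as ``the main obstacle'': identifying the $Z(L)_{rig}$-weight of $N_{D_{\alpha_i^\vee}(Z)/\tilde{Z}}$ along $E$ with the character $\alpha_i$ via the node-smoothing picture $T_pE \otimes T_p C_{\alpha_i^\vee}$. This is far from formal. The $Z(L)_{rig}$-action on $\tilde{Z}$ over a point $z \in Z_0$ does not literally act on a single flag variety $G/B$; it acts through a homomorphism $Z(L)_{rig} \to \mathrm{Aut}(\xi_{G,z})_{rig}$ induced by the fact that $z$ is a fixed point, and the equivariant structure on $\xi_{G,z}/B$ has to be unwound before you can speak of ``the weight of $T_p C_{\alpha_i^\vee}$ being $\mf{g}_{\alpha_i}$.'' Moreover, the attachment point $\sigma(p)$ is not the basepoint $B/B$ (it is determined by the Harder--Narasimhan reduction), so the identification of the tangent line with a particular root space requires an argument, not just an appeal to the $T$-equivariant structure of the flag variety. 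None of this is supplied.

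So the skeleton is plausible, but the step that carries all the content is left at the level of a heuristic, and there is an unresolved (and almost certainly wrong as stated) sign convention. The paper's argument, while it depends on Looijenga's theorem and the explicit weight table, has the advantage of being entirely algebraic and avoiding any local analysis of $\tbun_G$.
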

\begin{proof}
We deduce this from the weights of the $Z(L)_{rig}$-action on the affine space $\hat{Y}\sslash W$ and the fibres of the affine space bundle $Z \to Z_0$. By construction, the $Z(L)_{rig}$-weights of $\hat{Y}\sslash W$ are the canonical $\mb{G}_m$-weights multiplied by $(\mu \mmid -)$. By a theorem of Looijenga \cite[Theorem 3.4]{looijenga76}, these $\mb{G}_m$ weights are $1, g_1, \ldots, g_l$, where $g_i$ are the \emph{coroot integers} defined by
\[ g_1 \alpha_1^\vee + \cdots + g_l\alpha_l^\vee = \tilde{\alpha}^\vee,\]
where $\tilde{\alpha}$ is the highest root of $G$. The $Z(L)_{rig}$-weights on $Z$, on the other hand, can be computed using, for example, the formula in \cite[Proposition 4.1.7]{davis19} for the weight multiplicities in a parabolic induction. These are all given in Table \ref{tab:weights} below.
\begin{table}[h]
\newcommand{\T}{\rule{0pt}{2.6ex}}
\renewcommand{\B}{\rule[-1.2ex]{0pt}{0pt}}
\begin{tabular}{l|l|l|l}
Type & $\mb{G}_m$-weights of $\hat{Y}\sslash W$ & $(\mu \mmid -)$ & $Z(L)_{rig}$-weights of $Z$ \\
\hline \T $A_1$ & $1^2$ & $2$ & $1^4$ \B \\
\hline \T $A_l, l > 1$ & $1^{l + 1}$ & $(1, 1)$ & $(1, 0)^1(0, 1)^1(1, 1)^l$ \B \\
\hline \T $B_l$ & $1^32^{l - 2}$ & $1$ & $1^52^{l - 3}$ \B \\
\hline \T $C_l$ & $1^{l + 1}$ & $2$ & $1^22^l$ \B \\
\hline \T $D_l$ & $1^42^{l - 3}$ & $1$ & $1^62^{l - 4}$ \B \\
\hline \T $E_5$ & $1^42^2$ & $1$ & $1^8$ \B \\
\hline \T $E_6$ & $1^32^33^1$ & $1$ & $1^62^3$ \B \\
\hline \T $E_7$ & $1^22^23^24^1$ & $1$ & $1^42^43^2$ \B \\
\hline \T $E_8$ & $1^12^23^24^25^16^1$ & $1$ & $1^22^33^34^25^1$ \B \\
\hline \T $F_3$ & $1^32^1$ & $2$ & $1^22^4$ \B \\
\hline \T $F_4$ & $1^22^23$ & $2$ & $1^12^33^14^2$ \B \\
\hline \T $G_2$ & $1^22^1$ & $3$ & $1^12^13^3$
\end{tabular}
\vspace{1ex}
\caption{Weights of the subregular slices}
\label{tab:weights}
\end{table}
In type $A_l$, $l > 1$, observe that the fixed loci of $\{1\} \times \mb{G}_m$ and $\mb{G}_m \times \{1\} \subseteq Z(L)_{rig}$ are necessarily contained in the zero fibre $\chi_Z^{-1}(0)$. Since these are both line bundles over $E = Z_0$, they must be $L_1$ and $L_2$. So the weights of these are $(1, 0)$ and $(0, 1)$ as claimed.

In the other types, assume for a contradiction that $L$ has weight $w > 1$. (Note that it cannot have negative weight, since all weights of $Z$ are positive.) So the whole zero fibre $\chi_Z^{-1}(0)$ is contained in the fixed locus of $\mu_w$, and hence the images of the weight $-1$ generators of the polynomial ring $\Gamma(\hat{Y}\sslash W, \mc{O})$ span the weight $-1$ part of $p_* \mc{O}_Z$, where $p \colon Z \to Z_0$ is the natural map. But in each case the multiplicity of the weight $-1$ in $p_*\mc{O}_Z$ is larger than in $\Gamma(\hat{Y}\sslash W, \mc{O})$, so this is a contradiction, and the lemma is proved.
\end{proof}

\begin{lem} \label{lem:simplyellipticweights}
Let $X$ be a cone over $E$ of degree $1 \leq d \leq 4$, and assume that $(\mrm{char}(k), d) \neq (2, 2), (3, 3)$. Then the miniversal $\mb{Z}_{>0}$-deformation space of $X$ is an affine space with the same weights as $\hat{Y}\sslash W$ in type $E_{9 - d}$ given in Table \ref{tab:weights}.
\end{lem}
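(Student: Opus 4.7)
The plan is to present the cone $X$ as an explicit weighted complete intersection, compute the positive-weight tangent cohomology $H^1(X, \mb{T}_X)_+$ and $H^2(X, \mb{T}_X)_+$ by a direct Tjurina (resp.\ complete intersection) calculation, and apply Proposition \ref{prop:weighteddeformations2}. This is essentially the classical approach of Pinkham and Looijenga to simply elliptic singularities, with some care needed to track the characteristic assumptions.

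First I would describe $X = \spec \bigoplus_{n \geq 0} H^0(E, L^n)$ concretely using the embedding of $E$ determined by $L$. For $d = 4$, $L$ embeds $E$ in $\mb{P}^3$ as the complete intersection of two quadrics, so $X \subseteq \mb{A}^4$ is cut out by two weight-$2$ polynomials in weight-$1$ variables. For $d = 3$, $L$ embeds $E$ as a smooth cubic in $\mb{P}^2$, so $X \subseteq \mb{A}^3$ is a weight-$3$ hypersurface. For $d = 2$ with $\mrm{char}(k) \neq 2$, the linear system $|2L|$ exhibits $E$ as a double cover of $\mb{P}^1$, and $X \subseteq \mb{A}^3_{1,1,2}$ is the weighted hypersurface $y^2 = f_4(x_1, x_2)$. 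For $d = 1$, $X \subseteq \mb{A}^3_{2,3,1}$ is the universal Weierstrass cubic $y^2 + a_1 xyz + a_3 yz^3 = x^3 + a_2 x^2 z^2 + a_4 xz^4 + a_6 z^6$ with $(x, y, z)$ of weights $(2, 3, 1)$, which works uniformly in every characteristic. The excluded pairs $(\mrm{char}(k), d) = (2, 2)$ and $(3, 3)$ are precisely those where the natural presentation fails to produce a smooth $E$: in $\mrm{char}=2$ the double cover $y^2 = f_4$ is inseparable, and in $\mrm{char} = 3$ the weighted Euler identity degenerates for a cubic.

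Next, I would use the standard fact that for a weighted local complete intersection $X \subseteq P$ of dimension $2$ with isolated singularity at the origin, the cotangent complex is quasi-isomorphic to $[\mc{I}/\mc{I}^2 \to \Omega^1_P|_X]$ in degrees $[-1, 0]$; dually, $\mb{T}_X$ is the two-term complex $[T_P|_X \to \mc{N}_{X/P}]$ in degrees $[0, 1]$. Since $X$ is affine, hypercohomology collapses to cohomology of global sections, giving $H^2(X, \mb{T}_X) = 0$ unconditionally, and $H^1(X, \mb{T}_X)$ identified with the Tjurina algebra of the defining equation(s). Proposition \ref{prop:weighteddeformations2} then shows that the miniversal $\mb{Z}_{>0}$-weighted deformation ring is the honest graded polynomial ring $\sym (H^1(X, \mb{T}_X)_+)^\vee$.

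Finally, I would compute the Hilbert series of the positive-weight part of the Tjurina algebra in each of the four cases and verify it matches Table \ref{tab:weights}. For $d = 3$, Macaulay's theorem for a smooth plane cubic gives Jacobian Hilbert series $1 + 3t + 3t^2 + t^3$; the part in degrees $< 3$, corresponding to deformation parameters of weights $1, 2, 3$, has Hilbert series $3t + 3t^2 + t^3$, matching the $E_6$ entry $1^3 2^3 3^1$. Analogous bookkeeping in the remaining cases, using the Koszul resolution for $d = 4$ and the weighted Jacobian rings for $d = 1, 2$, yields the $E_5$, $E_7$, and $E_8$ entries respectively. The main obstacle is this final step in the weighted cases $d = 1, 2$: the variables and defining equation carry several distinct weights, so one must carefully enumerate monomial bases of the Tjurina quotient by weighted degree and apply the weighted Euler identity to eliminate the $f$-relation.
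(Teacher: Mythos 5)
Your approach is genuinely different from the paper's: for $d \leq 3$ the paper simply cites \cite{grojnowski-shep19} (characteristic zero) and Hirokado (positive characteristic), reserving a direct argument only for $d = 4$; you instead propose a uniform, self-contained computation of the weighted Tjurina algebra in every case. The skeleton is sound: the cones are weighted hypersurfaces (or a complete intersection for $d = 4$), $X$ is affine lci so $H^2(X, \mb{T}_X) = 0$ and Proposition \ref{prop:weighteddeformations2} applies, and the positive-weight tangent space is governed by the Jacobian ideal. But the ``bookkeeping'' step hides the real difficulties, and you have slightly misidentified where the traps lie.

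First, the weighted Euler identity is both unnecessary and unavailable. The positive-weight part of $H^1(X, \mb{T}_X)$ corresponds to degrees strictly less than $D = \deg f$ in the Tjurina algebra, and the ideal $(f)$ contributes nothing in those degrees; so ``eliminating the $f$-relation'' is a non-issue. Meanwhile for $d = 1$ ($D = 6$) the Euler identity $Df = \sum w_i x_i \partial_i f$ fails in characteristics $2$ and $3$, so had you needed it you would be stuck -- yet those cases are not excluded by the lemma's hypotheses. Second, and more seriously, in characteristics $2$ and $3$ for $d = 1$ the partial derivatives of the (general) Weierstrass equation do \emph{not} form a regular sequence (indeed $V(J(f))$ contains a line through the origin), so the formula $\prod_i(1 - t^{D - w_i})/(1 - t^{w_i})$ for the Hilbert series of the Milnor algebra is simply wrong. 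One still gets the correct dimensions $1,1,2,2,2,1$ in degrees $\leq 5$, but this has to be verified by hand from the specific generators of $J(f)$; the naive weighted-regular-sequence argument does not go through. Third, for $d = 4$ your Koszul/Jacobian count gives $1^4 2^2$ only if the map $k^4 \to k^8$, $c \mapsto (\sum c_j \partial_j f_1, \sum c_j \partial_j f_2)$, is injective, i.e.\ if $\ker H_1 \cap \ker H_2 = 0$ for the two Hessians; in characteristic not $2$ this follows directly from smoothness, but in characteristic $2$ it is a genuine fact requiring proof -- the paper devotes most of the $d=4$ argument to exactly this. So your plan is viable and more uniform than the paper's, but ``analogous bookkeeping'' conceals two characteristic-$p$ obstructions (failure of regular-sequence for $d = 1$ in char $2,3$; the Hessian condition for $d = 4$ in char $2$) that need to be addressed explicitly, and the reliance on the Euler identity should be dropped in favour of the observation that low weight sees only low degree.
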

\begin{proof}
Except for $d = 4$, this is pointed out in \cite[Theorem 6.24]{grojnowski-shep19}: the result is well-known in characteristic $0$, and is due to M. Hirokado \cite[Theorem 4.4]{hirokado04} in positive characteristic.

For $d = 4$, we argue as follows. In this case, the cone $X$ is a complete intersection
\[ X = \spec \frac{k[x_1, x_2, x_3, x_4]}{(f_1, f_2)},\]
where $f_1, f_2$ are homogeneous polynomials of degree $2$. The deformation theory of $X$ is unobstructed, and weight $d$ part of the tangent space is the degree $2 - d$ part of the cokernel of the $2 \times 4$ Jacobian matrix
\[ A = \left(\frac{\partial f_i}{\partial x_j}\right)\]
with entries in $R = k[x_1, x_2, x_3, x_4]/(f_1, f_2)$. So the miniversal $\mb{Z}_{>0}$-weighted deformation space is an affine space with weights $1^42^2$ as long as intersection of the kernels of the two Hessian matrices
\[ H_i = \left(\frac{\partial^2 f_i}{\partial x_j \partial x_k}\right)_{1 \leq j, k \leq 4}, \quad i = 1, 2 \]
is zero.

If the characteristic of $k$ is not $2$, then
\[ f_i(x) = \frac{1}{2}x^tH_ix, \quad \text{and} \quad df_i = dx^t H_i x.\]
So any nonzero vector $v$ in the intersection of the kernels gives a singular point in the curve $E = \mathrm{Proj}(R)$, which is a contradiction.

If the characteristic of $k$ is $2$, then $(H_i)_{j, j} = 0$ (so $H_i$ is the matrix of an alternating form on $k^4$), and $f_i$ is of the form
\[ f_i(x) = \sum_{1 \leq j < k \leq 4} (H_i)_{j, k} x_jx_k + \sum_{j = 1}^4 a_{i, j}x_j^2 \]
for some vectors $a_i = (a_{i, 1}, a_{i, 2}, a_{i, 3}, a_{i, 4}) \in k^4$. The variety of all possible tuples $(H_1, H_2, a_1, a_2)$ such that $H_1$ and $H_2$ have a common vector in their kernels is irreducible (it admits a surjection from a vector bundle over the projective space $\mb{P}^3$), and the subset of such tuples defining a smooth elliptic curve is an open subset; we will show that this subset must be empty.

Consider the open subset of tuples as above such that $\dim \ker H_1 = \dim \ker H_2 = 2$, $\dim \ker H_1 \cap \ker H_2 = 1$, $a_1$ and $a_2$ are linearly independent, and some vector in the span of $a_1$ and $a_2$ does not lie in $\ker H_1 + \ker H_2$. For any such tuple, after performing an invertible linear transformation on $f_1$ and $f_2$, and changing basis on $k^4$, we can arrange that
\[ H_1 = \left(\begin{matrix} 0 & 0 & 0 & 0 \\ 0 & 0 & 0 & 1 \\ 0 & 0 & 0 & 0 \\ 0 & 1 & 0 & 0 \end{matrix}\right), \quad H_2 = \left(\begin{matrix} 0 & 0 & 0 & 0 \\ 0 & 0 & 0 & 0 \\ 0 & 0 & 0 & 1 \\ 0 & 0 & 1 & 0 \end{matrix}\right), \quad a_1 = \left(\begin{matrix} 0 \\ 0 \\ 0 \\ 1 \end{matrix} \right), \quad a_2 = \left(\begin{matrix} a \\ b \\ c \\ 0 \end{matrix}\right),\]
for some $a, b, c \in k$. A straightforward Jacobian calculation shows that $\mrm{Proj}(R)$ is singular in this case. So this nonempty open subset is disjoint from the open subset yielding smooth elliptic curves, so the latter must be empty by irreducibility, and we are done.
\end{proof}

\begin{thm} \label{thm:subregulardeformations}
Assume we are in the setup of Theorem \ref{thm:subregularsingularities} and, moreover, that $k$ does not have characteristic $2$ if $(G, P, \mu)$ is of type $A_1$, $B$, $C$, $E_7$ or $F_3$, and that $k$ does not have characteristic $3$ if $(G, P, \mu)$ is of type $E_6$. Then (the formal completion of) the family $\chi_Z \colon Z \to \hat{Y}\sslash W$ is a miniversal $\mb{Z}_{>0}(\mu \mmid -)$-weighted deformation of $\chi_Z^{-1}(0)$ with respect to the action of the torus $Z(L)_{rig}$.
\end{thm}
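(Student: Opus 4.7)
The plan is to verify the hypotheses of Proposition \ref{prop:weighteddeformations2} applied to $X'' = \chi_Z^{-1}(0)$ with $\mb{X}^*(H)_+ = \mb{Z}_{>0}(\mu \mmid -) \subseteq \mb{X}^*(Z(L)_{rig})$. Concretely, I will show that $H^1(X'', \mb{T}_{X''})_+$ has the same $Z(L)_{rig}$-weights as $T_0(\hat{Y}\sslash W)$, that $H^2(X'', \mb{T}_{X''})_+ = 0$, and that the Kodaira-Spencer map
\[ \mrm{KS} \colon T_0(\hat{Y}\sslash W) \longrightarrow H^1(X'', \mb{T}_{X''})_+ \]
coming from the family $\chi_Z$ is an isomorphism. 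Given these, the induced graded map from the weighted miniversal deformation ring to $\widehat{\mc{O}}_{\hat{Y}\sslash W, 0}$ is a surjection between weighted polynomial rings of matching Krull dimension, hence an isomorphism, which identifies $\chi_Z$ with the miniversal weighted deformation.

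For the classical types $A$, $B$, $C$, $D$, I would invoke Lemma \ref{lem:contractiondeformations} with the gluing description from Theorem \ref{thm:subregularsingularities}, using the $Z(L)_{rig}$-weights on the line bundles $L_i$ or $L$ supplied by Lemma \ref{lem:singularityweights}. In type $A$ one has $X' = E$ and $I^\vee \cong L_1 \otimes L_2$ of degree $l + 1 > 0$, so $H^1(X', I^\vee) = 0$ and hence $H^2_+ = 0$; in types $B$, $C$, $D$ one has $X' = \mb{P}(1, 2)$ or $\mb{P}^1$, and a direct weight-graded computation using the positivity of $L$ again yields the vanishing of $H^1(X', I^\vee)_+$. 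The weighted dimension of $H^0(X', I^\vee)_+$ together with the ramification-point contributions of Lemma \ref{lem:contractiondeformations} can then be tallied and matched against $(\mu \mmid -)$ times the $\mb{G}_m$-weights of $\hat{Y}\sslash W$ listed in Table \ref{tab:weights}.

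For the exceptional types $A_1$, $E$, $F$, $G$, the singularity $X''$ is a simply elliptic cone of degree $d \in \{1, 2, 3, 4\}$, and Lemma \ref{lem:simplyellipticweights} identifies the full $\mb{Z}_{>0}$-weighted miniversal deformation as an affine space with weights matching those of $\hat{Y}\sslash W$ in type $E_{9 - d}$. Restricting to the $\mb{Z}_{>0}(\mu \mmid -)$-weighted part picks out exactly the $\mu_{(\mu \mmid -)} \subseteq \mb{G}_m$-invariant subspace, and direct inspection of Table \ref{tab:weights} confirms that for each of the pairs $(A_1, E_5)$, $(C_l, D_{l+4})$, $(F_l, E_{l+3})$, $(G_2, E_8)$ (and for the trivial case $E_l$), the resulting weights agree with those of $\hat{Y}\sslash W$ in the given type; this is precisely the folding twist noted after Theorem \ref{thm:introsubregularsingularities}.

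Finally, I would establish that $\mrm{KS}$ is an isomorphism by exploiting that $\chi_Z$ is generically smooth on the base (a consequence of the equivariant slice structure and the fact that $\tilde{\chi}_Z$ is a simultaneous log resolution smooth away from the zero section), so that every nonzero tangent direction at $0 \in \hat{Y}\sslash W$ gives rise to a nontrivial first-order deformation; combined with the dimension match from the weight computations above, this forces $\mrm{KS}$ to be injective, and hence an isomorphism. The main obstacle will be the case-by-case weight bookkeeping in the classical types, particularly tracking the $Z(L)_{rig}$-weights of $I^\vee$ at each branch point and handling the stacky branch point in type $B$. The characteristic hypotheses in the statement come from Lemma \ref{lem:contractiondeformations} (in types $B$, $C$, where $\mrm{char}\,k \neq 2$ is needed at the $D_\infty$ branch points), from Lemma \ref{lem:simplyellipticweights} (in types $E_6$, $E_7$, $F_3$, where the cone degree lands in $\{2, 3\}$), and from Remark \ref{rmk:sl2slice} (type $A_1$, for existence of the slice itself).
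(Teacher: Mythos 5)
Your weight bookkeeping is sound and mirrors the paper's computations: Lemma \ref{lem:contractiondeformations} in the classical types, Lemma \ref{lem:simplyellipticweights} in the exceptional types, and the folding reduction for $A_1$, $C$, $F$, $G$ are all correctly identified, as are the sources of the characteristic hypotheses. The gap is in the final step: you claim that the Kodaira--Spencer map is injective because $\chi_Z$ is \emph{generically smooth on the base}, and this inference does not hold. Generic smoothness of a family says nothing about the Kodaira--Spencer class at $0$; for instance, a product family $X'' \times \mb{A}^n \to \mb{A}^n$ with $X''$ having isolated singularities is smooth over a dense open subset but has identically zero Kodaira--Spencer map. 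What you actually need is some argument that the deformation is not (formally) constant in any tangent direction, and the simultaneous log resolution structure by itself only tells you where $\chi_Z$ is smooth, not that KS is nondegenerate.

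The paper avoids KS entirely and establishes something weaker but sufficient: it shows that the family $\chi_Z$ is not equivariantly formally trivial along any nonzero $\mb{G}_m$-orbit closure $\Theta^{-1}_{Y,y} \subseteq \hat{Y}\sslash W$. The argument is genuinely geometric and uses the resolutions in an essential way: a formal equivariant trivialisation over the orbit closure lifts to an honest trivialisation $X \cong \chi_Z^{-1}(0) \times \mb{A}^1$ (by affineness over $Z_0$), and then one examines the simultaneous log resolution $\tilde{X} \to X$ restricted to a nonzero fibre $X_t$: in the classical types this forces $\chi_Z^{-1}(0)$ to be regular in codimension $1$, contradicting Theorem \ref{thm:subregularsingularities}, and in the exceptional types, Corollary \ref{cor:canonicalformula} forces $\tilde{X}_t$ to have trivial canonical bundle, which is incompatible with $\tilde{X}_t$ being birational to a line bundle over an elliptic curve. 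This nontriviality statement gives that the classifying map $\hat{Y}\sslash W \to \spec R$ into the miniversal base has set-theoretic preimage of the origin equal to the origin; combined with the weight match (which gives both sides the same graded Hilbert series), one concludes the classifying map is an isomorphism. You need some substitute for this geometric input; the generic smoothness observation is not it.
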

\begin{proof}
We first argue that there is no non-constant $\mb{G}_m$-orbit (equivalently, $Z(L)_{rig}$-orbit) closure in $\hat{Y}\sslash W$ on which the family $\chi_Z$ is equivariantly trivial over the completion at $0$. To see this, note that every $\mb{G}_m$-orbit closure is of the form $q(\Theta_{Y, y}^{-1})$, where $q \colon \Theta_Y^{-1} \to \hat{Y}\sslash W$ is the quotient map and $\Theta_{Y, y}^{-1} \cong \mb{A}^1$ is the fibre of the line bundle $\Theta_Y^{-1}$ over a point $y \in Y$. If the pullback $X \to \mb{A}^1$ of $Z$ to this fibre is equivariantly trivial over the formal completion at $0$, then it is trivial relative to $Z_0$ (since there are no deformations of the map $\chi_Z^{-1}(0) \to Z_0$ of the relevant weights). Since $X$ is affine over $Z_0$, the equivariant formal trivialisation therefore lifts uniquely to an equivariant isomorphism $X \cong \chi_Z^{-1}(0) \times \mb{A}^1$.

Now, there is a simultaneous log resolution $\pi \colon \tilde{X} = \tilde{Z}_y \to X \cong \chi_Z^{-1}(0) \times \mb{A}^1$ over $\mb{A}^1$, relative to the divisor $0 \in \mb{A}^1$. Moreover, for any $t \in \mb{A}^1 \setminus \{t\}$, the map $\pi_t \colon \tilde{X}_t \to X_t$ is an isomorphism over the dense open locus of points in $X_t$ whose associated $G$-bundle is regular, and has positive dimensional fibres over all other points. Since $\tilde{X}_t$ is smooth, it follows that $X_t = \chi_Z^{-1}(0)$ is regular in codimension $1$. This contradicts Theorem \ref{thm:subregularsingularities} in the classical types $A_l$ ($l > 1$), $B$, $C$ and $D$. In types $A_1$, $E$, $F$ and $G$, we instead note that Corollary \ref{cor:canonicalformula} implies that $\tilde{X}_t$ has trivial canonical bundle (since $Z_0 = \spec k$ in these cases) for $t \neq 0$. In particular, from adjunction, every projective curve in $\tilde{X}_t$ is rational with self-intersection $-2$. But $\tilde{X}_t$ is a resolution of the elliptic cone $\chi_Z^{-1}(0)$ and is therefore birational (over $\chi_Z^{-1}(0)$) to a line bundle over an elliptic curve. Since a birational map between smooth surfaces projective over a common base is a sequence of blowups at points and contractions of $(-1)$-curves, this implies that $\tilde{X}_t$ must contain a projective curve whose normalisation is elliptic, which is a contradiction. So the deformation is formally nontrivial on all orbit closures as claimed.

Now let $\mf{X} \to \spf(\hat{R})$ be a miniversal $\mb{Z}_{>0}(\mu \mmid -)$-weighted deformation of $\chi_Z^{-1}(0)$. Then the completion of $\chi_Z \colon Z \to \hat{Y}\sslash W$ is the pullback of $\mf{X}$ along some $Z(L)_{rig}$-equivariant map $\hat{Y}\sslash W \to \spec R$ such that the preimage of the origin is (set-theoretically) the fixed point. We will show that $\spec R$ is an affine space with linear $Z(L)_{rig}$-action of the same weights as $\hat{Y}\sslash W$, from which it follows that $\hat{Y}\sslash W \to \spec R$ must be an isomorphism.

In type $E$, the claim is proved in Lemma \ref{lem:simplyellipticweights}.

In type $A_l$, $l > 1$, in the notation of Lemma \ref{lem:singularityweights}, $Z(L)_{rig} \cong \mb{G}_m \times \mb{G}_m$ acts on $L_1$ and $L_2$ with weights $(1, 0)$ and $(0, 1)$ respectively and $I = L_1^{-1} \otimes L_2^{-1}$ is a line bundle of degree $-l - 1$. So applying Lemma \ref{lem:contractiondeformations} and Proposition \ref{prop:weighteddeformations2}, we have that the miniversal $\mb{Z}_{> 0}^2$-weighted deformation is an affine space with weights $(1, 1)^{l + 1}$. Since $(\mu \mmid -) = (1, 1)$ in this presentation, this is also a miniversal $\mb{Z}_{> 0}(\mu \mmid -)$-weighted deformation with weights $(\mu \mmid -)^{l + 1}$ as required to prove.

In type $B$, we identify the line bundle $I$ on $\mb{P}(1, 2)$ of Lemma \ref{lem:contractiondeformations} as follows. First, note that since line bundles on $\mb{P}(1, 2)$ are rigid, we may assume without loss of generality that $L = \mc{O}((l - 6)p) = \pi^*\mc{O}_{\mb{P}(1, 2)}(l - 6)$, where $p \in E$ maps to the stacky point of $\mb{P}(1, 2)$. So we have
\[ \pi_*L^{-1} = \pi_*\mc{O} \otimes \mc{O}(6 - l) \quad \text{and} \quad \pi_*(L^{-2}) = \pi_*\mc{O} \otimes \mc{O}(12 - 2l).\]
Since $\pi \colon E \to \mb{P}(1, 2)$ is finite and flat of degree $2$ (and the characteristic is not $2$), we have $\pi_*\mc{O} = \mc{O} \oplus \mc{O}(d)$ for some $d \in \mb{Z}$. Since $h^1(\mb{P}(1, 2), \pi_*\mc{O}) = h^1(E, \mc{O}) = 1$, we deduce that $d = -3$ or $-4$. If $d = -4$, then the $\mu_2$-stabiliser of the stacky point of $\mb{P}(1, 2)$ would act trivially on the fibre of $\pi$, which contradicts the fact that $E$ is a scheme. So $d = -3$. We deduce that
\[ I = \ker(\sym^2(\mc{O}(6 - l) \oplus \mc{O}(3 - l)) \to \mc{O}(12 - 2l) \oplus \mc{O}(9 - 2l)) \cong \mc{O}(6 - 2l).\]
Since $l \geq 3$, $2l - 6 \geq 0$, so $H^1(\mb{P}(1, 2), I^\vee) = 0$, and $h^1(\mb{P}(1, 2), I^\vee) = l - 2$. Since $Z(L)_{rig} = \mb{G}_m$ acts on $L$ with weight $1$ by Lemma \ref{lem:singularityweights} and hence on $I^\vee$ with weight $2$, we deduce from Lemma \ref{lem:contractiondeformations} and Proposition \ref{prop:weighteddeformations2} that the miniversal weighted deformation space $\spec R$ is an affine space with weights $1^32^{l - 2}$, which are the same weights has $\hat{Y}\sslash W$ from Table \ref{tab:weights}.

The proof in type $D$ is similar: comparing Euler characteristics, we see that the rank $2$ bundles $\pi_*L^{-1}$ and $\pi_*L^{-2}$ on $\mb{P}^1$ have degrees $6 - l$ and $14 - 2l$. It follows that the kernel of the surjection $\sym^2 \pi_*L^{-1} \to \pi_* L^{-2}$ is $I = \mc{O}(4 - l)$. Since $l \geq 4$, we have $H^1(\mb{P}^1, I^\vee) = 0$ and $h^0(\mb{P}^1, I^\vee) = l - 3$. So the weighted miniversal deformation space $\spec R$ is an affine space with weights $1^42^{l - 3}$, which again agree with the weights of $\hat{Y}\sslash W$ from Table \ref{tab:weights}.

For the remaining types, we note that in type $A_1$ (resp., $C_l$, $F_l$, $G_2$), the unstable variety $\chi_Z^{-1}(0)$ is equivariantly isomorphic to the unstable variety for type $E_5$ (resp., $D_{l + 4}$, $E_{l + 4}$, $E_8$), with $(\mu\mmid -) = 2$ (resp., $2$, $2$, $3$). So the miniversal $\mb{Z}_{>0}(\mu\mmid -)$-weighted deformation is just the $\mu_2$- (resp., $\mu_2$-, $\mu_2$-, $\mu_3$-)fixed part of the miniversal $\mb{Z}_{>0}$-weighted deformation. It follows from the cases proved above and inspection of Table \ref{tab:weights} that this is an affine space with the desired weights.
\end{proof}

\bibliography{bibliography.bib}

\end{document}